\newtheorem{theorem}{Theorem}[section]
\newtheorem{lemma}[theorem]{Lemma}
\newtheorem{proposition}[theorem]{Proposition}
\newtheorem{corollary}[theorem]{Corollary}
\newtheorem{definition}[theorem]{Definition}
\newtheorem{remark}[theorem]{Remark}
\newtheorem{example}[theorem]{Example}
\numberwithin{equation}{section}
\newcommand{\de}{\, \mathrm{d}}
\newcommand{\shadow}[2]{\mathcal{S}^{#1}(#2)}
\newcommand{\leqcp}{\leq_{c,+}}
\newcommand{\leqcs}{\leq_{c,s}}
\newcommand{\leqc}{\leq_{c}}
\newcommand{\geqc}{\geq_{c}}
\newcommand{\leqp}{\leq_{+}}
\newcommand{\lc}{\text{lc}}
\newcommand{\mc}{\text{mc}}
\newcommand{\sun}{\text{sun}}
\newcommand{\sowh}{\in \cdot \,}
\newcommand{\MM}{\mathsf{M}}
\newcommand{\MMC}{\mathsf{M}^{\text{rc}}}
\newcommand{\PP}{\mathsf{P}}
\newcommand{\PPC}{\mathsf{P}^{\text{rc}}}
\newcommand{\MZ}{\mathcal{M}_0}
\newcommand{\MO}{\mathcal{M}_1}
\newcommand{\PZ}{\mathcal{P}_0}
\newcommand{\PO}{\mathcal{P}_1}
\newcommand{\TZ}{\mathcal{T}_0}
\newcommand{\TO}{\mathcal{T}_1}
\newcommand{\1}{\mathds{1}}
\newcommand{\tmax}{t^{*}}
\newcommand{\supp}{\mathrm{supp}}
\newcommand{\R}{\mathbb{R}}
\renewcommand{\P}{\mathbb{P}}
\begin{document}
	\title[Shadow Martingales]{Shadow martingales -- a stochastic mass transport approach to the peacock problem}
	\author{Martin Br\"uckerhoff \address[Martin Br\"uckerhoff]{Universit\"at M\"unster, Germany} \email{martin.brueckerhoff@devk.de} \hspace*{0.5cm} Martin Huesmann \address[Martin Huesmann]{Universit\"at M\"unster, Germany} \email{martin.huesmann@uni-muenster.de} \\ \\ Nicolas Juillet \address[Nicolas Juillet]{Universit\'{e} de Strasbourg, France} \email{nicolas.juillet@uha.fr}} 
	\thanks{MB and MH have been partially supported by the Vienna Science and Technology Fund (WWTF) through project VRG17-005; they are funded by the Deutsche Forschungsgemeinschaft (DFG, German Research Foundation) under Germany's Excellence Strategy EXC 2044 –390685587, Mathematics Münster: Dynamics–Geometry–Structure. NJ thanks the Erwin-Schr\"odinger Institue (ESI) for supporting his stay in Vienna in June 2019.}
	
	\begin{abstract}
		Given a family of real probability measures $(\mu_t)_{t\geq 0}$ increasing in convex order (a  peacock) we describe a systematic method to create a martingale exactly fitting the marginals at any time. The key object for our approach is the obstructed shadow of a measure in a peacock, a generalization of the (obstructed) shadow introduced in \cite{BeJu16,NuStTa17}. As input data we take an increasing family of measures $(\nu^\alpha)_{\alpha \in [0,1]}$ with $\nu^\alpha(\R)=\alpha$ that are submeasures of $\mu _0$, called a  parametrization of $\mu_0$. Then, for any $\alpha$ we define an evolution $(\eta^\alpha_t)_{t\geq 0}$ of the measure $\nu^\alpha=\eta^\alpha_0$ across our peacock by setting $\eta^\alpha_t$ equal to the obstructed shadow of $\nu^\alpha$ in $(\mu _s)_{s \in [0,t]}$. We identify conditions on the parametrization $(\nu^\alpha)_{\alpha\in [0,1]}$ such that this construction leads to a unique martingale measure $\pi$, the shadow martingale, without any assumptions on the peacock.  In the case of the left-curtain parametrization $(\nu_{\lc}^\alpha)_{\alpha \in [0,1]}$ we identify the shadow martingale as the unique solution to a continuous-time version of the martingale optimal transport problem.
		
		Furthermore, our method enriches the knowledge on the Predictable Representation Property (PRP) since any shadow martingale comes with a canonical Choquet representation in extremal Markov martingales.
		\smallskip
		
		\noindent\emph{Keywords:} Convex ordering, PCOC, Kellerer's theorem, Optimal transport, Shadows, Martingale optimal transport, Choquet representation, Predictable representation property
		
		\emph{Mathematics Subject Classification (2020):} Primary 60G07, 60G44, 60E15, 49Q25 (MSC 2020); Secondary 91G20.

\end{abstract}
	
\date{\today}
\maketitle

\section{Introduction}

Two finite measures $\mu$ and $\mu'$ on $\mathbb{R}$ with finite first moments are said to be in convex order, denoted by $\mu \leqc \mu'$, if $\int \varphi \de \mu \leq \int \varphi \de \mu'$ for all convex $\varphi: \mathbb{R} \rightarrow \R$.
Peacocks are families $(\mu_t)_{t \geq 0}$ of probability measures  on $\mathbb{R}$ with finite first moments that increase in convex order. 
Given a peacock $(\mu _t)_{t \geq 0}$, the peacock problem is to construct a probability measure $\pi$ such that the canonical process $X=(X_t)_{t \geq 0}$ is a martingale w.r.t.\ its natural filtration and the marginal distributions coincide with $(\mu _t)_{t \geq 0}$, i.e.\ $\mathsf{Law}_\pi(X_t)=\mu_t$ for each $t \geq 0$.

There is a wide range of beautiful solutions to this problem employing different ideas and techniques, e.g.\ \cite{Ke72, Lo08, BoPrRo12, KaTaTo17, Ho17, MaYo02, Ju16, HaKl07, Al08, Ju18, HeTaTo16}. 
On the one extreme, there is the fundamental non-constructive result of Kellerer \cite{Ke72} proving the existence of Markov solutions for any given peacock.
On the other end of the spectrum, there are very explicit constructions for specific sub classes of peacocks, many of which can be found in the monograph \cite{HiPrRoYo11} by Hirsch, Profeta, Roynette, and Yor.
However, it is difficult to manage both aspects by constructing an explicit solution for a generic peacock.
Only recently there have been  contributions in this direction by Lowther \cite{Lo08}, Hobson \cite{Ho17}, Juillet \cite{Ju18} and Henry-Labord\`ere and Touzi \cite{HeTo16}.

We propose a new method to systematically construct a martingale associated with
a peacock. Thereby, we rely on the rich theory of optimal transport. In optimal
transport a coupling of two probability measures is interpreted as a plan to transport
one marginal to the other one. More precisely, given a coupling $\pi$ of two probability
measures $\mu_0$ and $\mu_1$ on $\R$, i.e. a probability measure $\pi$ on $\R^2$ with $\pi(A\times \R)=\mu_0(A)$
and $\pi(\R\times B)=\mu_1(B)$ for all Borel sets $A$ and $B$. The quantity $\pi(A \times B)$ can be
interpreted as the amount of mass (of the measure $\mu_0$) that is transported from the set
$A$ to the set $B$ under $\pi$. Conversely, a coupling $\pi$ is fully characterized by the family
of values $(\pi(A \times B))_{A,B}$ and this characterization still holds if we only consider certain
families of sets, e.g.\ only sets $A \times B$ with $A$ of the type $(-\infty, q]$. Note, that given $q$
the values of $B \mapsto \pi((-\infty, q] \times B)$ are encoded in the second marginal of $\pi_{|(-\infty,q]\times\R}$ ,
i.e.\ in the measure $\eta_q := \pi((-\infty, q] \times\cdot )$. Therefore, the family $(\eta_q)_{q\in\R}$ associated
with $((\mu_0)_{|(-\infty,q]} )_{q\in\R}$, i.e.\ the one-step evolution $(\mu_0((-\infty,q]),\eta_q)$,
completely determines the transport plan $\pi$.

In recent years, this mindset of optimal transportation found several new applications within stochastic analysis sometimes subsumed under the name stochastic mass transport, see e.g.\ \cite{BeJu21, BeCoHu17, GhKiPa21, KlKu15, BeEdElSc18}. In various striking applications it turned out to be useful to interpret a stochastic process $X\equiv (X_t)_{t\geq 0}$ as a device to transport mass from time $0$ to the distribution of  $X$ at a (potentially random) time $\tau$. To identify the induced coupling of the distribution of $X$ at time $0$ and time $\tau$ it is then necessary to trace the evolution of fixed parts of the initial distribution, e.g.\ to consider for $A\subset \R$ the evolution in $t$ of
$$ \eta_t^A:=\P[X_t\in \cdot |X_0 \in A].$$

However, observe, that already for two step processes $(X_0,X_1,X_2)$ (corresponding to three marginal transport problems) the knowledge of only $(\eta_t^A)_{t=1,2}$ is in general not sufficient to pin down the law of the full process since it neglects correlations between $X_1$ and $X_2$.

We call a measure $\nu$ a submeasure of $\mu$, if $\nu \leqp \mu$, i.e.\ if \label{eq:Submeasures}
$\nu(A) \leq \mu(A)$  for all measurable sets $A$.
For instance, the restrictions $(\mu_0)_{|A}$ of $\mu_0=\mathsf{Law}(X_0)$ to the measurable sets $A=(-\infty,q]$ are submeasures of $\mu_0$. Using this terminology, the goal of this article is to uniquely define a martingale associated with a peacock $(\mu _t)_{t \geq 0}$ from the following input data only:

\begin{itemize}\label{p:input}
	\item A \textit{parametrization} of $\mu_0$, i.e.\ a family of submeasures $(\nu^{\alpha})_{\alpha \in [0,1]}$ of $\mu_0$ s.t.\ $\nu^{\alpha}(\mathbb{R})=\alpha$, $\nu^{\alpha} \leq_+ \nu^{\beta}$  for $\alpha \leq \beta$, and $\nu^1=\mu_0$.
	\item For each $\alpha$, the \textit{evolution} of $\nu ^{\alpha}$ through the marginals $(\mu _t)_{t \geq 0}$, i.e.\ a family $(\eta _t ^{\alpha})_{t \geq 0}$ of submeasures of $(\mu _t)_{t \geq 0}$, $\eta  ^{\alpha} _t \leqp \mu _t$ for all $t \geq 0$, satisfying $\nu ^{\alpha} = \eta _0 ^{\alpha} \leqc \eta _s ^{\alpha} \leqc \eta _t ^{\alpha}$ for all $0 \leq s \leq t$. These evolutions also need to be consistent in the sense that $\eta^\alpha_t\leqp \eta^\beta_t$ for all $\alpha \leq \beta$ in $[0,1]$ and $t \geq 0$.
\end{itemize}

It is easy to see that, without further assumptions, this data is not sufficient to uniquely determine the law of a martingale. It turns out that a certain convexity of $(\nu^{\alpha})_{\alpha \in [0,1]}$  together with some kind of minimality in the choice of $(\eta^{\alpha})_{\alpha \in [0,1]}$ is the key to 
uniquely define a martingale measure via this procedure.

Before introducing the appropriate notions we would like to present our solution in a special setting 
which already gives a good idea of the general case (namely Theorem \ref{thm:GenExist} in Subsection \ref{ss:main_results}):

\begin{corollary}\label{thm:intro1}
	Let $(\mu _t)_{t \geq 0}$ be a peacock with $\mu(\{x\}) = 0$ for all $x \in \mathbb{R}$. For any nested family of intervals $(I_{\alpha})_{\alpha \in [0,1]}$ for which
	\begin{enumerate}
		\item [(i)] $\mu _0(I_{\alpha}) = \alpha$ for any $\alpha \in [0,1]$,
		\item [(ii)] $\alpha \mapsto  \int _{I_{\alpha}} y \de \mu _0(y)$ is a convex function and
		\item [(iii)] $\sup I_{\alpha} < + \infty$ and $\partial I_{\alpha} \cap \partial I_{\beta} = \emptyset$ for all $\alpha \neq \beta$ in $[0,1]$,
	\end{enumerate}
	there exists a unique solution $\pi$ to the peacock problem w.r.t.\ $(\mu _t)_{t \geq 0}$ such that for any other solution $\rho$ to the peacock problem w.r.t.\ $(\mu _t)_{t \geq 0}$ it holds
	\begin{equation}\label{eq:ThmIntro1}
	\quad\mathsf{Law}_{\pi} (X_t | X_0 \in I_{\alpha}) \leqc \mathsf{Law}_{\rho} (X_t | X_0 \in I_{\alpha})
	\end{equation}
	for all $\alpha \in [0,1]$ and $t \geq 0$.
	Moreover, $(X_0,X_t)_{t \geq 0}$ is a Markov process under $\pi$.
\end{corollary}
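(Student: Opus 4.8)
The plan is to identify Corollary~\ref{thm:intro1} as the special case of the general existence result, Theorem~\ref{thm:GenExist}, in which the parametrization of $\mu_0$ has the nested intervals $(I_\alpha)$ as its level sets, and then to read off \eqref{eq:ThmIntro1} from the minimality that defines the obstructed shadow. First I would set $\nu^\alpha:=(\mu_0)_{|I_\alpha}$. Because the family is nested and $\mu_0(I_\alpha)=\alpha$ by (i), we have $I_\alpha\subseteq I_\beta$ for $\alpha\le\beta$ and $\mu_0(I_1)=1$; hence $\nu^\alpha(\R)=\alpha$, $\nu^\alpha\leqp\nu^\beta$ for $\alpha\le\beta$, and $\nu^1=\mu_0$, so $(\nu^\alpha)_{\alpha\in[0,1]}$ is a parametrization of $\mu_0$. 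The substantive point is then to check that (ii) and (iii) are precisely the specialization of the hypotheses of Theorem~\ref{thm:GenExist} to interval level sets: (ii), the convexity of $\alpha\mapsto\int_{I_\alpha}y\de\mu_0(y)=\int x\de\nu^\alpha(x)$, should be the required convexity of the parametrization, and (iii), finite suprema together with pairwise disjoint boundaries, should be the non-degeneracy ensuring that the parametrization is ``injective'' — which is what will yield both uniqueness and the Markov property. Granting this, Theorem~\ref{thm:GenExist} delivers the shadow martingale $\pi$, whose induced evolutions are by construction the obstructed shadows: $\pi(X_0\in I_\alpha,\,X_t\in\,\cdot\,)=\eta^\alpha_t:=\shadow{(\mu_s)_{s\in[0,t]}}{\nu^\alpha}$ for all $\alpha$ and $t$.

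To get the comparison \eqref{eq:ThmIntro1}, let $\rho$ be an arbitrary solution of the peacock problem for $(\mu_t)_{t\ge0}$ and set $\theta^\alpha_t:=\rho(X_0\in I_\alpha,\,X_t\in\,\cdot\,)$. Then $\theta^\alpha_t\leqp\mu_t$ and $\theta^\alpha_0=\nu^\alpha$, and the consistency $\theta^\alpha_t\leqp\theta^\beta_t$ for $\alpha\le\beta$ is immediate from the nesting of the events $\{X_0\in I_\alpha\}$. Moreover, for $0\le s\le t$ and convex $\varphi$, since $\1_{\{X_0\in I_\alpha\}}$ is $\mathcal F_s$-measurable, the tower property and conditional Jensen give
\[
\int\varphi\de\theta^\alpha_t=\E{\rho}{\1_{\{X_0\in I_\alpha\}}\,\E{\rho}{\varphi(X_t)\mid\mathcal F_s}}\ge\E{\rho}{\1_{\{X_0\in I_\alpha\}}\,\varphi\big(\E{\rho}{X_t\mid\mathcal F_s}\big)}=\int\varphi\de\theta^\alpha_s,
\]
so $\theta^\alpha_s\leqc\theta^\alpha_t$ for all $s\le t$; in particular $(\theta^\alpha_t)_{t\ge0}$ is an admissible evolution of $\nu^\alpha$ through the peacock. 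Since the obstructed shadow $\eta^\alpha_t$ is the $\leqc$-smallest value attained at time $t$ over all admissible evolutions of $\nu^\alpha$, we get $\eta^\alpha_t\leqc\theta^\alpha_t$; dividing both measures by their common mass $\alpha$ (and using that $\leqc$ is invariant under positive scaling) yields exactly $\mathsf{Law}_{\pi}(X_t\mid X_0\in I_\alpha)\leqc\mathsf{Law}_{\rho}(X_t\mid X_0\in I_\alpha)$.

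Finally, uniqueness and the Markov property I would take from Theorem~\ref{thm:GenExist}. The property \eqref{eq:ThmIntro1} (for every $\rho$) characterizes $\pi$: if $\pi'$ is another peacock solution with that property, then \eqref{eq:ThmIntro1} applied to $\pi$ with $\rho=\pi'$, and the analogue for $\pi'$ with $\rho=\pi$, force $\mathsf{Law}_{\pi}(X_t\mid X_0\in I_\alpha)=\mathsf{Law}_{\pi'}(X_t\mid X_0\in I_\alpha)$ for all $\alpha,t$, so $\pi'$ induces the same obstructed-shadow evolutions as $\pi$; together with the rigidity of the obstructed shadow (attaining equality in $\leqc$ leaves essentially no freedom in the corresponding layered martingale transports) and with (iii) this forces $\pi'=\pi$ and makes $(X_0,X_t)_{t\ge0}$ Markov under $\pi$, heuristically because disjointness of the boundaries makes $X_0$ a monotone deterministic function of the layer parameter, so the layer-by-layer construction assembles into a genuine Markov kernel for $(X_0,X_t)_{t\ge 0}$. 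I expect the only genuinely non-routine step to be the first one, namely matching (ii)--(iii) to the hypotheses of Theorem~\ref{thm:GenExist} — in particular confirming that the abstract convexity demanded there of $(\nu^\alpha)_\alpha$ reduces, for interval level sets, exactly to the convexity of $\alpha\mapsto\int_{I_\alpha}y\de\mu_0(y)$, and that (iii) genuinely yields the Markov property and not merely uniqueness.
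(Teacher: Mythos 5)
Your overall route is the same as the paper's (Section \ref{sec:ProofThmIntro}): set $\nu^\alpha=(\mu_0)_{|I_\alpha}$, invoke Theorem \ref{thm:GenExist}, read off \eqref{eq:ThmIntro1} from the $\leqc$-minimality defining the obstructed shadow, and inherit uniqueness and the Markov property from the theorem. Your derivation of \eqref{eq:ThmIntro1} via conditional Jensen is correct, and your uniqueness argument is essentially the paper's once you add the small bridge that a solution $\rho$ attaining equality in \eqref{eq:ThmIntro1} produces, via $\rho^\alpha:=\alpha\,\mathrm{Law}_\rho(X\,|\,X_0\in I_\alpha)$, a martingale parametrization satisfying \eqref{eq:ShadowCurveintro} (cf.\ Remark \ref{rem:uniqueParametrization}), so that the uniqueness of the \emph{pair} in Theorem \ref{thm:GenExist} applies; the ``rigidity'' you appeal to is exactly the NSI machinery inside that theorem and should be cited as such rather than re-asserted.

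The two steps you flag but do not carry out are genuine work, and one of them is slightly misdiagnosed. First, the $\leqcs$-convexity of $(\nu^\alpha)$ does \emph{not} reduce to the convexity of the barycenter map alone: one must show $\alpha\mapsto\int\varphi\de\nu^\alpha$ is convex for \emph{every} increasing convex $\varphi$. The paper's Lemma \ref{lemma:ExConvParam} does this by noting that the normalized increment over $[\alpha_1,\alpha_2]$ is supported in $\overline{I_{\alpha_2}}$ while that over $[\alpha_2,\alpha_3]$ is supported in $\overline{I_{\alpha_2}^{\,c}}$, and then separating $\varphi$ by an \emph{increasing} affine function $l$ with $\varphi\le l$ on $\overline{I_{\alpha_2}}$ and $\varphi\ge l$ on $\overline{I_{\alpha_2}^{\,c}}$; the existence of an increasing such $l$ uses $\sup I_\alpha<+\infty$ from (iii) in an essential way, so that hypothesis is doing convexity work, not only non-degeneracy. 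Second, for the Markov property the heuristic ``$X_0$ determines the layer'' must be implemented via the pseudo-quantile $q(x)=\sup\{\alpha: x\notin I_\alpha\}$: the disjoint-boundary condition in (iii) makes $q$ measurable with $q_\#\mu_0=\lambda$, Lemma \ref{lemma:PseudoQuantileFct} then gives $\mathbb{E}_\pi[\,\cdot\,|X_0,\mathcal F_s]=\mathbb{E}_{\hat\pi^{q(X_0)}}[\,\cdot\,|X_0,\mathcal F_s]$, and one concludes because each right-derivative $\hat\pi^a$ is Markov (this is the NSI consequence in Theorem \ref{thm:GenExist}(iii), via Lemma \ref{lemma:NSItoMarkov}). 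Neither gap is a wrong turn, but both require the specific lemmas above to close.
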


\begin{remark}
	As the reader should have noticed, a completely rigorous statement of Corollary \ref{thm:intro1} requires to specify on which measurable space the martingale measure $\pi$ in Corollary \ref{thm:intro1} is defined. Here, as well as in all of the paper, we use $\mathbb{R}^{[0,\infty)}$ with the Borel $\sigma$-algebra induced by the product topology on $\mathbb{R}^{[0,\infty)}$. 
	
	Alternatively, if the map $t \mapsto \mu _t$ is right-continuous w.r.t.\ the weak topology (cf.\ Section \ref{sec:Notation}), by standard martingale regularization (see e.g.\ \cite[II \S 2]{ReYo99}) there exists a c\`adl\`ag modification of the canonical process on $\mathbb{R}^{[0,\infty)}$ under $\pi$ that one can use to define the solution $\pi$ directly on the Skorokhod space of c\`adl\`ag functions from $[0,\infty)$ to $\mathbb{R}$, see Section \ref{sec:ContTimeRC} for an implementation. 
\end{remark}

Consider  the peacock $(\mu _t)_{t \geq 0}$ consisting of uniform distributions $\mu _t = \mathrm{Unif}_{[-1-t,1+t]}$ on the intervals $[-1-t,1+t]$ and the interval family $I_\alpha=[- \alpha, \alpha]$, $\alpha\in [0,1]$. It is not difficult to check that this pair satisfies the conditions (i)-(iii) in Corollary \ref{thm:intro1} and for two choices of $\alpha$, Figure \ref{fig:ExplMC} illustrates the evolution $(\eta^\alpha_t)_{t \geq 0}$ of $\nu ^{\alpha} = (\mu _0)_{|I_{\alpha}}$ over time under the solution to the peacock problem constructed in Corollary \ref{thm:intro1}.

We would like to highlight a few features of Corollary \ref{thm:intro1} which all appear in the general case, Theorem \ref{thm:GenExist}, again:

\begin{itemize}
	\item The parametrization of $\mu _0$ induced by the intervals $I_{\alpha}$, namely $({\mu_0}_{|I_{\alpha}})_{\alpha \in [0,1]}$, is in a certain sense convex, cf.\ item (ii) in Corollary \ref{thm:intro1}.
	\item The minimality condition \eqref{eq:ThmIntro1} affects only the conditional one-dimensional marginal distributions under $\pi$. Thus, only the evolution $\eta_t ^{\alpha} = \alpha \mathrm{Law}_{\pi}(X_t|X_0 \in I_{\alpha})$ of $\nu ^{\alpha} = (\mu _0)_{|I_{\alpha}}$ is prescribed by the requirement to be minimal in convex order but (a priori) no joint distributions are fixed.
	\item In particular, \eqref{eq:ThmIntro1} says that $\mathsf{Law}_{\pi} (X_t | X_0 \in I_{\alpha}) $ is minimal in convex order among all solutions to the peacock problem, for every $\alpha$. Explicitly, for every $t$, every $\alpha$, every convex function $\varphi$, and any other solution $\rho$ to the peacock problem w.r.t.\ $(\mu_t)_{t \geq 0}$ we have
	$$ \int \varphi\ \de \mathsf{Law}_{\pi} (X_t | X_0 \in I_{\alpha})\leq  \int \varphi\ \de \mathsf{Law}_{\rho} (X_t | X_0 \in I_{\alpha}),$$
	so that, in this precise sense, $\mathsf{Law}_{\pi} (X_t | X_0 \in I_{\alpha})$ is as concentrated as possible. Hence, we can think of $\pi$ as a plan to transport ${\mu_0}_{|I_{\alpha}}$  through $(\mu_t)_{t \in [0,1]}$ as concentrated as possible subject to the martingale constraint.
	\item  Finally, it will become apparent during the proof of Theorem \ref{thm:GenExist} that the Markov property turns out to be a consequence of the fact that $\mathrm{Law}(X|X_0)$ is uniquely determined by its marginal distributions, see Lemma \ref{lemma:NSItoMarkov}, Proposition \ref{prop:NSItoUniq}.
\end{itemize}

\begin{center} \label{fig:ExplMC}
	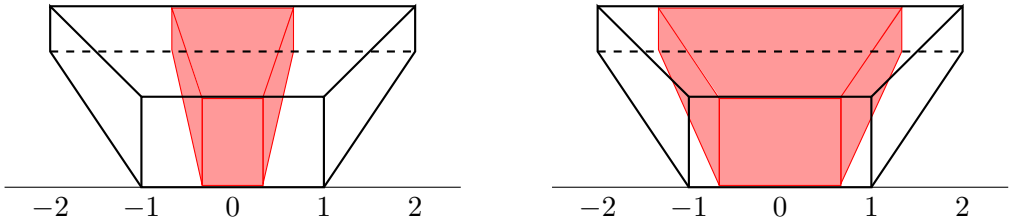
\begin{figure}
		\begin{tikzpicture}[scale=1.2]
		\draw[-] (-2.5,0) -- (2.5,0);
		\draw[-] (3.5,0) -- (8.5,0);
		
		\filldraw[red,fill opacity=0.4] (-1/3,0.02) rectangle (1/3,0.98);
		\filldraw[red,fill opacity=0.4] (-1/3,0.02) -- (-2/3,1.52) -- (-2/3,1.98) -- (-1/3,0.98) -- (-1/3,0.02);
		\filldraw[red,fill opacity=0.4]  (-2/3,1.98) -- (2/3,1.98) -- (1/3,1) -- (-1/3,1);
		\filldraw[red,fill opacity=0.4]  (2/3,2) -- (2/3,1.5) -- (1/3,0.04) -- (1/3,1);
		\draw[thick] (-1,0) rectangle (1,1);
		\draw[thick] (-1,0) -- (-2,1.5) -- (-2,2) --(-1,1);
		\draw[thick] (1,0) -- (2,1.5) -- (2,2) --(1,1);
		\draw[thick] (-2,2) -- (2,2);
		\draw[thick,dashed] (-2,1.5) -- (2,1.5);
		\node[below] at (-2,0) {$-2$};
		\node[below] at (-1,0) {$-1$};
		\node[below] at (0,0) {$0$};
		\node[below] at (1,0) {$1$};
		\node[below] at (2,0) {$2$};
		
		\filldraw[red,fill opacity=0.4] (-1/3*2+6,0.02) rectangle (1/3*2+6,0.98);
		\filldraw[red,fill opacity=0.4] (-1/3*2+6,0.02) -- 
		(-2/3*2+6,1.52) -- (-2/3*2+6,1.98) -- (-1/3*2+6,0.98) -- (-1/3*2+6,0.02);
		\filldraw[red,fill opacity=0.4]  (-2/3*2+6,1.98) -- (2/3*2+6,1.98) -- (1/3*2+6,1) -- (-1/3*2+6,1);
		\filldraw[red,fill opacity=0.4]  (2/3*2+6,2) -- (2/3*2+6,1.5) -- (1/3*2+6,0.04) -- (1/3*2+6,1);
		\draw[thick] (-1+6,0) rectangle (1+6,1);
		\draw[thick] (-1+6,0) -- (-2+6,1.5) -- (-2+6,2) --(-1+6,1);
		\draw[thick] (1+6,0) -- (2+6,1.5) -- (2+6,2) --(1+6,1);
		\draw[thick] (-2+6,2) -- (2+6,2);
		\draw[thick,dashed] (-2+6,1.5) -- (2+6,1.5);
		\node[below] at (-2+6,0) {$-2$};
		\node[below] at (-1+6,0) {$-1$};
		\node[below] at (0+6,0) {$0$};
		\node[below] at (1+6,0) {$1$};
		\node[below] at (2+6,0) {$2$};
		\end{tikzpicture}
		
		\caption{The shaded area shows the evolution $(\eta _t ^{\alpha})_{t \geq 0}$ of $\nu ^{\alpha}$ through $(\mu _t)_{t \geq 0}$ for $\alpha = \frac{1}{3}$ (left) and  $\alpha = \frac{2}{3}$ (right), respectively. Here $\mu _t = \mathrm{Unif}_{[-1-t,1+t]}$ and $I_{\alpha} = [-\alpha,+\alpha]$. The measures are represented by their density functions w.r.t.\ the Lebesgue measure. The representation is in 3D-perspective with times evolving transversally to the page.}   
	\end{figure}
\end{center}

\subsection{Main results}\label{ss:main_results}

Our main results, Theorem \ref{thm:GenExist} and \ref{thm:LMAtlOptimal}, enlarge the perspective presented in Corollary \ref{thm:intro1} but are of the same nature. They in fact permit further parametrizations of $\mu_0$ and stress the optimal feature of our shadow martingales.

To state Theorem \ref{thm:GenExist} we need to introduce the objects that will replace the specific parametrization $({\mu_0}_{|I_\alpha})_{\alpha\in [0,1]}$ and property \eqref{eq:ThmIntro1}. We start with the definition of shadows, the concept that will replace \eqref{eq:ThmIntro1}. 
To not overload the introduction we give a preliminary (but correct) definition in Proposition \ref{def:shadow} and refer to Proposition \ref{prop:GeneralSchadow}  where it is extended to and proved in a more general setting.

As before a martingale measure is a probability measure under which the canonical process is a martingale w.r.t.\ the filtration generated by the process.

\begin{proposition}\label{def:shadow}
	For all peacocks $(\mu_t)_{t \geq 0}$, $t \geq 0$ and $\nu \leq_+ \mu _0$ with $\alpha = \nu (\mathbb{R}) > 0$ the set
	\begin{equation*}
	\left\{ \alpha \mathsf{Law}_{\pi}(X_t) : \begin{array}{l}
	\pi \text{ is a martingale measure, }   \nu=\alpha\mathsf{Law}_{\pi}(X_0) \\ \text{and } \alpha\mathsf{Law}_{\pi}(X_s) \leqp \mu _s \text{ for all } s \in [0,t] \end{array}	 \right\}
	\end{equation*}
	attains a minimum w.r.t.\ the convex order $\leqc$. This minimum is called the shadow of $\nu$ in $(\mu _s)_{s \in [0,t]}$ and is denoted by $\shadow{\mu _{[0,t]}}{\nu}$.
\end{proposition}

\label{def:convex}
We say that two finite measures $\mu$ and $\mu'$ on $\mathbb{R}$ with finite first moments are in convex-stochastic order, denoted by $\mu \leqcs \mu'$, if $\int \varphi \de \mu \leq \int \varphi \de \mu'$ for all convex and increasing functions $\varphi: \mathbb{R} \rightarrow \R$. A parametrization $(\nu^{\alpha})_{\alpha \in [0,1]}$ of $\mu_0$ is called $\leq  _{c,s}$-convex
if for all $\alpha_1 < \alpha_2  < \alpha_3 $ in $[0,1]$ it holds
\begin{equation} \label{eq:csConvexParam}
\frac{\nu ^{\alpha_2} - \nu ^{\alpha_1}}{\alpha_2 - \alpha_1} \leqcs  \frac{\nu ^{\alpha_3} - \nu ^{\alpha_2}}{\alpha_3 - \alpha_2}.
\end{equation}
Since property \eqref{eq:csConvexParam} can be interpreted as increasing slopes of secant lines for the functions $\alpha\mapsto \int \varphi \de \mu^\alpha$, $\varphi$ increasing and convex, this property is called $\leqcs$-convexity.
The following three parametrizations, that were introduced in \cite{BeJu21} for one step processes, are examples of $\leqcs$-convex parametrizations (cf.\ Lemma \ref{lem:para_convex} for the proof):
\begin{itemize}
	\item the left-curtain parametrization  \label{p:defLC}
	\begin{equation*}
	\nu _{\lc} ^{\alpha} = \mu _0|_{(- \infty, F_{\mu _0}^{-1}(\alpha))}  + (\alpha - \mu _0 (- \infty, F_{\mu _0}^{-1}(\alpha))) \delta _{F_{\mu _0} ^{-1}(\alpha)},
	\end{equation*}
	where $F^{-1}_{\mu_0}$ is the quantile function of $\mu_0$, i.e the generalized inverse of the cumulative distribution function $F_{\mu_0}$,
	
	\item the sunset parametrization $\nu _{\text{sun}} ^{\alpha} = \alpha  \mu _0$ for every $\alpha\in [0,1]$ and
	
	\item  the middle-curtain parametrization
	\begin{equation*}
	\nu _{\text{mc}} ^{\alpha} = \mu _{0| (q_{\alpha}, q_{\alpha}')}  + c_{\alpha} \delta _{q_{\alpha}} + c'_{\alpha} \delta _{q_{\alpha}'},
	\end{equation*}
	where $q_{\alpha} \leq q_{\alpha}'$ and $c_{\alpha}, c_{\alpha}' \in [0,1]$ are chosen such that $\nu_{\text{mc}}^{\alpha}(\mathbb{R}) = \alpha$ and $\int y \de 
	\nu_{\text{mc}}^{\alpha}(y) = \int y \de \mu _0(y)$.
\end{itemize}
We remark that if $\mu_0$ has no atoms the left-monotone and the middle-curtain parametrizations are special cases of the parametrization used in Corollary \ref{thm:intro1}. In particular, the parametrization in Figure \ref{fig:ExplMC} is the middle-curtain parametrization of the uniform measure on $[-1,1]$.

The final object that we need to introduce are martingale parametrizations. Their purpose is to allow conditioning on the initial behaviour of a martingale which is not of the form $\{X_0\in I_\alpha\}$ for some Borel set $I_{\alpha} \subset \mathbb{R}$. Again, as Definition \ref{def:shadow}, Definition \ref{def:mgpara} is a simplified version of the general Definition \ref{def:GenParametrization}. Notice that the notion of submeasure from page \pageref{eq:Submeasures}  is also well-defined for measurable spaces other than $\mathbb{R}$. Moreover, we denote by $\pi ^{\alpha}(X_t \sowh)$ the push-forward measure of $\pi ^{\alpha}$ via $X_t$ (it is a measure of mass $\alpha$). 

\begin{definition} \label{def:mgpara}
	Let  $\pi$ be the law of a martingale indexed by $[0,\infty)$. A family $({\pi}^{\alpha})_{\alpha \in [0,1]}$ of finite measures is called a martingale parametrization of  $\pi$  if
	\begin{enumerate}
		\item[(i)] for every $\alpha\in [0,1]$ one has $\pi^{\alpha}(\mathbb{R}^{[0,\infty)}) = \alpha$,
		\item[(ii)] we have $\pi^{\alpha} \leqp \pi ^{\alpha'}$ for all $\alpha \leq \alpha'$,
		\item[(iii)] for every $\alpha\in (0,1]$ the measure $\frac{\pi^{\alpha}}{\alpha}$ is a martingale measure, 
		\item[(iv)]  we have $\pi ^1 = \pi$.
	\end{enumerate}	
	The family $(\pi ^{\alpha})_{\alpha \in [0,1]}$ is called a martingale parametrization of $\pi$ w.r.t.\ a parametrization $(\nu^{\alpha})_{\alpha \in [0,1]}$ of $\mu _0$ if it additionally satisfies
	\begin{enumerate}
		\item [(v)] $\pi ^{\alpha}(X_0 \sowh) = \nu^{\alpha}$ for all $\alpha \in [0,1]$.
	\end{enumerate}

\end{definition}

As we discuss in Subsection \ref{sec:ParamProbMeas}, a martingale parametrization $(\pi ^{\alpha})_{\alpha \in [0,1]}$ w.r.t.\ $(\nu^{\alpha})_{\alpha \in [0,1]}$ is a convenient way of encoding that for each $\alpha \in [0,1]$ the martingale $\pi$ transports the submeasure $\nu^\alpha$ of $\mu _0$ according to $\pi^\alpha$, i.e.\ we may interpret $\pi ^{\alpha}$ formally as ``$\alpha \mathrm{Law}_{\pi}(X | X_0 \in \nu ^\alpha)$''. In particular, any martingale parametrization $(\pi ^{\alpha})_{\alpha \in [0,1]}$ w.r.t.\ $(\nu^{\alpha})_{\alpha \in [0,1]}$ induces a specific evolution of $\nu ^{\alpha}$ for every $\alpha\in [0,1]$, namely $\eta ^{\alpha}_t = \pi ^{\alpha}(X_t \sowh )$.
We would like to stress that  there might be several martingale parametrizations of $\pi$ w.r.t.\ $(\nu^\alpha)_{\alpha\in[0,1]}$ (cf.\ Example \ref{ex:two_para}).
We can now state our first main result:

\begin{theorem} \label{thm:GenExist}
	Let $(\mu _t)_{t \geq 0}$ be a peacock and $(\nu ^{\alpha})_{\alpha \in [0,1]}$  a $\leq_{c,s}$-convex parametrization of $\mu _0$. Then, there exists a unique pair $(\pi,(\pi ^{\alpha})_{\alpha \in [0,1]})$ where the martingale measure $\pi$ solves the peacock problem w.r.t.\ $(\mu _t)_{t \geq 0}$, $(\pi ^{\alpha})_{\alpha \in [0,1]}$ is a martingale parametrization of $\pi$ w.r.t.\ $(\nu ^{\alpha})_{\alpha \in [0,1]}$ and
	\begin{equation} \label{eq:ShadowCurveintro}
	\pi^{\alpha} (X_t \sowh) = \shadow {\mu _{[0,t]}} {\nu ^{\alpha}}
	\end{equation}
	for all $\alpha \in [0,1]$ and $t \geq 0$. We call $\pi$ the shadow martingale (measure) w.r.t.\ $(\mu _t)_{t \geq 0}$ and $(\nu^{\alpha})_{\alpha \in [0,1]}$. 
	
	The shadow martingale $\pi$ can be represented as $\pi = \mathrm{Law}\left(M^U\right)$ where
	$U$ is a $[0,1]$-valued random variable and $(M^a)_{a \in [0,1]}$ is a family of $\mathbb{R}^{[0, \infty)}$-valued random variables 
	such that
	\begin{enumerate}
		\item [(i)] $\{U\} \cup \{M^a : a \in [0,1]\}$ is a collection of independent random variables,
		\item [(ii)] the random variable $U$ is uniformly distributed on $[0,1]$ with $\mathrm{Law}(M^U _0| U \leq \alpha) =  \frac{1}{\alpha}\nu ^{\alpha}$ for all $\alpha \in (0,1]$ and
		\item [(iii)] for each $a \in [0,1]$, $(M^a_t)_{t \geq 0}$ is a Markov martingale which is uniquely determined by its (one-dimensional) marginal distributions, i.e.\ any martingale $(Y_t)_{t \geq 0}$ with $\mathrm{Law}(Y_t) = \mathrm{Law}(M_t ^a)$ for all $t \geq 0$ satisfies $\mathrm{Law}(Y) = \mathrm{Law}(M^a)$.
	\end{enumerate}
\end{theorem}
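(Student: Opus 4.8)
The plan is to build the shadow martingale by first defining, for each fixed $\alpha$, a candidate evolution $t \mapsto \eta^\alpha_t := \shadow{\mu_{[0,t]}}{\nu^\alpha}$ and showing this is a well-defined peacock of mass $\alpha$ with $\eta^\alpha_0 = \nu^\alpha$; the existence of the minimizer follows from the (general version of the) shadow Definition \ref{def:shadow}, and one needs the elementary fact that shadows are monotone in $t$, i.e.\ $\eta^\alpha_s \leqc \eta^\alpha_t$ for $s \leq t$, together with $\eta^\alpha_t \leqp \mu_t$. The $\leqcs$-convexity of the parametrization enters precisely to guarantee consistency across $\alpha$, namely $\eta^\alpha_t \leqp \eta^\beta_t$ for $\alpha \leq \beta$; this should be proved by an associativity/additivity property of the shadow operation (if $\nu = \nu_1 + \nu_2$ with $\nu_1, \nu_2 \leqp \mu_0$, then $\shadow{\mu_{[0,t]}}{\nu_1 + \nu_2} = \shadow{\mu_{[0,t]}}{\nu_1} + \shadow{(\mu - \shadow{\mu}{\nu_1})_{[0,t]}}{\nu_2}$-type identities), which is the continuous-time analogue of the one-step results from \cite{BeJu16b}. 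I would isolate these structural properties of the shadow as separate lemmas before assembling the theorem.

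Next I would address the \emph{coupling} problem: from the family of one-dimensional evolutions $(\eta^\alpha_t)_{t,\alpha}$ I must produce an actual martingale law $\pi$ on $\R^{[0,\infty)}$ and a martingale parametrization $(\pi^\alpha)$ realizing \eqref{eq:ShadowCurveintro}. The idea is to differentiate in $\alpha$: the consistency $\eta^\alpha_t \leqp \eta^\beta_t$ lets one write, at least formally, $\eta^\alpha_t = \int_0^\alpha \mathrm{d}\lambda_a(t)$ where, for Lebesgue-a.e.\ $a$, the "infinitesimal fibre" $\mathrm{d}\lambda_a(t)/\mathrm{d}a =: \mathrm{Law}(M^a_t)$ is itself a peacock of mass one. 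The key point, and this is where I expect the real work to lie, is to show that each such fibre-peacock $(\mathrm{Law}(M^a_t))_{t \geq 0}$ is of the special "rigid" type whose martingale is \emph{uniquely determined by its marginals} — this is item (iii) of the conclusion and it is what forces the Markov property and the uniqueness. Heuristically, taking the derivative of a convex-order-minimal curve should leave no freedom: the infinitesimal shadow is an extreme point, and extreme points in this context are exactly the binomial/Lipschitz-type martingales à la Lowther/Hobson that Kellerer's theorem attaches canonically to a single marginal flow. I would make this rigorous via the disintegration of $\pi$ along $U := $ (the $\alpha$-parameter, made into a uniform random variable by the parametrization) and the characterization of uniqueness-by-marginals developed in Lemma \ref{lemma:NSItoMarkov} and Proposition \ref{prop:NSItoUniq} that the excerpt forward-references.

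Once the fibres $M^a$ are constructed and shown to be independent of $U$ and of each other (independence of the $M^a$ across $a$ is essentially automatic from the product construction, since each is a deterministic functional of its own marginal flow), setting $\pi = \mathrm{Law}(M^U)$ and $\pi^\alpha = \mathrm{Law}(M^U \mathbf{1}_{\{U \leq \alpha\}})$ gives a martingale parametrization whose time-$t$ push-forward is $\int_0^\alpha \mathrm{Law}(M^a_t)\,\mathrm{d}a = \eta^\alpha_t = \shadow{\mu_{[0,t]}}{\nu^\alpha}$, verifying \eqref{eq:ShadowCurveintro}; that $\pi$ solves the peacock problem is the $\alpha = 1$ case. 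For \emph{uniqueness} of the pair $(\pi,(\pi^\alpha))$: any competitor must satisfy $\pi^\alpha(X_t \in \cdot) \leqc \shadow{\mu_{[0,t]}}{\nu^\alpha}$ by definition of the shadow, but \eqref{eq:ShadowCurveintro} demands equality, so all its one-dimensional fibre-marginals are pinned down; then uniqueness-by-marginals of each fibre (item (iii)) upgrades this to equality of the full laws $\pi^\alpha$, hence of $\pi = \pi^1$. The main obstacle, to repeat, is the passage from the family of convex-order-minimal marginal curves to the honest stochastic process and the proof that the differentiated fibres are the rigid Kellerer-type martingales — essentially a continuous-time, parametrized refinement of Kellerer's theorem, and I would expect it to consume the bulk of the technical sections the introduction points to.
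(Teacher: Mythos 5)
Your skeleton matches the paper's: differentiate the shadow curves in $\alpha$, build the fibre martingales $\hat\pi^a$ via (a measurable selection from) Kellerer's theorem, integrate to get $(\pi^\alpha)$, and reduce uniqueness to the claim that each fibre-peacock admits only one associated martingale law. However, there are two genuine gaps. First, you misplace the role of the $\leqcs$-convexity hypothesis: the consistency $\eta^\alpha_t\leqp\eta^\beta_t$ follows from the additivity of the obstructed shadow (Proposition \ref{prop:ShadowAssoc}) for \emph{any} parametrization, and indeed existence of a shadow martingale needs no convexity at all; the $\leqcs$-convexity is needed exclusively for uniqueness.

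Second, and more seriously, your central heuristic --- that ``taking the derivative of a convex-order-minimal curve should leave no freedom,'' so the infinitesimal fibres are automatically extreme/rigid --- is false pointwise: Example \ref{expl:RDNotNSI} exhibits a shadow curve whose right-derivative at a specific $a$ is \emph{not} of the rigid (NSI) type, and Kellerer's theorem attaches no canonical martingale to a marginal flow. The paper's actual mechanism for rescuing the claim Lebesgue-almost everywhere is entirely absent from your proposal: one sets up an auxiliary optimization problem over measurable families of peacocks with prescribed initial fibres and prescribed superposition (Subsection \ref{sec:MonotonicityPrinciple}), proves a monotonicity principle for it, shows via the minimality of the shadow that the family of right-derivatives is a simultaneous optimizer for the costs $c_t$, and then uses the $\leqcs$-convexity together with the monotonicity principle and Corollary \ref{cor:MonotonicityPCOC} to conclude that the fibres are NSI for a.e.\ $a$ (Proposition \ref{prop:OptToNSI}); only then do Lemma \ref{lemma:NSItoMarkov} and Proposition \ref{prop:NSItoUniq} deliver the Markov property and uniqueness-by-marginals. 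Without this step your uniqueness argument is circular, since item (iii) of the conclusion is exactly what must be proved. (Minor further points: your convex-order inequality for competitors is reversed --- the shadow is the minimum, so $\shadow{\mu_{[0,t]}}{\nu^\alpha}\leqc\rho^\alpha(X_t\sowh)$ --- and the passage from a countable index set to $[0,\infty)$ via right-continuity and martingale regularization, Sections \ref{sec:ContTimeRC}--\ref{sec:AbstractIndexSet}, is not addressed.)
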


Note that the constraint on  $\pi$ given by \eqref{eq:ShadowCurveintro} only involves the evolution $(\pi ^{\alpha}(X_t \sowh))_{t \geq 0}$ of the submeasures $\nu ^{\alpha}$ and a priori no joint distributions.
Hence,  the theorem states that taking a $\leqcs$-convex parametrization of the initial marginal $\mu_0$ and fixing its evolution to be as concentrated as possible w.r.t.\ the convex order uniquely characterizes a martingale.

Specializing to the left-curtain parametrization $(\nu_{\lc}^\alpha)_{\alpha\in[0,1]}$ and additionally assuming that both $\mu_0$ has no atoms and $t \mapsto \mu _t$ is weakly right-continuous, we can give an alternative characterization of the associated shadow martingale which identifies it as a unique solution to a variant of an optimal transport problem, namely a peacock version of the martingale optimal transport problem:

\begin{theorem} \label{thm:LMAtlOptimal}
	Let $(\mu _t)_{t \geq 0}$ be a peacock  and $c$ a sufficiently integrable and regular cost function with $\partial_{xyy} c<0$ (cf. Theorem \ref{thm:GeneralOptimality}).
	The shadow martingale $\pi _{\lc}$ w.r.t.\ $(\mu _t)_{t \geq 0}$ and $(\nu^{\alpha}_{\lc})_{\alpha \in [0,1]}$ satisfies 
	\begin{equation}\label{eq:peacockSPM}
	\mathbb{E}_{\pi _{\lc}}[c(X_0,X_t)] = \inf \left\{ \mathbb{E}_{\rho}[c(X_0,X_t)] : \rho \text{ solves the PCOC problem w.r.t.} (\mu _t)_{t \geq 0} \right\}
	\end{equation}
	simultaneously for all $t \geq 0$.
	
	If  $\mu _0 (\{x\}) = 0$ for all $x \in \mathbb{R}$, $\pi_{\lc}$ is the only solution to the peacock problem w.r.t.\ $(\mu _t)_{t \geq 0}$ that satisfies \eqref{eq:peacockSPM} simultaneously for all $t \geq 0$.
\end{theorem}

The necessity of the assumption of no atoms is best seen by looking at the case $\mu_0=\delta_0$. In that case, since the marginals at time $t$ are given, each solution to the peacock problem w.r.t.\ $(\mu_t)_{t\in [0,1]}$ is a solution to the optimization problem \eqref{eq:peacockSPM}.

When considering only finitely many marginals $\{\mu_{t_0},\mu_{t_1},\ldots,\mu_{t_n}\}$ increasing in convex order, and a corresponding piecewise constant peacock, Theorem \ref{thm:LMAtlOptimal} reduces to a recent theorem by Nutz, Stebegg and Tan \cite[Theorem 7.16]{NuStTa17}, see also Corollary \ref{cor:thm83} where we give a short proof of \cite[Theorem 7.16]{NuStTa17} with the tools developed in this paper.

\begin{remark}  In this remark we discuss the necessity of the assumption of $\leqcs$-convexity for the existence and uniqueness part in Theorem \ref{thm:GenExist}.

The question of existence of a martingale measure satisfying \eqref{eq:ShadowCurveintro} for a \emph{general} parametrization is proven in Step 1 of Theorem \eqref{thm:MostGenExist} (for the easier case of a countable index set, see Proposition \ref{prop:CountableExistSM}). So for existence this assumption is not needed.

	The assumption of a $\leqcs$-convex parameterization is only needed for our proof of the \emph{uniqueness} statement of Theorem \ref{thm:GenExist}.  In Remark \ref{rem:ExplainAssumptionDetail} we explain how dropping this assumption would heavily affect our proof. However, it is worth highlighting that $\leqcs$ is not the only possible choice of order relation. In our proof, we only rely on the fact that the parametrization is $\preceq$-convex where $\preceq$ stands for any partial order relation that satisfies the following properties:
	\begin{itemize}
		\item  \emph{Compatibility} with the shadow: $\nu \preceq \nu'$ implies $\shadow{\mu_{[0,t]}}{\nu} \preceq \shadow{\mu_{[0,t]}}{\nu'}$. 
	
		\item \emph{Weaker} than $\leqc$: $\nu\leqc \nu'$ implies $\nu \preceq \nu'$.
			
		\item \emph{Representation:} There exists a convex function $\Phi$ such that $\nu \preceq \nu'$ implies $\int \Phi\ d\nu\leq \int \Phi\ d\nu'$ with equality if and only if $\nu=\nu'$. 
	\end{itemize}
	Our line of reasoning would work with any such partial order relation. These properties are satisfied by $\leqc$ and $\leqcs$. While $\leqc$-convex parameterizations already include the sunset and middle-curtain parameterization, it is required to consider $\leqcs$-parameterizations to include the (first introduced in the literature) left-curtain parametrization and therefore obtain the conclusions of Theorem \ref{thm:GenExist} (and first Theorem \ref{prop:GenExistCount}) for the corresponding shadow martingale. We can find a third partial order relation satisfying the properties above by a simple change of orientation. The order defined symmetrically to $\leqcs$ by replacing increasing convex by decreasing convex functions in \eqref{eq:OrderRelation}. This ordering, $\leqc$, and $\leqcs$ are the only three examples of ordering we know to satisfy the specifications listed above. For concreteness, we stick in this article with $\leqcs$.

	We would like to stress that the assumption of a $\leqcs$- (or $\preceq$-)convex parameterization is tightly linked with the infinite index set $T$. As we discuss in Section \ref{sec:contshadow}, for the case of a finite index set $T$ our methods can cope with any (not only $\leqcs$-convex) parametrization, see in particular Corollary \ref{cor:thm83}. Note that for infinite index sets $T$ and general parametrizations we did not furnish any counterexample in the conclusions of the main theorems. In fact we could not definitely exclude that Theorem \ref{thm:GenExist} could be improved in this direction --with different or better arguments as ours in the present paper.
\end{remark}

\subsection{Choquet representation and the PRP property}\label{sec:choquet}

There is another abstract point of view on our main result Theorem \ref{thm:GenExist}. Any martingale measure has a Choquet representation, i.e.\ it can be written as a superposition of martingale measures that are extremal elements of the convex set of all martingale measures. Such a representation is interesting because the extremality in the set of all martingale measures naturally relates to the predictable representation property (PRP). In stochastic analysis a martingale $M$ is said to satisfy the PRP if and only if any martingale $X$ adapted to the natural filtration of $M$ can be represented as a stochastic integral with respect to $M$. According to a theorem by Jacod and Yor a  martingale satisfies the PRP if and only if it's law is extremal in the convex set of all martingale measures  (cf.\ \cite{JaYo77,Yo78,Ja79}). Hence, any martingale measure is a superposition of martingales with the PRP. To the best of our knowledge, no concrete recipe for the construction of such a representation is known.
However, for shadow martingales there exists a natural Choquet representation. In fact, this natural Choquet representation is the driving force behind the proof of Theorem \ref{thm:GenExist}, especially the uniqueness part. 

Given a peacock $\mu = (\mu _t)_{t \geq 0}$, our construction of the uniquely determined shadow martingale starts with a representation of $\mu$ as a superposition of peacocks, i.e.\
\begin{equation}\label{eq:pcocrep}
\mu=\int_{[0,1]} \eta^a\ da.
\end{equation}
This representation is induced by the shadow and the choice of a proper parametrization $(\nu ^{\alpha})_{\alpha \in [0,1]}$ of $\mu_0$  (cf.\ Lemma \ref{lemma:RDofShadow}).
The peacocks $\eta^a$ in \eqref{eq:pcocrep} are in general not extremal in the convex set of all peacocks (in the sense that $2\eta^a=\eta'+\eta''$ implies $\eta'=\eta''=\eta^a$) so that \eqref{eq:pcocrep} cannot be called a Choquet representation of $\mu$. However, they satisfy a very similar property that we call \textit{non self-improvable} (NSI) (cf.\ \S \ref{sec:NSI}):
\begin{equation} \label{eq:ExtremalNSI}
2\eta^a=\eta'+\eta''\text{ and }\eta'_0=\eta''_0=\eta^a_0\quad \text{implies}\quad \eta'=\eta''=\eta^a.
\end{equation}
The main consequence of the NSI property is that for every peacock that satisfies this property there exists only one martingale measure that is associated with this peacock. Thus, the unique martingale measures $\pi ^a$ associated with $\eta ^a$ are extremal in the set of martingale measures with fixed initial distribution, i.e.\
\begin{align*}
\left\{ \begin{aligned}
&2\pi^a=\rho'+\rho\hspace{2cm}\text{and}\\
&\rho'(X_0 \sowh)= \rho''(X_0 \sowh) = \pi^a(X_0 \sowh)
\end{aligned}
\right.\quad\text{implies}\quad \rho'=\rho''=\pi^a \ .
\end{align*}
Indeed, if $\rho'$ and $\rho''$ are two martingale measures with $2\pi^a = \rho' + \rho ''$ that have the same initial distribution as $\pi ^a$, the marginal distributions of these three objects satisfy \eqref{eq:ExtremalNSI} and thus the marginal distributions of $\rho'$ and $\rho''$ are $\eta ^a$, i.e.\ they coincide with the marginal distributions of $\pi ^a$. But then the uniqueness of the associated martingale measure given by the NSI property yields ${\pi^a} = \rho = \rho'$. The superposition of these special martingale measures
\begin{equation} \label{eq:martrep}
\pi = \int _0 ^1 \pi ^a \de a
\end{equation}
is exactly the shadow martingale w.r.t.\ $\mu$ and $(\nu ^{\alpha})_{\alpha \in [0,1]}$. More precisely, $\pi ^a$ is the distribution of $M^a$ in the representation $\pi=\mathrm{Law}(M^U)$ of the shadow martingale that is described in the second part of Theorem \ref{thm:GenExist}.	

The representation of the shadow martingale in \eqref{eq:martrep} is in general not yet a Choquet representation because the martingale measures $\pi^a$ are only extremal in the set of martingale measures with fixed initial distribution. Nevertheless, we can directly obtain a Choquet representation from \eqref{eq:martrep} and then by Jacod and Yor's theorem we have a rather explicit representation of the shadow martingale as a superposition of martingales that satisfy the PRP. In the case of the left-curtain parametrization, this is particularly easy. The construction of \eqref{eq:pcocrep} is such that for each $\alpha\in[0,1]$ we have $\int_0^\alpha \eta_0^a da=\nu^\alpha_{\lc}$ where $(\nu ^{\alpha}_{\lc})_{\alpha \in [0,1]}$ is the left-curtain parametrization. Looking again at the definition of  $(\nu ^{\alpha}_{\lc})_{\alpha \in [0,1]}$ in Subsection \ref{p:defLC}, we see that $\eta^a_0$ is a Dirac measure for any $a\in [0,1]$. Hence, the peacocks $\eta^a$ and the associated martingale measures $\pi ^a$ are automatically extremal in the set of {\it all} peacocks and {\it all} martingale measures. Therefore, in the case of the left-curtain parametrization, \eqref{eq:martrep} is in fact already a Choquet representation of the shadow martingale $\pi$. More generally, given any $\leqcs$-convex parametrization $(\nu ^{\alpha})_{\alpha \in [0,1]}$ of $\mu_0$ we obtain a Choquet representation of the shadow martingale by  further disintegrating \eqref{eq:martrep} w.r.t.\ the initial marginal $\mu_0$, i.e.\ by conditioning on the starting value of $\pi$.

We want to emphasize that this Choquet representation of the shadow martingale is uniquely determined by the representation of the  peacock $\mu$ given in \eqref{eq:pcocrep}. This representation of $\mu$ is constructed using only the shadow and a $\leqcs$-convex parametrization of the initial distribution. To show that the peacocks $(\eta ^a)_{a \in [0,1]}$ in \eqref{eq:pcocrep} satisfy the NSI property (which is very similar to extremality, cf.\ \eqref{eq:ExtremalNSI}) is in fact a crucial part of our proof. Moreover, this abstract point of view of our result makes it apparent that the construction of the shadow martingale is purely based on its marginals as an object in the space of peacocks and thus these are intrinsic solutions to the peacock problem.

\subsection{Outline} \label{sec:MainIdeaProof}

There are several contributions to martingale optimal transport theory and the peacock problem that are related to our results and that we discuss in Section \ref{sec:RelatedLit}. In Section \ref{sec:Prelimin} we recall order relations for finite measures and important properties of the peacock problem. 

In Section \ref{sec:ImportantObjects} we introduce (martingale) parametrizations, (general obstructed) shadows and non self-improvable peacocks. These concepts are not only essential ingredients of our proof of Theorem \ref{thm:GenExist} but are interesting in themselves. 

In Section \ref{sec:GenExistCountable}, we prove a variant of Theorem \ref{thm:GenExist} in the case that the peacock is indexed by a countable set $S \subset [0,\infty)$ that contains $0$ and satisfies $\sup S \in S$. In this setup it is possible to avoid some of the technicalities needed to be able to handle the general case and concentrate on the key steps and ideas of the proof. Let us briefly sketch them in the following paragraphs:

We need to construct a family of measures $(\pi ^{\alpha})_{\alpha \in [0,1]}$ on $\mathbb{R}^S$ that satisfies both Definition \ref{def:mgpara} (i)-(iii) and property \eqref{eq:ShadowCurveintro}\footnote{Of course with $\R^{[0,\infty)}$ replaced by $\R^S$ and $[0,t]$ replaced by $S\cap [0,t]$.} and need to show that this family is uniquely determined by these two properties (note that \eqref{eq:ShadowCurveintro} already implies that $\pi := \pi ^1$ is a solution of the peacock problem w.r.t.\ $(\mu_t)_{t \in S}$ and  that condition (v) of Definition \ref{def:mgpara} is satisfied). To this end, we pursue the following approach:

\begin{itemize}
	\item \textsf{STEP 1:} Any family of measures $(\pi ^{\alpha})_{\alpha \in [0,1]}$ on $\mathbb{R}^S$ satisfies Definition \ref{def:mgpara} (i)-(iii) if and only if there exists a family of martingale measures $(\hat{\pi}^a)_{a \in [0,1]}$ such that 
	$ \pi ^{\alpha} = \int _0 ^{\alpha} \hat{\pi}^a \de a$
	for all $\alpha \in [0,1]$.
	In particular, the family $(\pi ^{\alpha})_{\alpha \in [0,1]}$ is uniquely determined by any such family $(\hat{\pi}^a)_{a \in [0,1]}$ and for Lebesgue-a.e.\ $a \in [0,1]$ it holds 
	\begin{equation}
	\hat{\pi}^a = \lim_{h \downarrow 0} \frac{\pi ^{a+h} - \pi ^a}{h}
	\end{equation}
	under an appropriate topology (cf.\ Subsection \ref{sec:Notation}).
	Thus, $(\pi ^{\alpha})_{\alpha \in [0,1]}$ satisfies property \eqref{eq:ShadowCurveintro} if and only if  the following two properties hold: For Lebesgue-a.e.\ $a\in [0,1]$ and all $t\in S$ the limit 
	\begin{equation}\label{eq:tu01}
	\hat{\eta}^a_t = \lim_{h \downarrow 0} \frac{\shadow{\mu _{[0,t]\cap S}}{\nu ^{a+h}} - \shadow{\mu _{[0,t]\cap S}}{\nu ^{a}} }{h}
	\end{equation}
	exists and the distribution of $X_t$ under $\hat{\pi}^a$ is $\hat{\eta}^a_t$. Step 1 is accomplished in Subsection \ref{sec:RDofParam}.
	
	\item \textsf{STEP 2:} Step 1 implies that there exists a family $(\pi ^{\alpha})_{\alpha \in [0,1]}$ with the desired properties, if there exists a family $(\hat{\pi}^a)_{a \in [0,1]}$ of martingale measures on $\mathbb{R}^S$ such that the canonical process under $\hat{\pi}^a$ has marginal distributions $(\hat{\eta} ^a _t)_{t \in S}$ for Lebesgue a.e.\ $a \in [0,1]$. By Kellerer's Theorem, for fixed $a \in [0,1]$ such a martingale measure $\hat{\pi}^a$ exists if $(\hat{\eta} ^a_t)_{t \in S}$ is a peacock. Using the calculus rules that we develop for general obstructed shadows, we show  in Subsection \ref{sec:CountableExistence} that for Lebesgue-a.e.\ $a \in [0,1]$ the limit in \eqref{eq:tu01} exists and that $(\hat\eta ^a_t)_{t \in S}$ is  a peacock.
	
	\item \textsf{STEP 3:} Another implication of Step 1 is that the family $(\pi ^{\alpha})_{\alpha \in [0,1]}$ constructed in Step 2 is uniquely determined if there exists only one martingale measure with marginal distributions $(\hat{\eta} _t ^a)_{t \in S}$ for Lebesgue a.e.\ $a \in [0,1]$.
	Unfortunately, just from the defining equation \eqref{eq:tu01}, the peacock $(\hat\eta^a_t)_{t\in S}$ does not need to satisfy this very restrictive property for all $a \in [0,1]$ where $(\hat\eta ^a _t)_{t \in S}$ is defined (cf.\ Example \ref{expl:RDNotNSI}).
	
	That being said, it is sufficient for us that $(\hat\eta^a_t)_{t \in S}$ is NSI for Lebesgue-a.e.\ $a\in [0,1]$ since the NSI property implies the uniqueness of a martingale associated with  $(\hat\eta^a_t)_{t\in S}$ (cf.\ Section \ref{sec:choquet}). To show this, we introduce an auxiliary optimization problem and establish a corresponding monotonicity principle (cf.\ Subsection \ref{sec:MonotonicityPrinciple}). The minimality of the shadow in conjunction with the $\leqcs$-convexity of $(\nu ^{\alpha})_{\alpha \in [0,1]}$ implies that $(\hat{\eta}^a )_{a \in [0,1]}$ is a minimizer of this optimization problem which in turn implies that $(\hat\eta^a _t)_{t \in S}$ is NSI  for Lebesgue-a.e\ $a$ as desired (see Subsection \ref{sec:coreArgument}).
\end{itemize}

If $S$ was finite, we could use the concept of Kellerer dilations as in \cite{BeJu21} to show that for all $a \in [0,1]$ where $(\hat{\eta}^a_t)_{t \in S}$ is well-defined there is only one martingale measure with these one-dimensional marginal distributions (cf.\ Remark \ref{rem:NSIandKellerer}). However, as shown in Example \ref{expl:RDNotNSI}, this is not true if $S$ is infinite. This major difference between the case of a finite index set and a countable infinite one, is the reason why we have to develop new tools and techniques and cannot extend methods used in \cite{BeJu21} and \cite{NuStTa17}.

In Section \ref{sec:ContTimeRC} we establish Theorem \ref{thm:GenExist} in the setting of a continuous time index set $T\subset [0,\infty)$ under the additional assumption that the given peacock is right-continuous. Martingale regularization techniques imply that martingale measures are uniquely determined by their behaviour on a countable index set. We will show in Subsection \ref{sec:ShadObstbyPCOC} that also the obstructed shadow and the NSI property are determined by the behaviour of the peacock $(\mu_t)_{t\in T}$ restricted to a well chosen countable index set $S\subset T$. This allows us to lift the results from Section \ref{sec:GenExistCountable} to the setting of $T\subset [0,\infty)$ with right-continuous peacock.

In Section \ref{sec:AbstractIndexSet} we show how we can pass to an abstract totally ordered index set without any assumptions on the peacock. In particular, this completes the proof of Theorem \ref{thm:GenExist} (recall that it was stated for the totally ordered space $T=[0,\infty)$). Moreover, we explain how Corollary \ref{thm:intro1} follows from Theorem \ref{thm:GenExist}.

The proof of Theorem \ref{thm:LMAtlOptimal} is contained in Section \ref{sec:LeftMonotone}.

Finally, in Section \ref{sec:Examples} we discuss counterexamples regarding shadows and NSI peacocks and provide explicit examples of shadow martingales.

\section{Related literature} \label{sec:RelatedLit}

The optimal transport theory dates back to Monge (1781) and Kantorovich (1939) and has a huge variety of different facets and applications (see e.g. \cite{Vi09}).  Martingale optimal transport is a relatively new subdomain, that has for instance applications in robust mathematical finance (see e.g.\ \cite{AcBePeSc16} or the book \cite{He17}). Given two probability measures $\mu_0$ and $\mu _1$ with $\mu _0 \leqc \mu _1$ and a cost function $c$, the goal is to minimize (or maximize)
\begin{equation*}
\pi \mapsto \mathbb{E}_{\pi}[c(X_0,X_1)]
\end{equation*}
over the set of couplings of $\mu _0$ and $\mu _1$ that additionally satisfy the martingale property. Among the solutions of the problem (for different cost functions) are the couplings presented by Hobson and Neuberger \cite{HoNe12}, Hobson and Klimmek \cite{HoKl15} and for other related problems the couplings recently introduced in \cite{JoMa20} by Jourdain and Margheriti and in \cite{BeJu21} by Beiglb\"ock and Juillet. Note that martingale optimal transport problems are a special case of a wider class of transport problems as weak optimal transport problems \cite{GRST17,GoJu20} or linear transfers \cite{BoGh19}.

The left-curtain coupling introduced by Beiglb\"ock and Juillet in \cite{BeJu16} is of particular importance for our approach of the peacock problem. Besides being the unique minimizer for a certain class of cost functions (cf.\ \cite{BeJu16}), the left-curtain coupling has several different characterizations, for instance concerning the geometry of its support \cite{Ju14} or in the context of the Skorokhod Embedding problem \cite{BeHeTo17, BeJu21}. Moreover, Hobson and Norgilas show in \cite{HoNo17} that it possesses a natural interpretation in Mathematical Finance.

The existence of the shadow (without coining this name) was established by Rost in \cite{Ro71} in the context of stopping times for Markov processes. Beiglb\"ock and Juillet later rediscovered this object  in the context of martingale optimal transport and established several important properties that are used in this paper.
In fact, the left-curtain coupling $\pi$ is introduced by
\begin{equation*}
\pi (X_0 \leq a, X_1\in \cdot ) = \shadow{\mu _1}{\mu _{0|(- \infty,a]}}.
\end{equation*}
for every $a\in \R$. The shadow martingale w.r.t.\ the left-curtain parametrization is a natural extension of this coupling (and hence also of its discrete time extension by Nutz, Stebegg and Tan in \cite{NuStTa17}) to the continuous time case. Moreover, shadow martingales extend the concept of shadow couplings introduced in \cite{BeJu21}.

The name peacock (alias PCOC) which is derived from the French term \textit{Processus Croissant pour l'Ordre Convexe} and likewise the peacock problem were introduced by Hirsch, Profeta, Roynette and Yor in their monograph \cite{HiPrRoYo11} and are therefore quite recent. However, the construction of martingales that match given marginal distributions at least goes back to the seminal work of Kellerer \cite{Ke72}. Since then a variety of solutions have been developed before it was subsumed under the name peacock problem. Most of these solutions make more or less restrictive additional assumptions on the peacock, e.g.\ assuming that the peacock satisfies the (IMRV) property (see \cite{MaYo02}) or consists of the marginal distributions of a solution to a certain class of SDE (see \cite{Gy86}). Moreover, the construction of fake Brownian motions (e.g.  \cite{Al08, HaKl07}) can be seen as solutions to a (very specific) peacock problem. The monograph \cite{HiPrRoYo11} provides a  comprehensive overview of solutions to the peacock problem that work with special classes of peacocks.

Recently there were several contributions that face generic peacocks without a rich additional structure. There is the solution of Hobson \cite{Ho17} which is based on the Skorokhod Embedding Problem and the one of Lowther \cite{Lo08} who constructs for continuous peacocks with connected support a solution under which the canonical process is a strong Markov process. The approach closest to our class of solutions is the one independently  studied by Henry-Labord\`ere, Tan and Touzi \cite{HeTaTo16} and Juillet \cite{Ju18}. The shadow martingale w.r.t.\ the left-curtain parametrization $(\nu ^{\alpha} _{\lc})_{\alpha \in [0,1]}$ is the limit of the discrete time simultaneous minimizer of
\begin{equation*}
\mathbb{E}_{\pi}[c(X_0, X_{t_k})] \hspace{1cm} \forall \ 1 \leq k \leq n
\end{equation*} 
among all martingale coupling of $\mu _0, \mu_{t_1},\ldots, \mu _{t_n}$ for all $c$ with $\partial_{xyy} c< 0$ as $n$ tends to infinity  for a suitable chosen sequence of nested finite partitions of $T$ whose mesh tends to zero.  In contrast, the solution of \cite{HeTaTo16,Ju18}-- when it exists-- is constructed as the limit of the concatenation of the discrete time simultaneous minimizers of
\begin{equation*}
\mathbb{E}_{\pi}[c(X_{t_{k-1}}, X_{t_k})] \hspace{1cm} \forall \ 1 \leq k \leq n
\end{equation*} 
for all $c$ with $\partial_{xyy}c < 0$.
Unsurprisingly, this solution behaves notably differently than the shadow martingale induced by the left-curtain parametrization (see Example \ref{expl:LCSM}).

Besides this article we are not aware of solutions to the peacock problem that are related to shadow martingales w.r.t.\ a parametrization which is not the left-curtain one.  Similarly, there are no results about uniquely constructing martingales by solely describing how parts of the initial distribution  evolve. In fact, the only approach in this direction that we are aware of is \cite{BoJu18} but in a non-martingale setup.

\section{Preliminaries} \label{sec:Prelimin}

In this section we introduce our notation and recall objects and properties that are well known in the context of martingale optimal transport and the peacock problem. Since we want to work at the level of probability distributions (of processes), we  sometimes choose a non-standard perspective on standard results.

\subsection{Notation} \label{sec:Notation}

We denote by $\MZ(\mathsf{X})$ (resp.\ $\PZ(\mathsf{X})$) the set of all finite measures (resp.\ probabilty measures) on some measurable space $\mathsf{X}$. The underlying space will mostly be the space of functions from $T$ to $\mathbb{R}$, denoted by  $\mathbb{R}^T$, for some totally ordered set $(T, \leq)$. In this case, the space $\mathbb{R}^T$ is equipped with the product topology and the corresponding Borel $\sigma$-algebra, the canonical process on $\mathbb{R}^T$ is denoted by $(X_t)_{t \in T}$, i.e.\ for all $t \in T$
\begin{equation*}
X_t : \mathbb{R}^T \ni \omega \mapsto \omega(t) \in \mathbb{R},
\end{equation*}
and $(\mathcal{F}_t)_{t\in T}$ is its natural filtration defined by $\mathcal{F}_t= \sigma ( X_s : s \leq t)$.

The set $\MO(\mathbb{R}^T)$ consists of all $\pi \in \MZ(\mathbb{R}^T)$ for which all one-dimensional marginal distributions  have a finite first moment. We equip $\MO(\mathbb{R}^T)$ with the initial topology generated by the functionals $(I_f)_{f \in \mathcal{G}_{0} \cup \mathcal{G}_1}$ where
\[I_f : \MO(\mathbb{R}^T) \ni \pi \mapsto \int _{\mathbb{R}^T} f \de \pi \in \mathbb{R}\]
and
\begin{align*}
\left\{
\begin{aligned}
\mathcal{G}_0 &= \left\{ g \circ (X_{t_1},\ldots,X_{t_n}) : n \geq 1, \ t_1,\ldots, t_n \in T, \ g \in C_b(\mathbb{R}^n) \right\}\\
\mathcal{G}_1 &= \left\{|X_t| : t \in T \right\}.
\end{aligned}
\right.
\end{align*} 
We denote this topology on $\MO(\mathbb{R}^T)$ by $\TO$. In contrast, we denote by $\TZ$ the initial topology on $\MZ(\mathbb{R}^T)$ that is generated by the functionals $(I_f)_{f \in \mathcal{G}_0}$ only. The subspace of probability measures in $\MO(\mathbb{R}^T)$ is denoted by $\PO(\mathbb{R}^T)$ and equipped with the inherited topology.  If $T$ is finite, the topology on $\PO(\mathbb{R}^T)$ is induced by the $1$-Wasserstein metric $\mathcal{W}_{1,l_1}$ corresponding to the $l_1$-metric on $\mathbb{R}^T$ (see Villani \cite[Theorem 6.9]{Vi09}). It is also not difficult to see, that, if $T$ is countable, $\MO(\mathbb{R}^T)$ is first countable and therefore continuity is equivalent to sequential continuity. 
Note now that we can reduce $\mathcal{G}_0$ in the definition of $\TZ$ to the following set of functions 
\begin{align*}
\mathcal{G}'_0 = \{\omega\in \R^T\mapsto 1\}\cup\left\{ g \circ (X_{t_1},\ldots,X_{t_n}) : n \geq 1, \ t_1,\ldots, t_n \in T, \ g \in C_c(\mathbb{R}^n) \right\}\ .
\end{align*}
To see this, recall that a sequence $(\pi _n)_{n \in \mathbb{N}}$ converges to $\pi$ w.r.t.\ some initial topology $\mathcal{T}$ defined by some set of functions $\mathcal{G}$ if and only if $(I_f(\pi _n))_{n \in \mathbb{N}}$ converges to $I_f(\pi)$ in $\mathbb{R}$ for all $f \in \mathcal{G}$. If $T$ is finite and $\mathcal{G}$ contains $\mathcal{C}_c(\R^T)$, the same limit is also satisfied for any continuous function $f$ that can be dominated by a linear combination generated with elements $f_i\in\mathcal{G}$, i.e such that $ |f|\leq \sum_i \zeta_i f_i.$
This is the reason why (i) the topology generated by $\mathcal{G}'_0$ is exactly $\TZ$ (also if $T$ is infinite), (ii) functions $g\circ(X_{t_1},\cdots,X_{t_n})$ where $g$ grows at most linearly at infinity are admissible for $\TO$.

We denote the push-forward of a measure $\pi\in \MO(\mathbb{R}^T)$ under some measurable map $f$ defined on $\mathbb{R}^T$ by $f_\#\pi$. If $\pi$ is a probability distribution, we refer to the push-forward as the law or distribution of $f$ under $\pi$ denoted by $\mathrm{Law}_{\pi}(f)$. Furthermore, the expression ``marginals of a probability measure $\pi$ on $\mathbb{R}^T$'', always refers to  all the one-dimensional marginal distributions of the canonical process under $\pi$, i.e.\ to the measures $\mathrm{Law}_{\pi}(X_t)$ for $t \in T$.   

Let $S$ be a subset of $T$ and $\mathrm{proj}^S : \mathbb{R}^T \rightarrow \mathbb{R}^S$ the projection on the index set $S$. The induced projection map 
\begin{equation*}
\MO(\mathbb{R}^T) \ni \pi \mapsto (\mathrm{proj}^S)_\# \pi \in \MO(\mathbb{R}^S)
\end{equation*}
is continuous w.r.t.\ $\TO$ on $\MO(\mathbb{R}^T)$ and $\MO(\mathbb{R}^S)$. We denote the measure $\pi$ projected on the coordinates in $S$, i.e. $(\mathrm{proj}^S)_\#\pi$, by the shorter notation $\pi_{|S}$ as if it were the restriction of a random vector.
Moreover, we denote the cumulative distribution function of a probability measure $\mu$ on $\mathbb{R}$ by $F_{\mu}$ and its quantile function is
\begin{equation*}
F_{\mu}^{-1}:\alpha\in[0,1]\mapsto \inf \{ x \in \mathbb{R} : F_{\mu}(x) \geq \alpha \}.
\end{equation*}
We also denote by $\lambda$ the Lebesgue measure on $[0,1]$, by $\mathrm{Unif}_{I}$ the uniform distribution on an interval $I \subset \mathbb{R}$ and by $\delta_x$ the Dirac measure at point $x$.

\subsection{Order relations and potential functions} \label{sec:OrderRelPF}

We use several partial order relations on $\MO(\mathbb{R})$. They can all be introduced in a parallel way saying that $\mu  \in \MO(\mathbb{R})$ is smaller than or equal to $\mu ' \in \MO(\mathbb{R})$  if
\begin{equation} \label{eq:OrderRelation}
\int _{\mathbb{R}} \varphi \de \mu \leq \int _{\mathbb{R}} \varphi \de \mu '
\end{equation} 
for every $\varphi$ in a certain positive cone of measurable test functions. These orders and the corresponding cones are:
\begin{itemize}
	\item \textit{The positive order:} $\mu \leqp \mu '$ if \eqref{eq:OrderRelation} holds for all non-negative $\varphi$.
	\item \textit{The convex order:} $\mu \leqc \mu '$ if \eqref{eq:OrderRelation} holds for all convex $\varphi$.
	\item \textit{The convex-positive order:} $\mu \leqcp \mu '$ if \eqref{eq:OrderRelation} holds for all non-negative convex $\varphi$.
	\item \textit{The convex-stochastic order:} $\mu \leqcs \mu '$ if \eqref{eq:OrderRelation} holds for all increasing convex $\varphi$.
\end{itemize}

Both non-negative and convex functions are bounded from below by an affine  function and thus, since the first moments are finite, the integrals in \eqref{eq:OrderRelation} are well-defined with values in $(- \infty, \infty ]$. The positive order is well-defined for finite measures on a measurable space (e.g.\ $\mathbb{R}^T$). 
Moreover, recall from the introduction that we call $\pi$ a submeasure of $\pi'$ if $\pi \leqp \pi'$.

The convex-positive order and the convex-stochastic order are less common in the literature. For a discussion of these ``combined'' partial order relations and the relationships between them we refer to \cite[Section 1]{Ju14} and especially Theorem 1.7 therein.

\begin{lemma} \label{lemma:relationOrders}
	Let $\mu$ and $\mu'$ be in $\MO(\mathbb{R})$.
	\begin{enumerate}
		\item[(i)] If $\mu \leqc \mu '$, then $\mu (\mathbb{R}) = \mu' (\mathbb{R})$ and $\int _{\mathbb{R}} y \de \mu (y) = \int _{\mathbb{R}} y \de \mu' (y)$.
		\item[(ii)] If $\mu \leq  _{c,+} \mu '$ and $\mu (\mathbb{R}) = \mu' (\mathbb{R})$ , then $\mu \leqc \mu '$.
		\item[(iii)] If $\mu \leq  _{c,s} \mu '$ and $\int _{\mathbb{R}} y \de \mu (y) = \int _{\mathbb{R}} y \de \mu' (y)$, then $\mu \leqc \mu'$.
	\end{enumerate}
\end{lemma}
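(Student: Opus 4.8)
The proof strategy is to exploit the basic fact that each of the four cones of test functions either contains or nearly contains the others, so that the hypotheses of each item combine to force equality \eqref{eq:OrderRelation} on the full cone of convex functions. I will treat the three items in order, since (i) supplies the two scalar identities that feed into (ii) and (iii).

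For (i): if $\mu\leqc\mu'$, then applying \eqref{eq:OrderRelation} to the convex functions $\varphi(y)=y$ and $\varphi(y)=-y$ (both convex, both bounded below by affine functions, hence integrable against measures in $\MO(\R)$) gives $\int y\,\de\mu=\int y\,\de\mu'$; applying it to $\varphi\equiv 1$ and $\varphi\equiv -1$ gives $\mu(\R)=\mu'(\R)$. This is immediate once one notes that affine functions lie in the linear span of the convex cone and that the convex order applied to $\varphi$ and to an affine minorant is consistent with finiteness of first moments.

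For (ii): suppose $\mu\leqcp\mu'$ and $\mu(\R)=\mu'(\R)$. Let $\varphi$ be an arbitrary convex function on $\R$. Since $\varphi$ is convex it is bounded below by an affine function $y\mapsto ay+b$, so $\psi:=\varphi-(a\,\mathrm{id}+b)\geq 0$ and $\psi$ is convex, hence $\psi$ is a non-negative convex test function and $\int\psi\,\de\mu\leq\int\psi\,\de\mu'$. Now add back the affine part: by part (i) applied in reverse — more precisely, since $\mu\leqcp\mu'$ already yields $\int y\,\de\mu\le\int y\,\de\mu'$ and $\int(-y)\,\de\mu\le\int(-y)\,\de\mu'$ is \emph{not} directly available — I instead argue as follows. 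The functions $\pm\,\mathrm{id}+c$ for $c$ large are non-negative convex only on half-lines, so I must be slightly careful: choose the affine minorant to be a supporting line of $\varphi$, write $\varphi=\psi+\ell$ with $\ell$ affine and $\psi\ge 0$ convex, and then $\int\varphi\,\de\mu=\int\psi\,\de\mu+\int\ell\,\de\mu$. The term $\int\ell\,\de\mu=a\int y\,\de\mu+b\mu(\R)$; to match it with $\int\ell\,\de\mu'$ I need $\int y\,\de\mu=\int y\,\de\mu'$, which follows because both $\mathrm{id}-m$ and $-\mathrm{id}+M$ can be made non-negative \emph{and convex} only on a half-line, so instead I use that $y\mapsto |y-k|$ is non-negative and convex for every $k$; letting $k\to-\infty$ along $|y-k|=(y-k)$ eventually and $k\to+\infty$ along $|y-k|=(k-y)$, together with $\mu(\R)=\mu'(\R)$, yields both $\int y\,\de\mu\le\int y\,\de\mu'$ and $\int y\,\de\mu\ge\int y\,\de\mu'$, hence equality. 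Then $\int\ell\,\de\mu=\int\ell\,\de\mu'$ and therefore $\int\varphi\,\de\mu\le\int\varphi\,\de\mu'$, i.e.\ $\mu\leqc\mu'$.

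For (iii): suppose $\mu\leqcs\mu'$ and $\int y\,\de\mu=\int y\,\de\mu'$. Again let $\varphi$ be convex. Decompose $\varphi$ using the fact that any convex function is the sum of a non-decreasing convex function and a non-increasing convex function, plus an affine correction — concretely, fix a point $x_0$, let $\varphi_+(y)=\varphi(y)$ for $y\ge x_0$ and extend it affinely (by its left derivative at $x_0$) for $y<x_0$, and symmetrically let $\varphi_-$ agree with $\varphi$ on $(-\infty,x_0]$ and be extended affinely to the right; then $\varphi_+$ is non-decreasing convex, $\varphi_-$ is non-increasing convex, and $\varphi_++\varphi_-=\varphi+\ell$ for an affine $\ell$. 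Since $\varphi_+$ is increasing convex, $\int\varphi_+\,\de\mu\le\int\varphi_+\,\de\mu'$; since $\varphi_-(y)=\tilde\varphi_-(-y)$ with $\tilde\varphi_-$ increasing convex, and since $\int y\,\de\mu=\int y\,\de\mu'$ guarantees the reflected measures have equal mean — but I also need equal mass, which I get from applying the increasing convex order to $\varphi\equiv\pm 1$ (both increasing convex in the weak sense, or use $y\mapsto\max(y,0)$ and $y\mapsto -y$ suitably) — I obtain $\int\varphi_-\,\de\mu\le\int\varphi_-\,\de\mu'$ as well. Summing and subtracting the affine term $\int\ell\,\de\mu=\int\ell\,\de\mu'$ (equal by the mean hypothesis together with equal total mass) gives $\int\varphi\,\de\mu\le\int\varphi\,\de\mu'$, i.e.\ $\mu\leqc\mu'$.

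\textbf{Main obstacle.} The only real subtlety — and the step I expect to require the most care — is the bookkeeping of \emph{total mass} in items (ii) and (iii): the convex-positive and convex-stochastic cones do not contain the constant function $-1$ nor the function $-y$, so recovering $\int y\,\de\mu=\int y\,\de\mu'$ (in (ii)) and $\mu(\R)=\mu'(\R)$ (in (iii)) from the one-sided hypotheses requires choosing the right families of test functions (e.g.\ $|y-k|$ as $k\to\pm\infty$, together with the given mass or mean equality) so that the inequality can be upgraded to an equality in both directions before the affine correction is reinserted. Once the affine parts are shown to integrate equally against $\mu$ and $\mu'$, the convex–nonnegative or increasing-convex/decreasing-convex decomposition of an arbitrary convex $\varphi$ closes the argument routinely.
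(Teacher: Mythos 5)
Your items (i) and (ii) are correct. For (ii) you take a mildly different route from the paper: you first recover the equality of barycenters from the test functions $y\mapsto|y-k|$ (letting $k\to\pm\infty$ and using equal masses), then split $\varphi=\psi+\ell$ along a supporting line with $\psi\geq 0$ convex. The paper instead observes that equal masses put the constant $-1$ into the admissible cone, so every convex function bounded from below satisfies \eqref{eq:OrderRelation}, and then passes to a general convex $\varphi$ via the truncations $\varphi\vee(-n)$ and monotone convergence; this avoids having to establish equal barycenters at all in (ii). Both arguments are valid.

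Item (iii), however, has a genuine gap at the step ``$\int\varphi_-\,\de\mu\le\int\varphi_-\,\de\mu'$'' for the non-increasing convex piece. The reflection $\varphi_-(y)=\tilde\varphi_-(-y)$ reduces this to the claim that the reflected measures satisfy $R_\#\mu\leqcs R_\#\mu'$ (with $R(y)=-y$), and that claim is \emph{not} implied by $\mu\leqcs\mu'$ together with equal masses and equal means of the reflected measures — it is equivalent to the very inequality you are trying to prove, since the convex-stochastic order is not symmetric under reflection (e.g.\ $\delta_0\leqcs\delta_1$ but $\delta_0\not\leqcs\delta_{-1}$). What actually closes the argument is the equal-barycenter hypothesis used multiplicatively against a \emph{linear} function: if $\varphi_-$ is non-increasing convex with finite limit slope $a\le 0$ at $-\infty$, then $g:=\varphi_- - a\,\mathrm{id}$ is non-decreasing convex, so $\int\varphi_-\,\de\mu=a\int y\,\de\mu+\int g\,\de\mu\le a\int y\,\de\mu'+\int g\,\de\mu'=\int\varphi_-\,\de\mu'$; for general $\varphi_-$ one approximates monotonically from below by such functions. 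This is precisely the paper's argument (``adding $x\mapsto -x$ to the set of nondecreasing convex functions'' and approximating a general convex function by ones with bounded slope at $-\infty$), and it makes the increasing/decreasing decomposition unnecessary. A secondary, fixable issue: in your decomposition the point $x_0$ must be a minimizer of $\varphi$ (or $\varphi$ must be monotone), otherwise $\varphi_+$ need not be non-decreasing and $\varphi_-$ need not be non-increasing.
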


\begin{proof}
	Item (i): The four functions $x\mapsto \pm 1$ and $x\mapsto \pm x$ are convex functions. 
	
	Item (ii): If $\mu \leq  _{c,+} \mu '$ and $\mu (\mathbb{R}) = \mu' (\mathbb{R})$, equation \eqref{eq:OrderRelation} is satisfied for every non-negative convex function and also for $x\mapsto -1$. Thus \eqref{eq:OrderRelation} is satisfied for any convex function that is bounded from below. Hence, for a general convex function $\varphi$ with $\int _{\mathbb{R}} \varphi \de \mu' < + \infty$, \eqref{eq:OrderRelation} holds for $\varphi_n=\varphi\vee(-n)$ and by the monotone convergence theorem it holds for $\varphi$ as well. 	
	
	Item (iii): We use a similar argument adding $x\mapsto -x$ to the set of nondecreasing convex functions. Any convex function is the pointwise increasing limit of a sequence of convex functions with limit slope bounded at $-\infty$.
\end{proof}

\begin{lemma} \label{lemma:preservingOrder}
	Let $\mu_n, \mu_n', \mu, \mu' \in \MO(\mathbb{R})$ for all $n \in \mathbb{N}$.
	\begin{enumerate}
		\item [(i)] Suppose $(\mu _n)_{n \in \mathbb{N}}$ and $(\mu' _n)_{n \in \mathbb{N}}$ converge to $\mu$ and $\mu'$ under $\TZ$. If $\mu _n \leqp \mu_n'$ for all $n \in \mathbb{N}$, then $\mu \leqp \mu'$.
		\item [(ii)] Suppose $(\mu _n)_{n \in \mathbb{N}}$ and $(\mu' _n)_{n \in \mathbb{N}}$ converge to $\mu$ and $\mu'$ under $\TO$. For any order relation  $\leqc$, $\leqcp$ or $\leqcs$ represented by $\leq$, the relations $\mu _n \leq \mu '_n$ for all $n \in \mathbb{N}$ imply $\mu \leq \mu'$. 
	\end{enumerate}
\end{lemma}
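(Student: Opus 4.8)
The plan is to prove Lemma \ref{lemma:preservingOrder} by exploiting that all the relevant orders are \emph{defined} as an inequality of integrals over a cone of test functions, and that the generating functionals $I_f$ are, essentially by construction of the topologies $\TZ$ and $\TO$, continuous. The two items are genuinely different in character: for (i) the test functions (non-negative measurable $\varphi$) are not continuous and not bounded, so one cannot simply pass to the limit in $\int \varphi\,\de\mu_n \le \int \varphi\,\de\mu_n'$; for (ii) the orders $\leqc,\leqcp,\leqcs$ are detected by convex functions, which can be dominated by linear combinations of the admissible functions for $\TO$, so the passage to the limit is almost immediate once one reduces to a good generating family.

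For item (i), first I would recall that the positive order $\mu\leqp\mu'$ on $\MZ(\R^T)$ is equivalent to $\mu'-\mu$ being a (non-negative) measure, equivalently $\int f\,\de\mu \le \int f\,\de\mu'$ for all $f$ in a generating class such as $\mathcal G_0'$ restricted to its non-negative elements, i.e. $1$ and $g\circ(X_{t_1},\dots,X_{t_n})$ with $g\in C_c(\R^n)$, $g\ge 0$. This reduction is exactly the content of the discussion preceding the lemma: convergence in $\TZ$ is tested against $\mathcal G_0'$, and for non-negative $g\in C_c$ the functional $I_g$ is continuous w.r.t.\ $\TZ$. Hence from $\mu_n\leqp\mu_n'$ we get $I_g(\mu_n)\le I_g(\mu_n')$ for every non-negative $g\in C_c(\R^n)$ and every choice of coordinates; letting $n\to\infty$ along the convergent sequences gives $I_g(\mu)\le I_g(\mu')$ for all such $g$. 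A standard monotone class / Riesz-type argument (every non-negative continuous compactly supported cylinder function yields the inequality; monotone limits of these exhaust all non-negative lower semicontinuous cylinder functions, and then all non-negative measurable functions by the monotone convergence theorem applied to the finite measure $\mu'-\mu\ge 0$ on cylinders, which determines it on the product $\sigma$-algebra) upgrades this to $\mu(A)\le\mu'(A)$ for all Borel $A$, i.e.\ $\mu\leqp\mu'$.

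For item (ii), the key structural fact is that each of $\leqc,\leqcp,\leqcs$ is characterised on $\MO(\R)$ by inequality \eqref{eq:OrderRelation} for $\varphi$ ranging over the respective cone, and every such $\varphi$ is bounded below by an affine function; more importantly, each $\varphi$ in the cone can be dominated in absolute value by a linear combination of the admissible $\TO$-functions $\{1\}\cup\{|X_t|\}\cup C_c$-cylinders, and (by the remark in Subsection \ref{sec:Notation}) cylinder functions of at most linear growth are admissible for $\TO$. Thus for a fixed convex (resp.\ non-negative convex, resp.\ increasing convex) $\varphi$ one has, by the definition of convergence in an initial topology, $\int\varphi\,\de\mu_n\to\int\varphi\,\de\mu$ and likewise for the primed sequence --- at least when $\int\varphi\,\de\mu'<\infty$. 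Passing to the limit in $\int\varphi\,\de\mu_n\le\int\varphi\,\de\mu_n'$ gives $\int\varphi\,\de\mu\le\int\varphi\,\de\mu'$. The case $\int\varphi\,\de\mu'=\infty$ is trivially satisfied since the left-hand side takes values in $(-\infty,\infty]$. Running this over all $\varphi$ in the cone yields $\mu\le\mu'$ for the relation in question, and since the argument never used which of the three cones we are in, it covers all three cases uniformly.

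The main obstacle is the upgrade step in item (i): converting the cylinder inequality $I_g(\mu)\le I_g(\mu')$ for non-negative $g\in C_c$ into the full measure-theoretic inequality $\mu(A)\le\mu'(A)$ on all Borel sets of $\R^T$. Here one must be careful that $\R^T$ need not be countable, so $\mathbb R^T$ with the product topology is not metrizable and one cannot naively invoke Portmanteau; instead one works with the signed set function $\mu'-\mu$, notes it is non-negative on the generating $\pi$-system of cylinder sets (via monotone approximation of indicators of open cylinders by non-negative $C_c$ cylinder functions), and invokes uniqueness of extension on the product $\sigma$-algebra. The secondary subtlety, in item (ii), is ensuring the admissibility of $|X_t|$ and linearly-growing cylinder functions for $\TO$; this is precisely guaranteed by the design of $\TO$ (the functionals $I_f$, $f\in\mathcal G_1=\{|X_t|\}$, are part of the generating family) together with the domination remark recalled above, so it is a bookkeeping point rather than a real difficulty.
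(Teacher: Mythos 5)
Your item (i) is essentially the paper's argument (test against non-negative continuous functions approximating indicators from below, pass to the limit, then upgrade by regularity of the signed measure $\mu'-\mu$), modulo a small circularity in phrasing: you invoke ``the finite measure $\mu'-\mu\ge 0$'' inside the very step that is supposed to establish its non-negativity. The intended argument — $\mu'-\mu$ is a finite signed measure that is non-negative on all open sets, hence non-negative by outer regularity of its Hahn decomposition — is sound, so this is cosmetic.

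Item (ii), however, contains a genuine gap. You claim that every $\varphi$ in the relevant cone ``can be dominated in absolute value by a linear combination of the admissible $\TO$-functions'' and deduce $\int\varphi\,\de\mu_n\to\int\varphi\,\de\mu$ directly. This is false for superlinear convex functions: $\varphi(x)=x^2$ is not dominated by $a+b|x|+(\text{compactly supported})$, and $\TO$-convergence does not imply convergence of second moments. Concretely, $\mu_n=(1-n^{-2})\delta_0+\tfrac{1}{2n^2}(\delta_{-n}+\delta_n)$ converges to $\delta_0$ under $\TO$ (the first absolute moments $n^{-1}$ vanish), yet $\int x^2\,\de\mu_n=1\not\to 0$. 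So the limit passage $\int\varphi\,\de\mu_n\le\int\varphi\,\de\mu_n'\Rightarrow\int\varphi\,\de\mu\le\int\varphi\,\de\mu'$ is not justified as written, even when $\int\varphi\,\de\mu'<\infty$. The repair is the same one-sided truncation device you used in item (i), and it is what the paper does: replace $\varphi$ by convex functions $\varphi_m\le\varphi$ of the same type (non-negative, resp.\ increasing) with bounded slopes at $\pm\infty$, e.g.\ by extending $\varphi$ affinely along tangent lines outside $[-m,m]$, so that $|\varphi_m|\le a_m|x|+b_m$ and hence $\int\varphi_m\,\de\mu_n\to\int\varphi_m\,\de\mu$ and $\int\varphi_m\,\de\mu_n'\to\int\varphi_m\,\de\mu'$ genuinely hold under $\TO$. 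Then $\int\varphi_m\,\de\mu\le\int\varphi_m\,\de\mu'\le\int\varphi\,\de\mu'$, and letting $m\to\infty$ with monotone convergence on the left gives the claim. The inequality is only needed in one direction, which is why the lower approximation suffices; your attempt to get two-sided convergence for the original $\varphi$ is the step that fails.
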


\begin{proof}	
Item (i): It is sufficient to test the positive order by indicator functions of closed intervals. For any such function $\varphi$ there exists a sequence $(\varphi _m)_{m \in \mathbb{N}}$ of continuous bounded functions such that $0 \leq \varphi _m \leq \varphi$ for all $m \in \mathbb{N}$ and $\int _{\mathbb{R}} \varphi \de \mu = \lim _{m \rightarrow \infty} \int _{\mathbb{R}} \varphi _m \de \mu$. Since convergence in $\TZ$ implies $\int _{\mathbb{R}} \varphi _m \de \mu = \lim _{n \rightarrow \infty} \int _{\mathbb{R}} \varphi _m \de \mu_n$ for all $m \in \mathbb{N}$, the claim follows. 
	
	Item (ii): For any (non-negative/increasing) convex function $\varphi \in L^1(\mu)$, there exists a sequence $(\varphi _m)_{m \in \mathbb{N}}$ of (non-negative/increasing) convex  functions with bounded slope at $\pm \infty$ such that $ \varphi _m \leq \varphi$ for all $m \in \mathbb{N}$ and $\int \varphi \de \mu = \lim _{m \rightarrow \infty} \int \varphi _m \de \mu$. Since the slope of $\varphi _m$ is bounded, there exist $a_m,b_m > 0$ such that $|\varphi_m(x)| \leq a_m |x| + b_m$ for all $x \in \mathbb{R}$ and hence the claim follows because the sequences converge w.r.t.\ $\TO$ (cf.\ Subsection \ref{sec:Notation}).
\end{proof}

Note that convergence in $\TZ$ does in general not preserve the order relations $\leqc$, $\leqcp$ and $\leqcs$. However, a sequence that is convergent under $\TZ$ and has a uniform upper bound in $\leqcp$, is in fact convergent under $\TO$: 

\begin{lemma}
	\label{lemma:convDom}
	Let $(\mu _n)_{n \in \mathbb{N}}$ be a sequence in $\MO(\R)$. If
	there exists a measure $\theta \in \MO(\mathbb{R})$ such that $\mu _n \leqcp \theta$ for all $n \in \mathbb{N}$, 	then the sequence $(\mu _n)_{n \in \mathbb{N}}$ is uniformly integrable, i.e.\ $$\lim _{N \rightarrow \infty} \sup _{n \in \mathbb{N}} \int _{\mathbb{R}} |x| \1_{[-N,N]^c} \de \mu _n(x) = 0.$$
	Hence,  $(\mu _n)_{n \in \mathbb{N}}$ converges under $\TZ$ if and only if  $(\mu _n)_{n \in \mathbb{N}}$ converges under $\TO$.
	Moreover, if $(\mu _n)_{n \in \mathbb{N}}$ converges under $\TZ$ to $\mu\in \MO(\R)$, then it holds $\int f \de \mu _n \rightarrow \int f d\mu$ for all continuous $f$ for which $|f|$ is dominated by some convex $\varphi \in L^1(\theta)$.
\end{lemma}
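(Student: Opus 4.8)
The strategy is to deduce all three assertions from one elementary domination. For $N>0$ the function $x\mapsto 2(|x|-N/2)^{+}$ is non-negative, convex and belongs to $L^{1}(\theta)$, and it dominates $x\mapsto |x|\1_{[-N,N]^{c}}(x)$ pointwise: for $|x|\geq N$ we have $2(|x|-N/2)=2|x|-N\geq|x|$, while the left-hand side vanishes for $|x|<N$. Hence $\mu_{n}\leqcp\theta$ gives
\[
\sup_{n}\int_{\R}|x|\1_{[-N,N]^{c}}\de\mu_{n}(x)\ \leq\ \sup_{n}\int_{\R}2(|x|-N/2)^{+}\de\mu_{n}(x)\ \leq\ \int_{\R}2(|x|-N/2)^{+}\de\theta(x),
\]
and since $0\leq(|x|-N/2)^{+}\leq|x|\in L^{1}(\theta)$ decreases pointwise to $0$ as $N\to\infty$, dominated convergence shows the right-hand side tends to $0$. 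This is the claimed uniform integrability.

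For the equivalence of the two notions of convergence, $\TO$-convergence always implies $\TZ$-convergence since $\TO$ is finer. Conversely, assume $\mu_{n}\to\mu$ in $\TZ$. Testing against the constant $1$ gives $\sup_{n}\mu_{n}(\R)<\infty$, which together with the uniform integrability yields $\sup_{n}\int|x|\de\mu_{n}<\infty$; testing against the bounded continuous functions $|x|\wedge K$ and letting $K\to\infty$ then shows $\int|x|\de\mu<\infty$, i.e.\ $\mu\in\MO(\R)$. To upgrade $\TZ$- to $\TO$-convergence it suffices (cf.\ Subsection \ref{sec:Notation}) to check $\int|x|\de\mu_{n}\to\int|x|\de\mu$, and this is the standard truncation argument: for every $K$,
\[
\Big|\int|x|\de\mu_{n}-\int|x|\de\mu\Big|\ \leq\ \Big|\int(|x|\wedge K)\de\mu_{n}-\int(|x|\wedge K)\de\mu\Big|+\sup_{m}\int|x|\1_{[-K,K]^{c}}\de\mu_{m}+\int|x|\1_{[-K,K]^{c}}\de\mu,
\]
where the last two terms are made small by choosing $K$ large (uniform integrability and $\mu\in\MO(\R)$) and the first then vanishes as $n\to\infty$.

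For the final assertion I would replicate the domination at the level of $\varphi$. Given convex $\varphi\in L^{1}(\theta)$, the function $2(\varphi-M)^{+}$ is non-negative, convex, lies in $L^{1}(\theta)$, and satisfies $\varphi\1_{\{\varphi>2M\}}\leq 2(\varphi-M)^{+}$, so exactly as above $\sup_{n}\int\varphi\1_{\{\varphi>2M\}}\de\mu_{n}\leq 2\int(\varphi-M)^{+}\de\theta\to 0$ as $M\to\infty$, using $(\varphi-M)^{+}\leq\varphi^{+}\leq|\varphi|\in L^{1}(\theta)$. Since $\{\varphi\leq M\}$ and $\{\varphi\geq 2M\}$ are disjoint closed sets, choose a continuous $\chi_{M}\colon\R\to[0,1]$ equal to $1$ on the former and $0$ on the latter; then $f\chi_{M}$ is bounded and continuous, so $\int f\chi_{M}\de\mu_{n}\to\int f\chi_{M}\de\mu$, while $|f|(1-\chi_{M})\leq\varphi\1_{\{\varphi>M\}}$ controls the truncation error for the $\mu_{n}$ uniformly in $n$ (by the preceding estimate, with $M/2$ in place of $M$) and for $\mu$ by dominated convergence. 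The latter needs $\varphi\in L^{1}(\mu)$, which follows from $\mu\leqcp\theta$; and $\mu\leqcp\theta$ is obtained from Lemma \ref{lemma:preservingOrder}(ii) applied to the $\TO$-convergence established in the previous paragraph. Taking $\limsup_{n}$ for fixed $M$ and then letting $M\to\infty$ gives $\int f\de\mu_{n}\to\int f\de\mu$.

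The only point that is not pure bookkeeping is that the hypothesis $\mu_{n}\leqcp\theta$ is \emph{not} stable under $\TZ$-convergence (Lemma \ref{lemma:preservingOrder}(ii) preserves $\leqcp$ only under $\TO$), so to transfer the bound $\leqcp\theta$ — and with it the integrability $\varphi\in L^{1}(\mu)$ — to the limit $\mu$, one must first bootstrap from $\TZ$- to $\TO$-convergence via the uniform integrability. This interdependence between the uniform integrability statement and the order-transfer is the real substance of the lemma; everything else is standard truncation.
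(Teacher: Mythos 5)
Your proof is correct and follows essentially the same route as the paper's (which is a one-line hint): dominate the tail of the integrand by a non-negative convex function, push the estimate through $\mu_n\leqcp\theta$, and let the truncation parameter go to infinity by dominated convergence against $\theta$. The only cosmetic difference is that for the last assertion you truncate in the range of $\varphi$ (via $2(\varphi-M)^+$ and a Urysohn cutoff) where the paper uses the spatial truncation $(\varphi+|x|-N)^+$; your version supplies all the bookkeeping the paper omits, including the correct observation that $\mu\leqcp\theta$ must be transferred to the limit only after upgrading to $\TO$-convergence.
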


\begin{proof}
If $\varphi\in L^1(\theta)$ and $|f|\leq \varphi$ it holds that
\begin{equation*} 
	\int _{\mathbb{R}} |f| \1_{[-N,N]^c} \de \mu _n \leq  \int _{\mathbb{R}} (\varphi + |x| - N)^+ \de \theta	
	\end{equation*}
from which the results easily follows.
\end{proof}

Dealing with the convex order, potential functions are a very useful representation of finite measures on $\mathbb{R}$. They are defined as follows:

\begin{definition}
	Let $\mu \in \MO(\mathbb{R})$. The potential function of $\mu$ is the function
	\begin{equation*}
	U(\mu) : \mathbb{R} \ni x \mapsto \int _{\mathbb{R}} |y - x| \de \mu (y)\in \R^+.
	\end{equation*}
\end{definition}

Since elements of $\MO(\mathbb{R})$ have finite first moments, the potential function is always well-defined.
We collect a few important properties of potential functions below.

\begin{lemma}[{cf.\ \cite[Proposition 4.1]{BeJu16}}]
	\label{lemma:characPotF}
	Let $m \in [0,\infty)$ and $x^* \in \mathbb{R}$.
	For a function $u:\mathbb{R} \rightarrow \mathbb{R}$ the following statements are equivalent:
	\begin{enumerate}
		\item [(i)] There exists a finite measure $\mu \in \MO(\mathbb{R})$ with mass $\mu(\mathbb{R}) = m$ and barycenter $x^* = \int _{\mathbb{R}} x \de \mu (x)$ such that $U(\mu) = u$ .
		\item [(ii)] The function $u$ is non-negative, convex and satisfies
		\begin{equation} \label{eq:characPotF}
		\lim _{x \rightarrow \pm \infty} u(x) - m|x - x^*| = 0. 
		\end{equation}
	\end{enumerate}
	Moreover, for all $\mu, \mu' \in \MO(\mathbb{R})$ we have $\mu = \mu'$ if and only if $U(\mu) = U(\mu')$.
\end{lemma}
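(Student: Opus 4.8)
The plan is to prove the equivalence (i)$\Leftrightarrow$(ii) and the injectivity statement by identifying the measure $\mu$ with one half of the distributional second derivative of its potential function, and then matching two competing convex functions through their asymptotic behaviour at $\pm\infty$.

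\emph{From (i) to (ii).} Let $u=U(\mu)$ for some $\mu\in\MO(\R)$ of mass $m$ and barycenter $x^*$. Non-negativity of $u$ is immediate and convexity follows because $x\mapsto|y-x|$ is convex for each fixed $y$ and convexity is preserved under integration. For the asymptotic condition I would rewrite, using $\int y\de\mu(y)=mx^*$,
\[
u(x)-m(x-x^*)=\int_{\R}\bigl(|y-x|-(x-y)\bigr)\de\mu(y)=2\int_{\R}(y-x)^+\de\mu(y),
\]
observe that the integrand decreases to $0$ pointwise as $x\to+\infty$ while being dominated, for $x\ge x_0$, by the $\mu$-integrable function $(y-x_0)^+$, and apply dominated convergence; since $|x-x^*|=x-x^*$ for large $x$ this gives $u(x)-m|x-x^*|\to 0$ as $x\to+\infty$, and the case $x\to-\infty$ is symmetric with $(x-y)^+$ in place of $(y-x)^+$.

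\emph{From (ii) to (i).} Since $u$ is convex, $u''$ in the sense of distributions is a non-negative Radon measure; set $\mu:=\tfrac12 u''$. I would first show $\mu(\R)=m$: the function $v:=u-m(\,\cdot\,-x^*)$ is convex and, by \eqref{eq:characPotF}, tends to $0$ at $+\infty$, which forces $v'(x)\to 0$, hence $u'(x)\to m$, as $x\to+\infty$; symmetrically $u'(x)\to-m$ as $x\to-\infty$; therefore $\mu(\R)=\tfrac12\bigl(u'(+\infty)-u'(-\infty)\bigr)=m$. Next I would check that $\mu$ has a finite first moment, so that $\mu\in\MO(\R)$ and $U(\mu)$ is defined: from $\mu((t,\infty))=\tfrac12|v'(t)|$ (note $v'\le 0$), $\int_N^{\infty}|v'(t)|\de t=v(N)<\infty$, and the layer-cake identity $\int_{(N,\infty)}y\de\mu(y)=N\mu((N,\infty))+\int_N^{\infty}\mu((t,\infty))\de t$ one reads off finiteness of the positive tail, and the negative tail is treated the same way. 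Finally $U(\mu)$ and $u$ are convex functions with the same distributional second derivative, so $U(\mu)-u$ is affine; applying the already proved implication (i)$\Rightarrow$(ii) to $U(\mu)$ (of mass $m$ and barycenter some $x^{**}$) and comparing limits at $+\infty$ and at $-\infty$ shows that this affine function equals $m(x^*-x^{**})$ and also $m(x^{**}-x^*)$, hence vanishes and, when $m>0$, $x^{**}=x^*$; the degenerate case $m=0$ gives $\mu=0$ and $u\equiv 0$ at once. This produces the desired $\mu$.

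The injectivity statement then follows immediately: if $U(\mu)=U(\mu')=:u$ then $\mu=\tfrac12 u''=\mu'$ by uniqueness of the distributional derivative. The step I expect to be the main obstacle is converting the purely qualitative asymptotics \eqref{eq:characPotF} into quantitative tail control on $\mu=\tfrac12 u''$, i.e.\ deducing that $\mu$ is a \emph{finite} measure of mass exactly $m$ with a \emph{finite first moment}; everything else reduces to the standard correspondence ``convex $\Leftrightarrow u''\ge 0$'' and the fact that a bounded affine function is constant. As the statement is cited as \cite[Proposition 4.1]{BeJu16}, I would also be prepared to refer the reader there for the routine estimates once this skeleton is in place.
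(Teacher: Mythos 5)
Your proof is correct. The paper itself gives no argument for this lemma and simply defers to \cite[Proposition 4.1]{BeJu16}; your proof is the standard one underlying that reference, namely the correspondence $\mu=\tfrac12 u''$ between finite measures and non-negative convex functions with the prescribed asymptotics, with the mass and first-moment bounds read off from the one-sided derivatives of $v=u-m(\cdot-x^*)$ and the layer-cake formula. All the individual steps (dominated convergence for (i)$\Rightarrow$(ii), the deduction $v'\to 0$ from convexity plus $v\to 0$, the affine-difference argument matching $U(\mu)$ to $u$, and injectivity via the distributional second derivative) check out.
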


Convex ordering and convergence in $\MO(\mathbb{R})$ can be expressed via potential functions. 

\begin{lemma}
	\label{lemma:propertiesPotF}
	For all $\mu$, $\mu'$ and sequences $(\mu _n)_{n \in \mathbb{N}}$ in $\MO(\mathbb{R})$ with $\mu(\mathbb{R}) = \mu'(\mathbb{R}) = \mu _n(\mathbb{R})$ for all $n \in \mathbb{N}$, we have the following properties:
	\begin{enumerate}
		\item [(i)] It holds $\mu \leqc \mu'$  if and only if $U(\mu) \leq U(\mu')$.
		\item [(ii)] It holds $\mu _n \rightarrow \mu$ under $\TO$ if and only if $U(\mu _n) \rightarrow U(\mu)$ pointwise.
	\end{enumerate}
\end{lemma}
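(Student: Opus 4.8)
The plan is to use throughout that the value $U(\mu)(y)=\int_{\mathbb R}|x-y|\de\mu(x)$ is the integral against the convex function $g_y\colon x\mapsto|x-y|$, so that \emph{values of the potential function are themselves test quantities for the convex order}, and that, modulo affine functions, the convex cone is generated by the family $(g_y)_{y\in\mathbb R}$ (piecewise linear convex functions are affine plus nonnegative combinations of the $g_y$, and general convex functions are increasing pointwise limits of these). For item (i) the forward implication is then immediate: $g_y$ is convex, so $\mu\leqc\mu'$ applied through \eqref{eq:OrderRelation} gives $U(\mu)(y)=\int g_y\de\mu\le\int g_y\de\mu'=U(\mu')(y)$ for every $y$.

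For the converse of (i), assume $U(\mu)\le U(\mu')$ and recall $\mu(\mathbb R)=\mu'(\mathbb R)=m$ (the case $m=0$ is trivial). First I would deduce that $\mu$ and $\mu'$ have the same barycenter by comparing the potentials at infinity: since $U(\mu)(x)=mx-\int y\de\mu(y)+o(1)$ as $x\to+\infty$ and $U(\mu)(x)=-mx+\int y\de\mu(y)+o(1)$ as $x\to-\infty$ (the precise asymptotics being those of \eqref{eq:characPotF} in Lemma \ref{lemma:characPotF}), and likewise for $\mu'$, the inequality $U(\mu)\le U(\mu')$ forces $\int y\de\mu(y)=\int y\de\mu'(y)$. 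Next, for a convex piecewise linear $\psi$ with kinks $y_1<\dots<y_k$ one has $\psi=\ell+\sum_{i=1}^k c_i\,g_{y_i}$ with $\ell$ affine and each $c_i\ge0$ (half the slope jump at $y_i$), whence $\int\psi\de\mu=\int\ell\de\mu+\sum_i c_i\,U(\mu)(y_i)\le\int\ell\de\mu'+\sum_i c_i\,U(\mu')(y_i)=\int\psi\de\mu'$, the affine parts agreeing because $\mu,\mu'$ share mass and barycenter. Finally, an arbitrary convex $\varphi$ is an increasing pointwise limit of convex piecewise linear functions $\psi_m$ lying above one fixed affine minorant of $\varphi$ (the same device as in the proofs of Lemma \ref{lemma:relationOrders} and Lemma \ref{lemma:preservingOrder}), so the monotone convergence theorem upgrades the inequality to $\int\varphi\de\mu\le\int\varphi\de\mu'$, i.e.\ $\mu\leqc\mu'$.

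For item (ii), the forward implication is again easy: $g_y$ is continuous with at most linear growth at infinity, hence an admissible test function for $\TO$ (Subsection \ref{sec:Notation}), so $\mu_n\to\mu$ in $\TO$ yields $U(\mu_n)(y)\to U(\mu)(y)$ for each $y$. For the converse, assume $U(\mu_n)\to U(\mu)$ pointwise. Evaluating at $0$ gives $\int|x|\de\mu_n=U(\mu_n)(0)\to U(\mu)(0)=\int|x|\de\mu$, which is convergence of the functional $|X_0|\in\mathcal{G}_1$; it remains to prove $\mu_n\to\mu$ in $\TZ$, i.e.\ weakly. For this I would invoke the standard fact that a pointwise convergent sequence of finite convex functions has right derivatives converging at every point where the limit is differentiable; since $U(\mu)'_+(x)=2\mu((-\infty,x])-m$ (and likewise for the $\mu_n$) and $U(\mu)$ is differentiable off the at most countable set of atoms of $\mu$, this gives $\mu_n((-\infty,x])\to\mu((-\infty,x])$ at every $x$ with $\mu(\{x\})=0$, hence $\mu_n\to\mu$ weakly; as all these measures have the same total mass, weak convergence is exactly convergence in $\TZ$. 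Combined with the convergence of the first absolute moments, this is convergence in $\TO$.

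The steps are routine; the only place that needs care is the converse of (ii), namely passing from pointwise convergence of the potential functions to weak convergence of the measures — which is precisely where the convexity of the $U(\mu_n)$ and the identification of $U(\mu)'_+$ with (an affine function of) the distribution function are used — together with the small bookkeeping that convergence in $\TO$ is convergence in $\TZ$ plus convergence of the first absolute moments, and that under the equal-mass hypothesis $\TZ$-convergence coincides with weak convergence.
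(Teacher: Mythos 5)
Your proof is correct, and for item (i) it is essentially the paper's argument: both reduce to the fact that, modulo affine functions, convex functions are generated by the kernels $y\mapsto|y-x|$, and both finish with monotone convergence; the only cosmetic difference is that you extract equality of barycenters from the asymptotics \eqref{eq:characPotF} of the potentials, whereas the paper gets $\pm x$ into the cone of admissible test functions as the monotone limits of $|\,\cdot\mp n|-n$. For the reverse direction of item (ii), however, you take a genuinely different route. The paper stays on the "test function" side: it observes that the linear span of the $f_x$ and the constants contains all compactly supported piecewise affine functions and concludes by uniform density of these in $C_c(\R)$, which directly yields $\TZ$-convergence. You instead pass to the "measure" side, using the standard fact that right derivatives of pointwise convergent convex functions converge wherever the limit is differentiable, together with the identity $U(\mu)'_+(x)=2\mu((-\infty,x])-m$, to get convergence of the distribution functions at all non-atoms of $\mu$ and hence weak convergence of measures of equal mass. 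Both arguments are standard and complete; yours has the small advantage of making explicit the probabilistic content of $U(\mu)'_+$ (which the paper uses implicitly elsewhere), while the paper's density argument avoids invoking the derivative-convergence theorem for convex functions. Your bookkeeping at the end — that $\TO$-convergence is $\TZ$-convergence plus convergence of the first absolute moments, the latter being exactly $U(\mu_n)(0)\to U(\mu)(0)$ — matches the paper's.
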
 
 \begin{proof}
 	Since for every $x\in \R$  the function $f_x:y\mapsto |y-x|$ is convex the direct implication of (i) is obvious. The reverse implication is part of the folklore (see e.g Exercise 1.7 of \cite{HiPrRoYo11}). It can be proved as follows: let $C$ be the cone of real functions $f$ for which $\int f \de \mu \leq \int f \de \mu'$. It includes the constants and also the functions $f_x$, $x\in \R$. Considering both sequences $(f_{\pm n}-n)_{n\in \mathbb{N}}$, by the monotone convergence theorem we obtain $\pm x\in C$. Hence $C$ contains any piecewise (we mean with finitely many pieces) affine convex function. By the monotone convergence theorem again we see that every convex function is in $C$.
 	
 	Since $f_x$ is affine close to $\pm\infty$, the direct implication of (ii) is obvious.  For the reverse implication, since all measures have the same finite mass and $U(\mu_n)(0)\to U(\mu)(0)$ we have $\int f d\mu_n\to_{n\in \infty} \int f d\mu$ for $x\mapsto 1$ and $x\mapsto |x|$. Therefore it suffices to establish the convergence for  every continuous and compactly supported function $f$. Notice that the vectorial space spanned by the functions $f_x$ and the constant functions includes the continuous and piecewise affine functions with compact support. Hence we can conclude by their density in $\mathcal{C}_c(\R)$ for the uniform norm.
 \end{proof}

Specified to families monotonously increasing in convex-stochastic order, the second part of the previous lemma yields the following result.

\begin{corollary} \label{cor:MonotonicityPCOC}
	Let $T \subset \mathbb{R}$ and $(\mu _t)_{t \in T}$ be a family in $\MO(\mathbb{R})$ that is increasing in convex-stochastic order, i.e. $\mu _s \leqcs \mu _t$ for all $s \leq t$ in $T$.
	There exists a countable set $S \subset T$ such that $t \mapsto \mu _t$ is a continuous map from $T \setminus S$ to $\MO(\mathbb{R})$ under $\TO$.
\end{corollary}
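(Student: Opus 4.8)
The plan is to reduce the continuity of $t \mapsto \mu _t$ into $(\MO(\R),\TO)$ to the continuity of countably many non-decreasing real functions of $t$, using the elementary fact that a non-decreasing real function on a subset of $\R$ has at most countably many points of discontinuity, together with the description of $\TO$-convergence by potential functions (Lemma~\ref{lemma:propertiesPotF}(ii)).

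First, I would note that for each $x \in \R$ the map $y \mapsto (y-x)^+$ is non-decreasing and convex, that $y \mapsto y$ is increasing and convex (affine), and that the constant function $1$ is (weakly) increasing and convex; hence, by definition of $\leqcs$, the three functions
\[
g_x : t \mapsto \int (y-x)^+ \de\mu _t(y), \qquad b : t \mapsto \int y \de\mu _t(y), \qquad m : t \mapsto \mu _t(\R)
\]
are non-decreasing on $T$ (a trivial limiting argument covers the last two if ``increasing'' is read in the strict sense). Using the identity $|y-x| = 2(y-x)^+ - (y-x)$ and finiteness of first moments, this yields
\[
U(\mu _t)(x) = \int |y-x| \de\mu _t(y) = 2\,g_x(t) - b(t) + x\,m(t),
\]
so for each fixed $x$ the function $t \mapsto U(\mu _t)(x)$ is a finite linear combination of non-decreasing functions, hence has at most countably many discontinuities, all contained in $D(g_x) \cup D(b) \cup D(m)$, where $D(\cdot)$ denotes the (countable) set of discontinuity points. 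I would then set $S := D(b) \cup D(m) \cup \bigcup _{q \in \mathbb{Q}} D(g_q)$, which is countable.

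Next I would fix $t_0 \in T \setminus S$ and an arbitrary sequence $t_n \to t_0$ in $T$, and show $\mu _{t_n} \to \mu _{t_0}$ in $\TO$; since $T\setminus S$ carries the metrizable subspace topology of $\R$, this establishes continuity of the restriction of $t\mapsto\mu_t$ to $T\setminus S$. By the choice of $S$ we have $m(t_n) \to m(t_0)$ and $U(\mu _{t_n})(q) \to U(\mu _{t_0})(q)$ for every $q \in \mathbb{Q}$. Since $c := \sup _n m(t_n) < \infty$ and each $U(\mu _t)$ is $m(t)$-Lipschitz, the family $(U(\mu _{t_n}))_n$ is equi-Lipschitz, and combined with pointwise convergence on the dense set $\mathbb{Q}$ this upgrades to $U(\mu _{t_n})(x) \to U(\mu _{t_0})(x)$ for every $x \in \R$. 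It then remains to convert this (together with $m(t_n) \to m(t_0)$) into $\TO$-convergence. If $m(t_0) = 0$ then $\mu _{t_0} = 0$ and $\mu _{t_n} \to 0$ follows immediately from $m(t_n) \to 0$ and $\int |x| \de\mu _{t_n} = U(\mu _{t_n})(0) \to 0$. If $m(t_0) > 0$, I would equalize masses by adding point masses at $0$: the measures $\tilde\mu _n := \mu _{t_n} + (c-m(t_n))\delta _0$ and $\tilde\mu _0 := \mu _{t_0} + (c-m(t_0))\delta _0$ all have mass $c$ (note $c \geq m(t_0)$), and $U(\tilde\mu _n)(x) = U(\mu _{t_n})(x) + (c-m(t_n))|x| \to U(\tilde\mu _0)(x)$ for all $x$; hence $\tilde\mu _n \to \tilde\mu _0$ in $\TO$ by Lemma~\ref{lemma:propertiesPotF}(ii), and subtracting the convergent Dirac contributions yields $\mu _{t_n} \to \mu _{t_0}$ in $\TO$.

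I expect the substantive point of the argument to be the observation encoded in the displayed formula for $U(\mu _t)(x)$: that $\leqcs$-monotonicity turns each $t \mapsto U(\mu _t)(x)$ into a difference of monotone functions of $t$. The two remaining steps are routine — checking that the discontinuity set stays countable after the union over $q \in \mathbb{Q}$, and promoting pointwise convergence of potential functions from rational to all real $x$ (for which the equi-Lipschitz bound, i.e.\ continuity of the mass at $t_0$, is exactly what one needs) — so I do not anticipate a real obstacle.
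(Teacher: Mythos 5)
Your proof is correct and follows essentially the same route as the paper's: write $U(\mu_t)(q)$ as a combination of functions that are monotone in $t$ thanks to $\leqcs$-monotonicity, collect the countably many discontinuity points over $q\in\mathbb{Q}$ into $S$, upgrade pointwise convergence of the potential functions from $\mathbb{Q}$ to all of $\mathbb{R}$ (the paper uses convexity of the pointwise limits where you use an equi-Lipschitz bound coming from continuity of the mass at $t_0$ --- both work), and conclude via Lemma \ref{lemma:propertiesPotF}(ii). Your explicit mass-equalization step $\tilde\mu_n=\mu_{t_n}+(c-m(t_n))\delta_0$ is in fact a welcome refinement, since Lemma \ref{lemma:propertiesPotF}(ii) is stated only for sequences of constant mass and the paper's proof passes over this point silently.
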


\begin{proof}
	For all $q\in \mathbb{Q}$ the function 
	\begin{equation}\label{eq:mon int}
	t \mapsto U(\mu _t)(q) = \int _{\mathbb{R}} |y-q| \de \mu _t(y) = 2 \int _{\mathbb{R}} (y-q)^+ \de \mu _t(y) - \int _{\mathbb{R}} (y-q) \de \mu _t(y)
	\end{equation}
	is continuous except on a countable set $S_q$ because it is the difference of two functions that are monotonously increasing in $t$. Set $S = \bigcup _{q \in \mathbb{Q}} S_q$.  Observe that $\bar{u}_t := \lim _{s \downarrow t} U(\mu _s)$ is a well defined convex function as a pointwise limit of convex functions. This limit exists by monotonicity in $t$ of the integrals in \eqref{eq:mon int}.
	Also $u_t(x) := U(\mu_t)(x)$
	is a convex function 
	and we get $\bar{u}_t = u_t$ on $\mathbb{Q}$ for all $t \not \in S$. Since both $\bar{u}_t$ and $u_t$ are continuous as convex  functions this equality extends to $\mathbb{R}$. Similarly, it holds $\lim _{r \uparrow t} U(\mu _r)(x) = U(\mu _t)(x)$ for all $x \in \mathbb{R}$ and  $t \not \in S$.
	Thus, the map $t \mapsto U(\mu _t)(x)$ is continuous for every $x \in \mathbb{R}$ at any time $t \not \in S$. This transfers to the continuity of $t\mapsto \mu_t$ outside of $S$ by Lemma \ref{lemma:propertiesPotF} (ii).
\end{proof}

Despite monotonicity, a family increasing in convex-stochastic order does in general not admit left- and right-limits everywhere under $\TO$. For instance, the family $(\mu _t)_{t \in [0,1]}$ with
\begin{equation*}
\mu _t = \frac{1-t^2}{2-t^2} \delta _{-\frac{1}{1-t}} + \frac{1}{2-t^2} \delta _{1+t} \hspace*{1cm} \mu _1 = \delta _2
\end{equation*}
does not have a left-limit at $1$.

\subsection{Infimum and supremum in convex order}

\begin{definition}
	Let $\mathcal{A}$ be a set of measures in $\MO(\mathbb{R})$. If $\mathcal{A}$ possesses a smallest upper bound w.r.t.\ convex order, we call it the convex supremum of $\mathcal{A}$ and denote it by $\mathrm{Csup} \ \mathcal{A}$. It is then the unique measure $\zeta$ such that
	\begin{enumerate}
		\item [(i)] $\mu \leqc \zeta$ for all $\mu \in \mathcal{A}$ and 
		\item [(ii)] $\zeta \leqc \zeta'$ for all $\zeta'$ that satisfy (i).
	\end{enumerate}
	Similarly, we define $\mathrm{Cinf} \ \mathcal{A}$ as the convex infimum, if it exists.
\end{definition}

\begin{proposition} \label{prop:ExistenceCSup}
	Let $\mathcal{A}$ be a non-empty subset of $\MO(\mathbb{R})$ such that all measures in $\mathcal{A}$ have the same mass and the same barycenter.
	\begin{enumerate}
		\item [(i)]  The convex infimum $\mathrm{Cinf} \, \mathcal{A}$ exists. 
		\item[(ii)] 
		If there exists some $\theta \in \MO(\mathbb{R})$ such that $\mu  \leqcp \theta$ holds for all $\mu \in \mathcal{A}$, then the convex supremum  $\mathrm{Csup} \, \mathcal{A}$ exists.
	\end{enumerate}
	Moreover, their potential functions satisfy
	\begin{eqnarray*}
		U(\mathrm{Cinf} \, \mathcal{A}) = \mathrm{conv}\left( \inf _{\mu \in \mathcal{A}} \ U(\mu) \right) \ \text{and}	 \hspace{3mm}	U(\mathrm{Csup} \, \mathcal{A}) = \sup _{\mu \in \mathcal{A}} \ U(\mu).
	\end{eqnarray*}		
	where $\mathrm{conv}(f)$ denotes the convex hull of a function $f$, i.e. the largest convex function that is pointwise smaller than $f$.
\end{proposition}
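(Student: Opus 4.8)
The plan is to transfer everything to the level of potential functions by means of Lemma~\ref{lemma:characPotF} and Lemma~\ref{lemma:propertiesPotF}(i). Write $m$ for the common mass and $x^{\ast}$ for the common barycenter of the measures in $\mathcal{A}$, and fix some $\mu_{0}\in\mathcal{A}$ (possible since $\mathcal{A}\neq\emptyset$). By Jensen's inequality, for every $\mu\in\mathcal{A}$ one has $m\,|x-x^{\ast}|=U(m\delta_{x^{\ast}})(x)\le U(\mu)(x)$ for all $x$, equivalently $m\delta_{x^{\ast}}\leqc\mu$. Together with the asymptotics $U(\mu_{0})(x)-m|x-x^{\ast}|\to 0$ as $x\to\pm\infty$ furnished by Lemma~\ref{lemma:characPotF}, this provides the squeeze that will identify the asymptotic behaviour of the candidate potential functions.

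For part (i) I would set $\underline{u}:=\mathrm{conv}\bigl(\inf_{\mu\in\mathcal{A}}U(\mu)\bigr)$, which is well defined because $m|\cdot-x^{\ast}|$ is a convex minorant of $\inf_{\mu}U(\mu)$. Then $\underline{u}$ is convex by construction, and $m|x-x^{\ast}|\le\underline{u}(x)\le U(\mu_{0})(x)$ (the second inequality since $\underline{u}\le\inf_{\mu}U(\mu)\le U(\mu_{0})$); in particular $\underline{u}\ge 0$ and $\underline{u}(x)-m|x-x^{\ast}|\to 0$ at $\pm\infty$ by squeezing. Hence Lemma~\ref{lemma:characPotF} yields a unique $\zeta\in\MO(\mathbb{R})$ of mass $m$, barycenter $x^{\ast}$, with $U(\zeta)=\underline{u}$. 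That $\zeta=\mathrm{Cinf}\,\mathcal{A}$ follows from Lemma~\ref{lemma:propertiesPotF}(i): $U(\zeta)=\underline{u}\le U(\mu)$ gives $\zeta\leqc\mu$ for all $\mu\in\mathcal{A}$; and if $\zeta'\leqc\mu$ for all $\mu\in\mathcal{A}$, then $\zeta'$ has mass $m$ by Lemma~\ref{lemma:relationOrders}(i), so $U(\zeta')\le\inf_{\mu}U(\mu)$, and since $U(\zeta')$ is convex, $U(\zeta')\le\mathrm{conv}(\inf_{\mu}U(\mu))=U(\zeta)$, i.e.\ $\zeta'\leqc\zeta$. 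This also yields the displayed formula for $U(\mathrm{Cinf}\,\mathcal{A})$.

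For part (ii) I would instead take $\bar{u}:=\sup_{\mu\in\mathcal{A}}U(\mu)$, which is convex as a pointwise supremum of convex functions and non-negative since $\bar{u}\ge U(\mu_{0})\ge 0$. The point is to show $\bar{u}$ is finite and $\bar{u}(x)-m|x-x^{\ast}|\to 0$ at $\pm\infty$, and here the hypothesis $\mu\leqcp\theta$ (rather than merely $U(\mu)\le U(\theta)$) is essential. I would use the identity, valid for $x\ge x^{\ast}$,
\[
U(\mu)(x)-m(x-x^{\ast})=2\int_{\mathbb{R}}(z-x)^{+}\de\mu(z),
\]
which follows by splitting $\int|z-x|\de\mu$ into its positive and negative parts and using that $\mu$ has mass $m$ and barycenter $x^{\ast}$; the symmetric identity with $(x-z)^{+}$ holds for $x\le x^{\ast}$. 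Since $z\mapsto(z-x)^{+}$ is non-negative and convex, $\mu\leqcp\theta$ gives $0\le U(\mu)(x)-m|x-x^{\ast}|\le 2\int(z-x)^{+}\de\theta(z)$ for all $x\ge x^{\ast}$, a bound uniform over $\mu\in\mathcal{A}$, and $\int(z-x)^{+}\de\theta(z)\to 0$ as $x\to+\infty$ because $\theta$ has finite first moment; symmetrically at $-\infty$. Thus $\bar{u}$ is finite everywhere with $\bar{u}(x)-m|x-x^{\ast}|\to 0$, so by Lemma~\ref{lemma:characPotF} there is $\zeta\in\MO(\mathbb{R})$ of mass $m$, barycenter $x^{\ast}$, with $U(\zeta)=\bar{u}$; the verification $\zeta=\mathrm{Csup}\,\mathcal{A}$ together with the corresponding potential-function formula is the mirror image of part (i).

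The main obstacle is precisely the asymptotics of $\bar{u}$ in part (ii): a pointwise supremum of functions each individually asymptotic to $m|x-x^{\ast}|$ need not itself be asymptotic to $m|x-x^{\ast}|$, so one genuinely has to convert the convex-positive domination by $\theta$ into a tail estimate that is uniform over $\mathcal{A}$, which is exactly what the identity $U(\mu)(x)-m(x-x^{\ast})=2\int(z-x)^{+}\de\mu(z)$ accomplishes. Everything else reduces to routine manipulations of potential functions.
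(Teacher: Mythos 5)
Your proof is correct. Part (i) follows the paper's argument essentially verbatim: form the convex hull of $\inf_{\mu}U(\mu)$, squeeze it between $U(m\delta_{x^{\ast}})$ and $U(\mu_{0})$ to get the asymptotics required by Lemma \ref{lemma:characPotF}, and translate back via Lemma \ref{lemma:propertiesPotF}(i). Part (ii) reaches the same conclusion by a genuinely different route at the one nontrivial step, namely the asymptotics of $\bar{u}=\sup_{\mu}U(\mu)$. The paper invokes \cite[Lemma 4.5]{BeJu16} applied to $m\delta_{x^{\ast}}\leqcp\theta$ to manufacture a single measure $\theta'$ with $m\delta_{x^{\ast}}\leqc\theta'$ dominating every $\mu\in\mathcal{A}$ in convex order, so that $U(m\delta_{x^{\ast}})\le\bar{u}\le U(\theta')$ and the squeeze applies as in part (i). You instead prove the tail estimate directly from the identity $U(\mu)(x)-m|x-x^{\ast}|=2\int(z-x)^{+}\de\mu(z)$ for $x\ge x^{\ast}$ (which is correct and standard) combined with $\leqcp$-domination by $\theta$ and dominated convergence. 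Your version is more self-contained — it avoids the external lemma entirely and makes explicit exactly how the convex-positive hypothesis enters — at the cost of an explicit computation; the paper's version is shorter on the page but outsources the real work to \cite{BeJu16}. Both are valid, and your closing remark correctly identifies why the naive "each $U(\mu)$ is asymptotic to $m|x-x^{\ast}|$, hence so is the sup" argument would fail without a uniform bound.
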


\begin{proof}
	Item (i): Since the measures of $\mathcal{A}$ all have the same mass $m$ and barycenter $x$ and $U(m \delta _x)$ is convex, the set $\{ g : g \textit{ convex, } g \leq \inf _{\mu \in \mathcal{A}} U(\mu) \}$ is not empty. Let $u$ be defined by $u(x) = \sup \{ g(x) : g \textit{ convex, } g \leq \inf _{\mu \in \mathcal{A}} U(\mu) \}$. It is convex as the pointwise supremum of convex functions and $U(m \delta _x) \leq u \leq U(\mu)$ for any fixed $\mu \in \mathcal{A}$. Hence $u$ possesses the right behaviour at $\pm \infty$ in the sense of Lemma \ref{lemma:characPotF}. Therefore $u$ is a potential function and the corresponding measure satisfies the properties of a convex infimum by Lemma \ref{lemma:propertiesPotF} (i).
	
	Item (ii): According to \cite[Lemma 4.5]{BeJu16} applied to $m\delta_x$ and $\theta$ the ordering $m\delta_x\leqcp \theta$ implies that there exists a $\theta' \in \MO(\mathbb{R})$ with $m \delta _x \leq _c \theta'$ such that for all $\eta \in \MO(\mathbb{R})$ with $m\delta_x\leqc \eta\leqcp \theta$ we have $\eta\leqc \theta'$. More precisely if, if $\theta(\R)=m$ we choose $\theta'=\theta$. If not, there exists $a\leq a'$ and $b,b'$ with $b\in [0,\theta(\{a\})]$, $b'\in [0,\theta(\{a'\})]$ such that $\theta'=\theta_{|(-\infty,a)}+b\delta_a+b'\delta_{a'}+\theta_{|(a',+\infty)}$.
	
	In particular, we obtain $U(m \delta_x) \leq U(\mu)  \leq U(\theta')$ for every $\mu\in \mathcal{A}$ and therefore the convex function $u = \sup _{\mu \in \mathcal{A}} U(\mu)$ satisfies $U(m \delta_x) \leq u  \leq U(\theta')$. Thus, $u$ has the right behaviour at $\pm \infty$ in the sense of Lemma \ref{lemma:characPotF}.  Hence, by Lemma \ref{lemma:characPotF} $u$ is a potential function and the corresponding measure satisfies the properties of a convex supremum (see Lemma \ref{lemma:propertiesPotF} (i)).
\end{proof}

\begin{remark}
	The assumption that all measures in $\mathcal{A}$ have the same mass and barycenter is equivalent to the assumption that $\mathcal{A}$ has a lower bound w.r.t.\ the convex order. Hence, we don't need an additional lower bound in (i).
\end{remark}

\begin{lemma} \label{lemma:CSupSum}
	Let $(\mu _n) _{n \in \mathbb{N}}$ be a sequence in $\MO(\mathbb{R})$.
	\begin{enumerate}
		\item [(i)] If $\mu _m \leqc \mu _n$ for all $n \leq m$ in $\mathbb{N}$, then $(\mu _n) _{n \in \mathbb{N}}$ converges to $\mathrm{Cinf} \{ \mu _n : n \in \mathbb{N}\}$ under $\TO$.
		\item [(ii)] If $\mu _n \leqc \mu _m \leqcp \theta$ for all $n \leq m$ in $\mathbb{N}$ and some $\theta \in \MO(\mathbb{R})$, then $(\mu _n) _{n \in \mathbb{N}}$ converges to $\mathrm{Csup} \{ \mu _n : n \in \mathbb{N}\}$ under $\TO$.
		\item [(iii)] If $(\mu _n')_{n \in \mathbb{N}}$ is another sequence in $\MO(\mathbb{R})$ and both sequences are increasing in convex-oder and are uniformly bounded from above in convex-positive order, then
		\begin{equation*}
		\mathrm{Csup} \left\{ \mu _n + \mu' _n : n \in \mathbb{N}\right\} = \mathrm{Csup} \left\{ \mu _n : n \in \mathbb{N}\right\} +  \mathrm{Csup} \left\{ \mu' _n : n \in \mathbb{N}\right\}.
		\end{equation*}
	\end{enumerate}
\end{lemma}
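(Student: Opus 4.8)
The plan is to pass everything through the potential-function dictionary furnished by Lemma~\ref{lemma:propertiesPotF} and Proposition~\ref{prop:ExistenceCSup}. The three facts I will use repeatedly are: the map $\mu\mapsto U(\mu)$ is additive, $U(\mu+\mu')=U(\mu)+U(\mu')$; for measures of equal mass, $\mu\leqc\mu'$ is equivalent to $U(\mu)\le U(\mu')$ pointwise (Lemma~\ref{lemma:propertiesPotF}(i)); and, again for measures of equal mass, convergence under $\TO$ is equivalent to pointwise convergence of potential functions (Lemma~\ref{lemma:propertiesPotF}(ii)). Note that a sequence monotone in $\leqc$ consists of measures with a common mass $m$ and a common barycenter $x^*$ by Lemma~\ref{lemma:relationOrders}(i), and the same holds for $(\mu_n+\mu'_n)$ in (iii); moreover $\mathrm{Cinf}$ and $\mathrm{Csup}$, being comparable in $\leqc$ to the members of the sequence, inherit this common mass, so Lemma~\ref{lemma:propertiesPotF}(ii) will indeed apply to identify them as $\TO$-limits.

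For (i), the hypothesis $\mu_m\leqc\mu_n$ for $n\le m$ says that $(U(\mu_n))_{n\in\mathbb{N}}$ is pointwise decreasing. It is bounded below by $U(m\delta_{x^*})$ (Jensen), so it converges pointwise to $u_\infty:=\inf_n U(\mu_n)$, and $u_\infty$ is convex as a pointwise limit of convex functions; hence $\mathrm{conv}(\inf_n U(\mu_n))=u_\infty$. By Proposition~\ref{prop:ExistenceCSup}(i) this equals $U(\mathrm{Cinf}\{\mu_n:n\in\mathbb{N}\})$, so $U(\mu_n)\to U(\mathrm{Cinf}\{\mu_n:n\in\mathbb{N}\})$ pointwise and Lemma~\ref{lemma:propertiesPotF}(ii) gives the $\TO$-convergence. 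For (ii) the argument is the mirror image: $(U(\mu_n))_{n\in\mathbb{N}}$ is pointwise increasing, hence converges pointwise to $\sup_n U(\mu_n)$, and by Proposition~\ref{prop:ExistenceCSup}(ii)---whose hypothesis is precisely the assumed bound $\mu_m\leqcp\theta$---the function $\sup_n U(\mu_n)$ equals $U(\mathrm{Csup}\{\mu_n:n\in\mathbb{N}\})$; Lemma~\ref{lemma:propertiesPotF}(ii) finishes. The only point requiring care is that the pointwise limit is in each case a genuine potential function, i.e.\ has the correct linear growth at $\pm\infty$; this is guaranteed by the uniform lower bound $U(m\delta_{x^*})$ together with an upper bound ($U(\mu_1)$ in (i); the dominating $\theta'$ supplied by the proof of Proposition~\ref{prop:ExistenceCSup}(ii) in the case of (ii)), so I simply quote Proposition~\ref{prop:ExistenceCSup} rather than re-checking it.

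For (iii), I first observe that $(\mu_n+\mu'_n)_{n\in\mathbb{N}}$ again satisfies the hypotheses of (ii): it is increasing in $\leqc$ because $\leqc$ is additive, and it is bounded above in $\leqcp$ by $\theta+\theta'$ because $\leqcp$ is additive, where $\theta,\theta'$ are the given convex-positive bounds; in particular all three convex suprema exist by Proposition~\ref{prop:ExistenceCSup}(ii). Part (ii) then yields $\mu_n\to\mathrm{Csup}\{\mu_n\}$, $\mu'_n\to\mathrm{Csup}\{\mu'_n\}$ and $\mu_n+\mu'_n\to\mathrm{Csup}\{\mu_n+\mu'_n\}$ under $\TO$; since every functional $I_f$ generating $\TO$ is additive, the addition map on $\MO(\mathbb{R})$ is $\TO$-continuous, whence also $\mu_n+\mu'_n\to\mathrm{Csup}\{\mu_n\}+\mathrm{Csup}\{\mu'_n\}$, and as $\TO$ is Hausdorff the two limits coincide. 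A shortcut avoiding uniqueness of limits is to apply $U$ directly: $U(\mathrm{Csup}\{\mu_n+\mu'_n\})=\sup_n\big(U(\mu_n)+U(\mu'_n)\big)=\sup_nU(\mu_n)+\sup_nU(\mu'_n)$, using additivity of $U$ and the identity $\sup_n(a_n+b_n)=\sup_na_n+\sup_nb_n$ for increasing real sequences; the right-hand side is $U(\mathrm{Csup}\{\mu_n\}+\mathrm{Csup}\{\mu'_n\})$, and injectivity of $U$ (Lemma~\ref{lemma:characPotF}) concludes. I do not anticipate a real obstacle: once the potential-function translation is in place, (i) and (ii) are monotone-convergence statements and (iii) is linearity of $\sup$ on increasing sequences; the only genuinely delicate point---that pointwise limits of potential functions of a fixed mass stay potential functions---is already absorbed into Proposition~\ref{prop:ExistenceCSup}.
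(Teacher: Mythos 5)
Your proof is correct and follows exactly the route the paper intends: its own proof is the one-line remark that Proposition~\ref{prop:ExistenceCSup} and Lemma~\ref{lemma:propertiesPotF} reduce the statement to well-known facts about monotone sequences of convex functions, and your argument is a careful fleshing-out of precisely that reduction (monotone pointwise convergence of the $U(\mu_n)$, identification of the limit as $U(\mathrm{Cinf})$ resp.\ $U(\mathrm{Csup})$, additivity of $U$ and of $\sup$ along increasing sequences for (iii)). No gaps.
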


\begin{proof}
	With Proposition \ref{prop:ExistenceCSup} and Lemma \ref{lemma:propertiesPotF} we can rewrite this statement in terms of sequences  of real functions and then the statement is well known.
\end{proof}

\begin{lemma} \label{lemma:CSupApprox}
	Let $\mathcal{A}$ be a non-empty subset of $\MO(\mathbb{R})$ such that all measures in $\mathcal{A}$ have the same mass, the same barycenter and are dominated by some $\theta \in \MO(\mathbb{R})$ in convex-positive order.
	If additionally for all $\mu_1, \mu _2 \in \mathcal{A}$ there exists some $\mu ' \in \mathcal{A}$ such that $\mu _1 \leqc \mu '$ and $\mu _2 \leqc \mu '$, then there exists an increasing sequence $(\mu _n)_{n \in \mathbb{N}}$ in $\mathcal{A}$ that converges to $\mathrm{Csup} \ \mathcal{A}$ under $\TO$.
\end{lemma}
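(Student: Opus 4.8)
The plan is to pass to potential functions, where everything becomes a soft statement about convex functions on $\mathbb{R}$. Recall that by Lemma~\ref{lemma:characPotF} a measure in $\MO(\mathbb{R})$ with fixed mass $m$ and barycenter $x^*$ is uniquely determined by its potential function, that by Lemma~\ref{lemma:propertiesPotF} the convex order and $\TO$-convergence correspond to pointwise order and pointwise convergence of potential functions, and that convex functions on $\mathbb{R}$ are continuous, hence determined by their values on $\mathbb{Q}$. By Proposition~\ref{prop:ExistenceCSup}(ii) the measure $\zeta := \mathrm{Csup}\,\mathcal{A}$ exists and $u^* := U(\zeta) = \sup_{\mu \in \mathcal{A}} U(\mu)$.

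First I would fix, for each $q \in \mathbb{Q}$, a sequence $(\mu_{q,k})_{k \in \mathbb{N}}$ in $\mathcal{A}$ with $U(\mu_{q,k})(q) \to u^*(q)$ as $k \to \infty$ (possible by definition of the supremum), and enumerate the countable family $\{\mu_{q,k} : q \in \mathbb{Q},\, k \in \mathbb{N}\}$ as $(\rho_j)_{j \in \mathbb{N}}$. Then I would use the directedness assumption to assemble these into one chain: set $\mu_1 := \rho_1$, and given $\mu_n \in \mathcal{A}$ choose $\mu_{n+1} \in \mathcal{A}$ with $\mu_n \leqc \mu_{n+1}$ and $\rho_{n+1} \leqc \mu_{n+1}$. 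The resulting sequence $(\mu_n)_{n \in \mathbb{N}}$ lies in $\mathcal{A}$ and satisfies $\mu_n \leqc \mu_m \leqcp \theta$ for all $n \leq m$, so by Lemma~\ref{lemma:CSupSum}(ii) it converges under $\TO$ to $\mu_\infty := \mathrm{Csup}\{\mu_n : n \in \mathbb{N}\}$, whose potential function is $\sup_n U(\mu_n)$.

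It then remains to identify $\mu_\infty$ with $\zeta$. Since each $\mu_n \in \mathcal{A}$, we have $U(\mu_\infty) = \sup_n U(\mu_n) \leq \sup_{\mu \in \mathcal{A}} U(\mu) = u^*$. Conversely, fix $q \in \mathbb{Q}$; for each $k$ there is $j$ with $\rho_j = \mu_{q,k}$, and by construction $\rho_j \leqc \mu_n$ for all $n \geq j$, hence $U(\mu_{q,k})(q) \leq U(\mu_\infty)(q)$, and letting $k \to \infty$ gives $u^*(q) \leq U(\mu_\infty)(q)$. Therefore $U(\mu_\infty) = u^*$ on $\mathbb{Q}$, and since both are convex (hence continuous) this equality extends to all of $\mathbb{R}$; Lemma~\ref{lemma:characPotF} then yields $\mu_\infty = \zeta$, so $(\mu_n)_{n \in \mathbb{N}}$ is the desired increasing sequence in $\mathcal{A}$ converging to $\mathrm{Csup}\,\mathcal{A}$ under $\TO$.

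The only genuinely delicate point is the bookkeeping in the second paragraph: one must pack the countably many ``coordinatewise'' approximants $(\mu_{q,k})$ into a \emph{single} sequence that is increasing in convex order, and this is exactly what the added directedness hypothesis is there to permit. Once that chain is built, the boundedness by $\theta$ in $\leqcp$ supplies convergence via Lemma~\ref{lemma:CSupSum}(ii), and the identification of the limit is the routine density argument sketched above.
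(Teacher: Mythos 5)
Your proof is correct and follows essentially the same route as the paper's: choose rational approximants of the supremum of the potential functions, chain them into a single increasing sequence via the directedness hypothesis, and identify the limit by showing the potential functions agree on $\mathbb{Q}$ and hence, by convexity, on all of $\mathbb{R}$. The only cosmetic difference is that you first invoke Lemma \ref{lemma:CSupSum}(ii) to produce the limit and then identify it, whereas the paper verifies the pointwise convergence of potential functions directly and concludes via Lemma \ref{lemma:propertiesPotF}(ii).
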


\begin{proof}
	The potential function of $\mathrm{Csup}\, \mathcal{A}$ is given by $u = \sup _{\mu \in \mathcal{A}} U(\mu)$.  For any $q \in \mathbb{Q}$ there exists a sequence $(\nu ^{q}_k)_{k \in \mathbb{N}}$ of measures in $\mathcal{A}$ such that for the corresponding potential functions $u^q _k = U(\nu ^q _k)$ the sequence $(u_k ^q (q))_{k \in \mathbb{N}}$ converges to $u(q)$. 
	
	Let $(q_n)_{n \in \mathbb{N}}$ be an enumeration of $\mathbb{Q}$, set $\mu _1 = \nu ^{q_1} _1$ and choose a $\mu _{n} \in \mathcal{A}$ that is an upper bound in convex order to the finite set
	\begin{equation*}
	\{ \mu _{n-1} \} \cup \left\{ \nu ^{q_l} _k : 1 \leq k,l \leq n  \right\}
	\end{equation*}
	which is possible by assumption.	
	Thereby, we get an increasing sequence in $\mathcal{A}$ that satisfies $\lim _{n \rightarrow \infty} U(\mu _n)(q) = u(q)$ for all $q \in \mathbb{Q}$. 
	Since $(\mu _n)_{n \in \mathbb{N}}$ is increasing in convex order, $\lim _{n \rightarrow \infty} U(\mu _n)(x) = \sup _{n \in \mathbb{N}} U(\mu _n)(x)$ for all $x \in \mathbb{R}$.  Thus,   $ \sup _{n \in \mathbb{N}} U(\mu _n)$ and $u$ are  convex functions that agree on $\mathbb{Q}$ and, hence, on $\mathbb{R}$. Hence, $\lim _{n \rightarrow \infty} U(\mu _n)(x)=u(x)$ for all $x\in\R$  and we can apply Lemma \ref{lemma:propertiesPotF} (ii) to conclude that $(\mu_n)_{n \in \mathbb{N}}$ converges to $\mathrm{Csup}\, \mathcal{A}$ under $\TO$.
\end{proof}

\subsection{Peacocks and Kellerer's Theorem}

In this section we introduce notation regarding peacocks and martingale measures.

We fix  a totally ordered index set $(T, \leq)$. 
As already indicated in Subsection \ref{sec:Notation}, we are not working on the level of processes but with their distributions on the state space $\mathbb{R}^T$. However, we would like to introduce the martingale property and the Markov property that are typically formulated for processes indexed by $T$ and not probability measures on $\R^T$. 

\begin{definition} \label{def:MartSpaces}
	Let $\pi \in \PO(\mathbb{R}^T)$.
	\begin{enumerate}
		\item [(i)] We call $\pi$ a martingale measure if the canonical process $(X_t)_{t \in T}$ is a martingale w.r.t.\ its natural filtration under $\pi$, i.e.\ if
		\begin{equation*}
		\mathbb{E}_{\pi} \left[ X_t \ \vert \ \mathcal{F}_s \right] = X_s \hspace{1cm} \pi\text{-a.e.}
		\end{equation*}
		for all $s \leq t$ in $T$. The set of all martingale measures on $\mathbb{R}^T$ is denoted by $\MM_T$.
		\item[(ii)] The probability measure $\pi$ is said to be Markov if the canonical process $(X_t)_{t \in T}$ is a Markov process under $\pi$, i.e. if
		\begin{equation*}
		\mathbb{E}_{\pi} \left[ \1_{A}(X_t)  \vert  \mathcal{F}_s \right] = \mathbb{E}_{\pi} \left[ \1_{A}(X_t)  \vert  X_s \right] \hspace{1cm} \pi\text{-a.e.}
		\end{equation*}
		for all Borel sets $A \subset \mathbb{R}$ and $s < t$ in $T$.
	\end{enumerate}
\end{definition}

We equip $\mathsf{M}_T$ with  the topology inherited from $\TO$ on $\PO(\mathbb{R}^T)$ and the corresponding Borel $\sigma$-algebra. All subsets of $\mathsf{M}_T$ are equipped with the subspace topology and subspace $\sigma$-algebra.\footnote{Recall that the Borel $\sigma$-algebra of the subspace topology coincides with the subspace $\sigma$-algebra of the Borel $\sigma$-algebra corresponding to the topology on the ambient space.}
By Jensen's inequality, the marginal distributions $(\mathrm{Law}_{\pi}(X_t))_{t \in T}$ of a martingale measure $\pi$ form a family in $\PO(\mathbb{R})$ that is increasing in convex order. Recall from the introduction that those families are called peacocks:

\begin{definition}\label{def:PCOCspace}
	We call a family $(\mu _t)_{t \in T}$ in $\PO(\mathbb{R})$ a peacock, if $\mu _s \leqc \mu_t$ for all $s \leq t$ and we denote by $\PP_T$ the set of all peacocks indexed by $T$.
	Moreover, we say that a martingale measure $\pi$ is associated with a peacock $(\mu _t)_{t \in T}$ if $\mathrm{Law}_{\pi}(X_t) = \mu _t$ for all $t \in T$.
\end{definition}

Since all elements of a family of finite measures increasing in convex order have the same mass (not always $1$) by Lemma \ref{lemma:relationOrders} (i), they can therefore easily be rescaled to become peacocks. We equip $\PP_T$ with the inherited product topology on $\PO(\mathbb{R})^T$ where each factor $\PO(\mathbb{R})$ is equipped with $\TO$. The corresponding Borel $\sigma$-algebra is the product $\sigma$-algebra. 

\begin{definition}
	Let $S \subset T$ and $(\mu _t)_{t \in S}$ be a family in $\PO(\mathbb{R})$. By $\mathsf{M}_T((\mu _t)_{t \in S})$ we denote the set of all  martingale measures $\pi \in \MM _T$ satisfying
	$\mathrm{Law}_{\pi}(X_t) = \mu _t$ for all $t \in S$. 
\end{definition}

Thanks to the following result we know precisely when $\MM_T((\mu _t)_{t \in T})$ is not empty.

\begin{proposition}[Kellerer's Theorem \cite{Ke72, Ke73}] \label{prop:KellerersThm}
	Let $(\mu _t)_{t \in T}$ be a family in $\PO(\mathbb{R})$. The following are equivalent:
	\begin{itemize}
		\item [(i)] The family $(\mu _t)_{t \in T}$ is a peacock.
		\item [(ii)] There exists a  martingale measure $\pi \in \MM_T((\mu _t)_{t \in T})$ which can moreover be chosen to be Markov.
	\end{itemize}
\end{proposition}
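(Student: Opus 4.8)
The implication (ii) $\Rightarrow$ (i) is immediate: if $\pi$ is a martingale measure with $\mathrm{Law}_\pi(X_t)=\mu_t$, then for any convex $\varphi$ and $s\le t$, Jensen's inequality applied conditionally on $\mathcal F_s$ gives $\E{\pi}{\varphi(X_t)\mid\mathcal F_s}\ge \varphi(\E{\pi}{X_t\mid\mathcal F_s})=\varphi(X_s)$, and integrating yields $\int\varphi\,\de\mu_s\le\int\varphi\,\de\mu_t$, i.e.\ $\mu_s\leqc\mu_t$. So the substance of the statement is the existence direction (i) $\Rightarrow$ (ii), which is exactly the theorem of Kellerer \cite{Ke72, Ke73} that the paper cites, so the ``proof'' here is really a matter of reducing the stated form to the classical one.

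The plan is as follows. First I would reduce to a finite-marginal statement: for any finite subset $\{t_1<\dots<t_n\}\subset T$, a peacock $(\mu_{t_1},\dots,\mu_{t_n})$ admits a martingale coupling — indeed a \emph{Markov} one — and this is elementary, being built by composing one-step martingale kernels $\kappa_i$ (a Markov kernel with $\int y\,\kappa_i(x,\de y)=x$ and $\mu_{t_i}\kappa_i=\mu_{t_{i+1}}$ exists precisely because $\mu_{t_i}\leqc\mu_{t_{i+1}}$, e.g.\ via Strassen's theorem or the explicit construction in \cite{HiPrRoYo11}). This produces a consistent family of finite-dimensional Markov martingale laws. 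Second, by Kolmogorov's extension theorem there is a probability measure $\pi$ on $\R^T$ with these finite-dimensional marginals; it automatically satisfies $\mathrm{Law}_\pi(X_t)=\mu_t$ and has the one-step martingale identity on every finite subset, hence (since the natural filtration of the canonical process is generated by finitely many coordinates at a time) is a martingale measure in $\MM_T$ in the sense of Definition \ref{def:MartSpaces}. The point where one must be careful — and where the genuine depth of Kellerer's theorem lies — is that the naive composition of kernels does \emph{not} produce a consistent family: the Markov kernel transporting $\mu_{t_i}$ to $\mu_{t_{i+1}}$ is not unique, and a choice good for $(t_1,\dots,t_n)$ need not be compatible with one good for a refinement. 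Kellerer's resolution is to select the kernels canonically (e.g.\ via a fixed-point/compactness argument in the space of Markov kernels, or via the Lipschitz-Markov property) so that the family \emph{is} projective; alternatively one approximates $(\mu_t)_{t\in T}$ by piecewise-constant peacocks on finite grids and passes to a limit using tightness (the marginals are a peacock, hence uniformly integrable on any bounded time set by Lemma \ref{lemma:convDom}-type arguments) together with the closedness of $\MM_T$ under $\TO$.

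The main obstacle, then, is precisely this consistency/projectivity issue — reconciling the non-uniqueness of one-step martingale kernels across all finite subsets of $T$ simultaneously — and it is resolved by invoking Kellerer's original argument rather than reproving it. Since the paper explicitly cites \cite{Ke72, Ke73} and uses Proposition \ref{prop:KellerersThm} as a black box in Steps 2 of the outline, I would not reprove it in detail; I would state the (ii)$\Rightarrow$(i) direction as the short Jensen computation above and refer to \cite{Ke72, Ke73} (or to the modern account in \cite{HiPrRoYo11}) for (i)$\Rightarrow$(ii), noting only that the extension to an arbitrary totally ordered $T$ from the classical $T=[0,\infty)$ (or $T=\mathbb N$) case is routine via Kolmogorov extension once the finite-dimensional Markov martingale laws have been shown to be consistent.
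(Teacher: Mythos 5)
Your proposal is correct and matches the paper's treatment: the paper offers no proof of Proposition \ref{prop:KellerersThm} at all, simply citing \cite{Ke72, Ke73}, and your short Jensen argument for (ii)$\Rightarrow$(i) together with the deferral of the existence direction to Kellerer's original work is exactly the right division of labour. Your identification of the kernel-consistency (projectivity) issue as the genuine depth of the existence direction is accurate and needs no expansion here.
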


The existence of solutions to the peacock problem is also true for martingales on $\R^d$ with $d\geq 2$ (cf.\ \cite{HiPrRoYo11}) but it is still an open problem whether in this case the martingale can be chosen to be Markov. An extension to partially ordered sets of indices is possible but only in certain cases (cf. \cite{Ju16}).

\section{Parametrizations, shadows, and NSI} \label{sec:ImportantObjects}

The goal of this section is to introduce the three concepts that are crucial on the one hand for the construction of the shadow martingales, namely parametrizations and obstructed shadows, and on the other hand for the uniqueness of the shadow martingale measure, namely the NSI property, cf.\ Subsection \ref{sec:MainIdeaProof}.
Throughout this section we fix a totally ordered set $(T,\leq)$.

\subsection{Parametrizations}

\begin{definition} \label{def:GenParametrization}
	Let $\mathsf{X}$ be a measurable space and $\mu \in \PZ(\mathsf{X})$. A family $(\mu ^{\alpha})_{\alpha \in [0,1]}$ in $\MZ(\mathsf{X})$ is called a parametrization of $\mu$ if 
	\begin{enumerate}
		\item [(i)] $\mu ^{\alpha}(\mathsf{X}) = \alpha$ for all $\alpha \in [0,1]$,
		\item [(ii)] $\mu ^{\alpha} \leqp \mu ^{\alpha'}$ for all $\alpha \leq \alpha'$ in $[0,1]$ and
		\item [(iii)] $\mu ^1 = \mu$.
	\end{enumerate}
\end{definition}

Each parametrization of a probability measure $\mu$ can be seen as an explicit coupling of $\mu$ with a uniformly distributed random variable on $[0,1]$ that is added to the probability space. Recall, that $\lambda$ denotes the Lebesgue measure on $[0,1]$.

\begin{remark} \label{rem:ParamToCpl}
	Let $\mathsf{X}$ be a measurable space, $\mu \in \PZ(\mathsf{X})$ and $(\mu ^{\alpha})_{\alpha \in [0,1]}$ a family of finite measures on $\mathsf{X}$. The following are equivalent:
	\begin{enumerate}
		\item [(i)] The family $(\mu ^{\alpha})_{\alpha \in [0,1]}$ is a parametrization of $\mu$.
		\item [(ii)] There exists a coupling $\xi$ of $\lambda$ and $\mu$ with $\xi ([0,\alpha] \times B) = \mu ^{\alpha}(B)$ for all $\alpha \in [0,1]$ and measurable sets $B \subset E$.
	\end{enumerate}
	Clearly, the coupling $\xi$ is uniquely determined by $(\mu ^{\alpha})_{\alpha \in [0,1]}$ and vice versa.
\end{remark}

\begin{lemma} \label{lemma:ParamDetermined}
	Let $\mathsf{X}$ be a measurable space, $\mu, \nu \in \PZ(\mathsf{X})$, $(\mu ^{\alpha})_{\alpha \in [0,1]}$ a parametrization of $\mu$ and $(\nu ^{\alpha})_{\alpha \in [0,1]}$ a parametrization of $\nu$. If $\mu ^{\alpha}  = \nu ^{\alpha}$ for all $\alpha$ in a dense subset $A$ of $[0,1]$, then $\mu ^{\alpha} = \nu ^{\alpha}$ for all $\alpha \in [0,1]$ and, in particular, $\mu = \nu$.
\end{lemma}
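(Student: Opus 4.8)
The plan is to exploit the monotonicity built into the definition of a parametrization together with the structure of the positive order $\leqp$, which is determined by evaluation on a sufficiently rich family of sets. First I would fix a measurable set $B\subset\mathsf{X}$ and consider the two functions $\alpha\mapsto\mu^\alpha(B)$ and $\alpha\mapsto\nu^\alpha(B)$ on $[0,1]$. By property (ii) of Definition \ref{def:GenParametrization}, both are non-decreasing, since $\mu^\alpha\leqp\mu^{\alpha'}$ forces $\mu^\alpha(B)\le\mu^{\alpha'}(B)$, and likewise for $\nu$. The two functions agree on the dense set $A$ by hypothesis. A bounded monotone function that agrees with another bounded monotone function on a dense set must agree with it at every point of continuity of either, so the two functions can only disagree at countably many points --- but this alone is not quite enough, so I would instead pin down the values from one side.

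The key step is a one-sided limiting argument using countable additivity. Fix $\alpha\in[0,1]$ and a measurable $B$. Choose a sequence $\alpha_n\downarrow\alpha$ with $\alpha_n\in A$ (possible if $\alpha<1$; if $\alpha=1$ use property (iii) directly, $\mu^1=\mu=\nu=\nu^1$, so assume $\alpha<1$). Then by monotonicity $\mu^\alpha(B)\le\mu^{\alpha_n}(B)=\nu^{\alpha_n}(B)$ for all $n$, hence $\mu^\alpha(B)\le\lim_n\nu^{\alpha_n}(B)$. To control the right-hand side I would use that $\nu^{\alpha_n}\leqp\nu^{\alpha_1}$ and Remark \ref{rem:ParamToCpl}: the coupling $\xi$ associated with $(\nu^\beta)_{\beta\in[0,1]}$ satisfies $\nu^\beta(B)=\xi([0,\beta]\times B)$, so $\lim_{n}\nu^{\alpha_n}(B)=\lim_n\xi([0,\alpha_n]\times B)=\xi([0,\alpha]\times B)=\nu^\alpha(B)$ by continuity of measures along the decreasing sequence of sets $[0,\alpha_n]\times B\downarrow[0,\alpha]\times B$. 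This yields $\mu^\alpha(B)\le\nu^\alpha(B)$, and the symmetric argument (swapping the roles of $\mu$ and $\nu$) gives the reverse inequality, hence $\mu^\alpha(B)=\nu^\alpha(B)$ for every measurable $B$, i.e.\ $\mu^\alpha=\nu^\alpha$. Taking $\alpha=1$ and invoking (iii) gives $\mu=\nu$.

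The main obstacle, and the reason to route the argument through Remark \ref{rem:ParamToCpl} rather than working directly with the measures, is justifying the continuity $\lim_n\nu^{\alpha_n}(B)=\nu^\alpha(B)$ along a decreasing sequence: a priori the family $(\nu^\beta)_\beta$ is only required to be monotone in $\beta$, not right-continuous, so one cannot take right-continuity of $\beta\mapsto\nu^\beta(B)$ for granted. The coupling $\xi$ of $\lambda$ and $\nu$ supplied by the remark repairs this automatically, because $\xi([0,\beta]\times B)$ \emph{is} right-continuous in $\beta$ by countable additivity of the single measure $\xi$ --- the decreasing sequence of sets $[0,\alpha_n]\times B$ has intersection exactly $[0,\alpha]\times B$ and all have finite $\xi$-measure. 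Once this continuity is in hand the rest is the routine squeeze described above; no further subtlety arises, and density of $A$ is used only to supply the approximating sequence $\alpha_n\in A$.
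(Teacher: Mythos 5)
Your argument for $\alpha<1$ is correct and is essentially the mirror image of the paper's proof: the paper approximates $\alpha$ from \emph{below} by a sequence $\alpha_n\uparrow\alpha$ in $A$ and uses the coupling $\xi$ from Remark \ref{rem:ParamToCpl}, whereas you approximate from \emph{above} and use continuity from above of the finite measure $\xi$ along $[0,\alpha_n]\times B\downarrow[0,\alpha]\times B$. Both routes go through the same coupling; yours has the small aesthetic advantage that $\bigcap_n[0,\alpha_n]=[0,\alpha]$ exactly, so no boundary term needs to be discussed. (The monotone squeeze $\mu^\alpha(B)\le\mu^{\alpha_n}(B)=\nu^{\alpha_n}(B)$ is a harmless detour; you could equally well pass to the limit in the identity $\mu^{\alpha_n}(B)=\nu^{\alpha_n}(B)$ on both sides at once, using both couplings, which is what the paper does.)

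There is, however, one genuine gap: the case $\alpha=1$. You dispose of it by writing ``$\mu^1=\mu=\nu=\nu^1$ by property (iii)'', but property (iii) only gives $\mu^1=\mu$ and $\nu^1=\nu$; the middle equality $\mu=\nu$ is precisely part of the conclusion of the lemma (``in particular, $\mu=\nu$''), so this step is circular. Since $A$ is merely dense, you cannot assume $1\in A$, and your approximation-from-above scheme has no points of $A$ to the right of $1$ to work with. The fix is to fall back on approximation from below at $\alpha=1$ (and this also shows why the paper's from-below route works everywhere): take $\alpha_n\uparrow 1$ in $A$, so that $[0,\alpha_n]\times B\uparrow[0,1)\times B$ and hence $\lim_n\nu^{\alpha_n}(B)=\xi\bigl([0,1)\times B\bigr)=\xi\bigl([0,1]\times B\bigr)=\nu^1(B)$, where the middle equality holds because the first marginal of $\xi$ is Lebesgue measure and therefore $\xi(\{1\}\times B)=0$. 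The same computation for $\mu$ then gives $\mu^1(B)=\nu^1(B)$. With this one-line repair your proof is complete.
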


\begin{proof}
	Let $\alpha \in (0,1]$ and $(\alpha _n)_{n \in \mathbb{N}}$ a sequence in $A$ with $\alpha_n \uparrow \alpha$. Remark \ref{rem:ParamToCpl} yields that 
	\begin{equation*}
	\mu ^{\alpha}(B) = \lim _{n \rightarrow \infty} \mu ^{\alpha_n}(B) \hspace*{1cm} \text{and} \hspace*{1cm} \nu ^{\alpha}(B) = \lim _{n \rightarrow \infty} \nu ^{\alpha_n}(B)
	\end{equation*}
	for all measurable sets $B \subset \mathsf{X}$.
\end{proof}

Given a specific peacock, the degree of freedom in our construction (Theorem \ref{thm:GenExist}) is the choice of a parametrization of the initial marginal. Hence, our primary motivation to consider general  (non-interval based) parametrizations of probability measures is to enlarge the set of possible input choices. For instance, an initial distribution that contains atoms cannot satisfy condition (i) in Corollary \ref{thm:intro1}. The concept of parametrizations allows us to break these atoms into a continuum of quantiles.

\subsubsection{Convex parametrizations} \label{ss:convex_para}

\begin{definition}
	Let $\mu$ be in $\PO(\mathbb{R})$.
	A parametrization $(\nu ^{\alpha})_{\alpha \in [0,1]}$ of $\mu$ is said to be $\leq  _{c,s}$-convex if
	\begin{equation} \label{eq:csConvex}
	\frac{\nu ^{\alpha_2} - \nu ^{\alpha_1}}{\alpha_2 - \alpha_1} \leqcs  \frac{\nu ^{\alpha_3} - \nu ^{\alpha_2}}{\alpha_3 - \alpha_2}
	\end{equation}
	for all $\alpha _1 < \alpha _2  < \alpha _3 $ in $[0,1]$.
\end{definition}

Since both sides of inequality \eqref{eq:csConvex} can be interpreted as the slopes of secant lines of $\alpha \mapsto \nu^{\alpha}$ on $[\alpha_1, \alpha_2]$ and $[\alpha_2, \alpha_3]$, the inequality yields that $\alpha \mapsto \nu ^{\alpha}$ is convex in this sense.
Moreover, property \eqref{eq:csConvex} is equivalent to $\alpha\mapsto \int \varphi \mathrm{d}\nu^\alpha$ being convex for all increasing convex functions $\varphi: \mathbb{R} \rightarrow \mathbb{R}$.

\begin{lemma} \label{lemma:ExConvParam}
	Let $\mu \in \PO(\mathbb{R})$ and $(\nu ^{\alpha})_{\alpha \in [0,1]}$ be a parametrization of $\mu$. If there exists a sequence of nested intervals $(I_{\alpha})_{\alpha \in [0,1]}$ in $\mathbb{R}$ such that 
	\begin{enumerate}
		\item [(i)]  $\sup I_{\alpha} < + \infty$ and $\mathrm{supp}(\nu ^{\alpha}) \subset \overline{I_{\alpha}}$ for all $\alpha \in [0,1)$,
		\item [(ii)] $\mathrm{supp}(\nu ^{\alpha_2} - \nu ^{\alpha_1}) \subset \overline{{I_{\alpha_1}}^c}$ for all $\alpha_1 < \alpha _2$ in $[0,1]$ and
		\item [(iii)] $\alpha \mapsto  \int _{\mathbb{R}} y \de \nu ^{\alpha} (y)$ is convex,
	\end{enumerate}
	then the parametrization $(\nu ^{\alpha})_{\alpha \in [0,1]}$ is  $\leq_{c,s}$-convex.
\end{lemma}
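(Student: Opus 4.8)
# Proof Plan for Lemma \ref{lemma:ExConvParam}

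\textbf{The plan.} Fix increasing convex $\varphi:\mathbb{R}\to\mathbb{R}$ and $\alpha_1 < \alpha_2 < \alpha_3$ in $[0,1]$. I want to show
$$\frac{1}{\alpha_2-\alpha_1}\int \varphi\, \de(\nu^{\alpha_2}-\nu^{\alpha_1}) \ \leq\ \frac{1}{\alpha_3-\alpha_2}\int \varphi\, \de(\nu^{\alpha_3}-\nu^{\alpha_2}).$$
By Lemma \ref{lemma:relationOrders}(iii) — or rather its reasoning about which test functions suffice — and the usual density/monotone-convergence arguments (as in the proof of Lemma \ref{lemma:propertiesPotF}(i)), it is enough to verify the inequality for $\varphi$ of the form $\varphi = (\cdot - q)^+$ for $q\in\mathbb{R}$, together with the affine function $\varphi(y)=y$; the constants cancel automatically since $\nu^{\alpha_2}-\nu^{\alpha_1}$ and $\nu^{\alpha_3}-\nu^{\alpha_2}$ have masses $\alpha_2-\alpha_1$ and $\alpha_3-\alpha_2$. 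For $\varphi(y)=y$ the claimed inequality is exactly convexity of $\alpha\mapsto\int y\,\de\nu^\alpha(y)$, i.e.\ hypothesis (iii). So the whole content is the case $\varphi=(\cdot-q)^+$.

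\textbf{The case $\varphi=(\cdot-q)^+$.} Here I split on the location of $q$ relative to the intervals. If $q \geq \sup I_{\alpha_1}$, then by (i) $\operatorname{supp}(\nu^{\alpha_1})\subset\overline{I_{\alpha_1}}\subset(-\infty,q]$, so $(\cdot-q)^+$ vanishes $\nu^{\alpha_1}$-a.e., and moreover, on the region $\{y>q\}$ the functions $(\cdot-q)^+$ and $\operatorname{Id}$ agree up to the additive constant $-q$; hence
$$\int (\cdot-q)^+ \de \nu^{\alpha_2} = \int_{\{y>q\}} (y-q)\,\de\nu^{\alpha_2}(y),$$
and similarly for $\alpha_3$, while $\int(\cdot-q)^+\de\nu^{\alpha_1}=0$. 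Using (i) again for $\alpha_2$ I'd like to reduce this to the linear statement, but the cleaner route is: on $(-\infty,q]$ all three measures $\nu^{\alpha_1},\nu^{\alpha_2},\nu^{\alpha_3}$ contribute $0$ to $\int(\cdot-q)^+$, so the difference quotients of $\int(\cdot-q)^+\de\nu^\alpha$ at these three points equal the difference quotients of $\int_{\{y>q\}}(y-q)\,\de\nu^\alpha(y)$. If $q\geq \sup I_{\alpha_2}$ too, then only $\nu^{\alpha_3}$ charges $\{y>q\}$ and the right side is $\geq 0$ while the left side is $0$. If $\sup I_{\alpha_1}\le q<\sup I_{\alpha_2}$, I will use (ii): since $\operatorname{supp}(\nu^{\alpha_3}-\nu^{\alpha_2})\subset\overline{I_{\alpha_2}^{\,c}}$ and likewise for $(\nu^{\alpha_2}-\nu^{\alpha_1})$ relative to $I_{\alpha_1}^c$, one rewrites both difference quotients as integrals against the linear function $y\mapsto y$ over suitable regions and compares using (iii). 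The remaining case $q<\sup I_{\alpha_1}$: here I would like $(\cdot-q)^+$ to differ from $\operatorname{Id}$ only on the set $\{y\le q\}$, which by (i) and (ii) is not charged by the relevant differences $\nu^{\alpha_2}-\nu^{\alpha_1}$ and $\nu^{\alpha_3}-\nu^{\alpha_2}$ — this is precisely where (ii) forces the increments to live to the right of $q$ — so again both difference quotients coincide with those of $\alpha\mapsto\int y\,\de\nu^\alpha$ plus a constant, and the result follows from (iii).

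\textbf{Main obstacle.} The delicate point is the bookkeeping of where exactly each difference measure $\nu^{\beta}-\nu^{\gamma}$ is supported relative to the threshold $q$, and handling the boundaries/atoms carefully (the hypotheses are phrased with closures $\overline{I_\alpha}$, $\overline{I_{\alpha_1}^c}$). One has to make sure that, for every $q$, the function $(\cdot - q)^+$ restricted to the support of each relevant increment is affine — i.e.\ the increment is supported either entirely in $\{y\le q\}$ or entirely in $\{y\ge q\}$ — so that testing against $(\cdot-q)^+$ reduces to testing against an affine function, modulo a constant that cancels by equal masses. Conditions (i) and (ii) are engineered to guarantee exactly this nestedness, and the only real work is organizing the case distinction and invoking the density argument for general increasing convex $\varphi$; I expect no genuine difficulty beyond care with the closures and with the limiting argument that upgrades the $(\cdot-q)^+$-case to arbitrary increasing convex test functions.
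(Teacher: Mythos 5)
Your reduction to test functions of the form $y\mapsto(y-q)^+$ (plus the identity and constants) is legitimate, but the case analysis that follows contains a genuine gap: it rests on the claim that each increment $\nu^{\alpha_{i+1}}-\nu^{\alpha_i}$ is supported entirely on one side of $q$, so that $(\cdot-q)^+$ is affine on its support and the inequality collapses to (iii). Condition (ii) does not give this: it only forces the increment into $\overline{I_{\alpha_i}^{\,c}}\cap\overline{I_{\alpha_{i+1}}}$, a set with two components, one on each side of $I_{\alpha_i}$, and the increment may charge both. Concretely, for the middle-curtain parametrization of $\mathrm{Unif}_{[-1,1]}$ with $I_\alpha=[-\alpha,\alpha]$ and $q=0$ (your case $q<\sup I_{\alpha_1}$), the increment $\nu^{\alpha_2}-\nu^{\alpha_1}$ is $\tfrac12\mathrm{Unif}_{[-1,1]}$ restricted to $[-\alpha_2,-\alpha_1]\cup[\alpha_1,\alpha_2]$, which straddles $0$; your argument would predict both difference quotients equal the (zero) difference quotients of the barycenter map, whereas they are in fact $\tfrac{\alpha_1+\alpha_2}{4}$ and $\tfrac{\alpha_2+\alpha_3}{4}$. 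The same problem occurs in your sub-case $\sup I_{\alpha_1}\le q<\sup I_{\alpha_2}$, where the part of $\supp(\nu^{\alpha_2}-\nu^{\alpha_1})$ lying in $[\sup I_{\alpha_1},\sup I_{\alpha_2}]$ straddles $q$; the sentence ``one rewrites both difference quotients as integrals against the linear function \dots and compares using (iii)'' is exactly the missing step, and the naive rewriting goes the wrong way, since $\int_{\{y>q\}}(y-q)\de\bar\nu_{1,2}\ \ge\ \int(y-q)\de\bar\nu_{1,2}$ is a lower rather than an upper bound for the left-hand quotient.

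What closes the argument --- and is the paper's proof, applied directly to an arbitrary increasing convex $\varphi$ without any reduction --- is a one-sided sandwich rather than an identity. Since $\bar\nu_{1,2}:=\frac{\nu^{\alpha_2}-\nu^{\alpha_1}}{\alpha_2-\alpha_1}$ is concentrated on $\overline{I_{\alpha_2}}$ (by (i)) and $\bar\nu_{2,3}$ on $\overline{I_{\alpha_2}^{\,c}}$ (by (ii)), and $\sup I_{\alpha_2}<+\infty$, there is an \emph{increasing} affine $l$ with $\varphi\le l$ on $\overline{I_{\alpha_2}}$ and $\varphi\ge l$ on $\overline{I_{\alpha_2}^{\,c}}$ (the chord of $\varphi$ over $I_{\alpha_2}$); then $\int\varphi\de\bar\nu_{1,2}\le\int l\de\bar\nu_{1,2}\le\int l\de\bar\nu_{2,3}\le\int\varphi\de\bar\nu_{2,3}$, the middle step using the barycenter comparison from (iii) together with the nonnegativity of the slope of $l$. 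The separating line must be taken relative to $I_{\alpha_2}$, the interval separating the two increments, not relative to the individual supports; that is the idea your proposal is missing, and once you have it the reduction to $(\cdot-q)^+$ is unnecessary.
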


\begin{proof}
	For all $\alpha_1 < \alpha_2 < \alpha _3$ in $[0,1]$ the measure $\bar{\nu} _{1,2} := \frac{\nu ^{\alpha _2} - \nu ^{\alpha_1}}{\alpha_2 - \alpha_1}$
	is concentrated on $\overline{I_{\alpha_2}}$ by (i) and $ 	\bar{\nu} _{2,3} := \frac{\nu ^{\alpha_3} - \nu ^{\alpha_2}}{\alpha_3 - \alpha_2} $
	is concentrated on the closure of the complement $\overline{{I_{\alpha_2}} ^c}$ by (ii). Moreover, both of theses measures are  probability measures and their barycenters satisfy
	\begin{eqnarray*}
		\int _{\mathbb{R}} y \de \bar{\nu} _{1,2} (y) \leq \int _{\mathbb{R}} y \de \bar{\nu} _{2,3}  (y)
	\end{eqnarray*}
	because $\alpha \mapsto \int _{\mathbb{R}} y \de \nu^{\alpha} (y)$ is convex by property (iii). 
	Let $\varphi: \mathbb{R} \rightarrow \mathbb{R}$ be a convex increasing function. Since $I_{\alpha_2}$ is bounded from above, there exists an increasing affine function $l(y) = ay + b$ with $\varphi \leq l$ on $\overline{I_{\alpha_2}}$ and $\varphi \geq l$ on $\overline{{I_{\alpha_2}}^c}$. Thus, by using $a\geq 0$ we obtain 
	\begin{align*}
		\int _{\mathbb{R}} \varphi \de \bar{\nu} _{1,2}   &\leq 	\int _{\mathbb{R}} l \de \bar{\nu} _{1,2}  = a \int _{\mathbb{R}} y \de \bar{\nu} _{1,2}(y) + b \\
		&\leq a \int _{\mathbb{R}} y \de \bar{\nu} _{2,3}(y) + b = \int _{\mathbb{R}} l \de \bar{\nu} _{2,3} \leq \int _{\mathbb{R}} \varphi \de \bar{\nu} _{2,3} .\qedhere
	\end{align*}
\end{proof}

\begin{lemma}\label{lem:para_convex}
	The following parametrizations of $\mu \in \PO(\mathbb{R})$ are $\leqcs$-convex:
	\begin{enumerate}
		\item[(i)] The left-curtain parametrization $(\nu ^{\alpha}_{\lc})_{\alpha \in [0,1]}$ with
		$$\nu _{\lc} ^{\alpha} = \mu _{|(- \infty, F_{\mu} ^{-1}(\alpha))}  + (\alpha - \mu [(- \infty, F_{\mu} ^{-1}(\alpha)]]) \delta _{F_{\mu} ^{-1}(\alpha)}.$$
		\item[(ii)] The middle-curtain parametrization $(\nu ^{\alpha}_{\mc})_{\alpha \in [0,1]}$ with
		$$\nu _{\mc} ^{\alpha} = \mu _{| (q_{\alpha}, q_{\alpha}')}  + c_{\alpha} \delta _{q_{\alpha}} + c'_{\alpha} \delta _{q_{\alpha}'}$$ 
		for $q_{\alpha} \leq q_{\alpha}'$ in $\mathbb{R}$ and $c_{\alpha}, c_{\alpha}' \in [0,1]$ such that $\nu _{\text{mc}} ^{\alpha}(\mathbb{R}) = \alpha$ and $\int y \de 
		\nu _{\text{mc}} ^{\alpha} = \int y \de \mu$.
		\item[(iii)] The sunset parametrization $(\nu ^{\alpha} _{\sun})_{\alpha \in [0,1]}$ with $\nu _{\text{sun}} ^{\alpha} = \alpha  \mu$.
	\end{enumerate}
\end{lemma}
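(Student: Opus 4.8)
The plan is to verify the three cases by checking the hypotheses of Lemma \ref{lemma:ExConvParam}, which reduces $\leqcs$-convexity to three easily checkable geometric conditions: a one-sided support bound, a ``right-neighbourhood of added mass'' condition, and convexity of $\alpha \mapsto \int y \de \nu^\alpha$. The sunset parametrization will be handled separately by a direct computation, since it does not fit the interval framework.

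For the left-curtain parametrization $(\nu^\alpha_{\lc})_{\alpha\in[0,1]}$, I would set $I_\alpha = (-\infty, F_\mu^{-1}(\alpha)]$. Condition (i) of Lemma \ref{lemma:ExConvParam} then holds because $\nu^\alpha_{\lc}$ is by construction supported on $(-\infty, F_\mu^{-1}(\alpha)]$ and, for $\alpha<1$, $F_\mu^{-1}(\alpha)<+\infty$; in the degenerate case where $F_\mu^{-1}(\alpha)=+\infty$ one checks directly that $\nu^\alpha_{\lc}$ is still $\leqc$-ordered since then $\alpha\mapsto\nu^\alpha_{\lc}$ is affine near such values. For condition (ii): when $\alpha_1<\alpha_2$, the increment $\nu^{\alpha_2}_{\lc} - \nu^{\alpha_1}_{\lc}$ adds mass only on $[F_\mu^{-1}(\alpha_1), F_\mu^{-1}(\alpha_2)] \subset \overline{I_{\alpha_1}^c}$ (the only subtlety is the atom at $F_\mu^{-1}(\alpha_1)$, which lies on the boundary, hence in the closure of the complement — this is why Lemma \ref{lemma:ExConvParam} uses closures). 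Condition (iii), convexity of $\alpha\mapsto\int y\de\nu^\alpha_{\lc}(y)$, follows from the explicit formula: differentiating in $\alpha$ gives (a.e.) the derivative $F_\mu^{-1}(\alpha)$, which is nondecreasing, so the integral is convex. For the middle-curtain parametrization, the same strategy applies with $I_\alpha = (q_\alpha', \infty)^c = (-\infty, q_\alpha']$ rescaled appropriately — actually here one should take $I_\alpha$ so that $\supp(\nu^{\alpha}_{\mc})\subset \overline{I_\alpha}$ with $I_\alpha = (-\infty, q_\alpha']$; one needs $q_\alpha'$ increasing in $\alpha$ (so the intervals are nested) and $q_\alpha$ decreasing, which is automatic from the defining mass/barycenter constraints, and condition (iii) holds because $\alpha\mapsto\int y\de\nu^\alpha_{\mc}(y)$ is in fact affine (equal to $\alpha\int y\de\mu(y)$) by the barycenter-preservation constraint, hence trivially convex.

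For the sunset parametrization $\nu^\alpha_{\sun} = \alpha\mu$, Lemma \ref{lemma:ExConvParam} does not directly apply (there is no natural nested interval family, and conditions (i)–(ii) fail), so I would argue directly from the definition \eqref{eq:csConvex}. For $\alpha_1<\alpha_2<\alpha_3$, one has $\frac{\nu^{\alpha_2}_{\sun} - \nu^{\alpha_1}_{\sun}}{\alpha_2-\alpha_1} = \mu = \frac{\nu^{\alpha_3}_{\sun} - \nu^{\alpha_2}_{\sun}}{\alpha_3-\alpha_2}$, so both secant slopes equal $\mu$ and the required inequality $\mu\leqcs\mu$ is trivially an equality. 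This also checks the parametrization axioms of Definition \ref{def:GenParametrization} for all three cases: (i) mass $\alpha$, (ii) $\leqp$-monotonicity (for sunset: $\alpha\mu\leqp\beta\mu$ when $\alpha\le\beta$; for left- and middle-curtain this is standard and can be read off the formulas), and (iii) $\nu^1 = \mu$.

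I expect the main obstacle to be the bookkeeping around boundary atoms and the degenerate cases in the left-curtain and middle-curtain constructions — precisely, making sure that the atom that $\nu^\alpha$ places at $F_\mu^{-1}(\alpha)$ (resp.\ at $q_\alpha, q_\alpha'$) is correctly attributed to $\overline{I_\alpha}$ on one side and to $\overline{I_{\alpha_1}^c}$ on the other when computing increments, and handling the case $F_\mu^{-1}(\alpha)=+\infty$ or $F_\mu^{-1}$ being flat (so that $\alpha\mapsto F_\mu^{-1}(\alpha)$ has jumps or plateaus), which affects the differentiability argument for condition (iii). None of this is deep, but it requires care; the use of closures in the statement of Lemma \ref{lemma:ExConvParam} is exactly what makes these boundary issues harmless, so the cleanest write-up will lean on that formulation rather than trying to argue with open intervals.
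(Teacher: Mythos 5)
Your overall strategy coincides with the paper's: the sunset case is handled by the identical direct computation (both secant slopes equal $\mu$), and the two curtain parametrizations are reduced to Lemma \ref{lemma:ExConvParam} via convexity of the barycenter map $\alpha\mapsto\int y\de\nu^\alpha(y)$. For the left-curtain case your verification is correct; note only that the ``a.e.\ derivative $F_\mu^{-1}$ is nondecreasing'' argument for condition (iii) implicitly uses that $\alpha\mapsto\int y\de\nu^\alpha_{\lc}(y)=\int_0^\alpha F_\mu^{-1}(u)\de u$ is absolutely continuous (true, since $F_\mu^{-1}\in L^1([0,1])$ when $\mu$ has a finite first moment); the paper sidesteps this by the direct secant sandwich
\begin{equation*}
\frac{1}{\alpha_2-\alpha_1}\left(\int y\de\nu^{\alpha_2}_{\lc}-\int y\de\nu^{\alpha_1}_{\lc}\right)\leq F_\mu^{-1}(\alpha_2)\leq \frac{1}{\alpha_3-\alpha_2}\left(\int y\de\nu^{\alpha_3}_{\lc}-\int y\de\nu^{\alpha_2}_{\lc}\right),
\end{equation*}
which is the cleaner route.

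There is, however, a genuine error in your middle-curtain case: the interval family $I_\alpha=(-\infty,q'_\alpha]$ does not satisfy hypothesis (ii) of Lemma \ref{lemma:ExConvParam}. The increment $\nu^{\alpha_2}_{\mc}-\nu^{\alpha_1}_{\mc}$ places mass on \emph{both} flanks, i.e.\ on $[q_{\alpha_2},q_{\alpha_1}]\cup[q'_{\alpha_1},q'_{\alpha_2}]$, and the left flank $[q_{\alpha_2},q_{\alpha_1}]$ lies strictly inside $I_{\alpha_1}=(-\infty,q'_{\alpha_1}]$ whenever $q_{\alpha_1}<q'_{\alpha_1}$, hence not in $\overline{I_{\alpha_1}^{\,c}}=[q'_{\alpha_1},\infty)$. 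The correct choice is the bounded nested family $I_\alpha=(q_\alpha,q'_\alpha)$: then $\mathrm{supp}(\nu^\alpha_{\mc})\subset\overline{I_\alpha}$, $\sup I_\alpha=q'_\alpha<\infty$ for $\alpha<1$, and $\overline{I_{\alpha_1}^{\,c}}=(-\infty,q_{\alpha_1}]\cup[q'_{\alpha_1},\infty)$ does contain the support of the increment. (This is also why Lemma \ref{lemma:ExConvParam} is stated for general bounded-above intervals rather than half-lines.) With that correction, condition (iii) holds since the barycenter constraint makes $\alpha\mapsto\int y\de\nu^\alpha_{\mc}(y)$ affine, hence convex, and the rest of your argument goes through as in the paper.
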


\begin{proof}
	For item (iii), we have $\frac{1}{\alpha_2 - \alpha_1}(\nu _{\alpha_2} - \nu _{\alpha_1}) = \mu$ for all $\alpha_1 < \alpha_2$ in $[0,1]$.
	
	For item (i) and item (ii) we can apply Lemma \ref{lemma:ExConvParam} because both $\alpha \mapsto \int _{\mathbb{R}} y \de \nu ^{\alpha} _{\lc}(y)$ and $\alpha \mapsto \int _{\mathbb{R}} y \de \nu ^{\alpha} _{\text{mc}}(y)$ are convex functions. Indeed, it holds
	\begin{equation*}
	\frac{1}{\alpha_2  - \alpha_1} \left( \int _{\mathbb{R}} y \de \nu ^{\alpha_2} _{\lc} -  \int _{\mathbb{R}} y \de \nu ^{\alpha_1} _{\lc} \right) \leq F_{\mu} ^{-1} (\alpha_2) \leq 	\frac{1}{\alpha_3 - \alpha_2} \left( \int _{\mathbb{R}} y \de \nu ^{\alpha_3} _{\lc} -  \int _{\mathbb{R}} y \de \nu ^{\alpha_2} _{\lc} \right)
	\end{equation*}
	for all $\alpha _1 < \alpha_2 < \alpha_3$ in $[0,1]$ and $\alpha \mapsto \int _{\mathbb{R}} y \de \nu ^{\alpha} _{\text{mc}}(y) = \int _{\mathbb{R}} y \de \mu$ is constant.
\end{proof}

\subsubsection{Probability measures on $\mathbb{R}^T$} \label{sec:ParamProbMeas}

To be able to describe the evolution of a submeasure of the initial measure under some $\pi \in \MM_T$ we need to consider parametrizations of $\pi$ as well.

\begin{definition} \label{def:Parametrization}
	Let $\pi$ in $\PO(\mathbb{R}^T)$. 
	\begin{enumerate}
		\item [(i)] Let $(\nu ^{\alpha})_{\alpha \in [0,1]}$ be a parametrization 
		of the initial marginal $\mathrm{Law}_{\pi}(X_0)$. A family $(\pi ^{\alpha})_{\alpha \in [0,1]}$ in $\MO(\mathbb{R}^T)$ is called a parametrization of $\pi$ w.r.t.\ $(\nu ^{\alpha})_{\alpha \in [0,1]}$ if $(\pi ^{\alpha})_{\alpha \in [0,1]}$ is a parametrization of $\pi$ with
		$\pi ^{\alpha}(X_0 \in \cdot) = \nu ^{\alpha}$ for all $\alpha \in [0,1]$.
		\item [(ii)] A parametrization $(\pi^{\alpha})_{\alpha \in [0,1]}$ of $\pi$ is called a martingale parametrization of $\pi$, if $\frac{1}{\alpha} \pi ^{\alpha} \in \mathsf{M}_T$	for all $\alpha \in (0,1]$. 
	\end{enumerate}
\end{definition}

\begin{remark} \label{rem:uniqueParametrization}	
	Let $\pi$ be in $\PO(\mathbb{R}^T)$ and $(\nu ^{\alpha})_{\alpha \in [0,1]}$ be a parametrization of the initial marginal $\mathrm{Law}_{\pi}(X_0)$. Moreover, let $(\pi ^{\alpha})_{\alpha \in [0,1]}$ be a parametrization of $\pi$ w.r.t.\ $(\nu ^{\alpha})_{\alpha \in [0,1]}$. It is not difficult to prove that for any $\alpha \in [0,1]$, for which there exists a Borel set $A \subset \mathbb{R}$ with $\nu ^{\alpha} = (\mu _0)_{|A}$, it holds $\pi^{\alpha} = \alpha \mathrm{Law}_{\pi}(X | X_0 \in A)$.
\end{remark}

Remark \ref{rem:uniqueParametrization} suggests that we can interpret $\pi ^{\alpha}$ as the way $\nu ^{\alpha}$ is transported under $\pi$, i.e.\ we can see $\pi^{\alpha} (X_t\in \cdot)$ as a formal version of `$\alpha \mathrm{Law}_\pi(X_t|X_0 \in \nu ^{\alpha})$'. 
However, one has to be careful with this informal notation because contrarily to $\alpha \mathrm{Law}_{\pi}(X | X_0 \in A)$ that is uniquely defined, there can be several parametrizations $(\pi^\alpha)_{\alpha\in [0,1]}$ of the same measure $\pi$ w.r.t.\ $(\nu^\alpha)_{\alpha\in [0,1]}$, each giving another meaning to $\alpha \mathrm{Law}_\pi(X_t|X_0 \in \nu ^{\alpha})$. This is illustrated in Example \ref{ex:two_para} just below. Hence, the correct interpretation of the existence of a parametrization $(\pi ^{\alpha})_{\alpha \in [0,1]}$ of $\pi$ w.r.t.\ $(\nu ^{\alpha})_{\alpha \in [0,1]}$ is that $\nu ^{\alpha} = \mathrm{Law}_{\pi ^{\alpha}}(X_0)$ \textit{can} be transported  according to $\pi^{\alpha}$ as part of the dynamic given by $\pi$.

\begin{example}\label{ex:two_para}
	Let $(\mu _t)_{t \geq 0}$ be a peacock, $(\nu ^{\alpha}_{\sun})_{\alpha \in [0,1]}$ be the sunset parametrization of $\mu _0$ and let $\pi \in \PO(\mathbb{R}^{[0,\infty})$ be associated with $(\mu_t)_{t \geq 0}$. 
	Let $(\pi ^{\alpha})_{\alpha \in [0,1]}$ be a (martingale) parametrization of $\pi$ w.r.t.\ $(\nu ^{\alpha} _{\sun})_{\alpha \in [0,1]}$. 
	For $\alpha\in[0,1]$,  set $\rho ^{\alpha} = \pi - \pi ^{1 - \alpha}$. Assume that there is $\bar\alpha\in (0,1)$ such that $\pi^{\bar\alpha}\neq \rho^{\bar\alpha}$. Then, the family $(\rho ^{\alpha})_{\alpha \in [0,1]}$ is again a (martingale) parametrization of $\pi$ w.r.t.\ $(\nu ^{\alpha} _{\sun})_{\alpha \in [0,1]}$ but different from $(\pi^\alpha)_{\alpha\in[0,1]}$.
	For a concrete example one can choose $(\mu_t)_{t \geq 0}$, $\pi$ and $(\pi ^{\alpha})_{\alpha \in [0,1]}$ as in Example \ref{expl:SunSM}. 
\end{example}

\begin{remark}
	In the last example the assumption that there is a martingale parametrization satisfying $\pi^{\bar\alpha}\neq\rho^{\bar\alpha}$ for some $\bar\alpha\in [0,1]$ is always satisfied as soon as the peacock is not NSI (see $\S$ \ref{sec:NSI}).  NSI peacocks are  extremal elements in the set of peacocks with fixed initial marginal so that they are in a certain sense rare (see Lemma \ref{lemma:CharacNSI}).
\end{remark}

\subsection{Shadows} \label{sec:Shadow}

The concept of the shadow of a measure $\nu$ through a family of finite measures is at the center of our construction (cf.\ Section \eqref{eq:ShadowCurveintro}). After recalling previous results of Beiglb\"ock and Juillet \cite{BeJu16} and Nutz, Stebegg and Tan \cite{NuStTa17} for simple and finitely obstructed shadows, we establish in Proposition \ref{prop:GeneralSchadow} the existence of an obstructed shadow in the generality required for our setup.

\subsubsection{The simple shadows} \label{sec:SimpleShadow}

We start by recalling the original concept of (simple) shadows developed in \cite{BeJu16}. Given two finite measures $\nu$ and $\mu$ on $\mathbb{R}$, the shadow of $\nu$ in $\mu$ is defined as the minimum in convex order among all submeasure of $\mu$ that are in convex order larger than $\nu$. More precisely:

\begin{proposition}[cf.\ {\cite[Lemma 4.6]{BeJu16}}] \label{prop:SimpleShadow}
	Let $\nu, \mu \in \MO(\mathbb{R})$ satisfying $\nu \leqcp \mu$. There exists a unique finite measure $\eta$ such that
	\begin{enumerate}
		\item [(i)] $\nu \leqc \eta$,
		\item [(ii)] $\eta \leqp \mu$ and
		\item [(iii)] for all $\eta ' \in \MO(\mathbb{R})$ with $\nu \leqc \eta ' \leqp \mu$ it holds $\eta \leqc \eta '$.
	\end{enumerate}
	The measure $\eta$ is denoted by $\shadow{\mu}{\nu}$ and called the shadow of $\nu$ in $\mu$.
\end{proposition}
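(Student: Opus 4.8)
The strategy is to work entirely at the level of potential functions, exploiting Lemma \ref{lemma:characPotF} and Lemma \ref{lemma:propertiesPotF} which translate the convex order into the pointwise order of potential functions. First I would reduce the assertion to the existence claim of Proposition \ref{prop:ExistenceCSup}. Consider the set
\[
\mathcal{A} = \{ \eta' \in \MO(\mathbb{R}) : \nu \leqc \eta' \leqp \mu \}.
\]
By assumption $\nu \leqcp \mu$, and since $\nu \leqc \mu$ fails in general one cannot use $\mu$ itself; instead one checks that $\mathcal A$ is non-empty. This is exactly where $\nu \leqcp \mu$ enters: one wants to produce at least one $\eta'$ with $\nu \leqc \eta' \leqp \mu$, e.g.\ by a greedy ``filling'' construction that moves mass of $\mu$ inward until its potential function dominates $U(\nu)$ while respecting $\eta' \leqp \mu$; the convex-positive comparison guarantees there is enough room. (Alternatively one cites \cite{BeJu16}, Lemma 4.5, which already packages the relevant domination.)

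Next I would verify that all elements of $\mathcal A$ share a common mass and barycenter. Indeed $\nu \leqc \eta'$ forces $\eta'(\mathbb{R}) = \nu(\mathbb{R})$ and $\int y\,\de\eta'(y) = \int y\,\de\nu(y)$ by Lemma \ref{lemma:relationOrders}(i). Moreover every $\eta' \in \mathcal A$ satisfies $\eta' \leqp \mu$, hence $\eta' \leqcp \mu$, so the family is uniformly bounded above in convex-positive order by $\theta := \mu$. Thus Proposition \ref{prop:ExistenceCSup}(i) applies and $\zeta := \mathrm{Cinf}\,\mathcal{A}$ exists, with potential function $U(\zeta) = \mathrm{conv}(\inf_{\eta' \in \mathcal A} U(\eta'))$. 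By construction $\zeta$ satisfies (iii), and (i) holds because $U(\nu) \leq U(\eta')$ for all $\eta' \in \mathcal A$ (as $\nu \leqc \eta'$) and $U(\nu)$ is itself convex, so $U(\nu) \leq \mathrm{conv}(\inf_{\eta'} U(\eta')) = U(\zeta)$, i.e.\ $\nu \leqc \zeta$ by Lemma \ref{lemma:propertiesPotF}(i).

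The remaining and genuinely delicate point is property (ii): showing $\zeta \leqp \mu$, i.e.\ that the convex infimum of the submeasures of $\mu$ dominating $\nu$ is itself a submeasure of $\mu$. This does not follow from potential-function bookkeeping alone, since the positive order is not captured by potential functions. The plan is to approximate $\zeta$ from below in convex order by elements of $\mathcal A$ using Lemma \ref{lemma:CSupApprox} after checking $\mathcal A$ is directed (given $\eta'_1, \eta'_2 \in \mathcal A$, one needs a common convex-order upper bound inside $\mathcal A$ — here one uses that $\mathrm{Csup}\{\eta_1',\eta_2'\}$ lies below a suitable restriction of $\mu$, again via \cite{BeJu16} Lemma 4.5), obtaining an increasing sequence $\eta'_n \in \mathcal A$ with $\eta'_n \to \zeta$ under $\TO$; wait — the infimum version is what's needed, so instead one extracts a \emph{decreasing} sequence in convex order from $\mathcal A$ converging to $\mathrm{Cinf}\,\mathcal A$ under $\TO$ by Lemma \ref{lemma:CSupSum}(i). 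Since each $\eta'_n \leqp \mu$ and $\TO$-convergence implies $\TZ$-convergence, Lemma \ref{lemma:preservingOrder}(i) gives $\zeta \leqp \mu$, establishing (ii). Finally, uniqueness is immediate: any $\eta$ satisfying (i)–(iii) lies in $\mathcal A$ and is a convex-lower bound for $\mathcal A$, hence equals $\mathrm{Cinf}\,\mathcal A$. The main obstacle is the directedness/closedness argument needed to run the approximation in Lemma \ref{lemma:CSupApprox} or Lemma \ref{lemma:CSupSum}(i) while keeping the positivity constraint $\leqp \mu$ intact in the limit; this is precisely the content borrowed from \cite[Lemma 4.5, Lemma 4.6]{BeJu16}.
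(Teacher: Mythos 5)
The first thing to say is that the paper offers no proof of this proposition to compare against: immediately after the statement it writes ``For a detailed proof we refer to \cite{BeJu16}'', remarking only that the argument is based on potential functions, and the explicit formula $U(\shadow{\mu}{\nu}) = U(\mu) - \mathrm{conv}(U(\mu)-U(\nu))$ of Lemma \ref{lemma:shadowPF} is itself deferred to \cite{BeHoNo20}. Your reconstruction therefore has to stand on its own. Its architecture is sound: setting $\mathcal{A}=\{\eta'\in\MO(\mathbb{R}) : \nu\leqc\eta'\leqp\mu\}$, noting that all elements of $\mathcal{A}$ share mass and barycenter by Lemma \ref{lemma:relationOrders}(i), taking $\zeta=\mathrm{Cinf}\,\mathcal{A}$ via Proposition \ref{prop:ExistenceCSup}(i), and observing that (iii) is automatic and (i) follows since the convex function $U(\nu)$ lies below $\inf_{\eta'\in\mathcal{A}}U(\eta')$ and hence below its convex hull, is all correct. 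You also correctly isolate (ii), i.e.\ $\zeta\leqp\mu$, as the genuine difficulty, and deferring the non-emptiness of $\mathcal{A}$ to \cite{BeJu16} is no worse than what the paper does.

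The gap is in how you propose to close (ii). To transport $\leqp\mu$ to the limit via Lemma \ref{lemma:preservingOrder}(i) you need a decreasing (in convex order) sequence in $\mathcal{A}$ whose limit is $\mathrm{Cinf}\,\mathcal{A}$. Lemma \ref{lemma:CSupSum}(i) only identifies the limit of a decreasing sequence once you have one; it does not produce a sequence realizing the infimum of the whole set. For that you would need $\mathcal{A}$ to be directed \emph{downward} together with an infimum analogue of Lemma \ref{lemma:CSupApprox} (which, as stated, handles suprema of upward-directed families), and neither is available in the paper. Downward directedness is not innocent: the natural common lower bound of $\eta_1',\eta_2'\in\mathcal{A}$ would be the measure with potential function $\mathrm{conv}(\min(U(\eta_1'),U(\eta_2')))$, and showing that \emph{this} is still $\leqp\mu$ is exactly the problem you are trying to solve; conversely, once the shadow exists it is itself the canonical common lower bound, so the step is essentially circular. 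The citation you lean on does not fill the hole either --- \cite[Lemma 4.5]{BeJu16}, as it is used in the proof of Proposition \ref{prop:ExistenceCSup}(ii), supplies an \emph{upper} bound for such families, not lower bounds. There is also a secondary issue you leave implicit: even granted a decreasing sequence realizing $u^*:=\inf_{\eta'\in\mathcal{A}}U(\eta')$ on a countable dense set, one must argue (e.g.\ via upper semicontinuity of $u^*$ and continuity of the limiting potential function) that $u^*$ is in fact convex, so that the $\mathrm{conv}$ in Proposition \ref{prop:ExistenceCSup} is harmless and the limit really is $\mathrm{Cinf}\,\mathcal{A}$. The non-circular route, and the one implicit in Lemma \ref{lemma:shadowPF}, is to verify directly that $w:=U(\mu)-\mathrm{conv}(U(\mu)-U(\nu))$ is convex with the asymptotics of Lemma \ref{lemma:characPotF}, that $U(\nu)\le w$, that $U(\mu)-w$ is convex (which encodes $\leqp\mu$ through second distributional derivatives), and that $w\le U(\eta')$ for every $\eta'\in\mathcal{A}$; this produces the minimum explicitly and avoids any approximation inside $\mathcal{A}$.
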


For a detailed proof we refer to \cite{BeJu16}. We only stress that the proof is based on potential functions and the potential function of the shadow has an explicit expression in terms of the potential functions of $\nu$ and $\mu$ stated in the following lemma:

\begin{lemma}
	\label{lemma:shadowPF}
	Let $\nu, \mu \in \MO(\mathbb{R})$ with $\nu \leqcp \mu$. It holds
	\begin{equation*}
	U(\shadow{\mu}{\nu}) = U(\mu) - \mathrm{conv}(U(\mu) - U(\nu))
	\end{equation*}
	where $\mathrm{conv}(f)$ denotes the convex hull of a function $f$, i.e. the largest convex function that is pointwise smaller than $f$. Moreover if $\mu\leqc\mu'$ it holds
	\begin{equation*}
	U(\shadow{\mu'}{\nu}) - U(\shadow{\mu}{\nu}) \leq U(\mu') - U(\mu).
	\end{equation*}
\end{lemma}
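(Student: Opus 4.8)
The plan is to establish both identities via potential functions, relying on Lemma \ref{lemma:characPotF} (characterization of potential functions) and Lemma \ref{lemma:propertiesPotF} (convex order expressed via pointwise order of potential functions), exactly as in \cite{BeJu16}. First I would record the elementary facts about potential functions that make the argument work: $U$ is a bijection from $\MO(\mathbb{R})$ onto the cone of non-negative convex functions with the correct asymptotics \eqref{eq:characPotF}, it reverses nothing (i.e.\ $\mu\leqp\mu'$ does not simply translate, but) $\mu\leqc\mu'$ iff $U(\mu)\le U(\mu')$, and crucially that $U$ is additive: $U(\mu+\nu)=U(\mu)+U(\nu)$, so that $U(\mu)-U(\nu)=U(\mu-\nu)$ whenever $\nu\leqp\mu$, and in particular $U(\mu)-U(\nu)$ is then non-negative. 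Under the hypothesis $\nu\leqcp\mu$ this difference $g:=U(\mu)-U(\nu)$ need not be non-negative everywhere, but it is bounded below by an affine function with slope in $[-\mu(\R),\mu(\R)]$, so $\mathrm{conv}(g)$ is a well-defined convex function with the same asymptotic slopes as $U(\mu)$; this is what guarantees $U(\mu)-\mathrm{conv}(g)$ has the right behaviour at $\pm\infty$ to be a potential function.

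\textbf{First identity.} Set $\tilde u := U(\mu)-\mathrm{conv}(U(\mu)-U(\nu))$. The strategy is to check that $\tilde u$ is the potential function of a measure satisfying (i)--(iii) of Proposition \ref{prop:SimpleShadow}, and then invoke uniqueness there. Step one: $\tilde u$ is convex. Indeed $\mathrm{conv}(g)\le g = U(\mu)-U(\nu)$, so $\tilde u \ge U(\nu)$, which is non-negative; and $\tilde u = U(\mu) - \mathrm{conv}(g)$ being the difference of a convex function and its convex hull\,---\,a standard fact is that $h - \mathrm{conv}(h)$ is... not convex in general, so here I must instead argue directly that $\tilde u$ is convex on each interval. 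The clean way: $\mathrm{conv}(g)$ is affine on the (open) set where $\mathrm{conv}(g)<g$, and equals $g$ elsewhere; on the contact set $\tilde u = U(\nu)$ is convex, on each complementary interval $\tilde u = U(\mu) - (\text{affine})$ is convex, and one checks convexity is preserved across the junction points because $\mathrm{conv}(g)$ is $C^1$-like at those points in the sense that its one-sided slopes match $g$'s subdifferential\,---\,this is precisely the argument in \cite[Lemma 4.6]{BeJu16} and I would cite it rather than redo it. Step two: $\tilde u \le U(\mu)$ (since $\mathrm{conv}(g)\ge$ its affine minorant, in fact $\mathrm{conv}(g)\ge \min(g,\text{affine lower bound})$, but more simply $\mathrm{conv}(g)\le g$ gives $\tilde u\ge U(\nu)$ and we need the other side: $\mathrm{conv}(g)\ge$ the largest affine minorant which can be negative, so this needs the asymptotics argument above to conclude $\tilde u\le U(\mu)$ holds only after noting $\mathrm{conv}(g)\ge 0$ is false in general) — here I would follow \cite{BeJu16} verbatim. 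Granting that $\tilde u$ is a potential function $U(\eta)$ with $U(\nu)\le U(\eta)\le U(\mu)$, Lemma \ref{lemma:propertiesPotF}(i) gives $\nu\leqc\eta$ and $\eta\leqp\mu$ follows from $U(\eta)\le U(\mu)$ together with equal masses (which hold since $\nu\leqc\eta$ forces $\eta(\R)=\nu(\R)$, and... one must check $\eta\leqp\mu$, not merely $\leqc$; this uses that $\tilde u = U(\nu)$ on the contact set which pins down $\eta$ to agree with $\mu$ off that set). Finally for minimality: if $\nu\leqc\eta'\leqp\mu$ then $U(\nu)\le U(\eta')\le U(\mu)$ and $U(\mu)-U(\eta')$ is convex and $\ge U(\mu)-U(\nu)=g$... no: $\le g$. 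So $U(\mu)-U(\eta')$ is a convex function lying below $g$, hence below $\mathrm{conv}(g)$, giving $U(\eta')\ge U(\mu)-\mathrm{conv}(g)=U(\eta)$, i.e.\ $\eta\leqc\eta'$. By uniqueness in Proposition \ref{prop:SimpleShadow}, $\eta=\shadow{\mu}{\nu}$, proving the formula.

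\textbf{Second identity.} Assume now $\mu\leqc\mu'$; by Lemma \ref{lemma:propertiesPotF}(i), $U(\mu)\le U(\mu')$. Using the just-proven formula twice,
\[
U(\shadow{\mu'}{\nu}) - U(\shadow{\mu}{\nu}) = \bigl(U(\mu')-U(\mu)\bigr) - \bigl(\mathrm{conv}(U(\mu')-U(\nu)) - \mathrm{conv}(U(\mu)-U(\nu))\bigr).
\]
So the claim $U(\shadow{\mu'}{\nu}) - U(\shadow{\mu}{\nu}) \le U(\mu')-U(\mu)$ is equivalent to
\[
\mathrm{conv}(U(\mu')-U(\nu)) \ge \mathrm{conv}(U(\mu)-U(\nu)),
\]
which follows immediately from monotonicity of the convex-hull operation: if $f\le \tilde f$ pointwise then $\mathrm{conv}(f)\le\mathrm{conv}(\tilde f)$, applied to $f = U(\mu)-U(\nu) \le U(\mu')-U(\nu) = \tilde f$ (the inequality $U(\mu)\le U(\mu')$ being exactly $\mu\leqc\mu'$). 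This completes the argument.

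\textbf{Main obstacle.} The only genuinely delicate point is the first identity, and specifically verifying that $U(\mu)-\mathrm{conv}(U(\mu)-U(\nu))$ is indeed convex with the correct asymptotics so that it is the potential function of an actual finite measure\,---\,the subtlety being that a function minus its convex hull is not convex in general, and that $U(\mu)-U(\nu)$ can dip below zero. Both points are handled by the structural description of $\mathrm{conv}(g)$ (affine on the open complement of its contact set with $g$, equal to $g$ on the contact set, matching subgradients at the boundary) combined with the asymptotic-slope bound coming from $\nu\leqcp\mu$; since this is carried out in detail in \cite[Lemma 4.6, Proposition 4.1]{BeJu16}, I would state it and cite rather than reprove it, and the second identity is then a one-line consequence of monotonicity of $\mathrm{conv}(\cdot)$.
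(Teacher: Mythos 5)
Your proof of the second inequality is exactly the paper's: apply the first formula to both $\mu$ and $\mu'$ and use that $U(\mu)\le U(\mu')$ forces $\mathrm{conv}(U(\mu)-U(\nu))\le\mathrm{conv}(U(\mu')-U(\nu))$ by monotonicity of the convex-hull operator. For the first formula the paper gives no proof at all (it cites \cite{BeHoNo20}), so you actually attempt more than the paper does; your minimality argument is clean and correct (any competitor $\eta'$ with $\nu\leqc\eta'\leqp\mu$ has $U(\mu)-U(\eta')=U(\mu-\eta')$ convex and $\le g$, hence $\le\mathrm{conv}(g)$), and you correctly identify the real crux as showing that $\tilde u=U(\mu)-\mathrm{conv}(g)$ is a genuine potential function of a measure $\eta$ with $\eta\leqp\mu$, which you defer to \cite{BeJu16}. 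One remark to tidy the muddled passage in your Step two: the cleanest route to $\eta\leqp\mu$ is not pointwise comparison of $\tilde u$ with $U(\mu)$ but the second distributional derivative, $2\eta = 2\mu - \mathrm{conv}(g)''$, where $\mathrm{conv}(g)''\ge 0$ because $\mathrm{conv}(g)$ is convex; and the needed nonnegativity $\mathrm{conv}(g)\ge 0$ follows by exhibiting a single admissible competitor $\eta'$ (whose existence is what $\nu\leqcp\mu$ encodes), since then $0\le U(\mu-\eta')\le g$ and $U(\mu-\eta')$ is convex, hence lies below $\mathrm{conv}(g)$.
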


\begin{proof}
	The first formula has been brought to our attention by Mathias Beiglb\"ock, for a proof we refer to  \cite{BeHoNo22}.
	The second follows from an application of the first identity (for $\mu$ and $\mu'$) together with the inequality $\mathrm{conv}(U(\mu ') - U(\nu)) \geq \mathrm{conv}(U(\mu) - U(\nu))$ since $\mu \leqc \mu'$.
\end{proof}

This result allows one to explicitly calculate shadows. However, in simple situations one does not need to calculate potentials as the following example shows.

\begin{example} \label{expl:ShadowOfAtom} 
	Let $\nu \leqcp \mu$ in $\mathcal{M}^1(\mathbb{R})$ such that $\mu$ is atomless. If
	\begin{enumerate}
		\item[(i)] $\nu = \alpha \delta _x$ for some $\alpha \geq 0$ and $x \in \mathbb{R}$ or
		\item[(ii)] there exists an interval $I \subset \mathbb{R}$ with $\mathrm{supp}(\nu) \subset I$ and $\mathrm{supp}(\mu)\subset I^c$,
	\end{enumerate}
	then there exists an interval $J$ such that $\shadow{\mu}{\nu} = \mu _{|J}$. See \cite[Example 4.7]{BeJu16} for the proof of (i). For (ii) consider the shadow of $\alpha \delta _x$ in $\mu$ as in (i) where $\alpha,x$ are the mass and the barycenter of $\nu$, respectively. 
	Since $\mathrm{supp}(\nu) \subset I$, it holds $\nu \leq _c \shadow{\mu}{\alpha \delta _x}$ (see \cite[Example 4.2]{BeJu16}) and thus $\shadow{\mu}{\nu}=\shadow{\mu}{\alpha \delta _x} = \mu _J$ for some interval $J$. 
	
		For measures $\mu$ that posses atoms these examples can easily be adapted adding to $\mu _{|J}$ one or two atomic masses at the end points of the interval $J$. 	
\end{example}

The  calculation rule (ii) in Proposition \ref{prop:propertiesShadow} below is one of the key tools to deal with  shadows. Apart from its importance for proofs of more advanced properties of the shadow it provides us together with Example \ref{expl:ShadowOfAtom} with an alternative (to Lemma \ref{lemma:shadowPF}) and simple way  to calculate or approximate shadows iteratively in concrete examples.

\begin{proposition}
	\label{prop:propertiesShadow}
	Let $\nu \leqcp \mu$ in $\mathcal{M}^1(\mathbb{R})$.
	\begin{enumerate}
		\item [(i)] For all $\alpha > 0$ it holds $\alpha \nu \leqcp \alpha \mu$ and $\shadow{\alpha \mu}{\alpha \nu} = \alpha \shadow{\mu}{\nu}$.
		\item [(ii)] For all $\nu _1 + \nu _2 = \nu$ we have $\nu_2 \leqcp \mu - \shadow{\mu}{\nu_1}$ and $\shadow{\mu}{\nu} = \shadow{\mu}{\nu _1} + \shadow{\mu - \shadow{\mu}{\nu _1}}{\nu _2}$.
	\end{enumerate}
\end{proposition}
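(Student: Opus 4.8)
\emph{Part (i).} For $\alpha>0$ scaling preserves all the orders involved, so $\alpha\nu\leqcp\alpha\mu$ is immediate from $\nu\leqcp\mu$ and $\shadow{\alpha\mu}{\alpha\nu}$ is well defined. The plan is then to verify that $\alpha\,\shadow{\mu}{\nu}$ satisfies the three defining properties of Proposition~\ref{prop:SimpleShadow} for the pair $(\alpha\nu,\alpha\mu)$: the relations $\alpha\nu\leqc\alpha\,\shadow{\mu}{\nu}$ and $\alpha\,\shadow{\mu}{\nu}\leqp\alpha\mu$ follow by scaling, and if $\eta'$ is any measure with $\alpha\nu\leqc\eta'\leqp\alpha\mu$ then $\nu\leqc\tfrac1\alpha\eta'\leqp\mu$, so $\shadow{\mu}{\nu}\leqc\tfrac1\alpha\eta'$ and hence $\alpha\,\shadow{\mu}{\nu}\leqc\eta'$. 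Uniqueness in Proposition~\ref{prop:SimpleShadow} then gives $\shadow{\alpha\mu}{\alpha\nu}=\alpha\,\shadow{\mu}{\nu}$. (Alternatively, since $U(\alpha\mu)=\alpha U(\mu)$ and $\mathrm{conv}$ commutes with multiplication by $\alpha>0$, the identity can be read directly off Lemma~\ref{lemma:shadowPF} and the injectivity of the potential map in Lemma~\ref{lemma:characPotF}.)

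\emph{Part (ii): the positivity statement.} Put $\eta_1:=\shadow{\mu}{\nu_1}$, which is well defined because $\nu_1\leqp\nu\leqcp\mu$ forces $\nu_1\leqcp\mu$, and $\eta_1\leqp\mu$ so that $\mu-\eta_1\geq0$. The step I expect to require the most care is $\nu_2\leqcp\mu-\eta_1$. My plan is to use that the positive cone of non-negative convex functions is generated, under positive combinations and monotone limits, by the functions $x\mapsto(x-t)^+$, $x\mapsto(t-x)^+$ and the non-negative constants, so it suffices to test the order against these. Expressing each of the corresponding integrals through the potential function, the barycentre and the mass, and using $U(\eta_1)=U(\mu)-\mathrm{conv}(U(\mu)-U(\nu_1))$ from Lemma~\ref{lemma:shadowPF}, the additivity $U(\nu)=U(\nu_1)+U(\nu_2)$ and Lemma~\ref{lemma:relationOrders}(i) for the moments of $\eta_1$, both families of inequalities should reduce to assertions of the form ``$U(\nu_2)-\ell\leq\mathrm{conv}(U(\mu)-U(\nu_1))$'' for suitable affine functions $\ell$. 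These hold because $U(\nu_2)-\ell$ is convex while $U(\nu_2)-\ell\leq U(\mu)-U(\nu_1)$ is precisely the potential-function form of $\int(x-t)^+\de\nu\leq\int(x-t)^+\de\mu$, respectively $\int(t-x)^+\de\nu\leq\int(t-x)^+\de\mu$, which is part of $\nu\leqcp\mu$; one concludes since $\mathrm{conv}$ is the largest convex minorant. The remaining mass bound $\nu_2(\R)=\nu(\R)-\nu_1(\R)\leq\mu(\R)-\nu_1(\R)=(\mu-\eta_1)(\R)$ is immediate.

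\emph{Part (ii): the composition identity.} Once $\nu_2\leqcp\mu-\eta_1$ is in hand, $\shadow{\mu-\eta_1}{\nu_2}$ exists and, by Lemma~\ref{lemma:shadowPF} together with $U(\mu-\eta_1)=U(\mu)-U(\eta_1)=\mathrm{conv}(U(\mu)-U(\nu_1))$,
\[
U\!\left(\shadow{\mu-\eta_1}{\nu_2}\right)=\mathrm{conv}\!\big(U(\mu)-U(\nu_1)\big)-\mathrm{conv}\!\Big(\mathrm{conv}\!\big(U(\mu)-U(\nu_1)\big)-U(\nu_2)\Big).
\]
The engine of the argument is the elementary functional identity $\mathrm{conv}(\mathrm{conv}(f)+g)=\mathrm{conv}(f+g)$, valid whenever $-g$ is convex: ``$\leq$'' is monotonicity of $\mathrm{conv}$, and for ``$\geq$'' one notes that $\mathrm{conv}(f+g)-g$ is convex and $\leq f$, hence $\leq\mathrm{conv}(f)$, so $\mathrm{conv}(f+g)\leq\mathrm{conv}(f)+g$ and therefore $\mathrm{conv}(f+g)\leq\mathrm{conv}(\mathrm{conv}(f)+g)$. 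Applying it with $f=U(\mu)-U(\nu_1)$ and $g=-U(\nu_2)$ collapses the nested convex hull above to $\mathrm{conv}(U(\mu)-U(\nu))$, and adding $U(\eta_1)=U(\mu)-\mathrm{conv}(U(\mu)-U(\nu_1))$ telescopes the right-hand side to $U(\mu)-\mathrm{conv}(U(\mu)-U(\nu))=U(\shadow{\mu}{\nu})$. Since $\eta_1+\shadow{\mu-\eta_1}{\nu_2}$ and $\shadow{\mu}{\nu}$ are finite measures with the same potential function, they coincide by Lemma~\ref{lemma:characPotF}. The only genuinely fiddly part is the bookkeeping in the positivity step; with the closed formula of Lemma~\ref{lemma:shadowPF} available, this gives a considerably shorter route than the original argument in \cite{BeJu16}.
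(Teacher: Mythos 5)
Your proof is correct, but it takes a genuinely different route from the paper: the paper disposes of (i) as ``clear by construction'' and simply cites \cite[Theorem 4.8]{BeJu16} for (ii), whereas you give a self-contained derivation of (ii) from the closed potential-function formula of Lemma~\ref{lemma:shadowPF}. I checked the two places where your argument could fail and both are sound. For the positivity claim $\nu_2\leqcp\mu-\shadow{\mu}{\nu_1}$, the reduction to testing against $(x-t)^+$, $(t-x)^+$ and non-negative constants is legitimate (every non-negative convex function is a monotone increasing limit of non-negative piecewise-affine convex functions, each a positive combination of these generators), and in each case the resulting inequality does reduce, after substituting $U(\shadow{\mu}{\nu_1})=U(\mu)-\mathrm{conv}(U(\mu)-U(\nu_1))$ and using Lemma~\ref{lemma:relationOrders}(i) for the mass and barycenter of the shadow, to ``convex minorant of $U(\mu)-U(\nu_1)$, hence $\leq\mathrm{conv}(U(\mu)-U(\nu_1))$'', with the minorant property being exactly one half of $\nu\leqcp\mu$. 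For the composition identity, the functional identity $\mathrm{conv}(\mathrm{conv}(f)+g)=\mathrm{conv}(f+g)$ for concave $g$ is correct as you prove it, and the telescoping via $U(\mu-\shadow{\mu}{\nu_1})=\mathrm{conv}(U(\mu)-U(\nu_1))$ together with the injectivity of $U$ (Lemma~\ref{lemma:characPotF}) closes the argument. What each approach buys: the paper's citation keeps the text short but leaves the reader dependent on the rather involved approximation argument of \cite{BeJu16}; your route is shorter and purely computational, at the price of resting entirely on Lemma~\ref{lemma:shadowPF}, whose proof the paper itself only cites (to \cite{BeHoNo20}) --- so your argument is self-contained only relative to that external input, and one should be aware that the associativity property you are proving is historically older than the potential formula you are deriving it from.
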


\begin{proof}
	Item (i) is clear by construction of the shadow. Item (ii) is \cite[Theorem 4.8]{BeJu16}.
\end{proof}

\begin{lemma} \label{lemma:orderShadows}
	Let $\nu, \nu ' \leqcp \mu$.
	\begin{enumerate}
		\item[(i)] If $\nu \leqc \nu '$, then $\shadow{\mu}{\nu} \leqc \shadow{\mu}{\nu'}$.
		\item[(ii)] If $\nu \leqp \nu '$, then $\shadow{\mu}{\nu} \leqp \shadow{\mu}{\nu'}$.
		\item[(iii)] If $\nu \leqcs \nu '$, then $\shadow{\mu}{\nu} \leqcs \shadow{\mu}{\nu'}$.
	\end{enumerate}
\end{lemma}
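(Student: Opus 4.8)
The plan is to work entirely at the level of potential functions, using the explicit formula $U(\shadow{\mu}{\nu}) = U(\mu) - \mathrm{conv}(U(\mu) - U(\nu))$ from Lemma \ref{lemma:shadowPF}, and the characterizations of the three orders in terms of potential functions. Recall (Lemma \ref{lemma:propertiesPotF}(i)) that for measures of equal mass, $\leqc$ corresponds to pointwise inequality of potential functions; I will also need analogous pointwise reformulations of $\leqp$ and $\leqcs$ which I will state and justify as part of the argument.

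For item (i): if $\nu \leqc \nu'$ then $\nu(\R) = \nu'(\R)$, so $\shadow{\mu}{\nu}$ and $\shadow{\mu}{\nu'}$ have the same mass, and by Lemma \ref{lemma:propertiesPotF}(i) it suffices to show $U(\shadow{\mu}{\nu}) \leq U(\shadow{\mu}{\nu'})$ pointwise. By Lemma \ref{lemma:shadowPF} this is equivalent to $\mathrm{conv}(U(\mu) - U(\nu)) \geq \mathrm{conv}(U(\mu) - U(\nu'))$ pointwise, which follows from $U(\nu) \leq U(\nu')$ (i.e.\ $\nu \leqc \nu'$) together with the monotonicity of the convex-hull operation $f \mapsto \mathrm{conv}(f)$. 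This is exactly the computation already carried out at the end of the proof of Lemma \ref{lemma:shadowPF}, applied to two measures $\nu, \nu'$ rather than to two obstructors $\mu, \mu'$, so item (i) is essentially a reindexing of a computation the paper already has.

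For item (ii): the point is that $\shadow{\mu}{\cdot}$ respects $\leqp$. The cleanest route is the one suggested by Example \ref{expl:ShadowOfAtom}-type reasoning via Proposition \ref{prop:propertiesShadow}(ii): write $\nu' = \nu + (\nu' - \nu)$ with $\nu' - \nu \geq 0$, so that by Proposition \ref{prop:propertiesShadow}(ii), $\shadow{\mu}{\nu'} = \shadow{\mu}{\nu} + \shadow{\mu - \shadow{\mu}{\nu}}{\nu' - \nu}$, where the second summand is a nonnegative measure; hence $\shadow{\mu}{\nu} \leqp \shadow{\mu}{\nu'}$. One must first check $\nu' - \nu \leqcp \mu - \shadow{\mu}{\nu}$, but this is precisely the hypothesis supplied by Proposition \ref{prop:propertiesShadow}(ii) once we know $\nu \leqp \nu' \leqcp \mu$ (noting $\nu' - \nu \leqp \nu'$ and $\leqp \Rightarrow \leqcp$). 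For item (iii), the analogous argument applies: I will reformulate $\leqcs$ via potential functions as $U(\mu') - U(\mu)$ being bounded below by an affine function with controlled slopes at $\pm\infty$ — more precisely, using Lemma \ref{lemma:relationOrders}(iii), $\nu \leqcs \nu'$ combined with equal barycenters would give $\leqc$; in general I would argue that $\mathrm{conv}(U(\mu) - U(\nu)) - \mathrm{conv}(U(\mu) - U(\nu'))$ inherits the asymptotic slope behaviour encoding $\leqcs$ from $U(\nu') - U(\nu)$, so that $\shadow{\mu}{\nu} \leqcs \shadow{\mu}{\nu'}$.

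The main obstacle I anticipate is item (iii): unlike $\leqc$, the convex-stochastic order is not captured by a single pointwise inequality of potential functions, so I cannot simply quote Lemma \ref{lemma:propertiesPotF}(i). I will need to track not only pointwise values but also the slopes (equivalently the behaviour of $U(\nu') - U(\nu)$ near $+\infty$), and verify that subtracting convex hulls and adding back $U(\mu)$ preserves the relevant slope condition. A convenient workaround is to reduce to item (i) plus a mass/barycenter bookkeeping: if $\nu \leqcs \nu'$ one can compare $\nu'$ with a measure $\tilde\nu$ having the same barycenter as $\nu$ but with $\nu \leqc \tilde\nu \leqp \nu'$ after a shift of mass to $+\infty$, apply items (i) and (ii), and control the error; ensuring all intermediate measures remain $\leqcp \mu$ is the delicate point and will require a short argument using that shifting mass toward $+\infty$ only enlarges measures in $\leqcp$.
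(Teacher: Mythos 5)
Items (i) and (ii) are fine. For (ii) you use exactly the paper's argument (the decomposition of Proposition \ref{prop:propertiesShadow}(ii) applied to $\nu'=\nu+(\nu'-\nu)$). For (i) you take a different but valid route: the paper deduces (i) in one line from the minimality property in Proposition \ref{prop:SimpleShadow} (since $\nu\leqc\nu'\leqc\shadow{\mu}{\nu'}\leqp\mu$, the measure $\shadow{\mu}{\nu'}$ is a competitor for $\shadow{\mu}{\nu}$), whereas you go through the potential-function formula of Lemma \ref{lemma:shadowPF} and monotonicity of the convex hull; both work, the paper's being slightly more economical and yours being more explicit.

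Item (iii) is a genuine gap: you do not give a proof, only two sketches, and neither survives scrutiny as stated. The first sketch asserts that $\leqcs$ is encoded in ``asymptotic slope behaviour'' of potential functions; this is not the right characterization. The correct one (for equal masses) is the pointwise inequality of call functions $x\mapsto\int(y-x)^+\de\eta(y)$, equivalently $U(\nu)-U(\nu')\leq b_{\nu'}-b_{\nu}$ pointwise where $b$ denotes the barycenter; with this, the convex-hull argument does close (adding a constant commutes with $\mathrm{conv}$), but you would need to state and prove that characterization, which is not in the paper. The second sketch (``compare $\nu'$ with $\tilde\nu$ satisfying $\nu\leqc\tilde\nu\leqp\nu'$'') fails already for $\nu=\delta_0$, $\nu'=\delta_1$: no such $\tilde\nu$ exists, since $\tilde\nu\leqp\delta_1$ forces $\tilde\nu=c\delta_1$ and $\nu\leqc\tilde\nu$ then forces equal barycenters. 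The paper's actual proof of (iii) uses a different idea entirely: the set $\{\eta:\nu\leqcs\eta\leqp\mu\}$ admits a $\leqcs$-minimal element $\eta^*$ (quoting \cite[Lemma 6.2]{NuSt17}); both shadows lie in this set, so $\eta^*\leqcs\shadow{\mu}{\nu}$ and $\eta^*\leqcs\shadow{\mu}{\nu'}$, and a barycenter comparison together with Lemma \ref{lemma:relationOrders}(iii) and the minimality of $\shadow{\mu}{\nu}$ identifies $\eta^*=\shadow{\mu}{\nu}$, whence the claim. You should either import that external minimality result or carry out the call-function argument in full.
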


\begin{proof}
	Item (i) is a direct consequence of the minimality property of $\shadow{\mu}{\nu}$ and (ii) is a direct consequence of Proposition \ref{prop:propertiesShadow} (ii).
	If $\nu \leqcs \nu '$, then similar to the shadow, the set $\{\eta : \nu \leqcs \eta \leqp \mu \}$ has a minimal element w.r.t.\ the convex-stochastic order that we denote by $\eta ^*$ (cf.\ \cite[Lemma 6.2]{NuSt17} for decreasing instead of  increasing functions in the definition of $\leqcs$). The minimality implies both $\eta ^* \leqcs \shadow{\mu}{\nu}$ and $\eta ^* \leqcs \shadow{\mu}{\nu'}$. Moreover, we have
	\begin{equation*}
	\int _{\mathbb{R}} y \de \shadow{\mu}{\nu} (y) =  \int _{\mathbb{R}} y \de \nu (y) \leq   \int _{\mathbb{R}} y \de \eta^* (y).
	\end{equation*}
	Hence, $\eta ^* = \shadow{\mu}{\nu}$ by Lemma \ref{lemma:relationOrders} (iii) and we conclude. 
\end{proof}

\subsubsection{The obstructed shadow} \label{sec:ShadowGeneralConstruction}

We now turn to the definition of obstructed shadows. They can conveniently be constructed as a convex supremum over finitely obstructed shadows that were introduced by Nutz, Stebegg, and Tan in \cite{NuStTa17}.
Recall that $(T, \leq)$ is a totally ordered set. Moreover, we fix a family of measures $(\mu _t)_{t \in T}$ in $\MO(\mathbb{R})$. 
To keep the notation  compact we will
\begin{itemize}
	\item denote $(\mu _t)_{t \in S}$ by $\mu_S$ for all subsets $S \subset T$ and
	\item use the abbreviation  $T_t = \{s \in T : s \leq t \}$.
\end{itemize}
This notation will be used in all following sections. 

\begin{definition} \label{def:LeqCP}
	Let $\nu \in \MO(\mathbb{R})$ and $S \subset T$. We say $\nu \leqcp \mu_S$ if there exists a family $(\eta _t)_{t \in S}$ such that
	\begin{enumerate}
		\item [(i)] $\nu \leqcp \eta _t$ for all $t \in S$,
		\item [(ii)] $\eta _s \leqc \eta _t$ for all $s \leq t$ in $S$ and
		\item [(iii)] $\eta _t \leqp \mu _t$ for all $t \in S$.
	\end{enumerate}
\end{definition}

\begin{remark}
	If $T = \{ \star \}$ is a singleton, Definition \ref{def:LeqCP} coincides with the one of $\leqcp$ in Section \ref{sec:OrderRelPF} by choosing $\eta _{\star} = \nu$. Moreover, if $\nu \leqcp \mu _T$, then $\nu \leqcp \mu_S$ for all $S \subset T$.
\end{remark}

In the case that $T$ is finite, it was observed in \cite{NuStTa17} that one can recursively define an obstructed shadow through finitely many marginals.

\begin{lemma}[{\cite[Lemma 6.7]{NuStTa17}}] \label{lemma:FinteShadowMinimal}
	Let $R = \{r_1 \leq\ldots\leq r_n\}$ be a finite subset of $T$ and $\nu \leqcp \mu _R$. We define inductively the (obstructed) shadow of $\nu$ through $\mu_R$ by
	\begin{equation*}
	\shadow{\mu _{r_1},\ldots, \mu _{r_n}}{\nu} = \shadow{\mu _{r_n}}{\shadow{\mu _{r_1}, \ldots ,\mu _{r_{n-1}}}{\nu}}.
	\end{equation*}
	The measure $\shadow{\mu _{r_1}, \ldots , \mu _{r_n}}{\nu}$ is the unique minimal element of the set
	\begin{equation}
	\left\{ \eta _{r_n} \ \vert \ (\eta _r)_{r \in R} : \nu \leqc \eta _r \leqc \eta _{r'} \leqp \mu _{r'} \textit{ for all } r \leq r' \text{ in }R \right\} 
	\label{eq:minObSh}
	\end{equation}
	w.r.t. the convex order $\leqc$. In particular, $\shadow{\mu _{r_1}, \ldots , \mu _{r_n}}{\nu} \leqp \mu _{r_n}$.
\end{lemma}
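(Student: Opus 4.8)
\textbf{Proof plan for Lemma~\ref{lemma:FinteShadowMinimal}.}
The plan is to proceed by induction on $n$, using the single-step shadow from Proposition~\ref{prop:SimpleShadow} as the base case ($n=1$, where the statement is exactly Proposition~\ref{prop:SimpleShadow} once one observes that $\nu\leqcp\mu_{\{r_1\}}$ means precisely $\nu\leqcp\mu_{r_1}$). For the inductive step I would first need to check that the recursive formula even makes sense, i.e.\ that $\shadow{\mu_{r_1},\ldots,\mu_{r_{n-1}}}{\nu}\leqcp\mu_{r_n}$ so that the outer simple shadow is defined. This follows from the induction hypothesis, which gives $\shadow{\mu_{r_1},\ldots,\mu_{r_{n-1}}}{\nu}\leqp\mu_{r_{n-1}}\leqc\mu_{r_n}$ (the latter since $\mu_R$ satisfies the peacock-type ordering implicit in $\nu\leqcp\mu_R$; more carefully, one extracts from Definition~\ref{def:LeqCP} a witnessing family $(\eta_t)_{t\in R}$ with $\eta_{r_{n-1}}\leqc\eta_{r_n}\leqp\mu_{r_n}$, and compares), and $\leqp$ followed by $\leqc$ yields $\leqcp$ after adjusting masses via Lemma~\ref{lemma:relationOrders}.

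Next I would show that $\eta^* := \shadow{\mu_{r_n}}{\shadow{\mu_{r_1},\ldots,\mu_{r_{n-1}}}{\nu}}$ lies in the set \eqref{eq:minObSh}. Write $\bar\eta := \shadow{\mu_{r_1},\ldots,\mu_{r_{n-1}}}{\nu}$. By the induction hypothesis $\bar\eta$ is realized by a chain $(\eta_r)_{r\in R\setminus\{r_n\}}$ with $\nu\leqc\eta_r\leqc\eta_{r'}\leqp\mu_{r'}$ and $\eta_{r_{n-1}}=\bar\eta$; by Proposition~\ref{prop:SimpleShadow}\,(i)--(ii) applied to $\bar\eta\leqcp\mu_{r_n}$ we get $\bar\eta\leqc\eta^*\leqp\mu_{r_n}$, so appending $\eta_{r_n}:=\eta^*$ produces an admissible chain and $\eta^*$ is a member of the set \eqref{eq:minObSh}. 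Conversely, for minimality, take any competitor $\eta_{r_n}$ witnessed by a chain $(\eta_r)_{r\in R}$. The restriction $(\eta_r)_{r\in R\setminus\{r_n\}}$ is an admissible chain for the $(n-1)$-step problem, so by the induction hypothesis $\bar\eta\leqc\eta_{r_{n-1}}$; combined with $\eta_{r_{n-1}}\leqc\eta_{r_n}$ this gives $\bar\eta\leqc\eta_{r_n}$, and also $\eta_{r_n}\leqp\mu_{r_n}$, so $\eta_{r_n}$ is an admissible competitor in the \emph{simple} shadow problem defining $\eta^*=\shadow{\mu_{r_n}}{\bar\eta}$; minimality of the simple shadow (Proposition~\ref{prop:SimpleShadow}\,(iii)) then yields $\eta^*\leqc\eta_{r_n}$, as desired. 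Uniqueness of the minimal element is automatic, since two elements each $\leqc$ the other are equal (Lemma~\ref{lemma:characPotF}, or directly via potential functions). Finally $\shadow{\mu_{r_1},\ldots,\mu_{r_n}}{\nu}\leqp\mu_{r_n}$ is just Proposition~\ref{prop:SimpleShadow}\,(ii) for the outermost shadow.

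The only genuinely delicate point, and the one I would spell out most carefully, is the bookkeeping with $\leqcp$ versus $\leqc$ versus $\leqp$ when passing between the $(n-1)$-step and $n$-step problems: one must be sure that the witnessing chain produced by the induction hypothesis can be truncated, extended, and re-used, and that at each stage the hypotheses ``$\nu\leqcp\mu_S$'' needed to invoke Proposition~\ref{prop:SimpleShadow} are actually available (here Definition~\ref{def:LeqCP} and the remark following it, which guarantees $\nu\leqcp\mu_S$ for all $S\subset R$, do the heavy lifting). Everything else is a routine transcription of the one-step minimality property, so I do not anticipate further obstacles.
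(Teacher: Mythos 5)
Your proof is correct in outline and takes essentially the same route as the paper, whose entire argument for this lemma is the one-line remark that it ``can be easily shown by induction over $n\geq 2$ using Proposition~\ref{prop:SimpleShadow} and Lemma~\ref{lemma:orderShadows}''; you have simply filled in that induction, replacing the appeal to Lemma~\ref{lemma:orderShadows} by a direct use of the minimality property in Proposition~\ref{prop:SimpleShadow}\,(iii), which works equally well. One parenthetical claim is wrong, however: $\nu\leqcp\mu_R$ does \emph{not} imply $\mu_{r_{n-1}}\leqc\mu_{r_n}$ --- the family $\mu_R$ is not assumed to be increasing in convex order (only the witnessing submeasures are), as Proposition~\ref{prop:GeneralSchadow} explicitly points out --- so your first justification of $\shadow{\mu_{r_1},\ldots,\mu_{r_{n-1}}}{\nu}\leqcp\mu_{r_n}$ fails. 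Your ``more careful'' alternative is the right one, but it elides a step: the witnessing family $(\eta_t)_{t\in R}$ of Definition~\ref{def:LeqCP} only satisfies $\nu\leqcp\eta_t$, not $\nu\leqc\eta_t$, so it is not itself an admissible chain for \eqref{eq:minObSh} and cannot be fed directly into the minimality statement of the induction hypothesis. One must first upgrade it to a genuine chain by iterated simple shadows, setting $\tilde\eta_{r_1}=\shadow{\eta_{r_1}}{\nu}$ and $\tilde\eta_{r_{k+1}}=\shadow{\eta_{r_{k+1}}}{\tilde\eta_{r_k}}$, using that $\tilde\eta_{r_k}\leqp\eta_{r_k}\leqc\eta_{r_{k+1}}$ implies $\tilde\eta_{r_k}\leqcp\eta_{r_{k+1}}$ (test against non-negative convex functions); this also establishes that the set \eqref{eq:minObSh} is non-empty. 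The induction hypothesis then gives $\bar\eta\leqc\tilde\eta_{r_{n-1}}\leqc\tilde\eta_{r_n}\leqp\mu_{r_n}$, whence $\bar\eta\leqcp\mu_{r_n}$ and the outer shadow is well defined. With that repair, the remainder of your argument --- membership of $\eta^*$ in \eqref{eq:minObSh}, minimality via Proposition~\ref{prop:SimpleShadow}\,(iii), uniqueness from mutual convex domination, and $\shadow{\mu_{r_1},\ldots,\mu_{r_n}}{\nu}\leqp\mu_{r_n}$ --- goes through as written.
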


\begin{proof}
	This can be easily shown by induction over $n \geq 2$ using Proposition \ref{prop:SimpleShadow} and Lemma \ref{lemma:orderShadows}. Alternatively, see \cite[Lemma 6.7]{NuStTa17}.
\end{proof}

By comparing \eqref{eq:minObSh} with Proposition \ref{prop:SimpleShadow}, we see that $\shadow{\mu _{r_1}, \ldots , \mu _{r_n}}{\nu}$ is the shadow of $\nu$ in $\mu _{r_n}$ obstructed by the additional finitely many marginals $\mu _{r_1}, \ldots , \mu _{r_{n-1}}$. Figure \ref{fig:ObstrShadow} illustrates that an additional obstructing marginal can force the shadow to ``spread out'' (in convex order).
\begin{center}
	\begin{figure}
		\begin{tikzpicture}[scale=0.8]
		\draw[-,dotted] (0,1) node[left]{$1$} -- (5.5,1);
		\draw[-,dotted] (0,-1) node[left]{$-1$} -- (5.5,-1);
		\draw[-,dotted] (0,0) node[left]{$0$} -- (5.5,0);
		\draw[-,dotted] (0,2) node[left]{$2$}-- (5.5,2);
		\draw[-,dotted] (0,-2) node[left]{$-2$} -- (5.5,-2);
		\draw[-,dotted] (0,3) node[left]{$3$} -- (5.5,3);
		\draw[-] (0,-2.5) -- (0,3.5);
		\draw[thick] (0,0) rectangle (1,1);
		\node at (0.5,3.5) {$\nu$};
		\draw[-] (2,-2.5) -- (2,3.5);
		\draw[thick] (2,-1) rectangle (3,0);
		\draw[thick] (2,1) rectangle (3,2);
		\node at (2.5,3.5) {$\mu_1$};
		\draw[-] (4,-2.5) -- (4,3.5);
		\draw[thick] (4,-2) rectangle (4.5,-1);
		\draw[thick] (4,0) rectangle (5,1);
		\draw[thick] (4,2) rectangle (4.5,3);
		\node at (4.5,3.5) {$\mu_2$};
		\draw[red,pattern=north east lines, pattern color=red] (2.02,-0.5) rectangle (2.98,-0.02);
		\draw[red,pattern=north east lines, pattern color=red] (2.02,1.02) rectangle (2.98,1.5);
		\node[red] at (3.2,-1.5) {$\mathcal{S}^{\mu _1}(\nu)$};
		\draw[red,pattern=north east lines, pattern color=red] (4.02,-1.317) rectangle (4.48,-1.02);
		\draw[red,pattern=north east lines, pattern color=red] (4.02,0.02) rectangle (4.98,0.342);
		\draw[red,pattern=north east lines, pattern color=red] (4.02,0.98) rectangle (4.98,1-0.342);
		\draw[red,pattern=north east lines, pattern color=red] (4.02,2.317) rectangle (4.48,2.02);
		\node[red] at (5.5,-1.5) {$\mathcal{S}^{\mu _1,\mu _2}(\nu)$};
		
		\draw[-,dotted] (7.5,1) node[left]{$1$} -- (11,1);
		\draw[-,dotted] (7.5,-1) node[left]{$-1$} -- (11,-1);
		\draw[-,dotted] (7.5,0) node[left]{$0$} -- (11,0);
		\draw[-,dotted] (7.5,2) node[left]{$2$}-- (11,2);
		\draw[-,dotted] (7.5,-2) node[left]{$-2$} -- (11,-2);
		\draw[-,dotted] (7.5,3) node[left]{$3$} -- (11,3);
		\draw[-] (7.5,-2.5) -- (7.5,3.5);
		\draw[thick] (7.5,0) rectangle (8.5,1);
		\node at (8,3.5) {$\nu$};
		\draw[-] (9.5,-2.5) -- (9.5,3.5);
		\draw[thick] (9.5,-2) rectangle (10,-1);
		\draw[thick] (9.5,0) rectangle (10.5,1);
		\draw[thick] (9.5,2) rectangle (10,3);
		\node at (10.5,3.5) {$\mu_2$};
		\draw[red,pattern=north east lines, pattern color=red] (9.52,0.02) rectangle (10.48,0.98);
		\node[red] at (10.5,-0.5) {$\mathcal{S}^{\mu _2}(\nu)$};
		\end{tikzpicture}
		
		\caption{The striped red area represents the obstructed shadow of $\nu$ in $\mu _1$ and $(\mu _1,\mu _2)$ (left) and the simple shadow of $\nu$ in $\mu _2$ (right).}\label{fig:ObstrShadow}
	\end{figure}
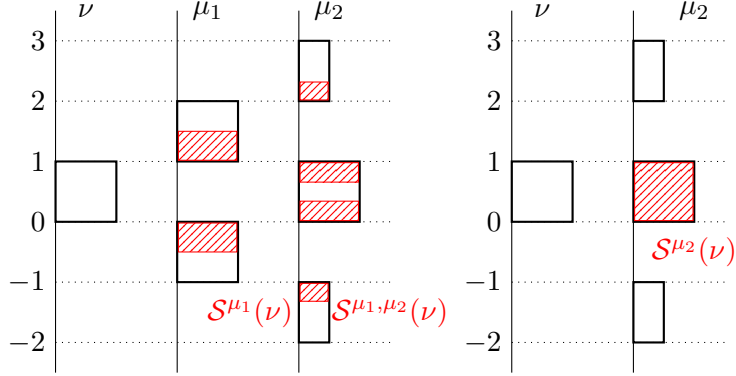
\end{center}

Taking the convex supremum over all choices of finite subsets $R\subset T$ yields the obstructed shadow of $\nu$ through $\mu_T$.

\begin{proposition} \label{prop:GeneralSchadow}
	Let $\nu \in \MO(\mathbb{R})$ with $\nu \leqcp \mu_T$ and suppose there exists $\theta \in \MO(\mathbb{R})$ such that $\mu_t \leqcp \theta$ for every $t\in T$. Then, the set 
	\begin{equation*}
	\{ \shadow{\mu_{r_1}, \ldots, \mu _{r_n}}{\nu} : \{r_1 < \ldots < r_n\} \subset T, \ n \geq 1\}
	\end{equation*}	
	admits a convex supremum. This is called the (general obstructed) shadow of $\nu$ through $\mu_T$  and is denoted by $\shadow{\mu_T}{\nu}$. Moreover, there exists a sequence $(R_n)_{n \in \mathbb{N}}$ of nested finite subsets of $T$ such that $(\shadow{\mu_{R_n}}{\nu})_{n \in \mathbb{N}}$ converges to $\shadow{\mu_T}{\nu}$ under $\TO$.
\end{proposition}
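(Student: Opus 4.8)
The plan is to deduce the statement from Lemma~\ref{lemma:CSupApprox}, applied to the set
\[
\mathcal{A}\;:=\;\bigl\{\,\shadow{\mu_{r_1},\ldots,\mu_{r_n}}{\nu}\ :\ \{r_1<\cdots<r_n\}\subset T,\ n\ge 1\,\bigr\}.
\]
First I would verify the easy hypotheses. Since $\nu\leqcp\mu_T$ forces $\nu\leqcp\mu_S$ for every $S\subset T$ (the remark after Definition~\ref{def:LeqCP}), each $\shadow{\mu_R}{\nu}$ with $R\subset T$ finite is well-defined by Lemma~\ref{lemma:FinteShadowMinimal}; as $T$ has a minimal element, $\mathcal{A}\ne\emptyset$. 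Lemma~\ref{lemma:FinteShadowMinimal} also gives $\nu\leqc\shadow{\mu_R}{\nu}$, so by Lemma~\ref{lemma:relationOrders}(i) every element of $\mathcal{A}$ has the mass and barycenter of $\nu$. Moreover $\shadow{\mu_R}{\nu}\leqp\mu_{\max R}\leqcp\theta$ by Lemma~\ref{lemma:FinteShadowMinimal} and the hypothesis on $\theta$; since $\leqp$ entails $\leqcp$ and $\leqcp$ is transitive, every member of $\mathcal{A}$ is dominated by $\theta$ in convex-positive order.

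The only remaining hypothesis of Lemma~\ref{lemma:CSupApprox}, and the main point, is that $\mathcal{A}$ is upward directed for $\leqc$. This follows from the monotonicity claim: for finite $R\subseteq R'\subseteq T$ one has $\shadow{\mu_R}{\nu}\leqc\shadow{\mu_{R'}}{\nu}$. I would prove it directly from Lemma~\ref{lemma:FinteShadowMinimal}: $\shadow{\mu_{R'}}{\nu}$ is the $\leqc$-minimal element of the set \eqref{eq:minObSh} associated with $R'$, and any family $(\eta_r)_{r\in R'}$ admissible there restricts to a family admissible for $R$ (its constraints being a subset of the $R'$-constraints), so $\shadow{\mu_R}{\nu}\leqc\eta_{\max R}\leqc\eta_{\max R'}$, where the second inequality uses $\max R\le\max R'$ and the $\leqc$-chain built into admissibility; as this holds for every element $\eta_{\max R'}$ of that set, it holds for its minimum $\shadow{\mu_{R'}}{\nu}$. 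Directedness is then immediate: given two members $\shadow{\mu_R}{\nu},\shadow{\mu_{R'}}{\nu}\in\mathcal{A}$, the element $\shadow{\mu_{R\cup R'}}{\nu}\in\mathcal{A}$ dominates both in convex order.

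With all hypotheses checked, Lemma~\ref{lemma:CSupApprox} yields the existence of $\mathrm{Csup}\,\mathcal{A}=:\shadow{\mu_T}{\nu}$ together with an increasing sequence $(\shadow{\mu_{Q_n}}{\nu})_{n\in\mathbb{N}}$ in $\mathcal{A}$ converging to it under $\TO$. To produce a \emph{nested} approximating sequence I would set $R_n:=Q_1\cup\cdots\cup Q_n$, so that $(R_n)_{n\in\mathbb{N}}$ is nested and finite. By the monotonicity step, $\shadow{\mu_{Q_n}}{\nu}\leqc\shadow{\mu_{R_n}}{\nu}\leqc\shadow{\mu_T}{\nu}$; passing to potential functions via Lemma~\ref{lemma:propertiesPotF}(i) (all measures having the same mass) and squeezing gives $U(\shadow{\mu_{R_n}}{\nu})\to U(\shadow{\mu_T}{\nu})$ pointwise, whence $\shadow{\mu_{R_n}}{\nu}\to\shadow{\mu_T}{\nu}$ under $\TO$ by Lemma~\ref{lemma:propertiesPotF}(ii). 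I expect the directedness/monotonicity step to be the only genuine obstacle; everything else is bookkeeping with the order relations and potential functions.
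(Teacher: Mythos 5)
Your proposal is correct and follows essentially the same route as the paper: monotonicity of finitely obstructed shadows under enlarging the obstruction set (via the minimality characterization in Lemma~\ref{lemma:FinteShadowMinimal}), domination by $\theta$ to get existence of the convex supremum, Lemma~\ref{lemma:CSupApprox} for an approximating increasing sequence, and passing to the unions $Q_1\cup\cdots\cup Q_n$ to make it nested. The only cosmetic difference is that the paper cites Proposition~\ref{prop:ExistenceCSup}(ii) explicitly for the existence of the supremum before invoking Lemma~\ref{lemma:CSupApprox}, whereas you fold both into the latter; this is immaterial since the hypotheses coincide.
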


\begin{proof}
	By Lemma \ref{lemma:FinteShadowMinimal}, $\shadow{\mu _{r_1}, \ldots, \mu _{r_n}}{\nu}$ is increasing in convex order, if we add additional marginals as obstructions. Thus, for all finite $R=\{r_1,\ldots,r_n\}\subset T$ it holds that  $\shadow{\mu _R}{\nu} = \shadow{\mu _{r_1}, \ldots, \mu _{r_n}}{\nu}$, and therefore $\shadow{\mu _R}{\nu} \leqp \mu _{r_n} \leqcp \theta$. 
	Consequently, the convex supremum exists by Proposition \ref{prop:ExistenceCSup} (ii).  
	
	Again by Lemma \ref{lemma:FinteShadowMinimal},  it holds $\shadow{\mu _{R_1}}{\nu} \leqc \shadow{\mu_{R_1 \cup R_2}}{\nu}$ and $\shadow{\mu _{R_2}}{\nu} \leqc \shadow{\mu_{R_1 \cup R_2}}{\nu}$ for all finite $R_1, R_2 \subset T$. Thus,  by Lemma \ref{lemma:CSupApprox}, there exists a sequence of finite sets $(R_n)_{n \in \mathbb{N}}$ such that $(\shadow{\mu _{R_n}}{\nu})_{n \in \mathbb{N}}$ converges under $\TO$ to the convex supremum. Moreover, one can easily show that $R_n' = \bigcup _{i = 1} ^n R_i$ is a nested sequence of finite subsets of $T$ for which $(\shadow{\mu _{R'_n}}{\nu})_{n \in \mathbb{N}}$ converges to $\shadow{\mu _T}{\nu}$ because $\shadow{\mu _{R_n}}{\nu} \leqc \shadow{\mu _{R'_n}}{\nu} \leqc \shadow{\mu _{T}}{\nu}$ for all $n \in \mathbb{N}$ and the convex order is preserved under convergence w.r.t.\ $\TO$ (see Lemma \ref{lemma:preservingOrder}).
\end{proof}

Proposition \ref{prop:GeneralSchadow} extends the definitions in Lemma \ref{lemma:FinteShadowMinimal} and Proposition \ref{prop:SimpleShadow} in two ways: Firstly, it allows for infinitely, even uncountably, many obstructions. Secondly, the family $\mu_T$ does not have to be increasing in convex order.

For the remaining part of Section \ref{sec:ShadowGeneralConstruction} we will {\it always assume that there exists some $\theta \in \MO(\mathbb{R})$ with $\mu _t \leqcp \theta$ for all $t \in T$}. If $T$ has a maximal element and $\mu _T$ is a peacock this property is automatically satisfied. The following lemma collects some important consequences of Proposition \ref{prop:GeneralSchadow}:

\begin{lemma} \label{lemma:ShadowInclusion}
	Let $\nu \leqcp \mu _T$.
	\begin{enumerate}
		\item[(i)] For all $S \subset S' \subset T$ we have $\shadow{\mu _S}{\nu} \leqc \shadow{\mu _{S'}}{\nu}$.
		\item[(ii)] If $(S_n)_{n \in \mathbb{N}}$ is a sequence of subsets of $T$ such that $(\shadow{\mu_{S_n}}{\nu})_{n \in \mathbb{N}}$ converges to $\shadow{\mu _T}{\nu}$ under $\TO$, the same holds for any sequence $(S_n')_{n \in \mathbb{N}}$ of subsets of $T$ with $S_n \subset S_n'$ for all $n \in \mathbb{N}$.
		\item[(iii)] Let $(S_n)_{n \in \mathbb{N}}$ be a nested sequence of sets with $T = \bigcup _{n \in \mathbb{N}} S_n$. Then $(\shadow{\mu _{S_n}}{\nu})_{n \in \mathbb{N}}$ converges to $\shadow{\mu _T}{\nu}$ under $\TO$.
		\item[(iv)] It holds $\shadow{\mu _T}{\nu} = \mathrm{Csup} \left\{ \shadow{\mu _{T_u}}{\nu} : u \in T  \right\}$.	
		\item[(v)] For all $t \in T$ it holds $\shadow{\mu _{T_t}}{\nu} \leqp \mu _t$.
	\end{enumerate}
\end{lemma}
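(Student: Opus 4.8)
The plan is to derive all five items from Proposition \ref{prop:GeneralSchadow}, which represents every general obstructed shadow $\shadow{\mu_S}{\nu}$ as the convex supremum of the finitely obstructed shadows $\shadow{\mu_R}{\nu}$ over finite $R\subset S$ and supplies an $\TO$-approximating sequence, combined with the potential-function calculus of Lemma \ref{lemma:propertiesPotF}. Two preliminary observations keep the bookkeeping clean: every measure $\shadow{\mu_S}{\nu}$ occurring below has the same mass and barycenter as $\nu$ (since $\nu\leqc\shadow{\mu_S}{\nu}$ and Lemma \ref{lemma:relationOrders} (i)) and is bounded in convex order by $\shadow{\mu_T}{\nu}$, hence in convex-positive order by $\theta$; consequently all convex suprema invoked below exist by Proposition \ref{prop:ExistenceCSup} (ii), and for such fixed-mass families $\TO$-convergence is exactly pointwise convergence of potential functions.

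First I would prove (i): the finite subsets of $S$ form a subfamily of those of $S'$, and the convex supremum of a subfamily is $\leqc$ the convex supremum of the whole family (any upper bound of the larger family bounds the smaller one), so Proposition \ref{prop:GeneralSchadow} gives $\shadow{\mu_S}{\nu}\leqc\shadow{\mu_{S'}}{\nu}$. Item (ii) is then a squeeze on potential functions: by (i) one has $U(\shadow{\mu_{S_n}}{\nu})\le U(\shadow{\mu_{S_n'}}{\nu})\le U(\shadow{\mu_T}{\nu})$ pointwise, the two outer sequences converge to $U(\shadow{\mu_T}{\nu})$ pointwise (the left one by Lemma \ref{lemma:propertiesPotF} (ii), the right one being constant), and Lemma \ref{lemma:propertiesPotF} (ii) returns the $\TO$-convergence of the middle one. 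Item (iv) is a direct least-upper-bound verification: $\shadow{\mu_T}{\nu}$ dominates each $\shadow{\mu_{T_u}}{\nu}$ by (i), and since every finite $R\subset T$ lies in $T_{\max R}$ we get $\shadow{\mu_R}{\nu}\leqc\shadow{\mu_{T_{\max R}}}{\nu}$ by (i), so any upper bound of $\{\shadow{\mu_{T_u}}{\nu}:u\in T\}$ dominates all finitely obstructed shadows and therefore their convex supremum $\shadow{\mu_T}{\nu}$.

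For (iii) I would take the nested finite sets $(R_k)$ from Proposition \ref{prop:GeneralSchadow} with $\shadow{\mu_{R_k}}{\nu}\to\shadow{\mu_T}{\nu}$, use that each finite $R_k$ sits inside some $S_{n(k)}$ (the $S_n$ being nested with union $T$), and apply (ii) to obtain $\shadow{\mu_{S_{n(k)}}}{\nu}\to\shadow{\mu_T}{\nu}$; then I would upgrade this subsequence to the full sequence using that $\bigl(U(\shadow{\mu_{S_n}}{\nu})\bigr)_n$ is pointwise nondecreasing and bounded above by $U(\shadow{\mu_T}{\nu})$ (both from (i)), so its pointwise limit equals that of any subsequence, after which Lemma \ref{lemma:propertiesPotF} (ii) concludes. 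Finally, for (v) I would apply Proposition \ref{prop:GeneralSchadow} to the totally ordered set $T_t$ (legitimate since $\nu\leqcp\mu_{T_t}$ and $\mu_s\leqcp\theta$ for $s\in T_t$) to get finite approximants $R_n\subset T_t$, enlarge them to $R_n\cup\{t\}$ (still convergent by (ii)), note $\shadow{\mu_{R_n\cup\{t\}}}{\nu}\leqp\mu_t$ by Lemma \ref{lemma:FinteShadowMinimal} since $t=\max(R_n\cup\{t\})$, and pass to the limit (using that $\TO$-convergence implies $\TZ$-convergence) via Lemma \ref{lemma:preservingOrder} (i) to conclude $\shadow{\mu_{T_t}}{\nu}\leqp\mu_t$. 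The only step requiring genuine care is the subsequence-to-sequence upgrade in (iii), where the monotonicity of the potential functions is essential; everything else is routine manipulation of convex suprema and potential functions.
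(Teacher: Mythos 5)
Your proof is correct and follows essentially the route the paper intends: the paper's own proof is a one-line deferral to the finite-set monotonicity established inside the proof of Proposition \ref{prop:GeneralSchadow} together with order preservation under $\TO$-limits, and your write-up is a faithful, detailed expansion of exactly that (using the potential-function characterization of Lemma \ref{lemma:propertiesPotF} in place of Lemma \ref{lemma:preservingOrder} for the squeeze and monotone-limit steps, which is an equivalent formulation).
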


\begin{proof} 
	For the case of finite subsets $S,S',S_n,S'_n$ of $T$, we have already shown items (i) and (ii) in the proof of Proposition \ref{prop:GeneralSchadow}.  
	The items (i)--(v) are simple consequences of these two properties together with Proposition \ref{prop:GeneralSchadow} and Lemma \ref{lemma:preservingOrder}.
\end{proof}

Proposition \ref{prop:GeneralSchadow} does not require that $T$ admits a maximal element.
However, if such a maximal element exists, then we can recover an analogue of \eqref{eq:minObSh}:

\begin{proposition} \label{prop:MaximalElement}
	Let $\nu \leqcp \mu _T$. For all $u \in T$, 
	\begin{equation} \label{eq:ShadowAsCInf}
	\shadow{\mu_{T_u}}{\nu} = \mathrm{Cinf} \left\{ \eta _u : (\eta _t)_{t \in T_u} \textit{ with } \nu \leqc \eta _s \leqc \eta _t \leqp \mu _t  \textit{ for all } s \leq t \textit{ in }T_u \right\}
	\end{equation}
	and the infimum is attained by the family $(\eta_t)_{t\in T_u}=(\shadow{\mu_{T_t}}{\nu})_{t \in T_u}$.
\end{proposition}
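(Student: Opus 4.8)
The plan is to establish \eqref{eq:ShadowAsCInf} by showing separately that the distinguished family $(\shadow{\mu_{T_t}}{\nu})_{t\in T_u}$ is an admissible competitor whose value at the index $u$ is $\shadow{\mu_{T_u}}{\nu}$, and that $\shadow{\mu_{T_u}}{\nu}$ is a $\leqc$-lower bound for $u$-values of \emph{all} admissible competitors. Together these two facts say that $\shadow{\mu_{T_u}}{\nu}$ is the minimum of the set on the right-hand side of \eqref{eq:ShadowAsCInf}, hence equals its $\mathrm{Cinf}$, and that this infimum is attained by the stated family. Throughout I would use that $\nu\leqcp\mu_T$ implies $\nu\leqcp\mu_{T_u}$ (the remark after Definition \ref{def:LeqCP}), so that $\shadow{\mu_{T_u}}{\nu}$ is well defined via Proposition \ref{prop:GeneralSchadow} applied to the set $T_u$ (which has maximal element $u$); in particular I would keep at hand the convex-supremum representation $\shadow{\mu_{T_u}}{\nu}=\mathrm{Csup}\{\shadow{\mu_R}{\nu}:R\subset T_u\text{ finite nonempty}\}$.

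For the admissibility of $(\shadow{\mu_{T_t}}{\nu})_{t\in T_u}$ I would check the three required inequalities directly from the earlier lemmas: $\nu\leqc\shadow{\mu_{\{t\}}}{\nu}\leqc\shadow{\mu_{T_t}}{\nu}$ by Proposition \ref{prop:SimpleShadow} and Lemma \ref{lemma:ShadowInclusion}(i); for $s\leq t$ in $T_u$ one has $T_s\subset T_t$, hence $\shadow{\mu_{T_s}}{\nu}\leqc\shadow{\mu_{T_t}}{\nu}$ again by Lemma \ref{lemma:ShadowInclusion}(i); and $\shadow{\mu_{T_t}}{\nu}\leqp\mu_t$ is exactly Lemma \ref{lemma:ShadowInclusion}(v). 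Since the value of this family at $u$ is $\shadow{\mu_{T_u}}{\nu}$, this already shows $\shadow{\mu_{T_u}}{\nu}$ belongs to the set over which the infimum is taken.

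For the lower-bound half, let $(\eta'_t)_{t\in T_u}$ be any admissible family. Using the convex-supremum representation above, it suffices to show $\shadow{\mu_R}{\nu}\leqc\eta'_u$ for every nonempty finite $R\subset T_u$, since $\mathrm{Csup}$ is by definition the least $\leqc$-upper bound. Given such an $R$, I would first enlarge it to $R\cup\{u\}$: adding $u$ as an extra obstruction only increases the obstructed shadow in convex order (Lemma \ref{lemma:ShadowInclusion}(i), i.e. the monotonicity already recorded in the proof of Proposition \ref{prop:GeneralSchadow}), so we may assume $u\in R$, say $R=\{r_1<\dots<r_n=u\}$. Then the restricted family $(\eta'_{r_1},\dots,\eta'_{r_n})$ satisfies $\nu\leqc\eta'_{r_i}\leqc\eta'_{r_j}\leqp\mu_{r_j}$ for $i\leq j$, so it lies in the set \eqref{eq:minObSh}; by the minimality of $\shadow{\mu_R}{\nu}$ there (Lemma \ref{lemma:FinteShadowMinimal}) we get $\shadow{\mu_R}{\nu}\leqc\eta'_{r_n}=\eta'_u$. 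Taking the convex supremum over finite $R$ yields $\shadow{\mu_{T_u}}{\nu}\leqc\eta'_u$, and combining with the previous paragraph completes the proof.

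I expect the only genuinely delicate point to be the passage from the finite case (Lemma \ref{lemma:FinteShadowMinimal}) to the possibly infinite — even uncountable — index set $T_u$; concretely, one has to be careful that enlarging the finite obstruction set is harmless, that the restriction to a finite set of an admissible family is again admissible in the finite sense appearing in \eqref{eq:minObSh}, and that a $\leqc$-upper bound of all the finite obstructed shadows $\shadow{\mu_R}{\nu}$ automatically dominates their convex supremum $\shadow{\mu_{T_u}}{\nu}$. All of these are immediate once the convex-supremum characterization of the general obstructed shadow is in place, so no additional calculations beyond invoking the cited results should be needed.
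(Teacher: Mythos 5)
Your proof is correct and follows essentially the same route as the paper: both halves rest on Lemma \ref{lemma:ShadowInclusion} (for admissibility of $(\shadow{\mu_{T_t}}{\nu})_{t\in T_u}$ and for enlarging the finite obstruction set by $u$) and on Lemma \ref{lemma:FinteShadowMinimal} (for the comparison $\shadow{\mu_{R\cup\{u\}}}{\nu}\leqc\eta'_u$). The only cosmetic difference is in the final step: the paper picks a specific sequence of finite sets converging to $\shadow{\mu_{T_u}}{\nu}$ under $\TO$ and passes to the limit via Lemma \ref{lemma:preservingOrder}, whereas you invoke the least-upper-bound property of $\mathrm{Csup}$ over all finite $R\subset T_u$ directly, which is a slightly more direct way of saying the same thing.
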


\begin{proof}
	Set $\eta _t = \shadow{\mu_{T_t}}{\nu}$ for all $t \in T$. We know that $\nu \leqc \shadow{\mu_{T_t}}{\nu}$. Lemma \ref{lemma:ShadowInclusion} (i) shows that $\shadow{\mu_{T_s}}{\nu} \leqc \shadow{\mu_{T_t}}{\nu}$ for all $s \leq t$ and we have  $\shadow{\mu_{T_t}}{\nu} \leqp \mu _t$ by Lemma \ref{lemma:ShadowInclusion} (v). Thus, $\shadow{\mu _{T_u}}{\nu}$ has to be, in convex order, greater than or equal to the convex infimum on the right-hand side of \eqref{eq:ShadowAsCInf}  for all $u \in T$.
	
	Let $u \in T$ and take a sequence $(R_n)_{n \in \mathbb{N}}$ of finite subsets of $T_u$ given by Proposition \ref{prop:GeneralSchadow} such that $(\shadow{\mu_{R_n}}{\nu})_{n \in \mathbb{N}}$ converges to $\shadow{\mu_{T_u}}{\nu}$. Lemma \ref{lemma:ShadowInclusion} (ii) yields that for $R'_n = R_n \cup \{u\}$ the sequence of corresponding shadows converges to $\shadow{\mu_{T_u}}{\nu}$ as well. Any family $(\eta' _t)_{t \in T_u}$ with $\eta' \leqc \eta' _s \leqc \eta' _t \leqp \mu _t$ for all $s \leq t$ in $T_u$ satisfies
	\begin{eqnarray*}
		\shadow{\mu_{R'_n}}{\nu} = \mathrm{Cinf} \ \left\{ \tilde{\eta}_{u} \ \vert \ \exists (\tilde{\eta}_t)_{t \in R_n'} : \nu \leqc \tilde{\eta} _s \leqc \tilde{\eta} _t \leqp \mu _t  \textit{ for all } s \leq t \textit{ in }R_n'    \right\} \leqc \eta' _u
	\end{eqnarray*}
	where the equality is due to Lemma \ref{lemma:FinteShadowMinimal}. Passing to the limit under $\TO$, shows that  $\shadow{\mu_{T_u}}{\nu}$ is smaller in convex order than the right-hand side of \eqref{eq:ShadowAsCInf} by Lemma \ref{lemma:preservingOrder}.
\end{proof}

The following lemma generalizes Lemma \ref{lemma:orderShadows} to obstructed shadows.
\begin{lemma} \label{lemma:GenShadowOrder}
	Let $\nu, \nu ' \leqcp \mu_T$.
	\begin{enumerate}
		\item[(i)] If $\nu \leqc \nu '$, then $\shadow{\mu_T}{\nu} \leqc \shadow{\mu_T}{\nu'}$.
		\item[(ii)] If $\nu \leqp \nu '$, then $\shadow{\mu_T}{\nu} \leqp \shadow{\mu_T}{\nu'}$.
		\item[(iii)] If $\nu \leqcs \nu '$, then $\shadow{\mu_T}{\nu} \leqcs \shadow{\mu_T}{\nu'}$.
	\end{enumerate}
	Moreover, for any peacock $\mu '_T$ we have
	\begin{enumerate}
		\item [(iv)] if  $\mu _t \leq _+ \mu'_t$ for all $t \in T$, then $\nu \leqcp \mu _T ' $ and $\shadow{\mu' _T} {\nu} \leq _c \shadow{\mu _T} {\nu}$.
	\end{enumerate}
\end{lemma}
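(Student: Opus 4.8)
The plan is to reduce all four items to the already-established monotonicity of simple and finitely obstructed shadows (Lemma~\ref{lemma:orderShadows} and Lemma~\ref{lemma:FinteShadowMinimal}) by using that the general obstructed shadow is a convex supremum of the finitely obstructed ones (Proposition~\ref{prop:GeneralSchadow}). The four statements split into two groups. Items (i) and (iv) concern the convex order, which is exactly the order out of which $\mathrm{Csup}$ and $\mathrm{Cinf}$ are built, so they follow from a soft upper-/lower-bound argument with no limits. Items (ii) and (iii) concern $\leqp$ and $\leqcs$, which are not visible to $\mathrm{Csup}$, so for those one has to approximate under $\TO$ and invoke that $\TO$-convergence preserves these orders (Lemma~\ref{lemma:preservingOrder}). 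To prove (i): for a finite $R=\{r_1<\dots<r_n\}\subset T$, transitivity of $\leqc$ shows that every family admissible in the minimisation \eqref{eq:minObSh} for $\nu'$ is also admissible for $\nu$ (since $\nu\leqc\nu'\leqc\eta_{r_1}$), so by Lemma~\ref{lemma:FinteShadowMinimal} we get $\shadow{\mu_R}{\nu}\leqc\shadow{\mu_R}{\nu'}$ (equivalently, this is an immediate induction on $|R|$ via Lemma~\ref{lemma:orderShadows}(i)). Combining with Lemma~\ref{lemma:ShadowInclusion}(i) gives $\shadow{\mu_R}{\nu}\leqc\shadow{\mu_R}{\nu'}\leqc\shadow{\mu_T}{\nu'}$ for every finite $R$, so $\shadow{\mu_T}{\nu'}$ is a convex upper bound of $\{\shadow{\mu_R}{\nu}:R\text{ finite}\}$ and hence $\shadow{\mu_T}{\nu}=\mathrm{Csup}\{\shadow{\mu_R}{\nu}\}\leqc\shadow{\mu_T}{\nu'}$.

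For (ii) and (iii) I first prove the inequality for finite $R$ by induction on $|R|$, peeling off the last marginal through $\shadow{\mu_R}{\nu}=\shadow{\mu_{r_n}}{\shadow{\mu_{R\setminus\{r_n\}}}{\nu}}$ and applying Lemma~\ref{lemma:orderShadows}(ii), resp.\ (iii), at the last step. To pass to general $T$ I choose, by Proposition~\ref{prop:GeneralSchadow}, nested finite sets $(R_n)$ and $(R_n')$ along which $\shadow{\mu_{R_n}}{\nu}\to\shadow{\mu_T}{\nu}$ and $\shadow{\mu_{R_n'}}{\nu'}\to\shadow{\mu_T}{\nu'}$ under $\TO$, and replace them by the common refinement $\tilde R_n=R_n\cup R_n'$; by Lemma~\ref{lemma:ShadowInclusion}(ii) both convergences survive along $(\tilde R_n)$. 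Since $\shadow{\mu_{\tilde R_n}}{\nu}\leqp\shadow{\mu_{\tilde R_n}}{\nu'}$ (resp.\ $\leqcs$) for every $n$, and $\TO$-convergence implies $\TZ$-convergence, Lemma~\ref{lemma:preservingOrder}(i) (resp.\ (ii)) yields $\shadow{\mu_T}{\nu}\leqp\shadow{\mu_T}{\nu'}$ (resp.\ $\leqcs$).

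For (iv): since $\nu\leqcp\mu_T$ there is a witnessing family $(\eta_t)_{t\in T}$ with $\nu\leqcp\eta_t\leqp\mu_t\leqp\mu_t'$, so the same family witnesses $\nu\leqcp\mu_T'$. Now fix a finite $R=\{r_1<\dots<r_n\}\subset T$; because $\mu_t\leqp\mu_t'$, every family admissible in \eqref{eq:minObSh} for $\mu_R$ is admissible for $\mu_R'$, so the infimum over the larger admissible set is smaller in convex order: $\shadow{\mu_R'}{\nu}\leqc\shadow{\mu_R}{\nu}$ (this is the finite incarnation of the conceptual point encoded in Proposition~\ref{prop:MaximalElement}). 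Since $\shadow{\mu_R}{\nu}\leqp\mu_{r_n}\leqcp\theta$ for the standing bound $\theta$ of $\mu_T$, we obtain $\shadow{\mu_R'}{\nu}\leqcp\theta$, so the family $\{\shadow{\mu_R'}{\nu}:R\text{ finite}\}$ (all of mass and barycenter equal to those of $\nu$) is $\leqcp$-dominated and $\shadow{\mu_T'}{\nu}=\mathrm{Csup}\{\shadow{\mu_R'}{\nu}\}$ is well-defined by Proposition~\ref{prop:ExistenceCSup}(ii), even though $\mu_T'$ need not carry a global $\theta$-bound of its own. Finally $\shadow{\mu_R'}{\nu}\leqc\shadow{\mu_R}{\nu}\leqc\shadow{\mu_T}{\nu}$ for every finite $R$, so $\shadow{\mu_T}{\nu}$ is a convex upper bound of $\{\shadow{\mu_R'}{\nu}\}$ and therefore $\shadow{\mu_T'}{\nu}\leqc\shadow{\mu_T}{\nu}$.

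The main obstacle is not any single estimate but the bookkeeping around well-definedness and the order of the arguments: in (iv) one must secure the existence of $\shadow{\mu_T'}{\nu}$ via the finite-level $\theta$-bound before comparing it, and in (ii)/(iii) one cannot approximate $\nu$ and $\nu'$ by independently chosen sequences but must work along a single common refinement so that Lemma~\ref{lemma:preservingOrder} applies to one sequence of pairs. Everything else is either a transitivity/feasible-set inclusion or a direct appeal to the results already in place for simple and finitely obstructed shadows.
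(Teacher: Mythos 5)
Your proof is correct and follows essentially the same route as the paper: items (ii)–(iii) via a common refinement of approximating finite sets, inductive use of Lemma \ref{lemma:orderShadows}, and passage to the limit with Lemma \ref{lemma:preservingOrder}, and item (iv) via inclusion of admissible families followed by a convex-supremum comparison (the paper phrases this through Proposition \ref{prop:MaximalElement} and Lemma \ref{lemma:ShadowInclusion}(iv) rather than finite sets, but the mechanism is identical). Your direct upper-bound argument for (i) and the explicit check that $\mathrm{Csup}\{\shadow{\mu'_R}{\nu}\}$ exists are harmless refinements of the same argument.
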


\begin{proof}
	Item (i)-(iii):
	By Proposition \ref{prop:GeneralSchadow} and Lemma \ref{lemma:ShadowInclusion} (ii), we can find a sequence of nested finite sets $(R_n)_{n \in \mathbb{N}}$ such that both $(\shadow{\mu _{R_n}}{\nu})_{n \in \mathbb{N}}$ converges weakly to $\shadow{\mu_T}{\nu}$ and $(\shadow{\mu _{R_n}}{\nu'})_{n \in \mathbb{N}}$ converges weakly to $\shadow{\mu_T}{\nu'}$. In any of the three cases we get the desired relation between $(\shadow{\mu _{R_n}}{\nu})_{n \in \mathbb{N}}$ and $(\shadow{\mu _{R_n}}{\nu'})_{n \in \mathbb{N}}$ by inductively applying Lemma \ref{lemma:orderShadows}. Since all of the three order relations are preserved under convergence in $\TO$ by Lemma \ref{lemma:preservingOrder}, we have shown the claim.
	
	Item (iv) is an immediate consequence of Proposition \ref{prop:MaximalElement} and Lemma \ref{lemma:ShadowInclusion} (iv).
\end{proof}

\begin{lemma} \label{lemma:ShadowFactor}
	Let $\nu \leqcp \mu_T$  and $\alpha > 0$.  Then $\alpha \nu \leqcp \alpha \mu_T$ and $\shadow{\alpha \mu _T}{\alpha \nu} = \alpha \shadow{\mu _T}{\nu}$,
	i.e.\ the convex supremum is positively $1$-homogeneous.
\end{lemma}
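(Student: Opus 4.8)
The plan is to reduce the claim to the scaling property of the \emph{simple} shadow already recorded in Proposition \ref{prop:propertiesShadow}(i), together with the elementary observation that the convex supremum commutes with multiplication by a positive scalar.

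First I would dispose of the relation $\alpha\nu\leqcp\alpha\mu_T$. If $(\eta_t)_{t\in T}$ is a family witnessing $\nu\leqcp\mu_T$ in the sense of Definition \ref{def:LeqCP}, then $(\alpha\eta_t)_{t\in T}$ witnesses $\alpha\nu\leqcp\alpha\mu_T$, because multiplying a measure by $\alpha>0$ preserves each of the relations $\leqcp$, $\leqc$ and $\leqp$ (just multiply the defining integral inequalities by $\alpha$). In particular $\alpha\mu_t\leqcp\alpha\theta$ for all $t\in T$, so Proposition \ref{prop:GeneralSchadow} applies and $\shadow{\alpha\mu_T}{\alpha\nu}$ is well defined.

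Next I would treat finite index sets. For a finite $R=\{r_1<\ldots<r_n\}\subset T$, Lemma \ref{lemma:FinteShadowMinimal} defines $\shadow{\mu_R}{\nu}$ by iterating simple shadows, and Proposition \ref{prop:propertiesShadow}(i) gives $\shadow{\alpha\mu}{\alpha\nu}=\alpha\shadow{\mu}{\nu}$ at the level of simple shadows. A straightforward induction on $n$ then yields $\shadow{\alpha\mu_R}{\alpha\nu}=\alpha\shadow{\mu_R}{\nu}$ for every finite $R\subset T$. Finally I invoke the description of the general obstructed shadow as a convex supremum over finite obstructions (Proposition \ref{prop:GeneralSchadow}):
\[\shadow{\alpha\mu_T}{\alpha\nu}=\mathrm{Csup}\{\shadow{\alpha\mu_R}{\alpha\nu}:R\subset T\text{ finite}\}=\mathrm{Csup}\{\alpha\shadow{\mu_R}{\nu}:R\subset T\text{ finite}\}.\]
It remains to check $\mathrm{Csup}\{\alpha\zeta_i\}=\alpha\,\mathrm{Csup}\{\zeta_i\}$ whenever the right-hand side exists: $\alpha\,(\mathrm{Csup}\{\zeta_i\})$ is an upper bound of $\{\alpha\zeta_i\}$ in convex order, and if $\zeta'$ is any such upper bound then $\tfrac1\alpha\zeta'$ is an upper bound of $\{\zeta_i\}$, so $\mathrm{Csup}\{\zeta_i\}\leqc\tfrac1\alpha\zeta'$ and hence $\alpha\,\mathrm{Csup}\{\zeta_i\}\leqc\zeta'$; alternatively one argues on potential functions via $U(\alpha\cdot)=\alpha\,U(\cdot)$, Proposition \ref{prop:ExistenceCSup}, and the injectivity in Lemma \ref{lemma:characPotF}. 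Applying this with $\zeta_R=\shadow{\mu_R}{\nu}$ gives $\shadow{\alpha\mu_T}{\alpha\nu}=\alpha\,\mathrm{Csup}\{\shadow{\mu_R}{\nu}:R\subset T\text{ finite}\}=\alpha\shadow{\mu_T}{\nu}$.

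I do not expect a genuine obstacle here; the only points requiring a little care are the commutation of $\mathrm{Csup}$ with scaling and making sure the finite-$R$ induction is set up against the iterated definition in Lemma \ref{lemma:FinteShadowMinimal}. Everything else is bookkeeping, and the statement about $1$-homogeneity of the convex supremum is exactly the displayed identity proved along the way.
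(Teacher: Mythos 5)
Your proof is correct and rests on the same idea as the paper's: the paper's entire proof is the one-line observation that for $\alpha>0$ one has $\eta\leqc\eta'$ if and only if $\alpha\eta\leqc\alpha\eta'$, so every object defined purely through the orders $\leqc$, $\leqp$, $\leqcp$ is equivariant under scaling. You simply spell out this equivariance explicitly through the finite-obstruction case and the commutation of $\mathrm{Csup}$ with positive scalars, which is a more detailed but not genuinely different route.
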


\begin{proof}
	For all $\alpha > 0$ and all $\eta, \eta ' \in \MO(\mathbb{R})$ it holds $\eta \leqc \eta '$ if and only if $\alpha \eta \leqc \alpha \eta '$. 
\end{proof}

\begin{proposition} \label{prop:ShadowAssoc}
	Let $\nu _1, \nu _2 \in \MO(\mathbb{R})$ with $\nu = \nu _1 + \nu _2 \leq_{c,+} \mu_T$. It holds $\nu _1 \leqcp \mu _T$, $\nu _2 \leqcp (\mu _t - \shadow{\mu _{T_t}}{\nu _1})_{t \in T}$ and
	\begin{equation*}
	\shadow{\mu_T}{\nu _1 + \nu _2} = \shadow{\mu_T}{\nu _1} +  \shadow{(\mu _t - \shadow{\mu_{T_t}}{\nu _1})_{t \in T}}{\nu _2}.
	\end{equation*} 
\end{proposition}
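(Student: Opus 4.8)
The plan is to reduce the statement to the already-established associativity rule for finitely obstructed shadows (Lemma \ref{lemma:FinteShadowMinimal} together with Proposition \ref{prop:propertiesShadow}(ii)) and then pass to the limit using the approximation scheme of Proposition \ref{prop:GeneralSchadow}. First I would verify the two ``admissibility'' claims. That $\nu_1 \leqcp \mu_T$ is immediate from $\nu_1 \leqp \nu \leqcp \mu_T$ (monotonicity of $\leqcp$ in the first slot, via Definition \ref{def:LeqCP} with the same witnessing family $(\eta_t)$). For $\nu_2 \leqcp (\mu_t - \shadow{\mu_{T_t}}{\nu_1})_{t \in T}$, I would exhibit the witnessing family directly: set $\zeta_t = \shadow{\mu_{T_t}}{\nu} - \shadow{\mu_{T_t}}{\nu_1}$. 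Monotonicity in convex order of $t \mapsto \shadow{\mu_{T_t}}{\cdot}$ (Lemma \ref{lemma:ShadowInclusion}(i)) gives the ordering $\zeta_s \leqc \zeta_t$ only after one checks the differences are themselves ordered; more robustly, one uses the finite-level identity $\shadow{\mu_{R}}{\nu} = \shadow{\mu_R}{\nu_1} + \shadow{(\mu_{r_i} - \shadow{\mu_{R_i}}{\nu_1})_i}{\nu_2}$ (iterating Proposition \ref{prop:propertiesShadow}(ii) along $R = \{r_1 < \dots < r_n\}$, where $R_i = \{r_1,\dots,r_i\}$), which shows $\zeta_{r_i}$ is exactly the finitely obstructed shadow of $\nu_2$ through the obstacle family, hence a peacock in $i$, hence $\nu_2 \leqcp (\mu_t - \shadow{\mu_{T_t}}{\nu_1})_t$ by Definition \ref{def:LeqCP}.

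The core is then the identity itself. By Proposition \ref{prop:GeneralSchadow} I can pick a nested sequence $(R_n)_{n}$ of finite subsets of $T$ such that $\shadow{\mu_{R_n}}{\nu} \to \shadow{\mu_T}{\nu}$ and, by Lemma \ref{lemma:ShadowInclusion}(ii), after enlarging, simultaneously $\shadow{\mu_{R_n}}{\nu_1} \to \shadow{\mu_T}{\nu_1}$. At each finite level the iterated application of Proposition \ref{prop:propertiesShadow}(ii) along $R_n$ yields
\begin{equation*}
\shadow{\mu_{R_n}}{\nu} = \shadow{\mu_{R_n}}{\nu_1} + \shadow{(\mu_{r} - \shadow{\mu_{(R_n)_{r}}}{\nu_1})_{r \in R_n}}{\nu_2},
\end{equation*}
where $(R_n)_r = \{r' \in R_n : r' \le r\}$. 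The first two terms converge under $\TO$ by the choice of $(R_n)$, so the third term, $\shadow{(\mu_r - \shadow{\mu_{(R_n)_r}}{\nu_1})_{r \in R_n}}{\nu_2}$, converges under $\TO$ as well (difference of convergent sequences, and $\TO$ is a linear topology). It remains to identify this limit as $\shadow{(\mu_t - \shadow{\mu_{T_t}}{\nu_1})_{t \in T}}{\nu_2}$. For this I would argue it equals $\sup_n$ of the finitely obstructed shadows of $\nu_2$ through the \emph{true} obstacle family $(\mu_t - \shadow{\mu_{T_t}}{\nu_1})_t$ restricted to $R_n$: the finite obstacle $\mu_r - \shadow{\mu_{(R_n)_r}}{\nu_1}$ dominates $\mu_r - \shadow{\mu_{T_r}}{\nu_1}$ in $\leqp$ (since $\shadow{\mu_{(R_n)_r}}{\nu_1} \leqc \shadow{\mu_{T_r}}{\nu_1}$ and both are submeasures of $\mu_r$ — here I would use that restrictions of $\mu_r$ ordered in $\leqc$ need not be ordered in $\leqp$, so I must instead compare potential functions and invoke Lemma \ref{lemma:shadowPF}), squeeze the finitely obstructed shadow of $\nu_2$ between the two obstacle families via Lemma \ref{lemma:GenShadowOrder}(iv), and let $n \to \infty$ using that $\shadow{\mu_{(R_n)_r}}{\nu_1} \to \shadow{\mu_{T_r}}{\nu_1}$, so the obstacle families coincide in the limit.

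\textbf{Main obstacle.} The delicate point is the last identification step: the finite obstacle measures $\mu_r - \shadow{\mu_{(R_n)_r}}{\nu_1}$ are \emph{not} submeasures of the limiting obstacle $\mu_t - \shadow{\mu_{T_t}}{\nu_1}$ nor vice versa in a way that is uniform in $n$ and $r$, so one cannot simply monotonically pass to the supremum. I expect the clean way around this is to compare everything at the level of potential functions: use the second inequality in Lemma \ref{lemma:shadowPF} to control $U(\shadow{\mu_{T_r}}{\nu_1}) - U(\shadow{\mu_{(R_n)_r}}{\nu_1})$ by $U(\mu_{T_r}\text{-data}) - U(\mu_{(R_n)_r}\text{-data})$ uniformly, deduce $\TO$-convergence of the obstacle families, and then invoke continuity of the (finitely, then generally) obstructed shadow construction in its obstacle argument under $\TO$ — which itself follows from Lemma \ref{lemma:preservingOrder} and the $\mathrm{Csup}$ characterization in Proposition \ref{prop:GeneralSchadow}. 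A cleaner alternative, which I would try first, is to bypass the finite approximation of the obstacle family entirely and instead characterize both sides via the $\mathrm{Cinf}$-formula of Proposition \ref{prop:MaximalElement} (when $T$ has a maximal element) or its $\mathrm{Csup}$-over-$T_u$ refinement (Lemma \ref{lemma:ShadowInclusion}(iv)) in the general case, checking that a family witnesses the infimum for $\shadow{\mu_T}{\nu}$ if and only if its ``$\nu_1$-part'' and ``$\nu_2$-residual'' witness the infima for the two summands; the additivity then drops out of the variational characterization without any explicit limit of obstacle families.
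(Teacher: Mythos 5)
Your preferred alternative --- running both inequalities through the variational $\mathrm{Cinf}$-characterization of Proposition \ref{prop:MaximalElement} --- is exactly what the paper does, so the core of your plan is sound. Concretely: the finite-level identity along a nested sequence $(R_n)$ is used \emph{only} to show that the difference family $\zeta_t := \shadow{\mu_{T_t}}{\nu} - \shadow{\mu_{T_t}}{\nu_1}$ is increasing in convex order with $\nu_2 \leqcp (\zeta_t)_{t}$ (this also settles your second admissibility claim). One then gets $\shadow{(\mu_t - \shadow{\mu_{T_t}}{\nu_1})_{t\in T_u}}{\nu_2} \leqc \zeta_u$ by feeding $(\zeta_t)_{t\in T_u}$ into Proposition \ref{prop:MaximalElement} as a competitor, and the reverse inequality by feeding $\bigl(\shadow{\mu_{T_t}}{\nu_1} + \shadow{(\mu_s - \shadow{\mu_{T_s}}{\nu_1})_{s\in T_t}}{\nu_2}\bigr)_{t\in T_u}$ into the $\mathrm{Cinf}$-characterization of $\shadow{\mu_{T_u}}{\nu}$. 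No identification of the limit of the third finite-level term is ever required; the case without a maximal element is then handled by Lemma \ref{lemma:ShadowInclusion}(iv) together with Lemma \ref{lemma:CSupSum}(iii), a step you should make explicit.

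Your primary route, by contrast, contains a genuine gap. You propose to identify $\lim_n \shadow{(\mu_r - \shadow{\mu_{(R_n)_r}}{\nu_1})_{r\in R_n}}{\nu_2}$ by invoking ``continuity of the obstructed shadow in its obstacle argument under $\TO$,'' claimed to follow from Lemma \ref{lemma:preservingOrder} and Proposition \ref{prop:GeneralSchadow}. That continuity is false once the index set is infinite: Example \ref{expl:ShadowNotCont} exhibits obstacle families $\mu^n_{[0,1]} \to \mu_{[0,1]}$ pointwise under $\TO$ with $\lim_n \shadow{\mu^n_{[0,1]}}{\nu} \neq \shadow{\mu_{[0,1]}}{\nu}$. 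The $\mathcal{W}_1$-stability \eqref{eq:StabilitySimpShad} only controls the simple (and hence fixed finite-index) shadow, whereas here both the index sets $R_n$ and the obstacle measures vary; the one-sided potential-function bound of Lemma \ref{lemma:shadowPF} does not repair this. So you should discard route (a) and carry out (b) as above.
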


\begin{proof}
	First assume that $T$ is finite. In this case the claim follows from applying   Lemma \ref{prop:propertiesShadow} (ii) inductively $|T|$ times.
	
	Now suppose that $T$ has a maximal element, i.e. $T = T_u$ for some $u \in T$. Lemma \ref{lemma:GenShadowOrder} (ii) implies that $(\shadow{\mu _{T_t}}{\nu} - \shadow{\mu _{T_t}}{\nu _1})_{t \in T_u}$ is a well-defined family in $\MO(\mathbb{R})$. 
	We will show that this sequence is increasing in convex order and greater than or equal to $\nu _2$. To this end, let $s,t\in T_u$ with $s\leq t$ and let $(R_n)_{n\in N}$ be a sequence of finite sets such that all of the four sequences  $(\shadow{\mu_{(R_n)_s}}{\nu})_{n \in \mathbb{N}}$, $(\shadow{\mu_{(R_n)_s}}{\nu _1})_{n \in \mathbb{N}}$, $(\shadow{\mu_{(R_n)_t}}{\nu})_{n \in \mathbb{N}}$ and $\shadow{\mu_{(R_n)_t}}{\nu _1}$  converge to  $\shadow{\mu_{T_s}}{\nu}$, $\shadow{\mu_{T_s}}{\nu_1}$, $\shadow{\mu_{T_t}}{\nu}$ and $\shadow{\mu_{T_t}}{\nu_1}$ respectively. Again, this sequence can be constructed by using Proposition \ref{prop:GeneralSchadow} in conjunction with Lemma \ref{lemma:ShadowInclusion} (ii). For all $n \in \mathbb{N}$ we obtain by the initial considerations for finite index sets
	\begin{eqnarray*}
		&\shadow{\mu_{(R_n)_s}}{\nu} - \shadow{\mu_{(R_n)_s}}{\nu _1} = \shadow{(\mu_r - \shadow{\mu_{(R_n)_r}}{\nu_1})_{r \in (R_n)_s}}{\nu _2}\\
		&\leqc \shadow{(\mu_r - \shadow{\mu_{(R_n)_r}}{\nu_1})_{r \in (R_n)_t}}{\nu _2} = \shadow{\mu_{(R_n)_t}}{\nu} - \shadow{\mu_{(R_n)_t}}{\nu _1}.
	\end{eqnarray*}
	Letting $n$ tend to infinity, this proves that $(\shadow{\mu _{T_t}}{\nu} - \shadow{\mu _{T_t}}{\nu _1})_{t \in T_u}$ is increasing in convex order and shows $\nu_2\leqcp  (\shadow{\mu _{T_t}}{\nu} - \shadow{\mu _{T_t}}{\nu _1})_{t \in T_u}$. Since additionally $\shadow{\mu_{T_t}}{\nu} - \shadow{\mu_{T_t}}{\nu _1} \leqp \mu _t - \shadow{\mu_{T_t}}{\nu _1}$ for all $t \in T_u$, Proposition \ref{prop:MaximalElement} yields 
	\begin{equation*}
	\shadow{(\mu _t - \shadow{\mu_{T_t}}{\nu _1})_{t \in T_u}}{\nu _2} \leqc \shadow{\mu_{T_u}}{\nu} - \shadow{\mu_{T_u}}{\nu _1}.
	\end{equation*}
	Similarly, we can apply Proposition \ref{prop:MaximalElement} to see 
	\begin{equation*}
	\shadow{\mu_{T_u}}{\nu} \leqc \shadow{\mu_{T_u}}{\nu _1} +  \shadow{(\mu _t - \shadow{\mu_{T_t}}{\nu _1})_{t \in T_u}}{\nu _2}.
	\end{equation*}
	and therefore both sides are equal. 
	
	In the general case,	by Lemma \ref{lemma:ShadowInclusion} (iv) it holds 	
	\begin{eqnarray*}
		\shadow{\mu_T}{\nu} &=&  \mathrm{Csup} \left\{ \shadow{\mu_{T_u}}{\nu _1} +  \shadow{(\mu _t - \shadow{\mu_{T_t}}{\nu _1})_{t \in T_u}}{\nu _2} : u \in T  \right\} \\
		&=& \mathrm{Csup} \left\{ \shadow{\mu_{T_u}}{\nu _1} :u \in T \right\} + \mathrm{Csup} \left\{  \shadow{(\mu _t - \shadow{\mu_{T_t}}{\nu _1})_{t \in T_u}}{\nu _2} : u \in T  \right\} \\
		&=& \shadow{\mu_T}{\nu _1} +  \shadow{(\mu _t - \shadow{\mu_{T_t}}{\nu _1})_{t \in T}}{\nu _2}
	\end{eqnarray*}
	where the second equality follows from Lemma \ref{lemma:CSupSum} (iii) because both summands are increasing in convex order as $u$ increases (see Lemma \ref{lemma:ShadowInclusion} (i) and Lemma \ref{lemma:GenShadowOrder} (iv)).
\end{proof}

\begin{remark} \label{rem:RestIsPCOC}
	In the previous proof we have shown that $(\shadow{\mu _{T_t}}{\nu} - \shadow{\mu _{T_t}}{\nu _1})_{t \in T}$ is increasing in convex order with $\nu _2 \leqcp (\shadow{\mu _{T_t}}{\nu} - \shadow{\mu _{T_t}}{\nu _1})_{t \in T}$. If $\nu_1 + \nu _2 \leqc \mu _s \leqc \mu _t$ for all $s \leq t$ in $T$, since $\shadow{\mu _{T_t}}{\nu} = \mu_t$, 
	this implies that $(\mu _t - \shadow{\mu _{T_t}}{\nu _1})_{t \in T}$ is increasing in convex order with $\nu _2 \leq _{c} (\mu _t - \shadow{\mu _{T_t}}{\nu _1})_{t \in T}$.
\end{remark}

\subsection{Non self-improvable peacocks} \label{sec:NSI}

After parametrizations and shadows, non self-improvable peacocks are the last  conceptual ingredient that we need for the proof of Theorem \ref{thm:GenExist}.
Recall that $(T, \leq)$ is an abstract totally ordered set with minimal element $0 \in T$ and that we use the notation $T_r = \{s \in T : s \leq r\}$.

\begin{definition}\label{def:pcoc}
	A peacock $(\mu _t)_{t \in T}$ is called non self-improvable (NSI) if for all peacocks $(\eta _t)_{t \in T}$ with $\eta _0 = \mu _0$ and $\eta _t \leqp 2 \mu _t$ for all $t \in T$, it holds $\mu _t \leqc \eta _t$.

\end{definition}

The following lemma explains the term ``non self-improvable''. Indeed, Item (ii) in Lemma \ref{lemma:CharacNSI} shows that an NSI peacock is minimal in convex order for an operation that aims at reducing the peacock $(\mu _t)_{t \in T}$ in convex order at every $t\in T$ by rearranging the mass constrained to be a submeasure of $2\mu_t$ at every $t\in T$.
In this sense NSI peacocks cannot be ``self-improved''.

\begin{lemma} \label{lemma:CharacNSI}
	Let $(\mu _t)_{t \in T}$ be a peacock. The following are equivalent:
	\begin{enumerate}
		\item[(i)] $(\mu _t)_{t \in T}$ is NSI.
		\item[(ii)] For all $t \in T$ it holds $\shadow{(2 \mu _s)_{s \in T_t}}{\mu _0} = \mu _t$.
		\item[(iii)] $(\mu _t)_{t \in T}$ is an extreme point of the convex set
		\begin{equation*}
		K_{\mu _0} = \left\{ (\eta _t)_{t \in T} \ \vert \  (\eta _t)_{t \in T} \textit{ is a peacock with } \eta _0 = \mu _0 \right\}.
		\end{equation*}
	\end{enumerate}	 
\end{lemma}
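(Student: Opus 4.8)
## Proof plan for Lemma \ref{lemma:CharacNSI}

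The plan is to prove the chain of implications (i) $\Rightarrow$ (ii) $\Rightarrow$ (iii) $\Rightarrow$ (i), exploiting the minimality property of the obstructed shadow throughout. First I would unpack definitions: by Definition \ref{def:LeqCP} and the fact that $\mu_0 \leqc \mu_t \leqc 2\mu_t$ we always have $\mu_0 \leqcp (2\mu_s)_{s \in T_t}$, so $\shadow{(2\mu_s)_{s \in T_t}}{\mu_0}$ is well-defined (here one invokes that $(\mu_t)_{t\in T}$ is a peacock, hence $\mu_t \leqcp \mu_t \leqc 2\mu_t$, and one can take $\theta$ to be any upper bound, e.g.\ $2\mu_t$ if $T$ has a maximal element, or argue on each $T_t$ which does have the maximal element $t$). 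Note that by Proposition \ref{prop:MaximalElement}, $\shadow{(2\mu_s)_{s\in T_t}}{\mu_0} = \mathrm{Cinf}\{\eta_t : (\eta_s)_{s \in T_t} \text{ with } \mu_0 \leqc \eta_r \leqc \eta_{r'} \leqp 2\mu_{r'}\}$, which is exactly the language in the NSI definition restricted to $T_t$.

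For (i) $\Rightarrow$ (ii): Since $(\mu_s)_{s \in T_t}$ is itself a peacock with $\mu_0$ as initial marginal and $\mu_s \leqp 2\mu_s$, it is an admissible competitor in the $\mathrm{Cinf}$ description above, so $\shadow{(2\mu_s)_{s\in T_t}}{\mu_0} \leqc \mu_t$. Conversely, if $(\eta_s)_{s \in T_t}$ is any family attaining (or approximating) that infimum, one must check that the NSI hypothesis forces $\mu_t \leqc \eta_t$. The subtlety is that NSI is stated for peacocks indexed by the full set $T$ with $\eta_t \leqp 2\mu_t$ for all $t \in T$, whereas here we only have data on $T_t$; I would extend $(\eta_s)_{s \in T_t}$ to all of $T$ by setting $\eta_r := \shadow{\mu_r}{\eta_t}$ (the simple shadow) for $r > t$ — using Lemma \ref{lemma:orderShadows} and $\eta_t \leqc \mu_t \leqc \mu_r$ one sees $\eta_r = \mu_r$ in fact, or at least $\eta_t \leqc \eta_r \leqp 2\mu_r$ — to obtain a genuine peacock on $T$ to which NSI applies, yielding $\mu_t \leqc \eta_t$. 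Taking the $\mathrm{Cinf}$ gives $\mu_t \leqc \shadow{(2\mu_s)_{s\in T_t}}{\mu_0}$, hence equality.

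For (ii) $\Rightarrow$ (iii): Suppose $2\mu_t = \mu'_t + \mu''_t$ for two peacocks $(\mu'_t), (\mu''_t) \in K_{\mu_0}$. Then $\mu'_t \leqp 2\mu_t$ for all $t$, and $(\mu'_t)_{t\in T_t}$ is an admissible competitor in the $\mathrm{Cinf}$ from Proposition \ref{prop:MaximalElement}, so $\mu_t = \shadow{(2\mu_s)_{s\in T_t}}{\mu_0} \leqc \mu'_t$; symmetrically $\mu_t \leqc \mu''_t$. Adding, $2\mu_t \leqc \mu'_t + \mu''_t = 2\mu_t$, forcing equality in both convex-order relations. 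By Lemma \ref{lemma:relationOrders} it suffices to upgrade: $\mu_t \leqc \mu'_t$ together with $U(\mu_t) \leq U(\mu'_t)$ and $U(\mu'_t) + U(\mu''_t) = 2U(\mu_t) \leq U(\mu'_t) + U(\mu''_t)$ via Lemma \ref{lemma:propertiesPotF} forces $U(\mu'_t) = U(\mu_t) = U(\mu''_t)$ pointwise, hence $\mu'_t = \mu''_t = \mu_t$ for every $t \in T$, i.e.\ $(\mu_t)_{t\in T}$ is extremal in $K_{\mu_0}$.

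For (iii) $\Rightarrow$ (i): Let $(\eta_t)_{t\in T}$ be a peacock with $\eta_0 = \mu_0$ and $\eta_t \leqp 2\mu_t$. Set $\theta_t := 2\mu_t - \eta_t \geq 0$. I would argue that $(\theta_t)_{t\in T}$ is again a peacock with $\theta_0 = \mu_0$: indeed $\theta_0 = 2\mu_0 - \mu_0 = \mu_0$, and one needs $\theta_s \leqc \theta_t$ for $s \leq t$. For this I would \emph{not} expect it to hold in general — and this is where the argument needs care: the naive decomposition $2\mu = \eta + \theta$ need not split into two peacocks because $\theta$ may fail to be convex-ordered. The fix is the standard one: if $(\eta_t)$ is a witness violating NSI, i.e.\ $\mu_{t_0} \not\leqc \eta_{t_0}$ for some $t_0$, replace $\eta$ by its "shadow regularization" — define $\tilde\eta_t := \shadow{(2\mu_s)_{s \in T_t}}{\mu_0}$, which by Proposition \ref{prop:GeneralSchadow} and Lemma \ref{lemma:ShadowInclusion} is a peacock with $\tilde\eta_0 = \mu_0$, satisfies $\tilde\eta_t \leqp 2\mu_t$ (Lemma \ref{lemma:ShadowInclusion} (v) applied to the family $2\mu$), and $\tilde\eta_t \leqc \eta_t$ for all $t$ by minimality of the obstructed shadow. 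If NSI fails, then $\tilde\eta_{t_0} \leqc \eta_{t_0} \not\geq_c \mu_{t_0}$, so $\tilde\eta_{t_0} \neq \mu_{t_0}$, while $\tilde\eta_{t_0} \leqc \mu_{t_0}$ (since $(\mu_s)_{s\in T_{t_0}}$ is a competitor). Then $2\mu_t = \tilde\eta_t + (2\mu_t - \tilde\eta_t)$ and I claim both summands — suitably the point is that \emph{both} $(\tilde\eta_t)$ and $(2\mu_t - \tilde\eta_t)$ are peacocks through $\mu_0$: for the second, apply Remark \ref{rem:RestIsPCOC} with $\nu_1 = \mu_0$, $\nu = 2\mu_0$, and the obstructing family $2\mu$, which gives exactly that $(2\mu_t - \shadow{(2\mu_s)_{s \in T_t}}{\mu_0})_{t\in T}$ is increasing in convex order and dominates in convex order a measure with the right initial value. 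This exhibits $2\mu$ as a nontrivial average of two distinct peacocks in $K_{\mu_0}$ (distinct because $\tilde\eta_{t_0} \neq \mu_{t_0}$), contradicting extremality.

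The main obstacle I anticipate is the last implication: making precise that $(2\mu_t - \tilde\eta_t)_{t\in T}$ is a \emph{peacock} (with the correct initial marginal $\mu_0$) rather than merely an increasing-in-convex-order family dominating $\mu_0$. Remark \ref{rem:RestIsPCOC} is tailor-made for this — applied with $\nu_1 = \mu_0$, $\nu_2 = \mu_0$, $\nu = 2\mu_0$, and the peacock $(2\mu_t)_{t\in T}$, it yields that $(2\mu_t - \shadow{(2\mu_s)_{s\in T_t}}{\mu_0})_{t\in T}$ is increasing in convex order with initial marginal $2\mu_0 - \mu_0 = \mu_0$ — so once this lemma is invoked correctly the argument closes. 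The secondary subtlety, already flagged above, is the mismatch between NSI being quantified over all of $T$ versus shadows living on $T_t$; this is handled uniformly by extending any $T_t$-indexed competitor to $T$ via simple shadows $\eta_r := \shadow{\mu_r}{\eta_t}$ for $r > t$, which stays below $2\mu_r$ by Lemma \ref{lemma:orderShadows}.
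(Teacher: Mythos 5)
Your proof is correct and rests on the same two pillars as the paper's own argument --- the $\mathrm{Cinf}$ characterization of the obstructed shadow from Proposition \ref{prop:MaximalElement} and the decomposition $2\mu_t=\shadow{(2\mu_s)_{s\in T_t}}{\mu_0}+\bigl(2\mu_t-\shadow{(2\mu_s)_{s\in T_t}}{\mu_0}\bigr)$ into two peacocks through $\mu_0$ via Remark \ref{rem:RestIsPCOC} --- only with the implication cycle traversed in the opposite direction ((i)$\Rightarrow$(ii)$\Rightarrow$(iii)$\Rightarrow$(i) instead of the paper's (i)$\Rightarrow$(iii)$\Rightarrow$(ii)$\Rightarrow$(i)). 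The one extra step your ordering forces, which the paper's route avoids, is extending a $T_t$-indexed competitor to all of $T$ in (i)$\Rightarrow$(ii); your extension is sound, since $\eta_t\leqc\mu_t\leqc\mu_r$ together with equality of total masses forces $\shadow{\mu_r}{\eta_t}=\mu_r$, so the extended family is a genuine peacock with initial marginal $\mu_0$ dominated by $(2\mu_r)_{r\in T}$ in positive order, and NSI applies to it.
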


\begin{proof}
	(i) $\Rightarrow$ (iii):
	Assume that $(\mu _t)_{t \in T}$ is NSI and $(\eta _t)	_{t \in T}$ and $(\eta' _t)	_{t \in T}$ are peacocks with $\eta _0 = \mu _0 = \eta '_0$ and $\mu _t = \frac{1}{2} \eta _t + \frac{1}{2} \eta' _t$ for all $t \in T$. Then both inequalities $\eta _t \leqp  2\mu _t$ and $\eta' _t \leqp  2\mu _t$ hold for all $t \in T$. Hence, $\mu _t \leqc \eta _t$ and $\mu _t \leqc \eta _t '$ by the NSI property.
	Combining these two inequalities with $\eta_t'=  2\mu _t - \eta _t$, yields  $\mu _t \leqc \eta _t \leqc \mu_t$ for all $t \in T$, i.e $\mu_t=\eta_t$. Hence, $(\mu _t)_{t \in T}$ is an extreme point of $K_{\mu _0}$.
	
	(iii) $\Rightarrow$ (ii):
	If $(\mu _t)_{t \in T}$ is an extreme point of $K_{\mu _0}$, applying first Lemma \ref{lemma:ShadowFactor} with $\alpha = 2$ and then Proposition \ref{prop:ShadowAssoc} with $\nu_1 = \nu_2 = \mu _0$, we can rewrite $\mu _t $ as
	\begin{equation*}
	\mu _t = \shadow{(\mu _t)_{s \in T_t}}{\mu _0} = \frac{1}{2} \shadow{(2 \mu _t)_{s \in T_t}}{2 \mu _0} = \frac{1}{2} \shadow{(2 \mu _s)_{s \in T_t}}{\mu _0} + \frac{1}{2} \left( 2 \mu _t - \shadow{(2 \mu _s)_{s \in T_t}}{\mu _0} \right) 
	\end{equation*}
	for all $t \in T$. Both $(\shadow{(2 \mu _t)_{t \in T}}{\mu _0})_{t \in T}$ and $(2 \mu _t - \shadow{(2 \mu _t)_{t \in T}}{\mu _0})_{t \in T}$ are elements of $K_{\mu _0}$ (see Remark \ref{rem:RestIsPCOC}) and thus extremality yields
	\begin{equation*}
	\shadow{(2 \mu _s)_{s \in T_t}}{\mu _0} =  2 \mu _t - \shadow{(2 \mu _s)_{s \in T_t}}{\mu _0}
	\end{equation*}
	for all $t \in T$ and hence $(\mu _t)_{t \in T}$ satisfies (ii).
	
	(ii) $\Rightarrow$ (i):
	Suppose $(\mu _t)_{t \in T}$ satisfies (ii) and $(\eta _t)_{t \in T}$ is a peacock with $\eta _0 = \mu _0$ and $\eta _t \leqp 2 \mu _t$ for all $t \in T$. Then $\mu _t = \shadow{(2 \mu _s)_{s \in T_t}}{\mu _0} \leq _c \eta _t$ for all $t \in T$ by Proposition \ref{prop:MaximalElement} and hence $(\mu _t)_{t \in T}$ is NSI.
\end{proof}

The key feature of non self-improvable peacocks is that there is only one martingale measure associated with such peacocks, see Proposition \ref{prop:NSItoUniq} below.
Even better this martingale measure is necessarily Markov. On the downside, the NSI property is not closed, as we show in Example \ref{expl:ShadowNotCont}.

\begin{lemma} \label{lemma:NSItoMarkov}
	If $(\mu _t)_{t \in T}$ is a peacock that is NSI, then under any $\pi \in \mathsf{M} _T((\mu _t)_{t \in T})$ the canonical process is a Markov process.
\end{lemma}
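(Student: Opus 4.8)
Fix $s<t$ in $T$. By Definition~\ref{def:MartSpaces}(ii) it is enough to prove that $\mathrm{Law}_\pi(X_t\mid\mathcal F_s)$ is $\sigma(X_s)$-measurable, i.e.\ that for every $B\in\mathcal F_s$ the finite measure $\pi|_B(X_t\in\cdot)$ is governed, in the right way, by its time-$s$ footprint $\theta_B:=\pi|_B(X_s\in\cdot)\leqp\mu_s$. For an abstract or uncountable $T$ I would first reduce to a countable index subset, exploiting that the Markov property involves only finitely many coordinates at a time and that both the obstructed shadow and the NSI property are controlled by a well-chosen countable subset of $T$ --- exactly the reduction performed in the later sections of the paper --- so from now on I think of $T$ as countable.

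The heart of the argument is the identity
\begin{equation}\label{eq:plan-key}
\pi|_B(X_u\in\cdot)=\shadow{(2\mu_v)_{v\in[s,u]\cap T}}{\theta_B}\qquad\text{for all }B\in\mathcal F_s\text{ and }u\ge s .
\end{equation}
To get it I would first record that the tail $(\mu_u)_{u\in T,\,u\ge s}$ is again NSI, i.e.\ $\shadow{(2\mu_v)_{v\in[s,u]\cap T}}{\mu_s}=\mu_u$: one inequality holds because $(\mu_v)_{v\in[s,u]}$ is itself an admissible $(2\mu_v)$-obstructed evolution of $\mu_s$, and the reverse follows from Lemma~\ref{lemma:CharacNSI}(ii) and Proposition~\ref{prop:MaximalElement} by prepending $(\mu_w)_{w<s}$ to any admissible evolution of $\mu_s$. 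Next, since $B\in\mathcal F_s$, the renormalised measure $\pi|_B$ restricted to times $\ge s$ is a martingale measure, so $(\pi|_B(X_u\in\cdot))_{u\ge s}$ and $(\pi|_{B^c}(X_u\in\cdot))_{u\ge s}$ are peacocks starting at $\theta_B$ and $\theta_{B^c}=\mu_s-\theta_B$ and dominated by $\mu_u\leqp 2\mu_u$. Writing $S^B_u:=\shadow{(2\mu_v)_{v\in[s,u]}}{\theta_B}$ and applying Proposition~\ref{prop:ShadowAssoc} with $\nu_1=\theta_B$, $\nu_2=\theta_{B^c}$ gives $\mu_u=S^B_u+\shadow{(2\mu_v-S^B_v)_{v\in[s,u]}}{\theta_{B^c}}$; in particular $S^B_u=\mu_u-(\text{a nonnegative measure})$, hence $S^B_u\leqp\mu_u$, so $(\pi|_{B^c}(X_u\in\cdot))_{u\ge s}$ is an admissible evolution of $\theta_{B^c}$ obstructed by $(2\mu_v-S^B_v)_v$. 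Minimality of the obstructed shadow (Proposition~\ref{prop:MaximalElement}) now yields $S^B_u\leqc\pi|_B(X_u\in\cdot)$ and $\shadow{(2\mu_v-S^B_v)_{v\in[s,u]}}{\theta_{B^c}}\leqc\pi|_{B^c}(X_u\in\cdot)$; since these two shadows add up to $\mu_u=\pi|_B(X_u\in\cdot)+\pi|_{B^c}(X_u\in\cdot)$ and potential functions are additive and monotone (Lemma~\ref{lemma:characPotF}), both inequalities are equalities, which is \eqref{eq:plan-key}.

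It remains to deduce the Markov property at $s$ from \eqref{eq:plan-key}. When $B=\{X_s\in D\}$ one has $\theta_B=(\mu_s)_{|D}$ and $\pi|_B(X_t\in\cdot)=\mathrm{Law}_\pi(X_t;X_s\in D)$, so \eqref{eq:plan-key} is automatically consistent with conditioning only on $X_s$; the real content is that \eqref{eq:plan-key} \emph{also} controls every $B\in\mathcal F_s$ whose footprint $\theta_B=f\mu_s$ has $f\notin\{0,1\}$ on a nonnull set, i.e.\ every $B$ that splits an $X_s$-fibre. Since $B\mapsto\pi|_B(X_t\in\cdot)$ is additive along disjoint sets in $\mathcal F_s$, \eqref{eq:plan-key} forces the obstructed shadow to be additive along every $\mathcal F_s$-measurable partition of the probability space; disintegrating such a partition down to the level of individual $X_s$-fibres and using the calculus of obstructed shadows (Proposition~\ref{prop:ShadowAssoc}, together with $S^B_v\leqp\mu_v$ which makes the residual obstructions fit disjointly) one identifies $\pi|_B(X_t\in\cdot)$ with $\int K(x,\cdot)\,\theta_B(dx)$ for the regular conditional kernel $K(x,\cdot)=\mathrm{Law}_\pi(X_t\mid X_s=x)$, uniformly in $B\in\mathcal F_s$ --- which is precisely $\mathrm{Law}_\pi(X_t\mid\mathcal F_s)=K(X_s,\cdot)$.

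I expect the main obstacle to be the identity \eqref{eq:plan-key}, and within it the verification that $(\pi|_{B^c}(X_u\in\cdot))_{u\ge s}$ really lives inside the \emph{residual} obstruction $2\mu_v-S^B_v$; this rests on the non-obvious point $S^B_v\leqp\mu_v$, which is squeezed out of the NSI decomposition of $\mu_v$ above. The second delicate point is the passage from \eqref{eq:plan-key} to the Markov property on fibres carrying atoms of $\mu_s$: there the scaling behaviour of the obstructed shadow of an atom (cf.\ Example~\ref{expl:ShadowOfAtom}) has to be reconciled with the $\mathcal F_s$-additivity forced by \eqref{eq:plan-key}, and this is exactly the interface at which the present lemma feeds into the uniqueness statement for martingales associated with NSI peacocks.
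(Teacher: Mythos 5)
Your route is genuinely different from the paper's. The paper's proof never touches the shadow calculus: it argues by contradiction, swapping the kernel conditioned on a finite past with the kernel conditioned on $X_r$ alone on a set where they differ by $\varepsilon$ against some convex $\varphi$, and thereby produces a peacock $(\eta_t)_{t\in T}$ with $\eta_0=\mu_0$, $\eta_t\leqp 2\mu_t$ and $\int\varphi\,\de\eta_u<\int\varphi\,\de\mu_u$, contradicting the definition of NSI directly. Your central identity
\[
\pi_{|B}(X_u\in\cdot)=\shadow{(2\mu_v)_{v\in T,\ s\le v\le u}}{\theta_B},\qquad \theta_B:=\pi_{|B}(X_s\in\cdot),\quad B\in\mathcal F_s,
\]
is correct and correctly derived: the tail $(\mu_u)_{u\ge s}$ is NSI by your prepending argument, Proposition~\ref{prop:ShadowAssoc} applied to $\theta_B+\theta_{B^c}=\mu_s$ yields $S^B_u\leqp\mu_u$ and splits $\mu_u$ into the shadow of $\theta_B$ and the residual shadow of $\theta_{B^c}$, both conditioned evolutions are admissible competitors, and the potential-function squeeze forces equality. (The preliminary reduction to countable $T$ is unnecessary, since the shadow calculus works for arbitrary totally ordered $T$, but it is harmless.) This is strictly more information than the paper's proof extracts.

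The genuine gap is the passage from this identity to the Markov property. ``Disintegrating the partition down to individual $X_s$-fibres'' is not an argument: fibres are in general $\pi$-null, the identity is only available for events $B\in\mathcal F_s$ of positive mass, and the map $\theta\mapsto\shadow{(2\mu_v)_v}{\theta}$ is \emph{not} additive in $\theta$ (that non-additivity is exactly what the residual obstruction in Proposition~\ref{prop:ShadowAssoc} quantifies), so it cannot be integrated fibre-wise against a disintegration of $\theta_B$. The correct way to close the gap is to run your own squeeze a second time against the Markovianized competitor: with $f:=\pi(B\mid X_s=\cdot)$ and $K_v(x,\cdot)$ a regular version of $\mathrm{Law}_\pi(X_v\mid X_s=x)$, the family $\bigl(\int f\,K_v\,\de\mu_s\bigr)_{v\ge s}$ is a peacock starting at $f\mu_s=\theta_B$ (conditional Jensen plus the tower property, and $K_s(x,\cdot)=\delta_x$) with $\int f\,K_v\,\de\mu_s\leqp\mu_v\leqp 2\mu_v$, and the family for $1-f$ is admissible for the residual obstruction $(2\mu_v-S^B_v)_v$ because $S^B_v\leqp\mu_v$. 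Minimality then gives $S^B_t\leqc\int f\,K_t\,\de\mu_s$ and the analogous bound for $B^c$; both pairs sum to $\mu_t$, so both inequalities are equalities, whence $\pi_{|B}(X_t\in A)=\int f(x)K_t(x,A)\,\de\mu_s(x)=\mathbb E_\pi[\1_B K_t(X_s,A)]$ for every Borel $A$ --- precisely the Markov property. Your closing concern about atoms of $\mu_s$ is then moot. With that paragraph added, your proof is complete and, unlike the paper's, constructive.
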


\begin{proof}
	Assume there exists $\pi \in \MM_T((\mu _t)_{t \in T})$ for which the canonical process is not a Markov process (in the sense of Definition \ref{def:MartSpaces} (ii)), i.e.\ there exist $r < u$ and a Borel set $A \subset \mathbb{R}$ such that $\mathbb{E}_{\pi}[\1_A | \mathcal{F}_r]$ is not $\pi$-a.e.\ equal to $ \mathbb{E}_{\pi}[\1_A | X_r]$. Since $\mathcal{F}_r$ is the product $\sigma$-algebra generated by the family $(\sigma(X_s))_{s \leq r}$,
	there exist $n \in \mathbb{N}$, $0 \leq r_1 < \ldots < r_n \leq r < u$ in $T$  such that 
	\begin{equation} \label{eq:KernelNEqu}
	\pi	[X_u \in A \ \vert X_{r_1}, \ldots, X_{r_n}, X_r] \neq \pi [X_u \in A \ \vert X_r].
	\end{equation}
	For $t \geq r$, let $k_{t}$ be a regular version of $\pi[X_t \in \cdot \ \vert X_{r_1}, \ldots, X_{r_n}, X_r]$ and $k'_{t}$ be a regular version of $\pi [X_t \in \cdot \ \vert X_r]$ which exist because $\mathbb{R}^{n+2}$ and $\mathbb{R}^2$ are Polish spaces. 
	The inequality in \eqref{eq:KernelNEqu} implies that there exists a convex function $\varphi$ s.t. the Borel-set 
	\begin{equation*}
	\left\{ x \in \mathbb{R}^{\{r_1,\ldots,r_n,r\}} : \int _{\mathbb{R}} \varphi(y) \de k_{u}(x,dy) \neq  \int _{\mathbb{R}} \varphi(y) \de k'_{u}(x_r,dy) \right\}
	\end{equation*}	
	has positive mass under	$\pi _{|\{r_1,\ldots,r_n,r\}}$. Suppose there exists $\varepsilon > 0$ such that the  Borel set 
	\begin{equation*}
	D = \left\{ x \in \mathbb{R}^{\{r_1,\ldots,r_n,r\}} : \int _{\mathbb{R}} \varphi(y) \de k_{u}(x,dy) \geq  \int _{\mathbb{R}} \varphi(y) \de k'_{u}(x_r,dy) + \varepsilon \right\}
	\end{equation*}
	has positive mass under $\pi _{|\{r_1,\ldots,r_n,r\}}$. We set
	\begin{equation*}
	\eta _t = \begin{cases}
	\mu _t & t < r \\
	\int _{D^c} k_{t}(x, \cdot) \de \pi _{|\{r_1,\ldots,r_n,r\}}(x) + \int _{D} k'_{t}(x_r, \cdot) \de \pi _{|\{r_1,\ldots,r_n,r\}}(x) & t \geq r
	\end{cases}
	\end{equation*}
	for all $t \in T$. Then $\eta _0 = \mu _0$,   $\eta _t \leqp 2 \mu _t$ for all $t \in T$ and $\eta_s \leqc \eta _t$ for all $s<t \leq r$. For all $r\leq s <t$ and any convex function $\psi$ it holds
	\begin{eqnarray*}
		&& \int \psi \de \eta _t = \mathbb{E}_{\pi}\left[ \mathbb{E}_{\pi} [\psi(X_t) | X_{r_1}, \ldots , X_{r_n}, X_r] \cdot \1_D + \mathbb{E}_{\pi} [\psi(X_t) | X_r] \cdot \1_{D^c}  \right] \\
		&\geq& \mathbb{E}_{\pi}\left[ \mathbb{E}_{\pi} [\psi(X_s) | X_{r_1}, \ldots , X_{r_n}, X_r] \cdot \1_D + \mathbb{E}_{\pi} [\psi(X_s) | X_r] \cdot \1_{D^c}  \right] = \int \psi \de \eta _s
	\end{eqnarray*}
	and thus $(\eta _t)_{t \in T}$ is a peacock.
	Then, since we have for $u>r$
	\begin{equation*}
	\int _{\mathbb{R}} \varphi \de \eta _u \leq \int _{\mathbb{R}} \varphi \de \mu _u - \varepsilon \cdot \pi _{|\{r_1,\ldots,r_n,r\}}[D]  < \int _{\mathbb{R}} \varphi \de \mu _u,
	\end{equation*}
	we get a contradiction to $(\mu _t)_{t \in T}$ being NSI. If such an $\varepsilon > 0$ does not exist, then there has to exist an $\varepsilon > 0$ such that the Borel set 
	\begin{equation*}
	D' = \left\{ x \in \mathbb{R}^{T_r} : \int _{\mathbb{R}} \varphi(y) \de k'_{u}(x,dy) \geq  \int _{\mathbb{R}} \varphi(y) \de k_{u}(x_r,dy) + \varepsilon \right\}
	\end{equation*}
	has positive mass under $\pi _{|\{r_1,\ldots,r_n,r\}}$. We define $\eta'$ with reversed roles of $k_{t}$ and $k'_{t}$ and obtain a contradiction as above. 
\end{proof}

The Markov property of a martingale measure associated with a NSI peacock allows for a short proof of the crucial uniqueness property. 

\begin{proposition} \label{prop:NSItoUniq}
	If $(\mu _t)_{t \in T}$ is a peacock that is NSI, then $\MM _T(\mu _T)$ consists of only one martingale measure and the canonical process is Markov under this measure.
\end{proposition}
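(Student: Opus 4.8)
The Markov part of the statement is exactly Lemma~\ref{lemma:NSItoMarkov}, so only uniqueness needs to be proved. The plan is to show that any two $\pi_1,\pi_2\in\MM_T(\mu_T)$ have the same two-dimensional marginals $\mathrm{Law}_{\pi_1}(X_s,X_t)=\mathrm{Law}_{\pi_2}(X_s,X_t)$ for all $s\le t$ in $T$. Since by Lemma~\ref{lemma:NSItoMarkov} both $\pi_1$ and $\pi_2$ are Markov, a Markov law on $\mathbb{R}^T$ is determined by its finite-dimensional marginals, and by the Markov property these are in turn determined by the one-dimensional marginals (all equal to $\mu_t$) together with the transition kernels $\pi_i[X_t\in\cdot\mid X_s=\cdot]$, which one recovers by disintegrating $\mathrm{Law}_{\pi_i}(X_s,X_t)$ with respect to the common marginal $\mu_s$. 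Hence such an agreement of two-dimensional marginals forces $\pi_1=\pi_2$.

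To compare two-dimensional marginals, fix $s\in T$ and a Borel set $D\subset\mathbb{R}$ and introduce the \emph{switched} family defined by $\eta_t:=\mu_t$ for $t<s$ and $\eta_t:=\pi_1(X_t\in\cdot\,,\,X_s\in D^c)+\pi_2(X_t\in\cdot\,,\,X_s\in D)$ for $t\ge s$. The first task is to check that $(\eta_t)_{t\in T}$ is a peacock with $\eta_0=\mu_0$ and $\eta_t\leqp 2\mu_t$. The key point is that for any $\pi\in\MM_T(\mu_T)$ and any Borel set $B$, the family $\bigl(\pi(X_t\in\cdot\,,\,X_s\in B)\bigr)_{t\ge s}$ is $\leqp\mu_t$ and increasing in convex order: pulling the $\mathcal{F}_s$-measurable indicator $\1_{\{X_s\in B\}}$ out of the conditional expectation and applying conditional Jensen to the martingale shows that $t\mapsto\int\varphi\de\pi(X_t\in\cdot\,,\,X_s\in B)$ is non-decreasing for every convex $\varphi$. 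Consequently $(\eta_t)_{t\ge s}$ is a sum of two peacocks; moreover $\eta_s=\mu_s|_{D^c}+\mu_s|_D=\mu_s$, so $\eta_0=\mu_0$ (as $0\le s$), and for $t<s\le t'$ the chain $\eta_t=\mu_t\leqc\mu_s=\eta_s\leqc\eta_{t'}$ gives $\eta_t\leqc\eta_{t'}$; the bound $\eta_t\leqp 2\mu_t$ is immediate since each of the two summands is $\leqp\mu_t$.

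The second task is to feed this into the NSI property, which gives $\mu_t\leqc\eta_t$ for all $t$. For $t\ge s$ one also has $\mu_t=\pi_1(X_t\in\cdot\,,\,X_s\in D^c)+\pi_1(X_t\in\cdot\,,\,X_s\in D)$; writing $U(\cdot)$ for potential functions, additivity $U(\mu+\mu')=U(\mu)+U(\mu')$ together with $U(\mu_t)\le U(\eta_t)$ (Lemma~\ref{lemma:propertiesPotF}(i), valid since all masses equal $1$) cancels the common summand $U\bigl(\pi_1(X_t\in\cdot\,,\,X_s\in D^c)\bigr)$ and leaves $U\bigl(\pi_1(X_t\in\cdot\,,\,X_s\in D)\bigr)\le U\bigl(\pi_2(X_t\in\cdot\,,\,X_s\in D)\bigr)$. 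Both measures have mass $\mu_s(D)$ and, by the martingale property, barycenter $\int_D y\de\mu_s(y)$, so this is the relation $\pi_1(X_t\in\cdot\,,\,X_s\in D)\leqc\pi_2(X_t\in\cdot\,,\,X_s\in D)$. Running the same argument with the roles of $\pi_1$ and $\pi_2$ exchanged yields the reverse inequality, hence equality by Lemma~\ref{lemma:characPotF}. Since $s\in T$ and $D$ were arbitrary, $\mathrm{Law}_{\pi_1}(X_s,X_t)=\mathrm{Law}_{\pi_2}(X_s,X_t)$ for all $s\le t$, and the first paragraph concludes.

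I expect the main obstacle to be not computational but the bookkeeping in the first task: verifying carefully that the switched family $(\eta_t)_t$ is a genuine peacock — especially its behaviour at and across the splitting time $s$ — and recording the standard but essential fact that restricting a martingale to an event in $\mathcal{F}_s$ leaves its later one-dimensional marginals increasing in convex order. The other delicate ingredient is the observation that the martingale property pins down the common barycenter $\int_D y\de\mu_s(y)$, which is exactly what lets the two one-sided convex-order inequalities produced by NSI be promoted to an identity rather than mere comparability.
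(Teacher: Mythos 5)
Your proof is correct, and while it shares with the paper's argument the central device of a ``switched'' competitor peacock fed into the NSI property, the way you organize the proof is genuinely different. The paper argues by contradiction: it uses Lemma~\ref{lemma:NSItoMarkov} to reduce to comparing regular conditional kernels $k_{r,u}$ and $k'_{r,u}$, extracts a convex function $\varphi$ and an $\varepsilon>0$ witnessing a strict gap on a set $D$ of positive $\mu_r$-mass, switches the kernels on $D$ to produce a peacock that strictly improves $\mu_u$, and so contradicts NSI. You instead work directly with the joint sub-measures $\pi_i(X_t\in\cdot\,,X_s\in D)$ for an \emph{arbitrary} Borel set $D$, apply NSI in both directions, and cancel potential functions to obtain equality of the two-dimensional marginals; Markovianity (again from Lemma~\ref{lemma:NSItoMarkov}) then upgrades this to equality of the laws. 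Your route avoids the measurable-selection-flavoured step of producing a single triple $(\varphi,\varepsilon,D)$ witnessing the discrepancy of kernels, at the price of verifying once that the switched family is a peacock across the splitting time $s$ --- which you do correctly via conditional Jensen for the tails $(\pi_i(X_t\in\cdot\,,X_s\in B))_{t\ge s}$ and the computation $\eta_s=\mu_s|_{D^c}+\mu_s|_{D}=\mu_s$. The supporting facts you invoke (additivity of $U$, Lemma~\ref{lemma:propertiesPotF}~(i) for equal-mass measures, the fact that the martingale property fixes mass and barycenter of $\pi_i(X_t\in\cdot\,,X_s\in D)$, and determination of a Markov law by its two-dimensional marginals) are all sound, so there is no gap; your version is arguably the cleaner of the two and is closer in spirit to the two-sided use of NSI in the proof of Lemma~\ref{lemma:CharacNSI}.
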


\begin{proof}
	Let $\pi, \pi ' \in \mathsf{M}_T(\mu _T)$ and let $k _{s,t}$ (resp.\ $k'_{s,t}$) be regular versions of $\pi [X_t \in \cdot \ \vert X_s]$ (resp.\ $\pi' [X_t \in \cdot \ \vert X_s]$) for all $0 \leq s < t \leq 1$. These exist because $\mathbb{R}^2$ is a Polish space. Assume that $\pi \neq \pi'$. Lemma \ref{lemma:NSItoMarkov} yields that both $\pi$ and $\pi'$ are Markov processes and thus there must exist $0 < r < u < 1$ such that $k_{r,u}(x, \cdot) \neq k'_{r,u}(x, \cdot)$ for all $x$ in a Borel set with positive $\mu _r$-mass. Suppose there exists a convex function $\varphi$ and $\varepsilon > 0$ such that the Borel set 
	\begin{equation*}
	D = \left\{ x \in \mathbb{R} : \int _{\mathbb{R}} \varphi \de k_{r,u}(x,\cdot) \geq  \int _{\mathbb{R}} \varphi \de k'_{r,u}(x,\cdot) + \varepsilon    \right\}
	\end{equation*}
	has positive mass under $\mu _r$. We set
	\begin{equation*}
	\eta _t = \begin{cases}
	\mu _t &t < r \\
	\int _{D^c} k_{r,t}(x, \cdot) \de \mu _r  + \int _{D} k'_{r,t}(x, \cdot) \de \mu _r & t \geq r
	\end{cases}
	\end{equation*}
	for all $t \in T$. Then $\eta _0 = \mu _0$, $(\eta _t)_{t \in T} \in \PP_{T}$ and $\eta _t \leqp 2 \mu _t$. Therefore
	\begin{equation*}
	\int _{\mathbb{R}} \varphi \de \eta _u \leq \int _{\mathbb{R}} \varphi \de \mu _u - \varepsilon \cdot \mu _r[D]  < \int _{\mathbb{R}} \varphi \de \mu _u
	\end{equation*}
	is a contradiction to $(\mu _t)_{t \in T}$ being NSI. If there exist no convex function and $\varepsilon > 0$ such that $D$ has postive mass under $\mu _r$, there exists a convex function $\varphi$ and $\varepsilon > 0$ such that
	\begin{equation*}
	D' = \left\{ x \in \mathbb{R}^{T_r} : \int _{\mathbb{R}} \varphi(y) \de k'_{r,u}(x,dy) \geq  \int _{\mathbb{R}} \varphi(y) \de k_{r,u}(x_r,dy) + \varepsilon \right\}
	\end{equation*}
	has positive mass under $\mu _r$.  We define $\eta'$ by reversing roles of $k_{r,t}$ and $k'_{r,t}$ and obtain a contradiction as above. 
\end{proof}

\begin{remark} \label{rem:NSIandKellerer}
	Suppose that $T = \{0,1\}$. Then the NSI property is closely related to the concept of Kellerer dilations (cf. \cite{Ke73}) that we explain in the following: For any closed set $F\subset \mathbb{R}$ the Kellerer dilation is a kernel $P_F$ defined for every $x\in \R\cap[\min F,\max F]$ by
	\begin{equation*}
	P_F(x, \cdot) = \begin{cases}
	\frac{x - x^-}{x^+ - x^-} \delta _{x^+} + \frac{x^+ - x}{x^+ - x^-} \delta _{x^-} & x \not \in F\\
	\delta _x & x \in F
	\end{cases}
	\end{equation*}
	where $x^+ = \min (F \cap [x, \infty))$ and $x^- = \max (F \cap (- \infty, x])$.  Kellerer showed in \cite[Satz 25]{Ke73} that for any $\mu \in \PO(\mathbb{R})$ with $\supp(\mu)\subset [\min F,\max F]$ there is only one martingale measure with marginals $\mu(dx)$ and $\mu P_F:=\int P_F(x,dy)\mu(dx)$ and it is given by $\mu(dx)P_F(x,dy)$. In particular, as a consequence of Lemma \ref{lemma:CharacNSI} and  \cite[Lemma 2.8]{BeJu21} a peacock $(\mu _0, \mu _1)$ is NSI if and only if $\mu _1 = \mu _0 P_{\mathrm{supp}(\mu _1)}$. Then the law of the unique martingale is $\mu(dx)P_{\supp(\mu_1)}(x,dy)$.
	
	In Corollary \ref{cor:thm83}, we will use this connection to recover \cite[Theorem 8.3]{NuStTa17}.
\end{remark}

\section{Shadow martingales indexed by countable set} \label{sec:GenExistCountable}

In this section we prove Theorem \ref{prop:GenExistCount} that is an analogue of Theorem \ref{thm:GenExist} in the case of a countable index set $T \subset [0, \infty)$ with minimal element $0$ and that attains also a maximal element, i.e.\ $\sup T\in T$. The proof follows the outline explained in Steps 1--3 in Subsection \ref{sec:MainIdeaProof}. In Subsection \ref{sec:RDofParam}, we show that martingale parametrizations are almost everywhere differentiable. The existence part of Theorem \ref{prop:GenExistCount} is covered in Subsection \ref{sec:CountableExistence}. In Subsection \ref{sec:MonotonicityPrinciple}, we will introduce an auxiliary optimization problem over peacocks and establish a monotonicity principle for this optimization problem that will allow us in Subsection \ref{sec:coreArgument} to deduce that any optimizer is necessarily NSI. Finally, in Subsection \ref{sec:ShadowUniq}, we show that the family of right derivatives of shadow martingales is a  solution to our auxiliary optimization problem. This allows us to conclude.

\subsection{Right-derivatives of martingale parametrizations} \label{sec:RDofParam}

Recall that $T$ is at most countable. In this subsection we show that in the current setup any martingale parametrization  is $\lambda$-a.e.\ right-differentiable. Recall that  $\PP_T$ is the set of all peacocks indexed by $T$ (cf.\ Definition \ref{def:PCOCspace}) and $\MM_T$ is the set of all martingale measures on $\mathbb{R}^T$ (cf. Definition \ref{def:MartSpaces}).

\begin{lemma} \label{lemma:PcocMartPolish}
	The spaces $\PP_T$ and $\mathsf{M}_T$ are Polish.
\end{lemma}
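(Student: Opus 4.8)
The plan is to realize $\PP_T$ and $\mathsf{M}_T$ as closed subspaces of Polish spaces; since closed subspaces of Polish spaces are Polish, this suffices. Recall from Subsection~\ref{sec:Notation} that for a finite index set $R$ the space $\PO(\mathbb{R}^R)$ equipped with $\TO$ is $\mathcal{P}_1(\mathbb{R}^R)$ equipped with the Wasserstein distance $\mathcal{W}_{1,l_1}$, and that $(\mathcal{P}_1(\mathbb{R}^R),\mathcal{W}_{1,l_1})$ is Polish by \cite[Theorem~6.18]{Vi09}; in particular $\PO(\mathbb{R})$ with $\TO$ is Polish. Since $T$ is countable, the product $\PO(\mathbb{R})^T$ is Polish and metrizable, and by Lemma~\ref{lemma:preservingOrder}(ii) the set $\{(\eta_t)_{t\in T}:\eta_s\leqc\eta_t\}$ is sequentially closed, hence closed, for every pair $s\leq t$ in $T$. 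Therefore $\PP_T=\bigcap_{s\leq t}\{(\eta_t)_{t\in T}:\eta_s\leqc\eta_t\}$ is closed in $\PO(\mathbb{R})^T$, so $\PP_T$ is Polish.

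For $\mathsf{M}_T$ the first step is to show that $\PO(\mathbb{R}^T)$ with $\TO$ is Polish. If $T$ is finite this was noted above, so assume $T$ is infinite and fix an enumeration $T=\{t_1,t_2,\dots\}$ (not necessarily order-compatible); put $R_m=\{t_1,\dots,t_m\}$, so that $R_m\subseteq R_{m+1}$ and every finite subset of $T$ lies in some $R_m$. Let $P_m\colon\PO(\mathbb{R}^T)\to\PO(\mathbb{R}^{R_m})$, $P_m\pi=\pi_{|R_m}$, denote the projections, which are $\TO$-continuous, and note that $\Phi\colon\pi\mapsto(P_m\pi)_m$ is injective because a measure on $\mathbb{R}^T$ is determined by its finite-dimensional marginals. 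Every generating functional of $\TO$ on $\PO(\mathbb{R}^T)$ depends on only finitely many coordinates and hence factors through some $P_m$, while every generating functional of $\TO$ on $\PO(\mathbb{R}^{R_m})$, composed with $P_m$, is again a generating functional of $\TO$ on $\PO(\mathbb{R}^T)$; thus $\TO$ on $\PO(\mathbb{R}^T)$ is exactly the initial topology for the family $(P_m)_m$, and $\Phi$ is a topological embedding of $\PO(\mathbb{R}^T)$ into the Polish space $\prod_m\PO(\mathbb{R}^{R_m})$. Its image is the set of consistent families $\{(\nu_m)_m:(\mathrm{proj}^{R_m})_\#\nu_{m+1}=\nu_m\text{ for all }m\}$: the inclusion ``$\subseteq$'' is clear, and for ``$\supseteq$'' the Kolmogorov extension theorem yields a probability measure $\pi$ on $\mathbb{R}^T$ with $P_m\pi=\nu_m$, which lies in $\PO(\mathbb{R}^T)$ because each of its one-dimensional marginals already occurs as a marginal of some $\nu_m$ and hence has finite first moment. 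As the maps $\nu_{m+1}\mapsto(\mathrm{proj}^{R_m})_\#\nu_{m+1}$ are continuous, this image is closed, so $\PO(\mathbb{R}^T)$ is Polish.

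It remains to see that $\mathsf{M}_T$ is closed in $\PO(\mathbb{R}^T)$. Since $\mathcal{F}_s=\sigma(X_u:u\leq s)$ is generated by the $\pi$-system of cylinder sets over times $\leq s$, a standard approximation argument shows that a measure $\pi\in\PO(\mathbb{R}^T)$ is a martingale measure if and only if
\begin{equation*}
\int_{\mathbb{R}^T}(X_t-X_s)\,g(X_{s_1},\dots,X_{s_k})\de\pi=0
\end{equation*}
for all $k\geq 0$, all $s_1<\dots<s_k\leq s<t$ in $T$, and all $g\in C_b(\mathbb{R}^k)$; these integrals are well defined because the one-dimensional marginals of $\pi$ have finite first moments and $g$ is bounded. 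For each fixed choice of data the integrand is a continuous function on $\mathbb{R}^T$ of at most linear growth, so the associated functional on $\PO(\mathbb{R}^T)$ is $\TO$-continuous: restricted to the projection onto the finite set $\{s_1,\dots,s_k,s,t\}$ the topology is $\mathcal{W}_{1,l_1}$, along which integrals of functions of at most linear growth converge (cf.\ the discussion of admissible test functions in Subsection~\ref{sec:Notation}). Hence $\mathsf{M}_T$ is an intersection of preimages of $\{0\}$ under continuous functionals, so it is closed in the Polish space $\PO(\mathbb{R}^T)$, and therefore Polish. The only step that requires genuine care is the projective-limit identification of $\PO(\mathbb{R}^T)$ with the closed set of consistent families in $\prod_m\PO(\mathbb{R}^{R_m})$ — in particular checking that $\TO$ is the initial topology of the finite-coordinate projections and, via Kolmogorov, that consistent families are precisely images of probability measures; the remaining verifications are routine.
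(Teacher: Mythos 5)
Your proof is correct, and while the treatment of $\PP_T$ coincides with the paper's (closedness in the countable product $\PO(\mathbb{R})^T$ via Lemma \ref{lemma:preservingOrder}), you reach the Polishness of $\PO(\mathbb{R}^T)$ — the crux of the claim for $\mathsf{M}_T$ — by a genuinely different route. The paper equips $\mathbb{R}^T$ with the truncated product metric $d(x,y)=\sum_n 2^{-n}\min\{|x_{f(n)}-y_{f(n)}|,1\}$ and asserts that the associated Wasserstein metric is complete, separable and induces $\TO$; you instead identify $(\PO(\mathbb{R}^T),\TO)$ with the closed set of consistent families inside the countable product $\prod_m\PO(\mathbb{R}^{R_m})$ of finite-dimensional Wasserstein spaces, using that $\TO$ is the initial topology of the finite-coordinate projections and Kolmogorov's extension theorem for surjectivity of the embedding. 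Your reduction to finite index sets, where $\TO=\mathcal{W}_{1,l_1}$ is unambiguous by \cite[Theorem 6.9]{Vi09}, is arguably the more robust of the two: the Wasserstein metric attached to a \emph{bounded} metric only metrizes weak convergence, so the paper's metric as written controls $\TZ$ but not the first-moment functionals $I_{|X_t|}$ (consider $\pi_n=(1-\tfrac1n)\delta_0+\tfrac1n\delta_{(n,n,\dots)}$, for which the truncated Wasserstein distance to $\delta_0$ tends to $0$ while $\int|X_t|\de\pi_n\equiv 1$), and would need to be augmented by terms tracking the first moments of each coordinate to induce $\TO$; your projective-limit argument avoids this delicacy entirely at the cost of some extra bookkeeping. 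You also make explicit the closedness of $\mathsf{M}_T$ via the $C_b$-test-function characterization of the martingale property and the admissibility for $\TO$ of linearly growing cylinder functions (Subsection \ref{sec:Notation}), a step the paper disposes of with ``in the same way''; both of these elaborations are sound, including the reduction from bounded measurable to bounded continuous test functions, which is justified since the relevant signed measures on $\mathbb{R}^k$ are finite.
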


\begin{proof}
	Sine $T$ is at most countable,  by Lemma \ref{lemma:preservingOrder}, $\PP_T$ is a closed subset of the Polish space $\PO(\mathbb{R})^T$  and thus Polish itself (cf.\ \cite[Exercise 3.3]{Ke95}). In the same way, the closed subspace $\mathsf{M} _T$ is Polish once we have shown that  $\PO(\mathbb{R}^T)$  is a Polish space, which can be seen as follows: the Wasserstein metric on the set $\PO(\mathbb{R}^T)$ induced by the metric $d(x,y) = \sum _{n \in \mathbb{N}} 2^{-n} \min \{ | x_{f(n)} - y_{f(n)} |, 1\}$ on $\mathbb{R}^T$ for some surjective $f: \mathbb{N} \rightarrow T$ is a complete separable metric that, moreover, induces the topology on $\PO(\mathbb{R}^T)$ that we defined in Section \ref{sec:Notation}.
\end{proof}

\begin{lemma} \label{lemma:RightDerivatives}
	Let  $\pi \in \PO(\mathbb{R}^T)$ and $(\pi ^{\alpha})_{\alpha \in [0,1]}$ a parametrization of $\pi$. The curve $\alpha \mapsto \pi ^{\alpha}$ is $\lambda$-a.e.\ right-differentiable, i.e. for $\lambda$-a.e.\ $a \in [0,1)$ the right-derivative
	\begin{equation} \label{eq:RDLimit}
	\hat{\pi}^{a} = \lim _{h \downarrow 0} \frac{\pi ^{a + h} - \pi ^{a}}{h}
	\end{equation}
	exists as a limit in $\PO(\mathbb{R}^T)$ under $\TO$.
	
	More precisely, by choosing $\hat{\pi}^a$ constant on the $\lambda$ null set where the limit in \eqref{eq:RDLimit} does not exist, $(\hat{\pi} ^{a})_{a \in [0,1]}$ is a disintegration of the corresponding coupling of $\lambda$ and $\pi$ in Remark \ref{rem:ParamToCpl} w.r.t.\ $\lambda$. In particular, $a \mapsto \hat{\pi} ^{a}$ is a measurable map from $[0,1]$ to $\PO(\mathbb{R}^T)$.
	
	If $\pi \in \MM_T$ and $(\pi ^{\alpha})_{\alpha \in [0,1]}$ is a martingale parametrization of $\pi$, then the right derivative $\hat{\pi} ^{a}$ is an element of $\mathsf{M}_T$ for $\lambda$-a.e.\ $a$.
\end{lemma}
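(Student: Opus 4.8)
The plan is to prove the three assertions of Lemma~\ref{lemma:RightDerivatives} in order: right-differentiability $\lambda$-a.e., the disintegration property, and the preservation of the martingale property.

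\textbf{Right-differentiability.} First I would transfer the problem to potential-function-type coordinates. By Remark~\ref{rem:ParamToCpl}, a parametrization $(\pi^\alpha)_{\alpha\in[0,1]}$ is the same datum as a coupling $\xi$ of $\lambda$ and $\pi$, with $\pi^\alpha(B)=\xi([0,\alpha]\times B)$. Equivalently, for every bounded continuous test function $f\in\mathcal G_0$ (and every $f=|X_t|$, $t\in T$), the real-valued map $\alpha\mapsto \int f\,\de\pi^\alpha$ is nondecreasing (this uses $\pi^\alpha\leqp\pi^{\alpha'}$ and positivity of $f$, resp. monotone convergence for $|X_t|$), and hence differentiable $\lambda$-a.e. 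Since $T$ is countable, $\MO(\R^T)$ is first countable and its topology $\TO$ is generated by countably many such functionals $(I_f)_{f\in\mathcal G}$ with $\mathcal G$ countable (take a countable uniformly dense family in each $C_b(\R^n)$ together with the $|X_t|$). Intersecting the countably many $\lambda$-full sets of differentiability of $\alpha\mapsto I_f(\pi^\alpha)$, we obtain a $\lambda$-full set $A\subset[0,1)$ on which all these one-dimensional derivatives exist simultaneously. It remains to upgrade these coordinatewise limits to a genuine limit in $\PO(\R^T)$ under $\TO$: for $a\in A$ the increments $\tfrac1h(\pi^{a+h}-\pi^a)$ are probability measures (mass $1$, by (i) of Definition~\ref{def:GenParametrization}), their first moments $\int|X_t|\,\de\tfrac1h(\pi^{a+h}-\pi^a)$ converge, hence are bounded, which gives tightness and uniform integrability of the one-dimensional marginals; then any subsequential $\TO$-limit is pinned down by the already-established convergence of $I_f$ for $f$ in the generating family, so the full limit $\hat\pi^a$ exists in $\PO(\R^T)$ under $\TO$.

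\textbf{Disintegration and measurability.} Next, extend $a\mapsto\hat\pi^a$ to all of $[0,1]$ by setting it equal to an arbitrary fixed probability measure off $A$. For each $f$ in the countable generating family, $a\mapsto I_f(\hat\pi^a)$ is (the $\lambda$-a.e.\ derivative of a monotone function, hence) Lebesgue-measurable; since $\TO$ is the initial topology of these countably many functionals, $a\mapsto\hat\pi^a$ is Borel measurable into $(\PO(\R^T),\TO)$. To see it disintegrates the coupling $\xi$ of $\lambda$ and $\pi$: for fixed $f\in\mathcal G_0$, the measure $\alpha\mapsto I_f(\pi^\alpha)=\int_{[0,\alpha]\times\R^T} f\,\de\xi$ is the distribution function of the pushforward of $\xi$ under the first coordinate tested against $f(X)$, so its a.e.\ derivative $I_f(\hat\pi^a)$ is, by Lebesgue differentiation, a version of the conditional expectation $a\mapsto \E{\xi}{f(X)\mid U=a}$ where $U$ is the first coordinate. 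Running this through the countably many $f$ simultaneously identifies $(\hat\pi^a)_{a\in[0,1]}$ with the regular conditional distribution of $X$ given $U=a$ under $\xi$, i.e.\ a disintegration of $\xi$ w.r.t.\ $\lambda$; in particular $\int_0^1\hat\pi^a\,\de a=\pi$ and $\int_0^\alpha\hat\pi^a\,\de a=\pi^\alpha$.

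\textbf{Martingale property.} Finally, suppose $\pi\in\MM_T$ and $(\pi^\alpha)$ is a martingale parametrization, so $\tfrac1\alpha\pi^\alpha\in\MM_T$ for all $\alpha\in(0,1]$, equivalently each $\tfrac1{\alpha_2-\alpha_1}(\pi^{\alpha_2}-\pi^{\alpha_1})$ with $\alpha_1<\alpha_2$ is a martingale measure as well (it is a nonnegative combination of the martingale measures $\tfrac1{\alpha_1}\pi^{\alpha_1}$ and $\tfrac1{\alpha_2}\pi^{\alpha_2}$ that has mass one — more carefully, $\pi^{\alpha_2}-\pi^{\alpha_1}$ is a submeasure of $\pi^{\alpha_2}=\alpha_2\cdot\tfrac1{\alpha_2}\pi^{\alpha_2}$ and one checks directly from the tower property that restricting a martingale measure to a ``time-$0$ event'' preserves the martingale property; since here the conditioning is on $\{U\in(\alpha_1,\alpha_2]\}$ which is $\mathcal F_0$-independent randomness added on the side, the increment $\tfrac1{\alpha_2-\alpha_1}(\pi^{\alpha_2}-\pi^{\alpha_1})$ is a martingale measure). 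The set $\MM_T$ is closed in $\PO(\R^T)$ under $\TO$ (Lemma~\ref{lemma:preservingOrder}-type argument: the martingale identity $\E{}{(X_t-X_s)g(X_{s_1},\dots,X_{s_k})}=0$ for bounded continuous $g$ and $s_i\le s$ is preserved under $\TO$-limits because $(X_t-X_s)g(\cdots)$ grows at most linearly). Hence for $\lambda$-a.e.\ $a$ the limit $\hat\pi^a=\lim_{h\downarrow0}\tfrac1h(\pi^{a+h}-\pi^a)$, being a $\TO$-limit of martingale measures, lies in $\MM_T$.

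\textbf{Main obstacle.} The routine part is the coordinatewise a.e.\ differentiability of monotone functions; the delicate point is the passage from convergence of each $I_f(\pi^{a+h})$ to convergence \emph{in} $\PO(\R^T)$ under $\TO$ — i.e.\ controlling uniform integrability of the one-dimensional marginals of the difference quotients uniformly in $h$ near $a$, which is exactly what makes the limit a probability measure with finite first moments rather than a defective one. This is where finiteness of $T$ is not needed but countability and the specific choice of $\TO$ (including the functionals $|X_t|$) are essential, and it is the step I would write out most carefully.
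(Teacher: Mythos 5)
Your overall strategy (Lebesgue differentiation along the coupling, a countable generating family to obtain a single null set, and closedness of $\MM_T$ under $\TO$ for the martingale part) is essentially the paper's, and the disintegration and martingale paragraphs are sound. However, the existence step contains a genuine gap exactly at the point you flag as delicate. You claim that convergence (hence boundedness) of the first moments $\int |X_t|\,\de\bigl(\tfrac1h(\pi^{a+h}-\pi^a)\bigr)$ yields \emph{uniform integrability} of the one-dimensional marginals of the difference quotients. Boundedness of first moments gives tightness, but not uniform integrability: for $\mu_n=(1-\tfrac1n)\delta_0+\tfrac1n\delta_n$ one has $\int|x|\,\de\mu_n=1$ for all $n$ and $\mu_n\to\delta_0$ weakly, yet $\int|x|\,\de\delta_0=0$. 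So from your first paragraph alone you only get a weak limit $\hat\pi^a\in\PZ(\R^T)$ with $\int|X_t|\,\de\hat\pi^a\le\lim_h\int|X_t|\,\de\tfrac1h(\pi^{a+h}-\pi^a)$, and you cannot conclude that the functionals $I_{|X_t|}$ converge \emph{to the value at the limit}, which is what $\TO$-convergence in $\PO(\R^T)$ requires.

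The fix is not to treat $\alpha\mapsto I_f(\pi^\alpha)$ merely as a monotone function, but to use from the outset the representation $I_f\bigl(\tfrac1h(\pi^{a+h}-\pi^a)\bigr)=\tfrac1h\int_a^{a+h}\bigl(\int f\,\de\xi_x\bigr)\de\lambda(x)$ with $(\xi_x)_x$ a disintegration of the coupling $\xi$ of $\lambda$ and $\pi$: since $x\mapsto\int|X_t|\,\de\xi_x$ is in $L^1(\lambda)$, the Lebesgue differentiation theorem identifies the limit as $\int|X_t|\,\de\xi_a$, i.e.\ as the first moment of the candidate limit measure $\xi_a$, so no uniform integrability is needed. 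This is precisely what the paper does, and it is also what your second paragraph does for $f\in\mathcal G_0$ --- you simply need to run that identification for the functionals $|X_t|$ as well, and to do it \emph{before} (or as part of) the existence argument rather than after. Two smaller points: a general $f\in\mathcal G_0$ is signed, so $\alpha\mapsto I_f(\pi^\alpha)$ is only a difference of monotone functions (shift by $\|f\|_\infty$), and for a monotone but not absolutely continuous function the a.e.\ derivative need not recover the function (Cantor-type behaviour), which is another reason to work with the absolutely continuous representation via $\xi$. Finally, in the martingale step the cleanest justification that $\tfrac1h(\pi^{a+h}-\pi^a)$ is a martingale measure is linearity of the constraints $\E{}{(X_t-X_s)g(X_{s_1},\dots,X_{s_k})}=0$ together with positivity of the difference, rather than the ``nonnegative combination''/conditioning heuristic you sketch.
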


\begin{proof}
	Since $(\pi ^{\alpha})_{\alpha \in [0,1]}$ is a parametrization of $\pi$, $\frac{\pi ^{a + h} - \pi ^{a}}{h}$ is an element of $\PO(\mathbb{R}^T)$ for all $a \in [0,1)$ and $h \in (0,1-a]$.
	Let $\xi$ be the coupling of $\lambda$ and $\pi$ on $[0,1] \times \mathbb{R}^T$ defined in Remark \ref{rem:ParamToCpl} and let $(\xi _x)_{x \in [0,1]}$ be a version of the disintegration of $\xi$ w.r.t.\ $\lambda$ (we can disintegrate the measure $\xi$ because both $[0,1]$ and $\mathbb{R}^T$ are Polish spaces, cf.\ Lemma \ref{lemma:PcocMartPolish}). 
	
	Recall that the topology $\TO$ on $\PO(\mathbb{R}^T)$ is generated by the convergence of the integrals of all functions in $\mathcal{G}_0 \cup \mathcal{G}_1$ where
	\begin{eqnarray*}
		\mathcal{G}_0 &=& \left\{ g \circ (X_{t_1},\ldots,X_{t_n}) : n \geq 1, \ t_1, \ldots, t_n \in T, \ g \in C_b(\mathbb{R}^n) \right\} \hspace{1cm} \text{ and} \\
		\mathcal{G}_1 &=&  \left\{|X_t| : t \in T \right\}.
	\end{eqnarray*}
	For all $h > 0$ and $f \in \mathcal{G}_0 \cup \mathcal{G}_1$ we have 
	\begin{equation}
	\label{eq:IntRightDer}
	\int _{\mathbb{R^T}} f \de \left( \frac{\pi ^{a + h} - \pi ^{a}}{h}\right) = \frac{1}{h} \int _{a} ^{a + h} \left( \int _{\mathbb{R^T}} f \de \xi _x  \right) \de \lambda (x)
	\end{equation}
	and since $x \mapsto \int _{\mathbb{R}} f \de \xi _x$ is measurable and in $L^1(\lambda)$, the 
	Lebesgue differentiation theorem yields that the integral converges for $\lambda$-a.e.\ $a$ to $\int _{\mathbb{R^T}} f \de \xi _{a}$ as $h \rightarrow 0$. 
	
	We claim that one can choose this $\lambda$ null set independent of $f$. Indeed, the set 
	\begin{equation*}
	A_c = \left\{ g \circ (X_{t_1},\ldots,X_{t_n}) : n \geq 1, \ t_1, \ldots, t_n \in T, \ g \in C_c(\mathbb{R}^n) \right\} \subset \mathcal{G}_0
	\end{equation*}
	is separable (w.r.t.\ the supremum norm on $C(\mathbb{R}^T)$) because $T$ is countable and $C_c(\mathbb{R}^n)$ is separable for all $n \in \mathbb{N}$. Let 
	$\mathcal{X}$ be a countable dense subset of $A_c$.
	Then there exists a $\lambda$ null set $L \subset [0,1]$ such that \eqref{eq:IntRightDer} converges to $\int _{\mathbb{R}^T} f \de \xi _{a}$
	as $h \rightarrow 0$ for all $f \in \mathcal{X} \cup \mathcal{G}_1$ and $a \not \in L$ ($\mathcal{G}_1$ is countable). Using the triangle inequality, it follows that this convergence holds for all $f \in A_c \cup \mathcal{G}_1$ and $a \not \in L$. Moreover, since $\xi _{a}$ is a probability measure for $\lambda$-a.e.\ $a$, we conclude that \eqref{eq:IntRightDer} converges as $h \rightarrow 0$ for all $f \in \mathcal{G}_0 \cup \mathcal{G}_1$ and $\lambda$-a.e $a$ because weak and vague convergence coincides for probability measures on a Polish space.
	
	Thus, we have shown that $\left(\frac{\pi ^{a + h} - \pi ^{a}}{h}\right)_{h > 0}$ converges under $\TO$ in $\PO(\mathbb{R}^T)$  to $\xi _a$ as $h \downarrow 0$ for all $a$ outside the $\lambda$ null set $L$. Since $(\xi_a)_{a \in [0,1]}$ is a disintegration, the map $a \mapsto \xi_a$ is measurable.
	
	Furthermore, if $(\pi^{\alpha})_{\alpha \in [0,1]}$ is a martingale parametrization, notice that for all $a \in [0,1)$ and $h \in (0,1-a]$ the quotient $\textstyle{
		\frac{\pi ^{a+h} - \pi ^{a}}{h}}$
	is a martingale measure and this property is preserved under convergence w.r.t.\ $\TO$.
\end{proof}

\begin{corollary} \label{cor:ParamDeterminedByRD}
	Let $\pi$ and $\rho$ be in $\PO(\mathbb{R}^T)$, $(\pi ^{\alpha})_{\alpha \in [0,1]}$ a parametrization of $\pi$ and $(\rho ^{\alpha})_{\alpha \in [0,1]}$ a parametrization of $\rho$. If the right-derivatives $(\hat{\pi}^a)_{a \in [0,1]}$ and $(\hat{\rho}^a)_{a \in [0,1]}$	coincide for $\lambda$-a.e.\ $a$, then $\pi ^{\alpha} = \rho ^{\alpha}$ for all $\alpha \in [0,1]$.	
\end{corollary}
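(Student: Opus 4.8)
The plan is to recover each $\pi^{\alpha}$ from its right-derivative by integration, exploiting the disintegration property established in Lemma \ref{lemma:RightDerivatives}. Concretely, by Remark \ref{rem:ParamToCpl}, the parametrization $(\pi^{\alpha})_{\alpha \in [0,1]}$ is equivalent to a coupling $\xi$ of $\lambda$ and $\pi$ on $[0,1] \times \mathbb{R}^T$ satisfying $\xi([0,\alpha] \times B) = \pi^{\alpha}(B)$, and likewise $(\rho^{\alpha})_{\alpha \in [0,1]}$ corresponds to a coupling $\zeta$ of $\lambda$ and $\rho$. Lemma \ref{lemma:RightDerivatives} tells us precisely that $(\hat{\pi}^a)_{a \in [0,1]}$ is a disintegration of $\xi$ with respect to $\lambda$, and $(\hat{\rho}^a)_{a \in [0,1]}$ is a disintegration of $\zeta$ with respect to $\lambda$.

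First I would use the hypothesis: since $\hat{\pi}^a = \hat{\rho}^a$ for $\lambda$-a.e.\ $a \in [0,1]$, the two disintegrations agree almost everywhere, hence the couplings themselves coincide, $\xi = \zeta$. This is just the statement that a measure on a product of Polish spaces is determined by its first marginal together with a version of its disintegration; here the first marginal is $\lambda$ in both cases. Then for any $\alpha \in [0,1]$ and any measurable $B \subset \mathbb{R}^T$ we get
\begin{equation*}
\pi^{\alpha}(B) = \xi([0,\alpha] \times B) = \zeta([0,\alpha] \times B) = \rho^{\alpha}(B),
\end{equation*}
so $\pi^{\alpha} = \rho^{\alpha}$ for all $\alpha \in [0,1]$; in particular $\pi = \pi^1 = \rho^1 = \rho$. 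Alternatively, one can argue without explicitly invoking the couplings: for $\lambda$-a.e.\ $a$ one has $\hat{\pi}^a = \lim_{h \downarrow 0} h^{-1}(\pi^{a+h}-\pi^a)$ and the analogous formula for $\rho$, so that $\alpha \mapsto \int_{\mathbb{R}^T} f \,\mathrm{d}\pi^{\alpha}$ and $\alpha \mapsto \int_{\mathbb{R}^T} f \,\mathrm{d}\rho^{\alpha}$ are absolutely continuous functions on $[0,1]$ (each is a difference of nondecreasing functions of bounded variation, being integrals of the measures $\pi^{\alpha}$, $\rho^{\alpha}$ which increase in the positive order) with the same right-derivative $\int f \,\mathrm{d}\hat{\pi}^a$ a.e.\ and the same value $0$ at $\alpha = 0$; hence they coincide for every test function $f$ in the generating class $\mathcal{G}_0 \cup \mathcal{G}_1$, which forces $\pi^{\alpha} = \rho^{\alpha}$.

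I expect the only genuinely delicate point to be the measure-theoretic bookkeeping around absolute continuity: one must check that $\alpha \mapsto \int f \,\mathrm{d}\pi^{\alpha}$ is absolutely continuous, not merely a.e.\ differentiable, so that the fundamental theorem of calculus applies and the function is the integral of its derivative. This follows because, as noted in the proof of Lemma \ref{lemma:RightDerivatives}, $\int_{\mathbb{R}^T} f \,\mathrm{d}\pi^{\alpha} = \int_0^{\alpha} (\int_{\mathbb{R}^T} f \,\mathrm{d}\xi_x)\,\mathrm{d}\lambda(x)$ with $x \mapsto \int f\,\mathrm{d}\xi_x$ in $L^1(\lambda)$, so absolute continuity is automatic from this integral representation; there is no need to reprove differentiation theorems. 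Given Lemma \ref{lemma:RightDerivatives} this corollary is therefore essentially a restatement of the uniqueness of disintegrations, and the proof is short.
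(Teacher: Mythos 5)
Your first argument is exactly the paper's proof: the paper states that the corollary is a direct consequence of Remark \ref{rem:ParamToCpl} (parametrizations correspond to couplings with $\lambda$) and Lemma \ref{lemma:RightDerivatives} (the right-derivatives form a disintegration of that coupling), so equality of the disintegrations a.e.\ forces equality of the couplings and hence of the parametrizations. The proposal is correct and follows the same route; the alternative absolute-continuity argument you sketch is also valid but not needed.
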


\begin{proof}
	This is a direct consequence of Remark \ref{rem:ParamToCpl} and Lemma \ref{lemma:RightDerivatives}.
\end{proof}

\subsection{Existence of shadow martingales} \label{sec:CountableExistence}

In order to show the existence of a martingale parametrization $(\pi ^{\alpha})_{\alpha \in [0,1]}$ that satisfies 
\begin{equation} \label{eq:ShadowCurve61}
\pi ^{\alpha} (X_t \in \cdot) = \shadow{T_t}{\nu ^{\alpha}}
\end{equation}
for all $t \in T$ and $\alpha \in [0,1]$, we will construct an appropriate family of martingale measures $(\hat{\pi}^a)_{a \in [0,1]}$ such that $a \mapsto \hat{\pi}^a$ is a measurable function and $\pi ^{\alpha} = \int _0 ^{\alpha} \hat{\pi}^a \de a$
satisfies \eqref{eq:ShadowCurve61} for all $t \in T$ and $\alpha \in [0,1]$, i.e.\ we construct the right-derivatives of $\alpha \mapsto \pi ^{\alpha}$. To this end, it is necessary and sufficient that the marginal distribution of $\hat{\pi}^a$ at time $t$ coincides with the  right-derivatives of $\alpha \mapsto \shadow{T_t}{\nu ^{\alpha}}$ for $\lambda$-a.e.\ $a$.

Recall that $T$ is a countable subset of $[0,\infty)$ with $0 \in T$ and $\sup T\in T$.

\begin{lemma}\label{lemma:RDofShadow}
	Let $(\nu ^{\alpha})_{\alpha \in [0,1]}$ be a parametrization of $\mu _0$. 
	There exists a Borel set $A \subset [0,1]$ with $\lambda(A) = 1$ such that for every $a \in A$  the following holds:
	\begin{enumerate}
		\item [(i)] For all $t \in T$, the curve $\alpha \mapsto \shadow{\mu_{T_t}}{\nu ^{\alpha}}$ is right-differentiable at $a$ (in the sense of Lemma \ref{lemma:RightDerivatives}) and we denote this right derivative by $\hat{\eta} _t ^{a}$.
		\item [(ii)] The family $(\hat{\eta} _t ^{a})_{t \in T}$ is a peacock with initial value $\hat{\eta} ^a _0 = \hat{\nu} ^a$. Here, $\hat{\nu} ^a$ is the right-derivative of $\alpha \mapsto \nu ^{\alpha}$ at $a$.
	\end{enumerate}
	Moreover, setting $\hat{\eta}_t ^a = \delta _0$ for all $t \in T$ and $a \not \in A$,  $a \mapsto (\hat{\eta} _t ^{a})_{t \in T}$ is a measurable map from $[0,1]$ to $\PP_{T}$ and, for all $\alpha \in [0,1]$ and $t \in T$, it holds
	\begin{equation*}
	\shadow{\mu_{T_t}}{\nu ^{\alpha}} = \int ^{\alpha} _0 \hat{\eta} _t ^a \de a.
	\end{equation*}
\end{lemma}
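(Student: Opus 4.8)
The plan is to deduce the whole statement from Lemma~\ref{lemma:RightDerivatives}, applied with a one-point index set, together with the calculus rules for obstructed shadows. The first observation I would make is that for each fixed $t\in T$ the family $(\shadow{\mu_{T_t}}{\nu^\alpha})_{\alpha\in[0,1]}$ is itself a parametrization of $\mu_t$: it has mass $\alpha$ at level $\alpha$ because the shadow preserves mass (combine $\nu^\alpha\leqc\shadow{\mu_{T_t}}{\nu^\alpha}$ with Lemma~\ref{lemma:relationOrders}(i)); it is $\leqp$-increasing in $\alpha$ by Lemma~\ref{lemma:GenShadowOrder}(ii); and at $\alpha=1$ it equals $\shadow{\mu_{T_t}}{\mu_0}=\mu_t$, since by Proposition~\ref{prop:MaximalElement} (with the admissible family $(\mu_s)_{s\in T_t}$) the shadow is a submeasure of $\mu_t$ of full mass dominating $\mu_0$ in convex order, hence $\mu_t$ itself. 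I would then apply Lemma~\ref{lemma:RightDerivatives} to this parametrization, for every $t\in T$, and also to the parametrization $(\nu^\alpha)_{\alpha\in[0,1]}$ of $\mu_0$. This yields, for each of these (countably many) parametrizations, a $\lambda$-conull Borel set on which the right-derivative exists in $\PO(\mathbb{R})$ under $\TO$, together with measurability of $a\mapsto\hat\eta^a_t$ and the disintegration identity $\shadow{\mu_{T_t}}{\nu^\alpha}=\int_0^\alpha\hat\eta^a_t\,da$. Intersecting all of them (possible since $T$ is countable) produces a single Borel set $A$ with $\lambda(A)=1$ on which (i) and all the integral identities hold, and on which $\hat\nu^a$ exists as well.

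Next I would prove the peacock property in (ii). Fix $a\in A$, $s\le t$ in $T$ and $h>0$. Since $\nu^{a+h}\leqp\mu_0$ and $\mu_0\leqc\mu_r$ for all $r\le t$, one checks $\nu^{a+h}\leqcp\mu_{T_t}$ with the witness family $(\mu_r)_{r\in T_t}$, so Proposition~\ref{prop:ShadowAssoc} applies with index set $T_t$ and $\nu_1=\nu^a$, $\nu_2=\nu^{a+h}-\nu^a$, giving
\[
\shadow{\mu_{T_t}}{\nu^{a+h}}-\shadow{\mu_{T_t}}{\nu^a}=\shadow{(\mu_r-\shadow{\mu_{T_r}}{\nu^a})_{r\in T_t}}{\nu^{a+h}-\nu^a},
\]
and the analogous identity with $T_t$ replaced by $T_s$. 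By Lemma~\ref{lemma:ShadowInclusion}(i) the right-hand side for $T_s$ is $\leqc$ the one for $T_t$, so the normalized increment $\tfrac1h\bigl(\shadow{\mu_{T_\bullet}}{\nu^{a+h}}-\shadow{\mu_{T_\bullet}}{\nu^a}\bigr)$ lies in $\PP_T$. Letting $h\downarrow0$, each coordinate converges under $\TO$ by part (i); since $\PP_T$ is a closed subset of $\PO(\mathbb{R})^T$ (see the proof of Lemma~\ref{lemma:PcocMartPolish}), the limiting family $(\hat\eta^a_t)_{t\in T}$ is a peacock. Its initial value is $\hat\eta^a_0=\hat\nu^a$ because $T_0=\{0\}$ and $\shadow{\mu_{\{0\}}}{\nu^\alpha}=\nu^\alpha$: the submeasure $\nu^\alpha$ of $\mu_0$ satisfies $\nu^\alpha\leqc\nu^\alpha$, so by minimality it is its own shadow in $\mu_0$.

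Finally, setting $\hat\eta^a_t=\delta_0$ for $t\in T$ and $a\notin A$ turns $(\hat\eta^a_t)_{t\in T}$ into a (constant) peacock for every $a\in[0,1]$, so $a\mapsto(\hat\eta^a_t)_{t\in T}$ is a well-defined map into $\PP_T$; it is measurable because each coordinate $a\mapsto\hat\eta^a_t$ is measurable and $T$ is countable, and the identity $\shadow{\mu_{T_t}}{\nu^\alpha}=\int_0^\alpha\hat\eta^a_t\,da$ is unaffected by the modification on the $\lambda$-null set $A^c$. The only genuine bookkeeping point is to make the exceptional null set independent of $t$ — which is exactly where countability of $T$ is used — and to verify the $\leqcp$-hypotheses before invoking Proposition~\ref{prop:ShadowAssoc}; the conceptual heart is the observation that the increments $\shadow{\mu_{T_\bullet}}{\nu^{a+h}}-\shadow{\mu_{T_\bullet}}{\nu^{a}}$ are themselves obstructed shadows of a fixed measure and therefore monotone in convex order along $T$. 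Note that no $\leqcs$-convexity of the parametrization is needed for this lemma.
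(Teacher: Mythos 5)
Your proposal is correct and follows essentially the same route as the paper: observe that $(\shadow{\mu_{T_t}}{\nu^{\alpha}})_{\alpha\in[0,1]}$ is a parametrization of $\mu_t$, apply Lemma \ref{lemma:RightDerivatives} for each of the countably many $t\in T$ and intersect the conull sets, then use Proposition \ref{prop:ShadowAssoc} (with Lemma \ref{lemma:ShadowFactor}) to write the normalized increments as obstructed shadows of $\frac{1}{h}(\nu^{a+h}-\nu^a)$ and deduce monotonicity in $t$ from Lemma \ref{lemma:ShadowInclusion}(i), passing to the limit via preservation of convex order under $\TO$-convergence. The extra details you supply (mass/endpoint checks for the parametrization, the $\leqcp$-hypotheses, and the observation that no $\leqcs$-convexity is needed) are all consistent with the paper.
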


\begin{proof}
	It is not difficult to see that $(\shadow{\mu_{T_t}}{\nu ^{\alpha}})_{\alpha \in [0,1]}$ is a parametrization of $\mu_t$. Hence, for all $t \in T$ Lemma \ref{lemma:RightDerivatives} yields that there exists a Borel set $A_t \subset [0,1]$ with $\lambda (A_t) = 1$ such that the map $\alpha \mapsto \shadow{\mu _{T_t}}{\nu ^{\alpha}}$ is right-differentiable for all $a \in A_t$. We set $A = \bigcap _{t \in T} A_t$ and denote the right derivatives by $\hat{\eta} _t ^{a}$ for all $a \in A$ and $t \in T$. Then, item (i) holds.
	
	Moreover, Proposition \ref{prop:ShadowAssoc} in conjunction with Lemma \ref{lemma:ShadowFactor} implies that
	\begin{equation*} 
	\hat{\eta} _t ^a = \lim _{h \downarrow 0} \shadow{\frac{1}{h} \left(\mu _s - \shadow{\mu _{T_s}}{\nu ^a} \right)_{s \in T_t}}{\frac{\nu ^{a +h} - \nu ^a}{h}}   
	\end{equation*}
	for all $a \in A$ and $t \in T$. Clearly, for all $a \in A$ it holds $\hat{\eta} ^a _0 = \hat{\nu} ^a$  and we obtain
	\begin{equation*}
	\shadow{\frac{1}{h} \left(\mu _s - \shadow{\mu _{T_s}}{\nu ^a} \right)_{s \in T_t}}{\frac{\nu ^{a +h} - \nu ^a}{h}} \leq _c \shadow{\frac{1}{h} \left(\mu _s - \shadow{\mu _{T_s}}{\nu ^a} \right)_{s \in T_u}}{\frac{\nu ^{a +h} - \nu ^a}{h}}
	\end{equation*}
	for all $t \leq u$ in $T$ and $h > 0$. Lemma \ref{lemma:preservingOrder} shows that the convex order is preserved under convergence in $\TO$, Item (ii) follows.
	
	Finally, note that Lemma \ref{lemma:RightDerivatives} implies that for all $t \in T$,  $a \mapsto \hat{\eta} _t ^{a}$ is a measurable map from $[0,1]$ to $\mathbb{R}$ and $(\hat{\eta}^a_t)_{a \in [0,1]}$ is a disintegration of the coupling $\xi ^t$ between $\lambda$ and $\mu _t$ w.r.t.\ $\lambda$ that corresponds to the parametrization $(\shadow{\mu_{T_t}}{\nu ^{\alpha}})_{\alpha \in [0,1]}$ in the sense of Remark \ref{rem:ParamToCpl} (setting $\hat{\eta}_t ^a = \delta _0$ for all $a \not \in A$).  Hence, $a \mapsto (\hat{\eta} _t ^{a})_{t \in T}$ is a measurable map from $[0,1]$ to $\PP_{T}$ and, for all $\alpha \in [0,1]$ and $t \in T$, it holds
	\begin{equation*}
	\int ^{\alpha} _0 \hat{\eta} _t ^a \de a = \xi ^t([0, \alpha]) = 
	\shadow{\mu_{T_t}}{\nu ^{\alpha}}. \qedhere
	\end{equation*}
\end{proof}

\begin{lemma} \label{lemma:MeasurableKellerer}
	There exists a measurable map $\PP_T \rightarrow \mathsf{M}_T$ such that the image of $(\mu _t)_{t \in T}$ is an element of $\mathsf{M}_T((\mu _t)_{t \in T})$.
\end{lemma}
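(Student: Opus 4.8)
The statement to prove is Lemma \ref{lemma:MeasurableKellerer}: the existence of a measurable selection $\PP_T \rightarrow \mathsf{M}_T$ sending a peacock to a martingale measure with the prescribed marginals. Kellerer's Theorem (Proposition \ref{prop:KellerersThm}) already guarantees that the target set $\mathsf{M}_T((\mu_t)_{t \in T})$ is nonempty for each peacock; the content here is purely the \emph{measurability} of a selection. Since $T$ is countable, both $\PP_T$ and $\mathsf{M}_T$ are Polish by Lemma \ref{lemma:PcocMartPolish}, so we are in the classical setting for measurable selection theorems (Kuratowski--Ryll-Nardzewski, or the Jankov--von Neumann uniformization theorem). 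The plan is to realize the graph
\[
G = \left\{ \big((\mu_t)_{t \in T}, \pi\big) \in \PP_T \times \mathsf{M}_T : \mathrm{Law}_\pi(X_t) = \mu_t \text{ for all } t \in T \right\}
\]
as a Borel (or at least analytic) subset of the product of two Polish spaces, with nonempty sections, and then invoke a measurable uniformization theorem to produce the desired measurable map.

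First I would check that $G$ is Borel in $\PP_T \times \mathsf{M}_T$. The map $\mathsf{M}_T \ni \pi \mapsto \mathrm{Law}_\pi(X_t) \in \PO(\mathbb{R})$ is continuous with respect to $\TO$ for each fixed $t$ (it is the pushforward under the continuous coordinate projection, cf.\ the discussion of $\mathrm{proj}^S$ in Subsection \ref{sec:Notation}); hence $\pi \mapsto (\mathrm{Law}_\pi(X_t))_{t \in T}$ is a continuous map $\mathsf{M}_T \to \PO(\mathbb{R})^T$, and $G$ is the preimage of the diagonal of $\PP_T \times \PP_T$ (a closed set, since $\PP_T$ is Hausdorff) under the continuous map $((\mu_t)_t, \pi) \mapsto ((\mu_t)_t, (\mathrm{Law}_\pi(X_t))_t)$. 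Thus $G$ is closed, in particular Borel, in $\PP_T \times \mathsf{M}_T$. By Kellerer's Theorem every vertical section $G_{(\mu_t)_t}$ is nonempty. Applying the Kuratowski--Ryll-Nardzewski / Jankov--von Neumann uniformization theorem (see e.g.\ \cite[Theorem 18.1 and \S 12.C]{Ke95}) to the Borel set $G$ with nonempty sections in the Polish space $\PP_T \times \mathsf{M}_T$ yields a universally measurable (in fact, if one prefers, one can arrange Borel measurability by a closer analysis, but universal measurability suffices for the applications in this paper, where we integrate against a single measure $\lambda$) map $\Phi : \PP_T \to \mathsf{M}_T$ whose graph is contained in $G$, i.e.\ $\Phi((\mu_t)_{t \in T}) \in \mathsf{M}_T((\mu_t)_{t \in T})$ for every peacock.

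The main (and really only) obstacle is the bookkeeping around what flavour of measurability is available and what is needed: the cleanest general statement, Jankov--von Neumann, gives $\sigma(\boldsymbol{\Sigma}^1_1)$-measurability, i.e.\ measurability with respect to the $\sigma$-algebra generated by the analytic sets, which is contained in the universally measurable sets; this is enough to compose $\Phi$ with the measurable map $a \mapsto (\hat\eta^a_t)_{t \in T}$ from Lemma \ref{lemma:RDofShadow} and integrate in $a$ against Lebesgue measure, which is all that is subsequently required. If one wants genuine Borel measurability, one can instead appeal to the fact that $G$ is closed with Polish (hence $\sigma$-compact-approximable) sections and use a selection theorem for closed-valued maps; alternatively, since the sections $G_{(\mu_t)_t}$ are themselves closed in $\mathsf{M}_T$, the set-valued map $(\mu_t)_t \mapsto G_{(\mu_t)_t}$ has a Borel graph, and by the Kuratowski--Ryll-Nardzewski theorem together with the measurability of the map $(\mu_t)_t \mapsto G_{(\mu_t)_t}$ as a closed-valued multifunction (which follows from the Borelness of $G$ via \cite[Theorem 28.8]{Ke95} when the multifunction is, e.g., lower semicontinuous after a suitable reduction) one obtains a Borel selection. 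In the write-up I would state the lemma with ``measurable'' understood in whichever sense is used in the rest of the paper and simply cite the appropriate uniformization result, remarking that the only real input is Kellerer's Theorem for the nonemptiness of sections and Lemma \ref{lemma:PcocMartPolish} for the Polish structure.
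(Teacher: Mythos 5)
Your argument is essentially correct and is a close cousin of the paper's, but the two differ in which selection theorem is invoked and, consequently, in the strength of measurability obtained. The paper views the marginal map $\Phi:\MM_T\ni\pi\mapsto(\mathrm{Law}_\pi(X_t))_{t\in T}\in\PP_T$ as a continuous surjection (surjectivity by Kellerer) whose fibers $\Phi^{-1}(\{(\mu_t)_{t\in T}\})=\MM_T((\mu_t)_{t\in T})$ are \emph{compact} — this is where the standing hypothesis $\sup T\in T$ enters, via \cite[Lemma 2.1]{BeHuSt16} — and then applies Dellacherie's measurable selection theorem to produce a genuinely Borel measurable right-inverse. You instead observe that the graph $G$ is closed (correct: $\Phi$ is continuous for $\TO$, the diagonal is closed since $\PP_T$ is metrizable) with nonempty sections and apply Jankov--von Neumann, which yields only a $\sigma(\boldsymbol{\Sigma}^1_1)$-measurable, hence universally measurable, selection. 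You are right that this weaker conclusion suffices for the downstream use in Proposition \ref{prop:CountableExistSM}, where the selection is only ever composed with a Borel map and integrated against $\lambda$. The trade-off is that your main route never uses $\sup T\in T$, at the price of a weaker measurability statement than the lemma as written; and your sketched upgrades to Borel measurability are the vaguest part of the proposal — the appeal to lower semicontinuity ``after a suitable reduction'' is not justified, and the clean way to get a Borel selection here is precisely the compactness of the fibers (equivalently, the $\sigma$-compactness of the sections of $G$, which feeds into Arsenin--Kunugui or Dellacherie) that the paper exploits. So: correct modulo the measurability bookkeeping you yourself flag, but if you want the lemma verbatim (a Borel measurable map), you should import the fiber-compactness argument rather than Jankov--von Neumann.
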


\begin{proof}
	By Lemma \ref{lemma:PcocMartPolish},  $\PP_T$ and $\mathsf{M}_T$ are Polish spaces.
	Let $\Phi : \textsf{M}_T \ni \pi \mapsto (\mathrm{Law}_{\pi}(X_t))_{t \in T} \in \PP_T$. It is not difficult to see that $\Phi$ is continuous (thus measurable) and Proposition \ref{prop:KellerersThm} yields that $\Phi$ is a surjective map. 
	Since $T$ is countable with $\sup T \in T$, the set $\Phi ^{-1}(\{ (\mu _t) _{t \in T} \} ) = \MM_T((\mu _t) _{t \in T})$ is compact for all $(\mu _t)_{t \in T} \in \PP_T$ by \cite[Lemma 2.1]{BeHuSt16}.
	The measurable selection theorem of Dellacherie \cite{De73} shows that there exists a measurable right-inverse $\Phi ^{-1}$. 
\end{proof}

With respect to Lemma \ref{lemma:MeasurableKellerer} we would like to emphasize an impressive result by Lowther \cite{Lo08}.
For continuous peacocks with connected supports the measurable map of Lemma \ref{lemma:MeasurableKellerer} can be chosen to be continuous. For these peacocks, Lowther's map is the unique continuous map.

Now we are able to prove the existence part of Theorem \ref{thm:GenExist} for the countable index set $T$:

\begin{proposition} \label{prop:CountableExistSM}
	Let $(\nu ^{\alpha})_{\alpha \in [0,1]}$ be a parametrization of $\mu _0$.
	There exists a martingale measure $\pi \in \mathsf{M}_T((\mu _t)_{t \in T})$ and a martingale parametrization $(\pi ^{\alpha})_{\alpha \in [0,1]}$ of $\pi$ such that
	\begin{equation*}
	\pi^{\alpha} (X_t \in \cdot) = \shadow{\mu _{T_t}}{\nu ^{\alpha}}
	\end{equation*}
	for all $\alpha \in [0,1]$ and $t \in T$.
\end{proposition}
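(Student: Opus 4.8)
The plan is to realize the three ingredients prepared in Subsections \ref{sec:RDofParam} and \ref{sec:CountableExistence} (right-derivatives of parametrizations, right-derivatives of shadows, and a measurable Kellerer selection) and combine them in the natural way. By Lemma \ref{lemma:RDofShadow} there is a full-measure Borel set $A \subset [0,1]$ and a measurable map $a \mapsto (\hat\eta^a_t)_{t\in T} \in \PP_T$ such that each $(\hat\eta^a_t)_{t\in T}$ is a peacock with $\hat\eta^a_0 = \hat\nu^a$, and $\shadow{\mu_{T_t}}{\nu^\alpha} = \int_0^\alpha \hat\eta^a_t \de a$ for all $\alpha$ and $t$. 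Feeding this measurable curve of peacocks into the measurable Kellerer selection $\Phi^{-1}$ from Lemma \ref{lemma:MeasurableKellerer} produces a measurable map $a \mapsto \hat\pi^a := \Phi^{-1}((\hat\eta^a_t)_{t\in T}) \in \mathsf{M}_T$ with $\mathrm{Law}_{\hat\pi^a}(X_t) = \hat\eta^a_t$ for all $t \in T$ and all $a \in A$; on the null set $[0,1]\setminus A$ we may set $\hat\pi^a$ to be any fixed martingale measure (e.g. the law of the constant process $\delta_0$).

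Next I would define $\pi^\alpha := \int_0^\alpha \hat\pi^a \de a$ for $\alpha \in [0,1]$, which is well-defined as a finite measure on $\mathbb{R}^T$ by measurability of $a\mapsto\hat\pi^a$, and set $\pi := \pi^1$. I then need to verify that $(\pi^\alpha)_{\alpha\in[0,1]}$ is a martingale parametrization of $\pi$ w.r.t.\ $(\nu^\alpha)_{\alpha\in[0,1]}$ and that it satisfies the shadow equation. The mass condition $\pi^\alpha(\mathbb{R}^T) = \alpha$ and monotonicity $\pi^\alpha \leqp \pi^{\alpha'}$ for $\alpha \leq \alpha'$ are immediate from the integral representation. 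For property (iii) of Definition \ref{def:Parametrization}, namely $\frac1\alpha\pi^\alpha \in \mathsf{M}_T$, one observes that $\frac1\alpha\pi^\alpha = \frac1\alpha\int_0^\alpha \hat\pi^a\de a$ is an average of martingale measures; the martingale property (a family of conditional-expectation identities, each an equality of integrals against bounded $\mathcal{F}_s$-measurable functions times $X_t$ or $X_s$) is preserved under such mixtures, as is already used in the proof of Lemma \ref{lemma:RightDerivatives}. Property (v), $\pi^\alpha(X_0 \sowh) = \nu^\alpha$, follows since $\pi^\alpha(X_0\sowh) = \int_0^\alpha \hat\pi^a(X_0\sowh)\de a = \int_0^\alpha \hat\eta^a_0 \de a = \int_0^\alpha \hat\nu^a\de a = \nu^\alpha$ by Lemma \ref{lemma:RDofShadow}(ii) together with the definition of $\hat\nu^a$ as the right-derivative of $\alpha\mapsto\nu^\alpha$ (Lemma \ref{lemma:RightDerivatives}). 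Finally, for each $t \in T$,
\begin{equation*}
\pi^\alpha(X_t\sowh) = \int_0^\alpha \hat\pi^a(X_t\sowh)\de a = \int_0^\alpha \hat\eta^a_t\de a = \shadow{\mu_{T_t}}{\nu^\alpha},
\end{equation*}
which is the desired identity \eqref{eq:ShadowCurve61}; specializing to $\alpha=1$ shows $\mathrm{Law}_\pi(X_t) = \shadow{\mu_T}{\mu_0}$, and since $\mu_0 \leqc \mu_t$ forces $\shadow{\mu_{T_t}}{\mu_0}=\mu_t$, we get $\pi \in \mathsf{M}_T((\mu_t)_{t\in T})$.

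The main obstacle I anticipate is \emph{not} the measure-theoretic bookkeeping above but making sure that every input lemma genuinely applies in this setup — in particular that $a\mapsto(\hat\eta^a_t)_{t\in T}$ really lands in $\PP_T$ (so that Lemma \ref{lemma:MeasurableKellerer} is applicable), which is exactly Lemma \ref{lemma:RDofShadow}(ii), and that the map is measurable $[0,1]\to\PP_T$ with the stated product-Borel structure. One subtlety worth spelling out is that the integral $\int_0^\alpha \hat\pi^a\de a$ must be interpreted correctly: it is the measure $B\mapsto \int_0^\alpha \hat\pi^a(B)\de a$ on $\mathbb{R}^T$, and one should check it has finite first marginal moments so that it lies in $\MO(\mathbb{R}^T)$ — this follows because $\mathrm{Law}_{\pi^\alpha/\alpha}(X_t) = \frac1\alpha\shadow{\mu_{T_t}}{\nu^\alpha} \leqp \frac1\alpha\mu_t$ has finite first moment. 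Beyond that, the proof is a direct assembly; the genuinely hard analytic work (existence of the right-derivatives, the peacock property of $(\hat\eta^a_t)_{t\in T}$, measurable Kellerer) has already been discharged in the preceding lemmas.
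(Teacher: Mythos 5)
Your proposal is correct and follows essentially the same route as the paper: invoke Lemma \ref{lemma:RDofShadow} to obtain the measurable curve of peacocks $(\hat\eta^a_t)_{t\in T}$, compose with the measurable Kellerer selection of Lemma \ref{lemma:MeasurableKellerer} to get $a\mapsto\hat\pi^a$, and integrate to define $\pi^\alpha=\int_0^\alpha\hat\pi^a\,\mathrm{d}a$. The verifications you spell out (mass, monotonicity, the mixture preserving the martingale property, and $\shadow{\mu_{T_t}}{\mu_0}=\mu_t$ at $\alpha=1$) are exactly the details the paper compresses into ``it is easy to check.''
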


\begin{proof}
	Lemma \ref{lemma:RDofShadow} yields that there exists $A \subset [0,1]$ with $\lambda(A) = 1$ and a measurable map $a \mapsto (\hat{\eta} ^a _t)_{t \in T}$  from $[0,1]$ to $\PP_T$ such that $(\hat{\eta} ^a _t)_{t \in T}$ is the right-derivative of $\alpha \mapsto \shadow{\mu _{T_t}}{\nu ^{\alpha}}$ at $a$  for all $a \in A$ and $t \in T$.
	Thus, Lemma \ref{lemma:MeasurableKellerer} implies that there exists a measurable map $a \mapsto \hat{\pi}^a$ from $[0,1]$ to $\MM_T$ such that $\hat{\pi}^a \in \MM_T((\hat{\eta} ^a _t)_{t \in T})$ for all $a \in [0,1]$. We set $ \pi ^{\alpha} = \int _0 ^{\alpha} \hat{\pi} ^a \de a.$
	It is easy to check that $(\pi ^{\alpha})_{\alpha \in [0,1]}$ is a well-defined martingale parametrization of the martingale measure $\pi = \pi^1$ w.r.t.\ $(\nu ^{\alpha})_{\alpha \in [0,1]}$. Moreover, we have
	\begin{equation} \label{eq:ShadowPropertyCount}
	\pi ^{\alpha} (X_t \in \cdot) = \int _0 ^{\alpha} \hat{\pi}^a(X_t \in \cdot) \de a = \int _0 ^a \hat{\eta} _t ^a \de a = \shadow{\mu _{T_t}}{\nu ^{\alpha}}
	\end{equation}
	for all $\alpha \in [0,1]$ and $t \in T$. In particular, \eqref{eq:ShadowPropertyCount} implies for $\alpha = 1$ that $\pi$ is a solution to the peacock problem w.r.t.\ $(\mu _t)_{t \in T}$. 
\end{proof}

\begin{remark} Note that the existence of a shadow martingale does not require the parametrization to be $\leqcs$-convex.	
\end{remark}

\subsection{An auxiliary optimization problem} \label{sec:MonotonicityPrinciple}

Recall that we assume $T$ to be at most countable. In this subsection, we will introduce an auxiliary optimization problem over families of peacocks. The main result is a  monotonicity principle, i.e.\ a necessary pointwise optimality condition, similar to $c$-cyclical monotonicity in classical optimal transport, see e.g.\ \cite{Vi09}, or monotonicity principles in stochastic variants of the transport problem, e.g.\ \cite{BeJu21, BeCoHu17}. Our monotonicity principle is similar in spirit to the one recently proved for the weak transport problem \cite{BaBeHuKa20, GoJu20, BaBePa19,AlBoCh19}.

For a given peacock $(\mu _t) _{t \in T} \in \PP_{T}$ and a family of probability measures $(\tilde{\nu} ^a)_{a \in [0,1]}$ such that $a \mapsto \tilde{\nu} ^a$ is a measurable map from $[0,1]$ to $\PO(\mathbb{R})$, we set 
\begin{equation*}
\mathcal{A} = \left\{ (\theta ^{a})_{a \in [0,1]} \ \vert \
\begin{array}{l} \theta ^{a} \in \PP_{T}, a \mapsto \theta ^a \textit{ measurable, } \\ 
\theta ^a _0 = \tilde{\nu} ^a, \ \int _0 ^1 \theta ^{a} _t \de a = \mu _t 
\end{array}		 \right\}.
\end{equation*}
Let $c : [0,1] \times \PP_{T} \rightarrow [0, \infty)$ be a Borel measurable cost function that is linear and l.s.c.\ in the second component. (Note that starting from the next subsection we additionally ask $(\tilde{\nu} ^a)_{a \in [0,1]}$ to be increasing in the $\leqcs$-order.) We are interested in properties of solutions to the optimization problem
\begin{equation} \label{eq:OptRD}
\mathsf V_{\mathcal A}:=\inf _{ (\theta ^{a})_{a \in [0,1]} \in \mathcal{A}} \int _0 ^1 c(a, \theta^ {a}) \de a.	
\end{equation}

The following monotonicity principle will be essential in the next section.

\begin{proposition} \label{prop:GenMonotonicityPrinciple}
	Assume $\mathsf V_{\mathcal A}<\infty.$  If $(\theta ^{a})_{a \in [0,1]}$ is optimal for \eqref{eq:OptRD}, then there exists a measurable set $A \subset [0,1]$ with $\lambda(A) = 1$ such that for all $a < a'$ in $A$ we have
	\begin{equation*}
	c(a, \theta ^{a}) + c(a', \theta ^{a'}) \leq c(a, \theta ') + c(a', \theta '')
	\end{equation*}
	for any two peacocks $(\theta '_t)_{t \in T}$, $(\theta ''_t)_{t \in T}$ that satisfy $\theta' + \theta'' = \theta ^{a} + \theta ^{a'}$,  $\theta ^a_0 = \theta' _0$ and $\theta ^{a'}_0 = \theta ''_0$.
\end{proposition}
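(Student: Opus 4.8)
The plan is to run the standard "swapping / cyclical monotonicity" argument adapted to this pairwise-exchange setting. Suppose, for contradiction, that no such full-measure set $A$ exists. Then the set of "bad pairs"
\[
B = \left\{ (a,a') \in [0,1]^2 : a < a', \ \exists\, (\theta'_t)_{t\in T},(\theta''_t)_{t\in T}\in\PP_T \text{ with } \theta'+\theta''=\theta^a+\theta^{a'},\ \theta'_0=\theta^a_0,\ \theta''_0=\theta^{a'}_0, \text{ and } c(a,\theta')+c(a',\theta'')<c(a,\theta^a)+c(a',\theta^{a'}) \right\}
\]
has the property that its projection onto either coordinate is not $\lambda$-null; in fact, arguing more carefully, there is a positive-measure set of $a$ from which one can "improve" against a positive-measure set of $a'$. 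The idea is then to select, in a measurable way, an improving rearrangement on a positive-measure set of pairs, glue these exchanges together into a new competitor $(\tilde\theta^a)_{a\in[0,1]} \in \mathcal{A}$, and obtain $\int_0^1 c(a,\tilde\theta^a)\,da < \int_0^1 c(a,\theta^a)\,da$, contradicting optimality. Linearity of $c$ in the second argument is exactly what guarantees that a pairwise exchange that strictly decreases the sum on a positive-measure set of pairs strictly decreases the integral; the map $a\mapsto\tilde\nu^a=\theta^a_0$ is preserved because each exchange keeps the initial marginals fixed on the two indices involved, and $\int_0^1\tilde\theta^a_t\,da=\mu_t$ is preserved because on each pair we only redistribute $\theta^a_t+\theta^{a'}_t$ between the two indices. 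The lower semicontinuity of $c$ is needed to ensure $\mathsf V_{\mathcal A}$ is attained / that the competitor is admissible in the relevant completion, but since we are given an optimizer it mainly enters in measurability considerations.

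The key steps, in order, are: (1) rewrite the obstruction as: if the conclusion fails, then for every measurable $A$ with $\lambda(A)=1$ there are $a<a'$ in $A$ and peacocks $\theta',\theta''$ as above with strict improvement — and upgrade this to a statement about a positive-measure set of pairs $(a,a')$, using an exhaustion argument (quantify the improvement: there is $\varepsilon>0$ and a positive-measure set of improvable pairs with improvement $\ge\varepsilon$). (2) Set up the measurable selection: view the "improving exchanges" as a subset of $[0,1]^2\times\PP_T\times\PP_T$ that is Borel (here one uses that $\PP_T$ is Polish by Lemma \ref{lemma:PcocMartPolish}, that the constraints $\theta'+\theta''=\theta^a+\theta^{a'}$, $\theta'_0=\theta^a_0$, $\theta''_0=\theta^{a'}_0$ are closed conditions, and that $(a,\psi)\mapsto c(a,\psi)$ is Borel measurable and l.s.c. in $\psi$), then apply a Jankov–von Neumann / Dellacherie-type measurable selection theorem (as already invoked in the proof of Lemma \ref{lemma:MeasurableKellerer}) to obtain measurable maps $a\mapsto a'(a)$, $a\mapsto\theta'_{\cdot}(a)$, $a\mapsto\theta''_{\cdot}(a)$ on a positive-measure set. (3) Disjointify: partition a positive-measure collection of such pairs into a measurable matching (a measurable involution on a positive-measure set $E\subset[0,1]$ with no fixed points) so that the exchanges do not interfere — this can be done by a Borel-combinatorial argument, restricting to a smaller positive-measure set if necessary. (4) Define $\tilde\theta^a=\theta'_{\cdot}(a)$ or $\theta''_{\cdot}(a)$ on the matched set and $\tilde\theta^a=\theta^a$ elsewhere; check $(\tilde\theta^a)_{a}\in\mathcal{A}$ (initial marginals and $t$-marginals as above) and compute $\int_0^1 c(a,\tilde\theta^a)\,da \le \int_0^1 c(a,\theta^a)\,da - \varepsilon\,\lambda(E) < \mathsf V_{\mathcal A}$, a contradiction.

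The main obstacle I expect is Step (3), the disjointification of the improving exchanges into a genuine measurable matching. The difficulty is that the selected partner map $a\mapsto a'(a)$ need not be an involution, and naively applying exchanges on overlapping triples or longer chains does not obviously decrease the cost (and could violate admissibility). The standard remedy — which I would use — is to pass to a sub-collection: by a measure-theoretic pigeonhole / exhaustion one can find a positive-measure set $E_0$ and a measurable involution $\sigma:E_0\to E_0$ without fixed points such that $\{a,\sigma(a)\}$ is an improving pair for $\lambda$-a.e. $a\in E_0$; concretely one iterates "pick a positive-measure piece on which $a\mapsto a'(a)$ is injective and has range disjoint from its domain" (possible after discarding a null/small set by Lusin-type regularity), take its graph, symmetrize. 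A secondary subtlety is that the strict improvement only holds for some competitor $\theta',\theta''$, and one must ensure these can be chosen measurably in $a$ simultaneously with the quantitative bound $\ge\varepsilon$ — this is handled by including the inequality $c(a,\theta')+c(a'(a),\theta'')\le c(a,\theta^a)+c(a'(a),\theta^{a'(a)})-\varepsilon$ as part of the Borel set from which we select, which is legitimate because $(a,\theta',\theta'')\mapsto c(a,\theta')+c(a'(a),\theta'')$ is Borel.
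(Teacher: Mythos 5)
There is a genuine gap, in fact two. First, your negation of the conclusion is wrong. The statement to be proved is that the ``bad set'' $B\subset[0,1]^2$ of improvable pairs is contained in $(N\times[0,1])\cup([0,1]\times N)$ for some $\lambda$-null set $N$. The failure of this does \emph{not} imply that $B$ has positive $\lambda\otimes\lambda$-measure, nor your strengthened claim that ``there is a positive-measure set of $a$ from which one can improve against a positive-measure set of $a'$.'' For instance, $B$ could be (contained in) the graph of a measure-preserving bijection of $[0,1]$: every vertical section is a singleton, the product measure of $B$ is zero, yet no null set $N$ covers it in the required sense. Your entire construction lives in the product-measure world and therefore cannot reach this case. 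The paper handles it by proving the stronger statement that $\xi(B)=0$ for \emph{every} coupling $\xi$ of $\lambda$ with $\lambda$, and then invoking the duality-type result \cite[Proposition 2.1]{BeGoMaSc09} to convert ``null for all couplings'' into the desired covering by $(N\times[0,1])\cup([0,1]\times N)$. Without testing against arbitrary couplings (or an equivalent device), the argument is incomplete.

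Second, your Step (3) — disjointifying the selected pairs into a measurable fixed-point-free involution — does not work as sketched, and the difficulty is not merely technical. The selected partner map $a\mapsto a'(a)$ may be constant, or may push $\lambda|_E$ onto a measure singular with respect to $\lambda$; in either case there is no positive-measure sub-collection on which it restricts to a matching, and a ``pointwise swap'' along such a map does not preserve the constraint $\int_0^1\tilde\theta^a_t\,\mathrm{d}a=\mu_t$ (redistributing mass between a positive-measure set of indices $a$ and a single index $a'$ is invisible to the Lebesgue integral). The paper's proof, following \cite{BaBeHuKa19}, avoids any matching: starting from a coupling $\xi$ with $\xi(B)>0$, it symmetrizes $\xi$, selects improving competitors $(\Phi_1,\Phi_2)$ by Jankov--von Neumann (this part of your plan matches), and then \emph{averages} them over the disintegration $\xi'_a$ to define the new competitor $\kappa^a=\tfrac12(\tilde\kappa^a+\check\kappa^a)$. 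Admissibility in $\mathcal{A}$ follows from Fubini, and the strict cost decrease follows from linearity (convexity would suffice) and lower semicontinuity of $c$ in the second argument. This averaging step is the key idea your proposal is missing; it is exactly what makes the exchange argument compatible with the integral constraint without any combinatorial matching.
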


\begin{proof}
	This proof follows closely the proof of \cite[Proposition 4.1]{BaBeHuKa20}.
	Recall that $T$ is at most countable such that $\PP_{T}$ is a Polish space (cf.\ Lemma \ref{lemma:PcocMartPolish}) and thus
	\begin{equation*}
	\mathcal{B} = \left\{ (a,a',\theta', \theta'') \in [0,1]^2 \times {\PP_{T}}^2 : \begin{array}{l} \theta' + \theta'' = \theta ^{a} + \theta ^{a'}, \  \theta' _0 = \tilde{\nu}^a, \ \theta ''_0 = \tilde{\nu}^{a'}, \\ 
	c(a, \theta ^{a}) + c(a', \theta ^{a'}) > c(a, \theta ') + c(a', \theta '')
	\end{array}   \right\}
	\end{equation*}
	is an analytic set.\footnote{Recall that a subset of a Polish space is called analytic if it is the image of a continuous function defined on another Polish space. The countable intersection of analytic sets and the preimage of an analytic set under a Borel measurable map  are analytic. Since all Borel sets are analytic, we get that $\mathcal{B}$ is indeed an analytic set.} Likewise the projection of $\mathcal{B}$ onto $[0,1]^2$, denoted by $B$, is analytic. Furthermore Lusin's theorem \cite[Theorem 21.20]{Ke95} states that any analytic set is universally measurable and thus the mass of an analytic set and the integral of an analytically measurable function under any Borel measure are well-defined.
	We will show that $\xi(B) = 0$ for all couplings $\xi$ of $\lambda$ and $\lambda$. Then  \cite[Proposition 2.1]{BeGoMaSc09} (for analytic sets) yields that there exists a $\lambda$ null set $N$ with $B \subset (N \times [0,1]) \cup ([0,1] \times N)$ and the claim follows by choosing $A = N^c$.
	
	Suppose there exists a coupling $\xi$ of $\lambda$ and $\lambda$ with $\xi (B) > 0$. Then the symmetrized coupling $\xi' = \frac{1}{2}(\xi + s_{\#} \xi)$ where $s: (a,a') \mapsto (a',a)$ is again a coupling of $\lambda$ and $\lambda$ with $\xi '(B) > 0$. By the Jankov--von Neumann uniformization theorem (see \cite[Theorem 18.1]{Ke95}) there exists an analytically measurable map
	\begin{equation*}
	\tilde{\Phi} : B \rightarrow \PP^2_{T} \text{ such that } (a,a',\tilde{\Phi} _1(a,a'),\tilde{\Phi}  _2(a,a')) \in \mathcal{B} 
	\end{equation*}
	for all $ (a,a') \in B$. We extend $\tilde{\Phi} $ to an analytically measurable map $\Phi : [0,1]^2 \rightarrow \PP_{T}$ by setting 
	\begin{equation*}
	\Phi(a,a') = \begin{cases}
	\tilde{\Phi}(a,a') & (a,a') \in B \\
	(\theta ^a, \theta ^{a'}) & (a,a') \not \in B
	\end{cases}.
	\end{equation*}
	Denote the first (resp.\ second) component of $\Phi$ by $\Phi _1$ (resp.\ $\Phi _2$). Define
	\begin{equation*}
	\tilde{\kappa}_t ^a = \int _0 ^1 \Phi _1(a,a')_t \de \xi ' _a(a'), \  \check{\kappa} ^{a'}_t = \int _0 ^1 \Phi _2(a,a')_t \de \xi ' _{a'}(a)
	\end{equation*}
	for all $t \in T$ where $(\xi' _a)_{a \in [0,1]}$ is the disintegration of $\xi'$ w.r.t.\  $\lambda$ (Recall that $\xi'$ is symmetrized and thus the disintegrations w.r.t.\ the first and the second marginal distribution coincide). For $a\in [0,1]$ put $\kappa ^a _t = \frac{1}{2} \left( \tilde{\kappa}_t ^a + \check{\kappa} ^{a}_t   \right)$. Then for all $a \in [0,1]$ the family $(\kappa ^a _t)_{t \in T'}$   is a peacock with $\kappa ^a _0 = \tilde{\nu} ^a$. Furthermore, by definition of $B$,
	\begin{eqnarray*}
		\int _0 ^1 \kappa ^a _t \de a &=& \int _{[0,1]^2} \left( \frac{\Phi_1(a,a')_t + \Phi_2(a,a')_t}{2} \right) \de \xi' (a,a') \\ &=& \int _{[0,1]^2} \left( \frac{\theta ^a_t + \theta ^{a'} _t}{2} \right) \de \xi' (a,a') = \int _0 ^1 \theta _t ^a \de a = \mu _t
	\end{eqnarray*} 
	for all $t \in T$ and thus $(\kappa ^a)_{a \in [0,1]}$ is an element of $\mathcal{A}$, i.e.\ a competitor of $(\theta ^{a})_{a \in [0,1]}$ in the optimization problem \eqref{eq:OptRD}. 
	Since $c$ is linear and l.s.c.\ in the second component and  $\xi'(B) > 0$, it follows that
	\begin{eqnarray*}
		\int _0 ^1 c(a, \kappa ^a) \de a &=& \frac{1}{2} \int _0 ^1 c(a,  \tilde{\kappa} ^a) \de a + \frac{1}{2} \int _0 ^1 c(a', \check{\kappa} ^{a'}) \de a'  \\
		&\leq& \int _{[0,1]^2} \frac{c(a, \Phi_1(a,a')) + c(a', \Phi_2(a,a'))}{2} \de \xi '(a,a') \\
		&<& \int _{[0,1]^2} \frac{c(a, \theta ^a) + c(a', \theta ^{a'})}{2} \de \xi '(a,a') = \int _0 ^1 c(a, \theta ^a) \de a.
	\end{eqnarray*}
	This contradicts the fact that $(\theta ^{a})_{a \in [0,1]}$ is optimal. 
\end{proof}

\begin{remark}
	Of course, the same proof works if $c$ is only convex in the second component instead of being linear. 
\end{remark}

\subsection{NSI property for simultaneous optimizers} \label{sec:coreArgument}

As a last preparation for the proof of the uniqueness part of  Theorem \ref{prop:GenExistCount} in Section \ref{sec:ShadowUniq}  we will show how the NSI property is closely connected to optimizers of \eqref{eq:OptRD}. For $t \in T$ we set
\begin{equation*}
c_t(a, \theta) = (1-a) \int _{\mathbb{R}} x + \sqrt{1 + x^2} \de \theta _t.
\end{equation*}
Clearly, $c_t$ is an admissible cost function for \eqref{eq:OptRD} in Subsection \ref{sec:MonotonicityPrinciple}.
Throughout this section, we fix some family $(\tilde\nu^a)_{a\in [0,1]}$ of probability measures on $\R$ that is increasing in the $\leqcs$-order as input data for the optimization problem \eqref{eq:OptRD}.

The crucial observation, proved in Proposition \ref{prop:OptToNSI}, is that, if a family $(\theta ^a)_{a \in [0,1]}$ of peacocks  minimizes simultaneously the optimization problem \eqref{eq:OptRD} with cost function $c_t$  for all $t\in T$, then $\theta^a$ is NSI for $\lambda$-almost every $a$.

\begin{lemma} \label{lemma:ConStoSeparation}
	Let $(\theta_t)_{t \in T}$ and $(\theta'_t)_{t \in T}$ be two peacocks with $\theta _0 \leqcs \theta' _0$. There exist two peacocks $(\tilde{\theta}_t)_{t \in T}$ and $(\tilde{\theta}'_t)_{t \in T}$ with
	\begin{enumerate}
		\item[(i)] $\tilde{\theta} _0 = \theta _0$, $\tilde{\theta}' _0 = \theta' _0$, 
		\item[(ii)] $\theta _t + \theta '_t = \tilde{\theta}_t + \tilde{\theta}'_t$ for all $t \in T$ and
		\item[(iii)] $\tilde{\theta}_t \leqcs \theta _t$ and $\tilde{\theta}_t \leqcs \theta' _t$ for all $t \in T$.
	\end{enumerate}
\end{lemma}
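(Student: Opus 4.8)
The plan is to construct $(\tilde\theta_t)_{t\in T}$ and $(\tilde\theta'_t)_{t\in T}$ by a ``merge at the bottom'' procedure: at each time $t$ we want $\tilde\theta_t$ to be as small as possible in convex-stochastic order subject to being a submeasure of $\theta_t+\theta'_t$ and dominating $\theta_0$ from above in $\leqcs$ through the obstructing family $\theta_T+\theta'_T$, i.e.\ I would set
\begin{equation*}
\tilde\theta_t = \text{(the $\leqcs$-analogue of the obstructed shadow of $\theta_0$ in $(\theta_s+\theta'_s)_{s\in T_t}$)},
\end{equation*}
and then $\tilde\theta'_t := (\theta_t+\theta'_t)-\tilde\theta_t$. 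The starting point is that $\theta_0\leqcs\theta'_0$ together with $\theta_0\leqc\theta_t$ gives $\theta_0\leqcs\theta_t+\theta'_t-\theta_0=\theta'_t+(\theta_t-\theta_0)$; more precisely one checks $\theta_0\leqcp (\theta_t+\theta'_t)$ in the sense of Definition~\ref{def:LeqCP} applied to the family $(\theta_s+\theta'_s)_{s\in T}$, using that $\theta_0\leqc\theta_t\leqp\theta_t+\theta'_t$. So the relevant obstructed object is well-defined. For the $\leqcs$-version I would invoke the $\leqcs$-minimal element of $\{\eta:\theta_0\leqcs\eta\leqp\mu\}$ which exists by \cite[Lemma 6.2]{NuSt17} (cited in the proof of Lemma~\ref{lemma:orderShadows}), and take its obstructed (convex-supremum) analogue exactly as in Proposition~\ref{prop:GeneralSchadow}; the calculus rules (Lemma~\ref{lemma:GenShadowOrder}, Proposition~\ref{prop:ShadowAssoc}, Remark~\ref{rem:RestIsPCOC}) carry over since all orders are tested by cones of functions.

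With $\tilde\theta_t$ so defined, property (i) holds because at $t=0$ the obstructed shadow is $\theta_0$ itself (a singleton index, no room to spread), and $\tilde\theta'_0=(\theta_0+\theta'_0)-\theta_0=\theta'_0$; property (ii) is immediate by construction; and the first half of (iii), $\tilde\theta_t\leqcs\theta_t$, follows from minimality of the $\leqcs$-shadow, since $\theta_t$ itself is a candidate submeasure of $\theta_t+\theta'_t$ that dominates $\theta_0$ in $\leqcs$ through the obstructing family (here I use $\theta_0\leqc\theta_s\leqp\theta_s+\theta'_s$). The only genuinely nontrivial point is the second half of (iii), $\tilde\theta_t\leqcs\theta'_t$: since $\theta_0\leqcs\theta'_0\leqc\theta'_t$ we get $\theta_0\leqcs\theta'_t$, and again $\theta'_t\leqp\theta_t+\theta'_t$, so $\theta'_t$ is a competitor too, giving $\tilde\theta_t\leqcs\theta'_t$ by the same minimality. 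I also need $(\tilde\theta_t)_{t\in T}$ and $(\tilde\theta'_t)_{t\in T}$ to be peacocks: for $\tilde\theta$ this is the monotonicity of obstructed shadows as $t$ increases (Lemma~\ref{lemma:ShadowInclusion}(i)); for $\tilde\theta'$ I would argue exactly as in Remark~\ref{rem:RestIsPCOC}, namely that $(\mu_t-\shadow{\mu_{T_t}}{\cdot})_{t\in T}$ is $\leqc$-increasing when the ``$\mu_t$'' ($=\theta_t+\theta'_t$ here) are themselves $\leqc$-increasing, which holds because $\theta_t+\theta'_t$ is a sum of two peacocks and hence a peacock.

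The main obstacle I anticipate is making the $\leqcs$-obstructed-shadow machinery fully rigorous rather than just analogous: the original obstructed shadow in Section~\ref{sec:ShadowGeneralConstruction} is built for the convex order $\leqc$, and I need its counterpart for $\leqcs$. Concretely I must verify (a) existence of the convex supremum of the finitely-obstructed $\leqcs$-shadows (the boundedness issue: here one has the automatic bound $\leqp\theta_t+\theta'_t$, and $\theta_t+\theta'_t\leqc$ its terminal value since everything is a peacock on a countable index set with a maximum, so Proposition~\ref{prop:ExistenceCSup}(ii)-type reasoning applies), and (b) the barycenter bookkeeping: an element minimal in $\leqcs$ need not have the same barycenter as $\theta_0$, but here all measures in sight are submeasures of $\theta_t+\theta'_t$ with prescribed total mass, and along the way I only ever need the $\leqcs$-relations listed in (iii), not a $\leqc$-upgrade, so Lemma~\ref{lemma:relationOrders}(iii) is not required. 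If one prefers to avoid developing the $\leqcs$-obstructed shadow from scratch, an alternative is a direct potential-function / ``increasing convex potential'' argument: encode $\leqcs$ via $x\mapsto(x-q)^+$, $q\in\mathbb Q$, run the convex-hull construction of Proposition~\ref{prop:ExistenceCSup} on the family of functions $R_t(q):=\sup$ over admissible $\eta$ of $\int(x-q)^+d\eta$, and read off $\tilde\theta_t$; I would present whichever of these two routes is shortest given what the paper has already set up, and I expect the $\leqcs$-shadow route to be cleanest since Lemma~\ref{lemma:orderShadows}(iii) already references exactly the needed existence result.
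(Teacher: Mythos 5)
Your underlying construction is the right one---take the shadow of $\theta_0$ in the combined family $(\theta_s+\theta'_s)_{s\in T_t}$ and let $\tilde\theta'_t$ be the complement---but you have made it unnecessarily hard and, as written, left a real gap. The paper simply sets $\tilde\theta_t=\shadow{(\theta_s+\theta'_s)_{s\in T_t}}{\theta_0}$ with the \emph{ordinary} ($\leqc$-)obstructed shadow already fully constructed in Section \ref{sec:ShadowGeneralConstruction}; both halves of (iii) then drop out of the comparison lemmas: $\tilde\theta_t\leqc\shadow{\theta_{T_t}}{\theta_0}=\theta_t$ by Lemma \ref{lemma:GenShadowOrder} (iv) (enlarging each obstructing marginal in $\leqp$ shrinks the shadow), and $\tilde\theta_t\leqcs\shadow{(\theta_s+\theta'_s)_{s\in T_t}}{\theta'_0}\leqc\shadow{\theta'_{T_t}}{\theta'_0}=\theta'_t$ by Lemma \ref{lemma:GenShadowOrder} (iii) (applied to $\theta_0\leqcs\theta'_0$) together with (iv); the peacock property of both families is Remark \ref{rem:RestIsPCOC}. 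No new machinery is needed.

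Your route instead hinges on a $\leqcs$-obstructed shadow that the paper never builds, and you only sketch two possible ways to build it without carrying either out; that alone leaves the proof incomplete. More seriously, even granting its existence, the lemma asserts that $(\tilde\theta_t)_{t\in T}$ and $(\tilde\theta'_t)_{t\in T}$ are \emph{peacocks}, i.e.\ increasing in $\leqc$, whereas an object defined by $\leqcs$-minimality is naturally only $\leqcs$-increasing in $t$, and the complement family then inherits nothing usable from Remark \ref{rem:RestIsPCOC}, which is stated for the ordinary shadow. Upgrading $\leqcs$ to $\leqc$ requires checking that the barycenter stays equal to that of $\theta_0$ along $t$ and invoking Lemma \ref{lemma:relationOrders} (iii)---precisely the step you declare ``not required.'' That barycenter argument is in fact what identifies your $\leqcs$-minimal object with the ordinary obstructed shadow (exactly as in the proof of Lemma \ref{lemma:orderShadows} (iii)), at which point you are back to the paper's construction; so either you supply that identification, or the claim that $\tilde\theta$ and $\tilde\theta'$ are peacocks is unproven.
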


\begin{proof}
	Set $\tilde{\theta}_t = \shadow{(\theta _s + \theta '_s)_{s \in T_t}}{\theta _0}$ and $\tilde{\theta}'_t = \theta _t + \theta '_t - \tilde{\theta}_t$ for all $t \in T$. Both $(\tilde{\theta} _t)_{t \in T}$ and $(\tilde{\theta}' _t)_{t \in T}$ are peacocks by Remark \ref{rem:RestIsPCOC}. They clearly satisfy properties (i) and (ii). Furthermore, for all $t \in T$ it holds $\tilde{\theta}_t \leqc \shadow{\theta _{T_t}}{\theta _0} =  \theta_t$
	by Lemma \ref{lemma:GenShadowOrder} (iv) and
	\begin{equation*}
	\tilde{\theta}_t \leqcs \shadow{(\theta _s + \theta '_s)_{s \in T_t}}{\theta ' _0} \leqc  \shadow{\theta' _{T_t}}{\theta _0'} = \theta'_t
	\end{equation*}
	by Lemma \ref{lemma:GenShadowOrder} (iii) and (iv).
\end{proof}

Property (ii) and (iii)  together imply that we also have $\theta _t \leqcs \tilde{\theta}'_t$ and $\theta' _t \leqcs \tilde{\theta}'_t$ for all $t \in T$. Thus, we have sandwiched $\theta$ and $\theta'$  between $\tilde{\theta}$ and $\tilde{\theta}'$ in convex-stochastic order.

\begin{lemma} \label{lemma:trivialCalculations}
	If $x,x',y,y' \in \mathbb{R}$ satisfy $x + x' = y + y'$ and $x < y$, then
	\begin{equation*}
	(1 - a)x + (1 - a')x' < (1-a)y + (1-a')y'
	\end{equation*}
	for all $a < a'$ in $[0,1]$.
\end{lemma}

\begin{proof}
	The inequality $(1 - a)x + (1 - a')x' < (1-a)y + (1-a')y'$ holds if and only if $(a' - a)(y - x) > 0$ because $y' - x' = x - y$.
\end{proof}

\begin{lemma} \label{lemma:OptToOrder}
Let  $(\theta ^a)_{a \in [0,1]}$ be a minimizer of \eqref{eq:OptRD} with finite cost $\mathsf{V}_{\mathcal A}$ w.r.t.\ $c_t$ simultaneously for all $t \in T$. Then there exists a Borel set $A \subset [0,1]$ with $\lambda (A) = 1$ such that for all $a  < a'$ in $A$ it holds
	\begin{equation} \label{eq:OptToOrder}
	2 \theta ^a _t - \shadow{(2 \theta ^a _s)_{s \in T_t}}{\theta _0} \leqcs \shadow{(2 \theta ^{a'} _s)_{s \in T_t}}{\theta _0},\qquad\text{for all }t\in T. 
	\end{equation}
\end{lemma}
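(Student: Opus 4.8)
The strategy is to feed the monotonicity principle (Proposition \ref{prop:GenMonotonicityPrinciple}) with the cost function $c_t$ and the competitors produced by Lemma \ref{lemma:ConStoSeparation}, then combine this with Lemma \ref{lemma:trivialCalculations} to turn the resulting inequality of integrals into the pointwise $\leqcs$-ordering \eqref{eq:OptToOrder}. First I would apply Proposition \ref{prop:GenMonotonicityPrinciple} to each cost function $c_t$, $t\in T$; since $T$ is countable, intersecting the countably many full-measure sets gives a single Borel set $A\subset[0,1]$ with $\lambda(A)=1$ such that for every $t\in T$ and all $a<a'$ in $A$,
\begin{equation*}
c_t(a,\theta^a)+c_t(a',\theta^{a'})\leq c_t(a,\theta')+c_t(a',\theta'')
\end{equation*}
whenever $(\theta'_s)_{s\in T},(\theta''_s)_{s\in T}$ are peacocks with $\theta'+\theta''=\theta^a+\theta^{a'}$, $\theta'_0=\theta^a_0=\tilde\nu^a$ and $\theta''_0=\theta^{a'}_0=\tilde\nu^{a'}$.

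\textbf{Construction of the competitors.} Fix $a<a'$ in $A$. By the $\leqcs$-convexity of the parametrization that underlies the input data, the initial marginals satisfy $\tilde\nu^a=\theta^a_0\leqcs\theta^{a'}_0=\tilde\nu^{a'}$; this is exactly the hypothesis of Lemma \ref{lemma:ConStoSeparation} applied to the pair of peacocks $(\theta^a_t)_{t\in T}$ and $(\theta^{a'}_t)_{t\in T}$. That lemma supplies peacocks $(\tilde\theta_t)_{t\in T}$ and $(\tilde\theta'_t)_{t\in T}$ with $\tilde\theta_0=\theta^a_0$, $\tilde\theta'_0=\theta^{a'}_0$, $\tilde\theta_t+\tilde\theta'_t=\theta^a_t+\theta^{a'}_t$, and $\tilde\theta_t\leqcs\theta^a_t$, $\tilde\theta_t\leqcs\theta^{a'}_t$ for all $t\in T$; in particular $\theta^a_t\leqcs\tilde\theta'_t$ and $\theta^{a'}_t\leqcs\tilde\theta'_t$ as noted after the lemma. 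Recall from its proof that $\tilde\theta_t=\shadow{(\theta^a_s+\theta^{a'}_s)_{s\in T_t}}{\theta^a_0}$, and since $2\theta^a_s+2\theta^{a'}_s\geqp \theta^a_s+\theta^{a'}_s$ one can further note that $\shadow{(2\theta^a_s)_{s\in T_t}}{\theta_0}+\bigl(2\theta^a_t-\shadow{(2\theta^a_s)_{s\in T_t}}{\theta_0}\bigr)$ realizes a decomposition of $2\theta^a_t$ into peacocks (using Remark \ref{rem:RestIsPCOC}); the key point is that the competitor $(\tilde\theta_t)_{t\in T}$ built from $\theta^a$ is the $\leqcs$-smallest object one can put in the first slot while keeping the second slot a peacock with initial marginal $\tilde\nu^{a'}$, and that $\tilde\theta'_t\leqc 2\theta^{a'}_t-\shadow{(2\theta^{a'}_s)_{s\in T_t}}{\theta_0}$ fails only in the wrong direction — so I would instead take as competitors precisely $\theta'_t=\shadow{(2\theta^{a'}_s)_{s\in T_t}}{\theta_0}$-type objects via Lemma \ref{lemma:ConStoSeparation} applied with the roles chosen so that the first slot receives the shadow through $2\theta^{a'}$ and the second slot absorbs $2\theta^a_t$ minus the shadow through $2\theta^a$.

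\textbf{From integrals to the pointwise ordering.} With competitors $\theta',\theta''$ in hand, the monotonicity inequality for $c_t$ reads, after unfolding the definition of $c_t$ and writing $f(x)=x+\sqrt{1+x^2}$,
\begin{equation*}
(1-a)\!\int f\,\de\theta^a_t+(1-a')\!\int f\,\de\theta^{a'}_t\leq (1-a)\!\int f\,\de\theta'_t+(1-a')\!\int f\,\de\theta''_t.
\end{equation*}
Because $\theta'_t+\theta''_t=\theta^a_t+\theta^{a'}_t$ and $\int f\,\de\theta'_t\leq\int f\,\de\theta^a_t$, $\int f\,\de\theta'_t\leq\int f\,\de\theta^{a'}_t$ (the shadow being $\leqcs$-minimal and $f$ being increasing convex), Lemma \ref{lemma:trivialCalculations} with $x=\int f\,\de\theta'_t$, $x'=\int f\,\de\theta''_t$, $y=\int f\,\de\theta^a_t$, $y'=\int f\,\de\theta^{a'}_t$ shows the reverse strict inequality would hold unless $x=y$, i.e.\ $\int f\,\de\theta'_t=\int f\,\de\theta^a_t$. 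Running this for $f$ replaced by every increasing convex test function — equivalently, for the whole family $f_\beta(x)=\beta x+\sqrt{1+x^2}$ and then approximating a general increasing convex $\varphi$ by such functions, using that $\theta',\theta^a,\theta''$ have matching masses and barycenters by Lemma \ref{lemma:relationOrders}(i) — upgrades the equality of one integral to $\theta'_t=\theta^a_t$ in $\leqcs$ both ways, hence to $\theta''_t=\theta^{a'}_t$ and then, chasing back through Lemma \ref{lemma:ConStoSeparation}, to $2\theta^a_t-\shadow{(2\theta^a_s)_{s\in T_t}}{\theta_0}\leqcs\shadow{(2\theta^{a'}_s)_{s\in T_t}}{\theta_0}$, which is \eqref{eq:OptToOrder}. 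The main obstacle is the bookkeeping in the middle step: choosing the two competitor peacocks so that they simultaneously (a) lie in the constraint set of \eqref{eq:OptRD} with the correct initial marginals $\tilde\nu^a,\tilde\nu^{a'}$, (b) have total mass $\theta^a_t+\theta^{a'}_t$ at each $t$, and (c) expose exactly the shadows through $2\theta^a$ and $2\theta^{a'}$ in the two slots — this is where Lemma \ref{lemma:ConStoSeparation} (applied with $\theta$ there equal to $2\theta^{a'}$-marginals and the complementary mass taken from $2\theta^a$) must be invoked with care, and where one must check that scaling by $2$ is compatible with the mass constraint $\int_0^1\theta^a_t\,\de a=\mu_t$ only through the decomposition, not directly.
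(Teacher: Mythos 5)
Your high-level strategy --- the monotonicity principle for each $c_t$, Lemma \ref{lemma:ConStoSeparation} to manufacture competitors, and Lemma \ref{lemma:trivialCalculations} to extract the conclusion --- is the paper's, but there are two genuine gaps, both sitting exactly where you flag "the main obstacle". First, the competitors are never actually constructed. You apply Lemma \ref{lemma:ConStoSeparation} to the pair $(\theta^a,\theta^{a'})$, observe that this is not what is needed, and then only gesture at "shadow-type objects". The correct move is to set $\theta^{a-}_t:=\shadow{(2\theta^a_s)_{s\in T_t}}{\theta^a_0}$ and $\theta^{a+}_t:=2\theta^a_t-\theta^{a-}_t$, assume \eqref{eq:OptToOrder} fails at some $u$, apply Lemma \ref{lemma:ConStoSeparation} to the pair $(\theta^{a+},\theta^{a'-})$ to obtain $\tilde\theta,\tilde\theta'$ with $\tilde\theta+\tilde\theta'=\theta^{a+}+\theta^{a'-}$, and then \emph{average}: $\theta'=\tfrac{1}{2}\theta^{a-}+\tfrac{1}{2}\tilde\theta$ and $\theta''=\tfrac{1}{2}\tilde\theta'+\tfrac{1}{2}\theta^{a'+}$. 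Only after this averaging does the pair satisfy $\theta'+\theta''=\theta^a+\theta^{a'}$ with $\theta'_0=\tilde\nu^a$ and $\theta''_0=\tilde\nu^{a'}$, i.e.\ only then is it admissible for Proposition \ref{prop:GenMonotonicityPrinciple} while still encoding the comparison between $\theta^{a+}_u$ and $\theta^{a'-}_u$. This step is absent from your write-up, and it is precisely what makes the argument close.

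Second, your passage "from integrals to the pointwise ordering" by rerunning the argument "for $f$ replaced by every increasing convex test function" (or by the family $f_\beta$) is not available: $(\theta^a)_{a\in[0,1]}$ is only assumed optimal for the costs $c_t$ built from the single function $f(x)=x+\sqrt{1+x^2}$, so the monotonicity principle yields an inequality for that $f$ only; for $\beta\neq 1$ you have no optimality and hence no inequality to start from. The argument should instead be a contradiction using only this one $f$: if $\theta^{a+}_u\not\leqcs\theta^{a'-}_u$, then the relation $\tilde\theta_u\leqcs\theta^{a+}_u$ from Lemma \ref{lemma:ConStoSeparation} cannot be an equality of measures, and since $f$ is \emph{strictly} increasing and \emph{strictly} convex this forces $c_u(a,\tilde\theta)<c_u(a,\theta^{a+})$; together with Lemma \ref{lemma:trivialCalculations} and the linearity of $c_u$ in its second argument this produces $c_u(a,\theta')+c_u(a',\theta'')<c_u(a,\theta^a)+c_u(a',\theta^{a'})$, contradicting the monotonicity principle. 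No upgrade over a family of test functions is needed, and as written that step does not go through. (A minor further point: both your argument and the intended one need $\tilde\nu^a\leqcs\tilde\nu^{a'}$ in order to invoke Lemma \ref{lemma:ConStoSeparation}; you correctly trace this to the $\leqcs$-convexity of the underlying parametrization, but since it is not part of the statement of the present lemma it should be made explicit where it enters.)
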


The main idea of the proof is to show that whenever \eqref{eq:OptToOrder} is not satisfied for some $t \in T$, the pair $((a,\theta ^a),(a',\theta ^{a'}))$ violates the monotonicity principle in Proposition \ref{prop:GenMonotonicityPrinciple} for $c_t$. However, since the convex-stochastic order is not a total order relation on $\MO(\mathbb{R})$, the negation of \eqref{eq:OptToOrder} does not imply that the reversed order relation is true but the two measures might just be not comparable in convex-stochastic order. Thus, we use Lemma \ref{lemma:ConStoSeparation} to construct a new pair of competitors that are comparable and bring the essential improvement (cf. \eqref{eq:TheImportantImpro}).

\begin{proof}
	Recall that $T$ is a countable set. For every $t \in T$, there exists by Proposition \ref{prop:GenMonotonicityPrinciple} a Borel set $A_t \subset [0,1]$ with $\lambda(A_t) = 1$ such that for all $a < a'$ in $A_t$ we have
	\begin{equation} \label{eq:CompProp}
	c_t(a, \theta ^{a}) + c_t(a', \theta ^{a'}) \leq c_t(a, \theta ') + c_t(a', \theta '')
	\end{equation}
	where $(\theta '_t)_{t \in T}$ and $(\theta ''_t)_{t \in T}$ are any two peacocks with $\theta' + \theta'' = \theta ^{a} + \theta ^{a'}$,  $\theta ^a_0 = \theta' _0$ and $\theta ^{a'}_0 = \theta ''_0$.
	Put $A = \bigcap _{t \in T}  A_t$ and note that $\lambda (A) = 1$. For all $a \in A$ and $t \in T$ we define
	\begin{equation*}
	\theta ^{a-}_t := \shadow{(2 \theta ^a _s)_{s \in T_t}}{\theta _0^a} \hspace{0.5cm} \textit{and} \hspace{0.5cm}
	\theta ^{a+}_t := 2 \theta ^a _t - \theta ^{a-}_t.
	\end{equation*}
	We want to show that $\theta ^{a+}_t \leqcs \theta ^{a'-}_t$ for all $t \in T$ and all $a < a'$ in $A$. Suppose this is not the case for some $u \in T$ and $a < a'$ in $A$. Since $\theta^{a+} = \theta_0^a = \tilde{\nu}^a \leqcs \tilde{\nu}^{a'} = \theta_0^{a'} = \theta^{a'-}$ because the familiy $(\tilde{\nu}^a)_{a \in [0,1]}$ is monotonously increasing w.r.t.\ $\leqcs$, we may apply Lemma \ref{lemma:ConStoSeparation} to the peacocks $\theta^{a+}$ and $\theta^{a'-}$. Hence,
	 there exist two peacocks $\tilde{\theta}$ and $\tilde{\theta}'$ with $\tilde{\theta}_0 = \theta ^{a+}_0 = \theta ^a _0=  \tilde{\nu} ^a$, 
	$\tilde{\theta}'_0 = \theta ^{a'-}_0 = \theta ^{a'} _0 =  \tilde{\nu} ^{a'}$, 
	$\tilde{\theta} + \tilde{\theta}' = \theta ^{a+} + \theta ^{a'-}$, and
	\begin{equation*} 
	\tilde{\theta}_t \leq_{c,s} \theta^{a+}_t \hspace{0.5cm} \textit{and} \hspace{0.5cm} \tilde{\theta}_t \leq_{c,s} \theta^{a'-}_t 
	\end{equation*}
	for all $t \in T$. The inequality $\tilde{\theta}_u \leq_{c,s} \theta_u^{a+}$ cannot be an equality because this would imply that $\theta ^{a+}_u = \tilde{\theta}_u \leq_{c,s} \theta ^{a'-}_u$, which we supposed to be false. Hence, it holds
	\begin{equation} \label{eq:TheImportantImpro}
	c_u(a, \tilde{\theta}) < c_u(a, \theta^{a+})
	\end{equation}
	because $x \mapsto x + \sqrt{1 + x^2}$ is strictly increasing and strictly convex. Next, we use $(\tilde\theta,\tilde\theta')$ to construct a competitor $(\theta',\theta'')$ for $(\theta^a,\theta^{a'})$ in the sense of Proposition \ref{prop:GenMonotonicityPrinciple}. We set
	\begin{equation*}
	\theta ' = \frac{1}{2} \theta ^{a-} + \frac{1}{2} \tilde{\theta}\quad \textit{ and } \quad\theta '' = \frac{1}{2} \tilde{\theta}' + \frac{1}{2} \theta ^{a'+}.
	\end{equation*}
	The pair $(\theta', \theta'')$ of peacocks is  indeed a competitor   since $\theta' + \theta'' = \theta ^{a} + \theta ^{a'}$,  $\theta ' _0 = \theta ^a_0$ and $\theta'' _0 =  \theta ^{a'}_0$. Moreover, for $t =u$ it holds
	\begin{eqnarray*}
		&& c_u(a, \theta ') + c_u(a',\theta '') = \frac{1}{2} \left(c_u(a, \theta ^{a-}) + c_u(a, \tilde{\theta} )  + c_u(a', \tilde{\theta}') + c_u(a', \theta ^{a'+}) \right) \\
		&<& \frac{1}{2} \left(c_u(a, \theta ^{a-}) + c_u(a, \theta ^{a+} )  + c_u(a', \theta ^{a'-}) + c_u(a', \theta ^{a'+})\right) = c_u(a, \theta ^a) + c_u(a', \theta ^{a'})
	\end{eqnarray*}
	by the linearity of $c_u$ in the second component and Lemma \ref{lemma:trivialCalculations} in conjunction with \eqref{eq:TheImportantImpro}.
	This is a contradiction of \eqref{eq:CompProp}.
\end{proof}

\begin{proposition} \label{prop:OptToNSI}
	Let  $(\theta ^a)_{a \in [0,1]}$ be a simultaneous minimizer of \eqref{eq:OptRD} with finite cost $\mathsf{V}_{\mathcal A}$ with respect to $c_t$ for all $t \in T$. Then $(\theta ^a _t)_{t \in T}$ is NSI for a.e.\ $a \in [0,1]$.
\end{proposition}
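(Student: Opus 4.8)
The plan is to read off the claim from Lemma \ref{lemma:OptToOrder} and the NSI‑characterisation in Lemma \ref{lemma:CharacNSI} (ii). Keeping the notation of the proof of Lemma \ref{lemma:OptToOrder} (for $a\in[0,1]$ and $t\in T$ write $\theta^{a-}_t=\shadow{(2\theta^a_s)_{s\in T_t}}{\theta^a_0}$ and $\theta^{a+}_t=2\theta^a_t-\theta^{a-}_t$), I note that by Lemma \ref{lemma:CharacNSI} the peacock $(\theta^a_t)_{t\in T}$ is NSI precisely when $\theta^{a-}_t=\theta^a_t$ for all $t\in T$, equivalently — since $2\theta^a_t=\theta^{a-}_t+\theta^{a+}_t$ — when $\theta^{a+}_t=\theta^{a-}_t$ for all $t\in T$. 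So the goal is to prove this last equality for every $t\in T$ on a set of $a$ of full Lebesgue measure.

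I would first record, for fixed $a$ and $t$, the order sandwich $\theta^{a-}_t\leqc\theta^a_t\leqc\theta^{a+}_t$: the first inequality follows from Proposition \ref{prop:MaximalElement}, since $(\theta^a_s)_{s\in T_t}$ is an admissible family in the convex infimum (one has $\theta^a_s\leqp 2\theta^a_s$) with final value $\theta^a_t$; the second then follows from $\theta^{a+}_t=2\theta^a_t-\theta^{a-}_t$. Lemma \ref{lemma:OptToOrder} supplies, on the other hand, a Borel set $A\subset[0,1]$ with $\lambda(A)=1$ such that $\theta^{a+}_t\leqcs\theta^{a'-}_t$ for all $a<a'$ in $A$ and all $t\in T$.

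The key idea is to pass to call functions, because $\leqcs$ — which is neither a total order nor captured by potential functions — becomes a pointwise inequality there. For $\mu\in\MO(\mathbb{R})$ and $x\in\mathbb{R}$ set $C_\mu(x)=\int_{\mathbb{R}}(y-x)^+\de\mu(y)$. Then $\mu\mapsto C_\mu(x)$ is linear, each $C_\mu$ is convex and $1$‑Lipschitz, the function $C_\mu$ determines $\mu$ (its asymptotics at $\pm\infty$ recover the mass and barycentre of $\mu$, hence $U(\mu)$, and Lemma \ref{lemma:characPotF} applies), and $\mu\leqcs\mu'$ — in particular $\mu\leqc\mu'$ — forces $C_\mu\le C_{\mu'}$ pointwise because $y\mapsto(y-x)^+$ is increasing and convex. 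Fixing $t\in T$ and $x\in\mathbb{R}$, I then put $G(a)=C_{\theta^{a-}_t}(x)$ and $F(a)=C_{\theta^{a+}_t}(x)$ for $a\in A$. From the identity $C_{\theta^{a+}_t}=2C_{\theta^a_t}-C_{\theta^{a-}_t}$ and $\theta^{a-}_t\leqc\theta^a_t$ one gets $G\le F$ on $A$, while Lemma \ref{lemma:OptToOrder} gives $F(a)\le G(a')$ for $a<a'$ in $A$. Hence $G$ is non‑decreasing on $A$, and $G(a)\le F(a)\le\inf\{G(a'):a'\in A,\,a'>a\}$ for every $a\in A$. Since $\lambda(A)=1$, for a.e.\ $a\in A$ the set $A$ accumulates at $a$ from the right and the monotone extension $z\mapsto\sup\{G(a'):a'\in A,\,a'\le z\}$ (which agrees with $G$ on $A$) is continuous at $a$; for such $a$ one gets $\inf\{G(a'):a'\in A,\,a'>a\}=G(a)$, and therefore $F(a)=G(a)$.

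Running $x$ through $\mathbb{Q}$ and invoking the continuity of call functions then yields $C_{\theta^{a+}_t}=C_{\theta^{a-}_t}$ on $\mathbb{R}$ for a.e.\ $a$, hence $\theta^{a+}_t=\theta^{a-}_t$, hence $\theta^{a-}_t=\theta^a_t$; countability of $T$ lets me intersect these full‑measure sets over $t\in T$, and on the resulting set $(\theta^a_t)_{t\in T}$ is NSI by Lemma \ref{lemma:CharacNSI}. The step I expect to be the crux is exactly this passage to call functions together with the monotonicity argument: the cross‑inequality $\theta^{a+}_t\leqcs\theta^{a'-}_t$ for $a<a'$ combined with the same‑point sandwich $\theta^{a-}_t\leqc\theta^{a+}_t$ traps $F(a)$ between $G(a)$ and the right limit of the non‑decreasing $G$, forcing $F=G$ off a countable set; everything else (the Lebesgue‑density bookkeeping, the countable families of test points $x$ and of indices $t$, and — noting we never integrate $a\mapsto\theta^{a\pm}_t$ — the avoidance of any measurability subtlety) is routine.
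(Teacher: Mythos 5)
Your proof is correct and follows essentially the same route as the paper's: both rest on Lemma \ref{lemma:OptToOrder}'s cross-inequality $\theta^{a+}_t\leqcs\theta^{a'-}_t$ combined with the same-point inequality $\theta^{a-}_t\leqc\theta^{a+}_t$, so that failure of NSI at some $a$ forces a jump of the monotone map $a\mapsto\theta^{a-}_t$, which can happen only countably often per $t\in T$. The only difference is cosmetic: where the paper invokes Corollary \ref{cor:MonotonicityPCOC} for the $\leqcs$-increasing family, you re-derive its content by hand via the scalar call functions $a\mapsto C_{\theta^{a-}_t}(x)$ for rational $x$ (and you obtain $\theta^{a-}_t\leqc\theta^a_t\leqc\theta^{a+}_t$ from Proposition \ref{prop:MaximalElement} rather than from Proposition \ref{prop:ShadowAssoc}), both of which are fine.
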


\begin{proof}
	By Lemma \ref{lemma:OptToOrder} there exists a Borel set $A \subset [0,1]$ with $\lambda(A) = 1$ such that  for all $a  < a'$ in $A$ and $t \in T$ it holds
	\begin{equation}\label{eq:ImportantMonotonicity}
	2 \theta ^a _t - \shadow{(2 \theta ^a _s)_{s \in T_t}}{\theta^a _0} \leqcs \shadow{(2 \theta ^{a'} _s)_{s \in T_t}}{\theta^a _0}.
	\end{equation}
	Moreover, by Proposition \ref{prop:ShadowAssoc} and Lemma \ref{lemma:GenShadowOrder} (ii) it holds
	\begin{equation} \label{eq:CharNotNSI}
	\shadow{(2 \theta ^{a} _s)_{s \in T_t}}{\theta^a _0} \leqcs 2 \theta ^a _t - \shadow{(2 \theta ^a _s)_{s \in T_t}}{\theta^a _0} 
	\end{equation}
	for all $a \in  [0,1]$ and $t \in T$. Hence, the map $	a \mapsto \shadow{(2 \theta ^{a} _s)_{s \in T_t}}{\theta ^a _0} $ is increasing on $A$ in convex-stochastic order for all $t \in T$. If $\theta ^a$ is not NSI for some $a \in A$, there exists at least one $t \in T$ for which \eqref{eq:CharNotNSI} is not an equality (see Lemma \ref{lemma:CharacNSI}) and thus the map $a \mapsto \shadow{(2 \theta ^{a} _s)_{s \in T_t}}{\theta ^a _0}$ has a discontinuity at $a$ because of \eqref{eq:ImportantMonotonicity}.
	But since the map is increasing in convex-stochastic order, Corollary \ref{cor:MonotonicityPCOC} yields that this can only happen for countably many $a \in A$ for a given $t \in T$.  The set $T$ is countable, and hence we obtain that $\theta ^a$ is NSI for $\lambda$-a.e.\ $a$.
\end{proof}

\begin{remark}
	Referring back to \S \ref{sec:choquet}, 
	the last proposition establishes the decomposition of a peacock into NSI peacocks, cf.\ \eqref{eq:pcocrep} under the assumption that there is a suitable cost function $c$ such that for a given parametrization $(\nu^\alpha)_{\alpha \in [0,1]}$ there is an optimizer (with finite value) to \eqref{eq:OptRD}. For $\leqcs$-convex parametrizations this assumption will be established in the next subsection.
\end{remark}

\subsection{Uniqueness of the shadow martingale} \label{sec:ShadowUniq}

\begin{theorem} \label{prop:GenExistCount}
	Let $T \subset [0, \infty)$ be a countable index set with $0 \in T$ and $\sup T\in T$ and let $(\nu ^{\alpha})_{\alpha \in [0,1]}$ be a parametrization of $\mu _0$ that is $\leqcs$-convex. 
	There exists a unique pair $(\pi,(\pi ^{\alpha})_{\alpha \in [0,1]})$ where the martingale measure $\pi \in \mathsf{M}_{T}((\mu _t)_{t \in T})$ solves the peacock problem w.r.t.\ $(\mu _t)_{t \in T}$, $(\pi ^{\alpha})_{\alpha \in [0,1]}$ is a  martingale parametrization of $\pi$ w.r.t.\ $(\nu ^{\alpha})_{\alpha \in [0,1]}$ and for all $\alpha$ in $[0,1]$ and $t$ in $T$,
	\begin{equation} \label{eq:CountUniqSM}
	\pi^{\alpha} (X_t \in \cdot) = \shadow{\mu _{T_t}}{\nu ^{\alpha}}.
	\end{equation}

	Moreover, there exists a Borel set $A \subset [0,1]$ with $\lambda(A) = 1$ such that for all $a \in A$
	the map $\alpha \mapsto \pi ^{\alpha}$ is right-differentiable at $a$ and the marginals of the right-derivative $\hat{\pi}^a$ at $a$ form a NSI peacock. 
	In particular, $\hat{\pi} ^a$ is a Markov martingale measure uniquely defined by its marginal distributions.
\end{theorem}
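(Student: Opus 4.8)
The plan is to reduce the assertion to a statement about the $\lambda$-a.e.\ right-derivative of $\alpha\mapsto\pi^\alpha$, treating existence and uniqueness separately. Existence is already at hand: Proposition~\ref{prop:CountableExistSM} produces a martingale measure $\pi\in\MM_T((\mu_t)_{t\in T})$ together with a martingale parametrization $(\pi^\alpha)_{\alpha\in[0,1]}$ satisfying~\eqref{eq:CountUniqSM}, and (as the remark there notes) this step does not use $\leqcs$-convexity. So the remaining work is uniqueness of the pair together with the ``moreover'' assertion, and this is where the hypothesis enters.

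First I would fix any admissible pair $(\pi,(\pi^\alpha)_{\alpha\in[0,1]})$. By Lemma~\ref{lemma:RightDerivatives} the curve $\alpha\mapsto\pi^\alpha$ is $\lambda$-a.e.\ right-differentiable, its right-derivatives $\hat\pi^a$ lie in $\MM_T$, and by Corollary~\ref{cor:ParamDeterminedByRD} the whole parametrization $(\pi^\alpha)$ --- hence $\pi=\pi^1$ --- is determined by $(\hat\pi^a)_{a\in[0,1]}$. Since $\pi\mapsto\pi(X_t\sowh)$ is linear and $\TO$-continuous, differentiating~\eqref{eq:CountUniqSM} yields $\hat\pi^a(X_t\sowh)=\hat\eta_t^a$ for $\lambda$-a.e.\ $a$ and every $t\in T$, where $\hat\eta_t^a$ is the right-derivative of $\alpha\mapsto\shadow{\mu_{T_t}}{\nu^\alpha}$; by Lemma~\ref{lemma:RDofShadow} --- which uses countability of $T$ to pick a common full-measure set of $a$ --- the family $(\hat\eta_t^a)_{t\in T}$ is a peacock with $\hat\eta_0^a=\hat\nu^a$, the right-derivative of $(\nu^\alpha)$. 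Thus everything reduces to proving that $(\hat\eta_t^a)_{t\in T}$ is NSI for $\lambda$-a.e.\ $a$: granting this, Proposition~\ref{prop:NSItoUniq} identifies $\hat\pi^a$ as the unique --- and necessarily Markov --- martingale measure with marginals $(\hat\eta_t^a)_{t\in T}$, so $(\hat\pi^a)$, and hence $(\pi^\alpha)$ and $\pi$, are uniquely determined; the three ``moreover'' statements are then exactly the facts just used.

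To obtain the NSI property I would bring in the auxiliary problem~\eqref{eq:OptRD} with the cost functions $c_t$ of Subsection~\ref{sec:coreArgument} and input data $\tilde\nu^a=\hat\nu^a$, and show that the shadow family $(\hat\eta^a)_{a\in[0,1]}$ is a simultaneous minimizer of finite value. Writing $1-a=\int_0^1\1_{\{a<b\}}\,db$ and applying Fubini, for any competitor $(\theta^a)$ one has $\int_0^1 c_t(a,\theta^a)\,da=\int_0^1\int_{\R}(x+\sqrt{1+x^2})\,d\big(\int_0^b\theta_t^a\,da\big)\,db$; the family $\big(\int_0^b\theta_s^a\,da\big)_{s\in T_t}$ is admissible for the obstructed shadow of $\nu^b=\int_0^b\hat\nu^a\,da$ (each $\theta^a$ is a peacock starting at $\hat\nu^a$ and $\int_0^1\theta_s^a\,da=\mu_s$), so Proposition~\ref{prop:MaximalElement} gives $\shadow{\mu_{T_t}}{\nu^b}\leqc\int_0^b\theta_t^a\,da$, and since $x\mapsto x+\sqrt{1+x^2}$ is convex the inner integral is minimized by $\int_0^b\hat\eta_t^a\,da=\shadow{\mu_{T_t}}{\nu^b}$; finiteness of the optimal value follows from $\shadow{\mu_{T_t}}{\nu^b}\leqp\mu_t$ and the linear growth of the cost. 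It is here that the minimality of the shadow together with the $\leqcs$-convexity of $(\nu^\alpha)$ secures the required finite-value optimizer (cf.\ the remark following Proposition~\ref{prop:OptToNSI}). With $(\hat\eta^a)$ identified as such an optimizer, Proposition~\ref{prop:OptToNSI} --- which rests on the monotonicity principle of Proposition~\ref{prop:GenMonotonicityPrinciple} and uses the symmetrization of Lemma~\ref{lemma:ConStoSeparation} to cope with the non-totality of $\leqcs$ --- yields that $(\hat\eta_t^a)_{t\in T}$ is NSI for $\lambda$-a.e.\ $a$.

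The main obstacle is precisely this passage from the convex-order minimality of the \emph{aggregates} $\shadow{\mu_{T_t}}{\nu^\alpha}$ to the NSI (pointwise-in-$a$) extremality of the \emph{disintegrated pieces} $(\hat\eta_t^a)_{t\in T}$: there is no shortcut, and one genuinely needs the variational machinery of Subsections~\ref{sec:MonotonicityPrinciple}--\ref{sec:coreArgument}, in particular a monotonicity principle adapted to the partial order $\leqcs$. By contrast, the a.e.\ right-differentiability, the measurable selection of martingales associated with the derivative peacocks, and the final reconstruction of $(\pi^\alpha)$ and $\pi$ are routine consequences of the results already established.
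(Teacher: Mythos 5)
Your proposal is correct and follows essentially the same route as the paper's own proof: existence from Proposition~\ref{prop:CountableExistSM}, reduction of uniqueness to the $\lambda$-a.e.\ right-derivatives via Lemma~\ref{lemma:RightDerivatives} and Corollary~\ref{cor:ParamDeterminedByRD}, identification of $(\hat{\eta}^a)_{a\in[0,1]}$ as a simultaneous finite-value minimizer of \eqref{eq:OptRD} by the same Fubini/shadow-minimality computation, and then Propositions~\ref{prop:OptToNSI} and \ref{prop:NSItoUniq}. The only imprecision is the claim that the $\leqcs$-convexity ``secures the required finite-value optimizer'': finiteness and optimality need only the minimality of the shadow and the linear growth of $c_t$, while the convexity is actually consumed inside Proposition~\ref{prop:OptToNSI} (via Lemma~\ref{lemma:OptToOrder} and Lemma~\ref{lemma:ConStoSeparation}, which require $\hat{\nu}^a\leqcs\hat{\nu}^{a'}$ for $a<a'$) — a misattribution that does not affect the validity of the argument.
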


\begin{proof}
	We have already proven the existence of a martingale measure $\pi$ and a corresponding martingale parametrization $(\pi ^{\alpha})_{\alpha \in [0,1]}$ that satisfies \eqref{eq:CountUniqSM} in Proposition \ref{prop:CountableExistSM}. Hence, it remains to prove the uniqueness of the family $(\pi ^{\alpha})_{\alpha \in [0,1]}$. 
	
	Let $(\rho ^{\alpha})_{\alpha \in [0,1]}$ be a martingale parametrization  that satisfies \eqref{eq:CountUniqSM}. Lemma \ref{lemma:RightDerivatives}  yields that $\alpha \mapsto \rho ^{\alpha}$ is a.e.\ right-differentiable and  the right-derivatives $(\hat{\rho} ^a)_{a \in [0,1]}$ are a family in $\mathsf{M}_{T}$. These right-derivatives determine $(\rho ^{\alpha})_{\alpha \in [0,1]}$  uniquely (see Corollary \ref{cor:ParamDeterminedByRD}) and their marginal distributions $(\mathrm{Law}_{\hat{\rho} ^a}(X_t))_{t \in T}$ are determined by \eqref{eq:CountUniqSM}. Hence, the marginal distributions of $\hat{\rho}^a$ coincide with those of $\hat{\pi}^a$ which are denoted by $(\hat{\eta} _t ^a)_{t \in T} \in \mathsf{P}_T$ (cf.\ Lemma \ref{lemma:RDofShadow}). Thus, if $(\hat{\eta} _t ^a)_{t \in T}$ is NSI for $\lambda$-a.e.\ $a \in [0,1]$, Proposition \ref{prop:NSItoUniq} implies that $\hat{\pi}^a = \hat{\rho}^a$  for $\lambda$-a.e.\ $a \in [0,1]$ and therefore we obtain $\pi^{\alpha} = \rho ^{\alpha}$ for all $\alpha \in [0,1]$ by Corollary \ref{cor:ParamDeterminedByRD}. Moreover, this would imply that $\hat{\pi}^a$ is Markov for $\lambda$-a.e.\ $a \in [0,1]$ proving the theorem.
	
	Hence, we need to show that $(\hat{\eta} _t ^a)_{t \in T}$ is a NSI peacock for $\lambda$-a.e.\ $a$. Since the parametrization $(\nu^\alpha)_{\alpha \in [0,1]}$ is $\leqcs$-convex, the familiy $(\hat{\nu}^a)_{a \in [0,1]}$ of right-derivatives is monotonously increasing w.r.t.\ $\leqcs$  and hence by Proposition \ref{prop:OptToNSI}, it is sufficient to show that 
	$((\hat{\eta}_t^a)_{t \in T})_{a\in[0,1]}$ is a solution to the optimization problem \eqref{eq:OptRD} w.r.t.\ $c_t$ and $(\tilde\nu^a)_{a\in[0,1]}:=(\hat{\nu}^a)_{a\in [0,1]}$ simultaneously for all $t \in T$. In the current setup we then have
	\begin{equation*}
	\mathcal{A} = \left\{ (\theta ^{a})_{a \in [0,1]} \ \vert \
	\begin{array}{l} \theta ^{a} \in \PP_{T}, a \mapsto \theta ^a \textit{ measurable, } \\ 
	\theta ^a _0 = \hat{\nu} ^a, \ \int _0 ^1 \theta ^{a} _t \de a = \mu _t 
	\end{array}		 \right\}.
	\end{equation*}
	It is easy to see that 	$(\hat{\eta}_t^a)_{t \in T} \in \mathcal{A}$ and 
	$\int _0 ^{\alpha} \hat{\eta} ^a _t \de a = \shadow{\mu _{T_t}} {\nu ^{\alpha}}$
	for all $t \in T$ and $\alpha \in [0,1]$. By the minimality of shadows (cf.\ Proposition \ref{prop:MaximalElement}), any competitor $(\tilde{\theta} ^a)_{a \in [0,1]} \in \mathcal{A}$ satisfies $\int _0 ^{\alpha} \tilde{\theta}^a_t \de a\leqc\int _0 ^{\alpha} \hat{\eta} ^a _t \de a$. Hence, it follows
	\begin{eqnarray*}
		\int _0 ^1 c_t (a,\tilde{\theta} ^a) \de a  
		&=& \int _0 ^1  \left( \int _a ^{1} \int _{\mathbb{R}} \frac{x + \sqrt{1 + x^2}}{2} \de \tilde{\theta}^{a} _t \de \alpha \right) \de a \\
		&=& \int _0 ^1  \left(  \int _{\mathbb{R}} \frac{x + \sqrt{1 + x^2}}{2} \de\left(\int _0 ^{\alpha}  \tilde{\theta}^{a} _t \de a\right)  \right) \de \alpha \\
		&\geq&  \int _0 ^1  \left(  \int _{\mathbb{R}} \frac{x + \sqrt{1 + x^2}}{2} \de\left(\int _0 ^{\alpha} \hat{\eta}^{a} _t \de a\right) \right) \de \alpha = \int _0 ^1 c_t (a,\hat{\eta}  ^a) \de a
	\end{eqnarray*}
	for all $t \in T$. This proves the claim. 
\end{proof}

\begin{remark} \label{rem:OtherFormulation}
	Let $(\Omega _a)_{a \in [0,1]}$ be uncountably many copies of $\mathbb{R}^T$ and set 
	\[\Omega = [0,1] \times \prod _{a \in [0,1]} \Omega _a.\]
	We equip $\Omega$ with the product $\sigma$-algebra and denote by $\mathbb{P}$ the product measure on $\Omega$ generated by $\lambda$ on $[0,1]$ and $\hat{\pi}^a$ on $\Omega _a$ for all $a \in [0,1]$ (on the Lebesgue null set where the right-derivative $\hat{\pi}^a$ are not defined we choose the Dirac mass of the null-path). It is easy to see that the random variables 
	\begin{equation*}
	U(\omega_0, (\omega _a)_{a \in [0,1]}) = \omega _0 \hspace{0.5cm} \text{ and }  \hspace{0.5cm}  M^a (\omega_0, (\omega _a)_{a \in [0,1]}) = \omega _a, \ a \in [0,1],
	\end{equation*} 
	satisfy the assertions of Theorem \ref{thm:GenExist}.
\end{remark}

\begin{remark} \label{rem:ExplainAssumptionDetail}
	
	We mentioned before that the $\leqcs$-convexity of the parametrization is crucial for our proof of the \emph{uniqueness} of the shadow martingale (Theorem \ref{prop:GenExistCount}). Note that the $\leqcs$-convexity is not mentioned in subsections prior to \ref{sec:coreArgument}. 
In the proof of Theorem \ref{prop:GenExistCount} we only use the $\leqcs$-convexity to justify the application of Proposition \ref{prop:OptToNSI} (``Optimality implies NSI'' for a family of peacocks $(\theta^a)_{a\in [0,1]}$). Since for almost every $a\in [0,1]$ the ``derivative peacock'' $(\hat\eta^a_t)_{t \in T}$ is NSI, Proposition \ref{prop:NSItoUniq} shows that the associated martingale is uniquely determined.

The proof of Proposition \ref{prop:OptToNSI} and the whole strategy of Subsection \ref{sec:coreArgument} boils down to the following observation: If the family $(\theta^a)_{a \in [0,1]}$ is a simultaneous minimizer of \eqref{eq:OptRD} with respect to the cost functions $c_t$ for all $t \in T$, we have
	\begin{equation} \label{eq:MainInequality}
		\shadow{(2 \theta ^{a} _s)_{s \in T_t}}{\theta^a _0} \leqcs 2 \theta ^a _t - \shadow{(2 \theta ^a _s)_{s \in T_t}}{\theta^a _0}  \leqcs \shadow{(2 \theta ^{b} _s)_{s \in T_t}}{\theta^b _0}
	\end{equation}
	for all $t$ and for $\lambda$-a.e.\ $a < b$ in $[0,1]$. Whereas the left inequality is a direct consequence of the defining properties of the shadow, the right inequality is a highly non-trivial statement for which we carefully study the monotonicity principle associated to \eqref{eq:OptRD}. For details we refer to the proof of Lemma \ref{lemma:OptToOrder}. So far we cannot hope that \eqref{eq:MainInequality} or even the weaker statement
	\begin{equation*}
		\shadow{(2 \theta ^{a} _s)_{s \in T_t}}{\theta^a _0} \leqcs \shadow{(2 \theta ^{b} _s)_{s \in T_t}}{\theta^b _0}
	\end{equation*}
	is satisfied for $\lambda$-a.e.\ $a<b$ if the map  $a \mapsto \theta_0^a $ is  not monotonously increasing w.r.t.\ $\leqcs$. In the setting of Theorem \ref{prop:GenExistCount} we have $\theta_0^a = \hat{\nu}^a$ where $(\hat\nu^a)_{a \in [0,1]}$ is the family of right-derivatives of the parametrization $(\nu^{\alpha}) _{\alpha \in [0,1]}$ and $(\hat\nu^a)_{a \in [0,1]}$ is monotonously increasing w.r.t.\ $\leqcs$ if and only if $(\nu^{\alpha}) _{\alpha \in [0,1]}$ is $\leq_{c,s}$-convex.
	
	Of course, there might exists a way to show that we have
	\begin{equation*} 
		\shadow{(2 \theta ^{a} _s)_{s \in T_t}}{\theta^a _0} = 2 \theta ^a _t - \shadow{(2 \theta ^a _s)_{s \in T_t}}{\theta^a _0} 
	\end{equation*}
--i.e the NSI property for $(\theta^a_t)_{t\in T}$-- for $\lambda$-a.e.\ $a \in [0,1]$  without relying on \eqref{eq:MainInequality}. However, for infinite $T$ and without continuity of $\mu _{S} \mapsto \shadow{\mu _S}{\nu}$ (see Subsection \ref{sec:contshadow} and Example \ref{expl:ShadowNotCont}) we don't see another possible line of reasoning so far.
\end{remark}

\section{C\`adl\`ag shadow martingales indexed by a continuous time set} \label{sec:ContTimeRC}

In this section we show how the results of the previous section can be lifted to the setting of a continuous time index set $T \subset [0, \infty)$ with minimal element $0 \in T$ under the additional assumption that the given peacock $(\mu _t)_{t \in T}$ is right-continuous, i.e.\ the map $t \mapsto \mu_t$ is a right-continuous map from $T$ to $\PO(\mathbb{R})$ (under $\TO$). 

The key observation is that in the current setup martingale measures $\pi$ (similarly for martingale parametrizations) are uniquely determined by the restriction to a well chosen countable index set $S$, i.e.\ there exists a unique martingale measure $\pi$ extending $\pi_{|S}$ to the index set $T$, cf.\ Lemma \ref{lemma:RCExtensionMartingale}. This will be established in Section \ref{sec:ContTimeMartMeas}. In Subsection \ref{sec:ShadObstbyPCOC} we will show that also the obstructed shadow only depends on $\mu_S$ for some countable family $S$ if $\mu_T$ is a peacock. Consequently, also the NSI property only depends on $\mu_S$ by Lemma \ref{lemma:CharacNSI}. These results will allow us to provide a proof of Theorem \ref{thm:GenExistRC}, a variant of Theorem  \ref{thm:GenExist} in the case of a continous time index set $T$ and a right-continuous peacock $\mu_T$ in Subsection \ref{sec:GenExistRC}.

\subsection{Continuous time martingale measures} \label{sec:ContTimeMartMeas}

We fix a subset $T \subset[0,\infty)$ with $0 \in T$ and we equip $T$  with the inherited standard topology. Recall that a modification of the canonical process $(X_t)_{t \in T}$ under $\pi \in \PO(\mathbb{R}^T)$ is a process $\tilde{X}: \mathbb{R}^T \rightarrow \mathbb{R}$ such that $\pi(\tilde{X}_t = X_t) = 1$ for all $t \in T$.  Note that $\mathrm{Law}_{\pi}(X) = \mathrm{Law}_{\pi}(\tilde{X}) = \pi$ and, if $T$ is countable, we get $\tilde{X} = X$ $\pi$-a.e.

\begin{definition}
	\begin{enumerate}
		\item [(i)] We call a peacock $(\mu _t)_{t \in T}$  right-continuous, if $t \mapsto \mu _t$ is right-continuous from $T$ to $\PO(\mathbb{R})$ w.r.t.\ $\TO$. We denote by  $\PPC_T\subset \PP_T$ the set of these peacocks.

		\item [(ii)] We call a martingale measure $\pi \in \MM _{T}$  a c\`adl\`ag martingale measure, if there exists a modification $(\tilde{X}_t)_{t \in T}$ of the canonical process under $\pi$ such that $t \mapsto \tilde{X}_t(\omega)$ is a c\`adl\`ag function for all $\omega \in \mathbb{R}^T$. We denote the set of all c\`adl\`ag martingale measures by $\MMC_T$.
		
		\item [(iii)] We call a martingale parametrization $(\pi ^{\alpha})_{\alpha \in [0,1]}$ of a c\`adl\`ag martingale measure $\pi \in \MMC_T$  c\`{a}dl\`{a}g, if $\frac{1}{\alpha} \pi ^{\alpha} \in \MMC_T$ for all $\alpha \in (0,1]$.
	\end{enumerate}
\end{definition}

\begin{remark} \label{rem:ObviuousRCMart}
	\begin{enumerate}
		\item [(i)] Lemma \ref{lemma:convDom} shows that a peacock is right-continuous (w.r.t.\ $\TO$)  if and only if $t \mapsto \mu _t$ is right-continuous w.r.t.\ $\TZ$ because for all $t \in T$ and $t_n \downarrow t$ the measures $\mu_t$ and $(\mu _{t_n})_{n \in \mathbb{N}}$ are bounded from above in convex order by $\mu _{t_1}$. 
		\item [(ii)] Let $\pi \in \MMC _T$ and $(\tilde{X}_t)_{t \in T}$ be a modification of the canonical process $(X_t)_{t \in T}$ under $\pi$. Then $(\tilde{X}_t)_{t \in T}$ is an $(\tilde{\mathcal{F}}_t)_{t \in T}$-martingale under $\pi$ where $\tilde{\mathcal{F}}_t = \sigma(\{ \tilde{X}_s :  s \in [0,t]\cap T\})$ because $\pi(\tilde{X}_t = X_t) = 1$ for all $t \in T$. 
	\end{enumerate}
\end{remark}

We equip both $\PPC_T$ and $\MMC_T$ with the subspace topology inherited from the product topology on $(\PO(\mathbb{R}))^T$ and $\TO$ on $\MM_T$. Moreover, we use the notation $\MMC_T((\mu _t)_{t \in T})$ for c\`adl\`ag martingale measures associated with a peacock $(\mu _t)_{t \in T}$.
Note that the right-continuity of the peacock corresponds to the c\`{a}dl\`{a}g property of the martingale measure:

\begin{lemma} \label{lemma:RCandcadlag}
	Let  $(\mu _t)_{t \in T} \in \PP_T$ and $\pi \in \mathsf{M}_T$ be associated with $(\mu _t)_{t \in T}$.
	\begin{enumerate}
		\item [(i)] If $\pi$ is a c\`adl\`ag martingale measure, then $(\mu _t)_{t \in [0,1]} \in \PPC_T$.
		\item [(ii)] If $(\mu _t)_{t \in T}$ is a right-continuous peacock, then $\pi \in  \MMC_T$.
	\end{enumerate}
\end{lemma}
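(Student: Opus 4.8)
The plan is to treat the two implications separately: (i) is elementary, and (ii) is a martingale regularization argument whose only non-classical ingredient is the right-continuity hypothesis on the peacock.

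For implication (i), I would argue as follows. Suppose $\pi$ is c\`adl\`ag and let $(\tilde X_t)_{t\in T}$ be a modification of the canonical process with c\`adl\`ag paths for every $\omega\in\R^T$. Fix $t\in T$ and a sequence $t_n\downarrow t$ in $T$. Right-continuity of the paths gives $\tilde X_{t_n}(\omega)\to\tilde X_t(\omega)$ for every $\omega$, whence $\mu_{t_n}=\mathrm{Law}_\pi(\tilde X_{t_n})$ converges to $\mu_t=\mathrm{Law}_\pi(\tilde X_t)$ under $\TZ$. Since $(\mu_t)_{t\in T}$ is a peacock we have $\mu_{t_n}\leqc\mu_{t_1}$ for all large $n$, so Lemma~\ref{lemma:convDom} upgrades this to convergence under $\TO$; as $\PO(\R)$ equipped with $\TO$ is first countable (cf.\ Subsection~\ref{sec:Notation}), this shows that $t\mapsto\mu_t$ is right-continuous, i.e.\ $(\mu_t)_{t\in T}\in\PPC_T$.

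For implication (ii), first I would fix a countable dense subset $D\subset T$ with $0\in D$ (observing that every $t\in T$ which is not isolated from the right in $T$ is automatically a right-accumulation point of $D$), and note that on $D$ the canonical process is a martingale for its natural filtration, with $\mathbb{E}_\pi|X_t|=\int|x|\,\de\mu_t$ non-decreasing in $t$ and finite. On each window $D\cap[0,u]$ the Doob maximal and upcrossing inequalities are then bounded by $\int|x|\,\de\mu_u<\infty$, so the standard argument (as in \cite[II~\S 2]{ReYo99}) yields a $\pi$-null set off which $q\mapsto X_q(\omega)$, $q\in D$, is locally bounded with left and right limits along $D$ at every point of $T$. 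I would then set $\tilde X_t:=\lim_{D\ni q\downarrow t}X_q$ for $t$ not isolated from the right, $\tilde X_t:=X_t$ for $t$ right-isolated, and $\tilde X\equiv 0$ on the exceptional null set, and record that $(\tilde X_t)_{t\in T}$ has c\`adl\`ag paths on all of $\R^T$.

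The hard part will be showing that $(\tilde X_t)_{t\in T}$ is a \emph{genuine} modification, i.e.\ $\tilde X_t=X_t$ $\pi$-a.e.\ for every fixed $t$ — this is exactly the step that uses right-continuity of the peacock, ruling out the jump at $t$ that a non-right-continuous canonical filtration would otherwise permit. For a right-accumulation point $t$ and any $u\in T$ with $u>t$, the family $\{X_s:s\in D,\ t<s\le u\}$ is bounded above in $\leqcp$ by $\mu_u$, hence uniformly integrable by Lemma~\ref{lemma:convDom}; combined with the a.e.\ convergence this gives $X_s\to\tilde X_t$ in $L^1(\pi)$ as $s\downarrow t$ in $D$. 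From $\mathbb{E}_\pi[X_s\mid\mathcal F_t]=X_t$ for all $s>t$ and $L^1$-continuity of conditional expectation I obtain $X_t=\mathbb{E}_\pi[\tilde X_t\mid\mathcal F_t]$, while the $L^1$-convergence yields $\mathrm{Law}_\pi(\tilde X_t)=\lim_{s\downarrow t}\mu_s=\mu_t=\mathrm{Law}_\pi(X_t)$, the middle equality being precisely the right-continuity of the peacock. Finally I would apply conditional Jensen to the strictly convex, $\mu_t$-integrable function $\varphi(x)=\sqrt{1+x^2}$: since $\varphi(X_t)=\varphi(\mathbb{E}_\pi[\tilde X_t\mid\mathcal F_t])\le\mathbb{E}_\pi[\varphi(\tilde X_t)\mid\mathcal F_t]$ $\pi$-a.e.\ and both sides integrate to $\int\varphi\,\de\mu_t$, the inequality is an a.e.\ equality, which by strict convexity of $\varphi$ forces the regular conditional distribution of $\tilde X_t$ given $\mathcal F_t$ to be $\delta_{X_t}$, i.e.\ $\tilde X_t=X_t$ $\pi$-a.e. (For $t$ right-isolated there is nothing to prove.) Hence $\pi$ admits a c\`adl\`ag modification, so $\pi\in\MMC_T$.
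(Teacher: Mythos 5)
Your proof is correct, and it is essentially the detailed version of the argument the paper merely cites (Remark \ref{rem:ObviuousRCMart} together with the standard regularization theorem of Revuz--Yor): part (i) via pathwise right-continuity of the modification plus the uniform-integrability upgrade of Lemma \ref{lemma:convDom}, and part (ii) via maximal/upcrossing bounds on a countable skeleton followed by the identification $\tilde X_t=X_t$ $\pi$-a.e. You correctly isolate the only non-classical step --- that the right-limit process is a genuine modification, which can fail for the natural (non-right-continuous, non-complete) filtration unless the peacock is right-continuous (consider $X_0=0$, $X_t=Z$ for $t>0$) --- and your equality-in-Jensen argument with the strictly convex, $\mu_t$-integrable function $\varphi(x)=\sqrt{1+x^2}$ settles it cleanly.

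One small repair in part (ii): for the modified paths to be c\`adl\`ag on all of $T$, the countable set $D$ should contain not only a dense subset but also all right-isolated points of $T$ (these form a countable set, cf.\ the construction in Lemma \ref{lemma:RDenseSubset}). As written, at a right-isolated $s\notin D$ you set $\tilde X_s:=X_s$, a value not controlled by the pathwise regularization carried out on $D$; if a right-accumulation point $u$ of $T$ is approached from the right by such points $s_n\downarrow u$, right-continuity of $t\mapsto\tilde X_t(\omega)$ at $u$ could fail. Enlarging $D$ accordingly fixes this with no other change to the argument.
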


\begin{proof}
This is a consequence of Remark \ref{rem:ObviuousRCMart} and of standard results on modifications for continuous martingales, see e.g.\ \cite[Theorem 2.8]{ReYo99}.
\end{proof}

\begin{lemma} \label{lemma:RDenseSubset}
	There exists a countable set $S \subset T$ that is right-dense in $T$, i.e.\ for all $t \in T$ there exists a sequence $(s_n)_{n \in \mathbb{N}}$ in $\{s \in S : s \geq t\}$ that converges to $t$. Similarly, there exists a countable left-dense set and the union of both is a countable subset of $T$ that is both right- and left-dense.
\end{lemma}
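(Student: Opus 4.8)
The statement is essentially a separability fact about the subspace topology on $T\subset[0,\infty)$, so I would reduce it to the second countability of $T$. First I would fix a countable base $(V_k)_{k\in\mathbb N}$ for the topology on $\mathbb R$ (say, open intervals with rational endpoints) and consider the induced base $\{V_k\cap T\}_{k\in\mathbb N}$ for the subspace topology on $T$. The idea is to pick, for each $t\in T$ and each basic neighborhood $V_k\cap T$ containing $t$, a point of $T$ strictly to the right of $t$ inside that neighborhood whenever one exists; collecting all such points over $t$ and $k$ will \emph{not} directly give a countable set (there are uncountably many $t$), so instead I would phrase the selection base-set by base-set: for each $k$ with $\inf(V_k\cap T)\notin V_k\cap T$ or, more robustly, for each $k$, enumerate a sequence in $V_k\cap T$ approaching $\inf(V_k\cap T)$ from the right (if the infimum is not attained) together with the infimum itself (if it is in $T$). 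Taking the union over $k\in\mathbb N$ yields a countable set $S\subset T$.

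The verification that this $S$ is right-dense goes as follows. Fix $t\in T$ and suppose $t$ is a right-limit point of $T$, i.e. there exist points of $T$ arbitrarily close to $t$ from above (if $t$ is right-isolated there is nothing to prove, since then $t$ itself can be included in $S$; note the countable set of right-isolated points can simply be adjoined). Given $\varepsilon>0$, choose a rational interval $V_k=(a,b)$ with $t\in V_k\subset (t-\varepsilon/2,\,t+\varepsilon/2)$ and $a<t$; then $\inf(V_k\cap T)\le t$, and since there are points of $T$ in $(t,t+\varepsilon/2)\cap V_k$, the set $V_k\cap T$ accumulates from the right onto its infimum and our construction placed in $S$ a point $s\in V_k\cap T$ with $s$ within $\varepsilon$ of that infimum and on the correct side. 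I would tighten the interval choice so that the infimum of $V_k\cap T$ is forced to be exactly $t$ or within $\varepsilon$ of $t$; concretely, pick $V_k$ with left endpoint in $(t-\varepsilon,t)$ so that $\inf(V_k\cap T)\in[t-\varepsilon,t]$, guaranteeing the selected $s\in S$ satisfies $|s-t|<2\varepsilon$ and $s\ge t$ (by discarding, if necessary, the finitely many selected points below $t$ — but one must be careful here, see below). Letting $\varepsilon\to 0$ through a sequence produces the desired sequence $(s_n)$ in $\{s\in S: s\ge t\}$ converging to $t$.

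The symmetric statement for a left-dense set is identical with inequalities reversed (select points approaching the \emph{supremum} of each basic set from the left), and the final claim follows by taking the union of the right-dense and left-dense countable sets, which is again countable.

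\textbf{Main obstacle.} The delicate point is ensuring that the points we select actually lie on the correct side of $t$ — our construction selects points near $\inf(V_k\cap T)$, but a priori such a point could lie strictly below $t$ while still being in $V_k\cap T$. The clean fix is not to aim for points near the infimum of a base set, but rather: for each pair $(k,\ell)$ of rationals with $k<\ell$, if $(k,\ell)\cap T\neq\emptyset$, include in $S$ one point $s_{k,\ell}\in(k,\ell)\cap T$; then $S$ is countable, and for any $t\in T$ that is a right-limit point and any $\varepsilon>0$, since $(t,t+\varepsilon)\cap T\neq\emptyset$ we may pick rationals $t<k<\ell<t+\varepsilon$ with $(k,\ell)\cap T\neq\emptyset$ (using that $(t,t+\varepsilon)\cap T$ has nonempty interior relative to $T$ — or more simply, pick any $u\in(t,t+\varepsilon)\cap T$ and rationals $k,\ell$ with $t<k<u<\ell<t+\varepsilon$), so $s_{k,\ell}\in S$ satisfies $t<k<s_{k,\ell}<t+\varepsilon$. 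This makes the right-density immediate and sidesteps the infimum-attainment subtleties entirely; it is the version I would write up.
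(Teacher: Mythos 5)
Your final ``clean fix'' is correct and is essentially the paper's own argument: your set $\{s_{k,\ell}\}$ is exactly a countable dense subset of $T$, and adjoining the (countable) set of right-isolated points is precisely what the paper does with $S_1\cup S_2$. The earlier infimum-based attempt had the side-of-$t$ issue you yourself flagged, but the version you say you would write up resolves it and matches the paper's proof.
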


\begin{proof} 
	Let $S_1 = \{ t \in T \ \vert \ \exists\ \varepsilon > 0,\  T \cap (t,t+ \varepsilon)= \emptyset \}$ be the set of points that are ``right-isolated'' in $T$ and $S_2$ be a countable dense subset of $T$. Since $S_1$ is countable so is $S:=S_1\cup S_2$. Moreover, it is not difficult to check that any $t \in T$ is the limit of a sequence in $S\cap [t,\infty)$. Thus, $S$ is a countable right-dense subset of $T$.
\end{proof}

The following  Lemmas \ref{lemma:RCExtensionMartingale}--\ref{lemma:ParamExtension}  link c\`adl\`ag martingale measures and c\`adl\`ag martingale parametrizations to their restrictions to a suitable right-dense set: 

\begin{lemma} \label{lemma:RCExtensionMartingale}
	Let $S$ be a  countable right-dense subset of $T$.
	\begin{enumerate}
		\item[(i)] If $\pi, \rho \in \MMC_T$ satisfy $\pi _{|S} = \rho _{|S}$, then $\pi = \rho$.
		\item[(ii)] For all $\pi' \in \MMC_S$, there exists  $\pi \in \MMC_T$ such that $\pi _{|S} = \pi'$ (uniquely determined by (i)). 
	\end{enumerate}
\end{lemma}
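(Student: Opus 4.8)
The plan is to prove Lemma \ref{lemma:RCExtensionMartingale} by exploiting the well-known martingale regularization theorem together with the fact that a right-dense set determines the natural filtration of a càdlàg process in a suitable sense. Throughout, fix a countable right-dense set $S \subset T$.

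\textbf{Part (i).} First I would show that a càdlàg martingale measure $\pi \in \MMC_T$ is already determined by its finite-dimensional distributions on $S$. Let $(\tilde X_t)_{t \in T}$ be a càdlàg modification of the canonical process under $\pi$. For any $t \in T$, pick a sequence $s_n \in S$ with $s_n \downarrow t$ (possible by right-density). Since $t \mapsto \tilde X_t(\omega)$ is càdlàg for every $\omega$, we have $\tilde X_t = \lim_{n \to \infty} \tilde X_{s_n}$ pointwise on $\R^T$, hence $X_t = \tilde X_t$ $\pi$-a.e.\ is the $\pi$-a.e.\ limit of $(X_{s_n})_{n\in\mathbb N}$. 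More generally, for any finite tuple $t_1, \ldots, t_k \in T$, the vector $(X_{t_1}, \ldots, X_{t_k})$ is a $\pi$-a.e.\ limit of vectors $(X_{s_1^n}, \ldots, X_{s_k^n})$ with all entries in $S$, so by dominated convergence $\int g(X_{t_1}, \ldots, X_{t_k}) \de \pi$ is determined by $\pi_{|S}$ for every $g \in C_b(\R^k)$ (one uses here that the modification property $\pi(\tilde X_{s} = X_{s}) = 1$ holds for $s\in S$, so $\pi_{|S}$ really is the law of $(\tilde X_s)_{s\in S}$). Since the finite-dimensional distributions along indices in $T$ determine $\pi$ on the product $\sigma$-algebra of $\R^T$, and these are all determined by $\pi_{|S}$, the equality $\pi_{|S} = \rho_{|S}$ forces $\pi = \rho$. (One must be mildly careful: the càdlàg modifications of $\pi$ and $\rho$ may differ, but the above argument only uses that \emph{some} càdlàg modification exists for each, which is part of the definition of $\MMC_T$, and that the pointwise limit computation identifies $X_t$ $\pi$-a.e.\ with a limit of $S$-coordinates — this identification is intrinsic to $\pi$, not to the choice of modification.)

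\textbf{Part (ii).} Given $\pi' \in \MMC_S$, I would build the extension $\pi \in \MMC_T$ as follows. Choose a càdlàg modification $(\tilde X_s)_{s \in S}$ of the canonical process on $\R^S$ under $\pi'$; actually, by martingale regularization (see \cite[II \S 2]{ReYo99}, applied to the martingale $(X_s)_{s\in S}$ indexed by the countable set $S$), since $S$ is right-dense in $T$ and $s \mapsto \mu_s = \mathrm{Law}_{\pi'}(X_s)$ is right-continuous (this holds because $\pi' \in \MMC_S$ implies by Lemma \ref{lemma:RCandcadlag}(i) that $(\mu_s)_{s\in S}$ is right-continuous, which then extends uniquely to a right-continuous peacock on $T$), the process admits right limits along $S$ at every $t \in T$. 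Concretely: define $\tilde X_t := \lim_{s \downarrow t, s \in S} \tilde X_s$ for $t \in T \setminus S$ (the limit existing $\pi'$-a.e.\ and pointwise after discarding a null set, by the càdlàg regularization theorem) and $\tilde X_t := \tilde X_t$ for $t \in S$. Then $(\tilde X_t)_{t \in T}$ is a càdlàg process, and setting $\pi := \mathrm{Law}(\tilde X)$ on $\R^T$, one checks that $\pi_{|S} = \pi'$ and that $(\tilde X_t)_{t\in T}$ is a martingale w.r.t.\ its natural filtration (the martingale property for $t \notin S$ follows by approximation from the martingale property on $S$ together with uniform integrability, which is guaranteed because all $\mu_t$ are dominated in convex order by $\mu_{t_1}$ for $t_1$ large, cf.\ Lemma \ref{lemma:convDom}). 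Hence $\pi \in \MMC_T$.

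\textbf{Main obstacle.} The delicate point is the interface between the \emph{pathwise} càdlàg structure and the \emph{measure-theoretic} setup on $\R^T$ with the product $\sigma$-algebra: one has to make sure that taking right limits along $S$ is a measurable operation $\R^S \to \R^T$ on the relevant full-measure set, and that the martingale property transfers cleanly through this limit. The standard fix is to invoke the martingale regularization theorem, which simultaneously gives existence of the càdlàg modification \emph{and} that it is adapted and a martingale w.r.t.\ the (completed, right-continuous) natural filtration; the right-continuity of $t \mapsto \mu_t$ (equivalently of $t \mapsto \mu_t$ under $\TZ$ by Remark \ref{rem:ObviuousRCMart}(i)) is exactly the hypothesis needed to run it. I expect the bulk of the work to be bookkeeping: checking right-density gives the approximating sequences, checking uniform integrability via the convex-order bound, and checking that the extension's restriction to $S$ returns $\pi'$.
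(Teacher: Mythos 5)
Your proposal is correct and follows essentially the same route as the paper: part (i) by approximating arbitrary tuples of time points from the right by points of $S$ and testing against bounded continuous functions via the c\`adl\`ag modification, and part (ii) by defining the extension as the right limit along $S$ of a c\`adl\`ag modification and pushing forward, with the martingale property passed to the limit by uniform integrability. The only difference is cosmetic: you justify the existence of right limits along $S$ via the martingale regularization theorem of \cite{ReYo99}, whereas the paper cites \cite[Proposition 1]{BeHuSt16} for the same fact.
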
 

\begin{proof}
	Item (i): Let $n \in \mathbb{N}$, $t_1, \ldots, t_n \in T$ and $\varphi \in C_b(\mathbb{R}^n)$. We can find sequences $(s^i_k)_{k \in \mathbb{N}}$ in $S$, $1 \leq i \leq n$, such that $s ^i _k \searrow t_i$ for all $i \in \{1, \ldots,n\}$. Since there exist c\`{a}dl\`{a}g modifications of the canonical process under $\pi$ and $\rho$ and $\pi _{|S} = \rho _{|S}$,  we obtain that $\int _{\mathbb{R}^T} \varphi (X_{t_1}, \ldots, X_{t_n}) \de \pi = \int _{\mathbb{R}^T} \varphi (X_{t_1}, \ldots, X_{t_n}) \de \rho$.
	
	Item (ii): Since $\pi' \in \MMC_S$, there exists a modification $(\tilde{X}_s)_{s \in S}$ of the canonical process $(X_s)_{s \in S}$ on $\mathbb{R}^S$ under $\pi'$ such that any path $s \mapsto X_s(\omega)$ is  c\`{a}dl\`{a}g. Furthermore, also the limits $\lim _{s \uparrow t, s \in S} \tilde{X}_s$ and $\lim _{s \downarrow t, s \in S} \tilde{X}_s$ for $t \in T \setminus S$ exist (see \cite[Proposition 1]{BeHuSt16}).	We define \begin{equation}
	\mathbb{R}^S \ni \omega \mapsto Y_t(\omega) = \begin{cases} \tilde{X}_t(\omega) &t \in S \\
	\lim _{s \downarrow t} \tilde{X}_s(\omega) &t \not \in S .
	\end{cases}
	\end{equation}
	The family $(Y_t)_{t \in T}$ is a well-defined process on the probability space $(\mathbb{R}^S, \mathcal{F}^S_{\infty}, \pi ')$ where $\mathcal{F}^S_{\infty}$ denotes the $\sigma$-algebra $\mathcal{F} ^S_{\infty} = \bigvee _{s \in S} \mathcal{F}^S_s$.  Here, for definitess, we denote the canonical filtration on $\R^S$ by $(\mathcal F_s^S)_{s\in S}.$ Moreover, $Y$ is a martingale w.r.t.\ the right-continuous filtration $(\overline{\mathcal{F}}^+_t)_{t \in T}$  on $\R^T$ given by $\overline{\mathcal{F}}^+_t = \bigcap _{s > t, s \in S}  \mathcal{F}^S_s$ for all $t \in T$ that are not the maximal element and $ \overline{\mathcal{F}}^+_{\tmax} = \mathcal{F}^S_{\tmax}$ if there exists a maximal element $\tmax \in T$.	
	Set $\pi = \mathrm{Law}_{\pi'}(Y) \in \PO(\mathbb{R}^T)$. Then $\pi$ is a martingale measure
	because $Y$ is a martingale under $\pi'$.
	By the c\`{a}dl\`{a}g property of $Y$, the marginal distributions of the canonical process on $\mathbb{R}^T$ under $\pi$ are right-continuous. Hence, $\pi\in \MMC_T$ by Lemma \ref{lemma:RCandcadlag}.
	By construction, it holds $\pi _{|S} = \pi'$.
\end{proof}

\begin{lemma}  \label{lemma:CompactnessTD}
	Assume $T$ has a maximal element $\tmax$ and $(\pi ^n)_{n \in \mathbb{N}}$ is a sequence in $\MMC_{T}$.
	\begin{enumerate}
		\item [(i)] Let $\pi \in \MMC_{T}$ and $S$ be a countable right-dense subset of $T$. If $({\pi ^n}_{|S})_{n \in \mathbb{N}}$ converges to $\pi _{|S}$ in $\MMC_S$ (under $\TO$) and $(\mathrm{Law}_{\pi ^n}(X_t))_{n \in \mathbb{N}}$ converges to $\mathrm{Law}_{\pi}(X_t)$ in $\PO(\mathbb{R})$ for all $t \in T$, then $(\pi _n)_{n \in \mathbb{N}}$ converges to $\pi$ in $\MMC_T$.
		
		\item [(ii)] If the sequence of right-continuous peacocks $((\mathrm{Law}_{\pi^n}(X_t))_{t \in T})_{n \in \mathbb{N}}$ is convergent in $\PPC_{T}$ with limit $(\mu _t)_{t \in T}$, then there exists a convergent subsequence of  $(\pi ^n)_{n \in \mathbb{N}}$ in $\MMC_T$ with limit $\pi \in \MMC_{T}((\mu _t)_{t \in T})$.
	\end{enumerate}
\end{lemma}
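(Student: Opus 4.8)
\emph{Plan.} I would prove part (ii) first and deduce part (i) from it; the pivotal tool is Lemma~\ref{lemma:RCExtensionMartingale}, which says that a c\`adl\`ag martingale measure is determined by its restriction to any countable right-dense subset of the index set. This is what lets us avoid establishing any uniform-in-$n$ modulus of right-continuity for the family $(\pi^n)$. Fix a countable right-dense subset $S\subset T$ containing $0$ and $\tmax$ (Lemma~\ref{lemma:RDenseSubset}); then $S$ has maximal element $\tmax$, and $S$ remains right-dense in $S\cup F$ for every finite $F\subset T$.

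\emph{Proof of (ii).} By hypothesis $\mathrm{Law}_{\pi^n}(X_t)\to\mu_t$ in $\PO(\R)$ for each $t\in T$, so every family $\{\mathrm{Law}_{\pi^n}(X_t)\}_n$ is tight; since $S$ is countable, $\{\pi^n_{|S}\}_n$ is tight in $\PO(\R^S)$, and along a subsequence $\pi^{n_k}_{|S}\to\rho$ weakly (Prokhorov). The one-dimensional marginals of $\rho$ are the weak limits $\mu_s$, $s\in S$, and since in addition $\int|X_s|\de\pi^{n_k}_{|S}=\int|x|\de\mathrm{Law}_{\pi^{n_k}}(X_s)\to\int|x|\de\mu_s$, the convergence $\pi^{n_k}_{|S}\to\rho$ in fact holds under $\TO$. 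The martingale property is stable under $\TO$-limits (cf.\ the proof of Lemma~\ref{lemma:RightDerivatives}) and $(\mu_s)_{s\in S}$ is a right-continuous peacock, so $\rho\in\MMC_S$ by Lemma~\ref{lemma:RCandcadlag}(ii). Let $\pi\in\MMC_T$ be the extension of $\rho$ provided by Lemma~\ref{lemma:RCExtensionMartingale}(ii). For $t\in T\setminus S$ pick $s_j\downarrow t$ in $S$: c\`adl\`ag-ness gives $X_{s_j}\to X_t$ $\pi$-a.s., and convergence in $L^1(\pi)$ since the marginals of $\pi$ are uniformly integrable, being bounded above in convex order by $\mu_{\tmax}$ (Lemma~\ref{lemma:convDom}); hence $\mathrm{Law}_\pi(X_{s_j})\to\mathrm{Law}_\pi(X_t)$ under $\TO$, while $\mathrm{Law}_\pi(X_{s_j})=\mu_{s_j}\to\mu_t$ by right-continuity of the limiting peacock, so $\mathrm{Law}_\pi(X_t)=\mu_t$ and $\pi\in\MMC_T((\mu_t)_{t\in T})$. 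It remains to check $\pi^{n_k}\to\pi$ in $\MMC_T$, i.e.\ $\int f\de\pi^{n_k}\to\int f\de\pi$ for all $f\in\mathcal{G}_0\cup\mathcal{G}_1$. For $f=|X_t|$ this is exactly the marginal convergence (and $\mathrm{Law}_\pi(X_t)=\mu_t$). For $f=g(X_{t_1},\dots,X_{t_m})$ with $g\in C_b(\R^m)$, put $F=\{t_1,\dots,t_m\}$: the sequence $(\pi^{n_k}_{|S\cup F})_k$ is tight in $\PO(\R^{S\cup F})$, and by the same reasoning as above every weak limit point $\sigma$ lies in $\MMC_{S\cup F}$ with $\sigma_{|S}=\rho$; since $S$ is right-dense in $S\cup F$ and $\pi_{|S\cup F}\in\MMC_{S\cup F}$ also restricts to $\rho$ on $S$, Lemma~\ref{lemma:RCExtensionMartingale}(i) forces $\sigma=\pi_{|S\cup F}$. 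A tight sequence all of whose weak limit points equal $\pi_{|S\cup F}$ converges weakly to it, whence $\int g(X_{t_1},\dots,X_{t_m})\de\pi^{n_k}\to\int g(X_{t_1},\dots,X_{t_m})\de\pi$. This proves (ii).

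\emph{Proof of (i).} Apply (ii) to the sequence $(\pi^n)$ itself: its marginal peacocks converge in $\PPC_T$ to $(\mathrm{Law}_\pi(X_t))_{t\in T}$, which is right-continuous because $\pi\in\MMC_T$ (Lemma~\ref{lemma:RCandcadlag}(i)), so some subsequence $\pi^{n_k}\to\rho'$ in $\MMC_T$. Restriction to $S$ is $\TO$-continuous, so $\rho'_{|S}=\lim_k\pi^{n_k}_{|S}=\pi_{|S}$, the last equality being the hypothesis of (i); Lemma~\ref{lemma:RCExtensionMartingale}(i) then yields $\rho'=\pi$. Since this argument applies verbatim to every subsequence of $(\pi^n)$, for each $f\in\mathcal{G}_0\cup\mathcal{G}_1$ every subsequence of $(\int f\de\pi^n)_n$ has a further subsequence converging to $\int f\de\pi$; hence $\int f\de\pi^n\to\int f\de\pi$ for all such $f$, that is, $\pi^n\to\pi$ in $\MMC_T$.

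\emph{Main obstacle.} The real difficulty lives in the first-moment topology: a direct proof of joint convergence at times $t\notin S$ would force one to control $\sup_n\mathbb{E}_{\pi^n}|X_s-X_t|$ as $s\downarrow t$, which is awkward when only integrability of the marginals is available. The plan sidesteps this entirely by transferring the question to the countable index sets $S\cup F$ and invoking uniqueness of the martingale extension. What is left is bookkeeping that must nonetheless be done with care: upgrading weak subsequential limits to $\TO$-convergence (always by producing convergence of the $|X_s|$-functionals from the marginal hypothesis), and checking that the martingale, c\`adl\`ag and peacock properties all survive passage to $\TO$-limits and to restrictions onto sub-index-sets.
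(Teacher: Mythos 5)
Your proof is correct and follows essentially the same route as the paper: both reduce to countable index sets of the form $S\cup F$, extract $\TO$-convergent subsequential limits by a compactness argument, and identify every limit point via the uniqueness of c\`adl\`ag extensions from right-dense subsets (Lemma \ref{lemma:RCExtensionMartingale}). The only organizational differences are that you prove (ii) first and deduce (i) from it, whereas the paper proves (i) first and invokes it to conclude (ii), and that you use Prokhorov plus the first-moment upgrade in place of the cited compactness of sets of martingale measures with compact terminal marginal; both substitutions are valid.
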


\begin{proof}
	Let $S' \subset T$ with $\tmax \in S'$ and suppose that $\mathcal{A}$ is a compact subset of $\PO (\mathbb{R})$ under $\TO$. A slight modification of \cite[Lemma 1]{BeHuSt16} shows that the set 
	\begin{equation} \label{eq:CompactSet}
	\left\{ \rho \in \MM_{S'} \ \vert \  \mathrm{Law} _{\rho} (X_{\tmax}) \in \mathcal{A} \right\}
	\end{equation}
	is a compact subset of $\MM_{S'}$ (under $\TO$).
	
	Item (i):  By the definition of the topology $\TO$ on $\mathcal{P}(\R^T)$, the sequence $(\pi _n)_{n \in \mathbb{N}}$ converges to $\pi$ under $\TO$ if and only if $(\pi ^n _{|S \cup R})_{n \in \mathbb{N}}$ converges to $\pi _{|S \cup R}$ in $\MMC_{S \cup R}$ under $\TO$ for all finite $R \subset T$. Thus, it is sufficient to show that for any finite subset $R \subset T$, any subsequence of $(\pi ^n _{|S \cup R})_{n \in \mathbb{N}}$ has a subsequence that converges to $\pi _{|S \cup R}$ under $\TO$.

	Let $R \subset T$ finite and note that $S$ contains $\tmax$. 
	Let $(\pi ^{n_k} _{|S \cup R})_{k \in \mathbb{N}}$ be an arbitrary subsequence of $(\pi ^{n} _{|S \cup R})_{n \in \mathbb{N}}$. 
	Since $(\mathrm{Law}_{\pi ^n}(X_{\tmax}))_{n \in \mathbb{N}}$ converges to $\mathrm{Law}_{\pi}(X_{\tmax})$ under $\TO$, the set $\mathcal A_1:=\{ \mathrm{Law}_{\pi ^n}(X_{\tmax}) : n \in \mathbb{N}\} \cup \{\mathrm{Law}_{\pi}(X_{\tmax})\} \subset \PO(\mathbb{R})$ is compact w.r.t.\ $\TO$. Hence, the set in \eqref{eq:CompactSet} with  $\mathcal A :=\mathcal{A}_1$ and $S' := S \cup R$ is a compact subset of $\MM_{S \cup R}$ under $\TO$. Consequently, $(\pi ^{n_k} _{|S \cup R})_{k \in \mathbb{N}}$ has itself a convergent  subsequence with limit $\rho ^* \in \mathsf{M}_{S \cup R}$. 
	By assumption, the marginal distributions of $\rho ^*$ have to coincide with the right-continuous marginal distributions of $\pi _{|S \cup R}$, and thus $\rho^* \in \MMC_{S \cup R}$ by Lemma \ref{lemma:RCandcadlag} (ii).
	Since $(\pi ^n _{|S})_{n \in \mathbb{N}}$ is convergent to $\pi_{|S}$, it holds $\rho ^* _{|S} = \pi _{|S}$ and therefore Lemma \ref{lemma:RCExtensionMartingale} (i) yields $\rho^* = \pi _{|S \cup R}$.
	
	Item (ii): Let $S$ be a countable right-dense subset of $T$ that includes $\tmax$.
	Since $(\mathrm{Law}_{\pi ^n}(X_{\tmax}))_{n \in \mathbb{N}}$ converges to $\mu _{\tmax}$ under $\TO$, the set $\mathcal A_2:=\{ \mathrm{Law}_{\pi ^n}(X_{\tmax}) : n \in \mathbb{N}\} \cup \{\mu_{\tmax}\} \subset \PO(\mathbb{R})$ is compact w.r.t.\ $\TO$. Hence, the set in \eqref{eq:CompactSet} with  $\mathcal A :=\mathcal{A}_2$ and $S' := S$ is a compact subset of $\MM_{S}$ under $\TO$.
	Therefore, there exists a convergent subsequence $({\pi ^{n_k}} _{|S})_{k \in \mathbb{N}}$ with limit $\pi ^S\in \mathsf{M}_S$. The marginal distributions of $\pi ^S$ are $(\mu _t)_{t \in S}$. Since $(\mu _t)_{t \in T}$ is a right-continuous peacock we can extend $\pi ^S$ to some $\pi  \in \MMC_{T}((\mu _t)_{t \in T})$ with ${\pi}_{|S} = \pi ^S$ by Lemma \ref{lemma:RCExtensionMartingale} (ii). Item (i) yields that  $(\pi ^{n_k})_{k \in \mathbb{N}}$ converges to $\pi$ in $\MMC_T$.
\end{proof}

\begin{lemma} \label{lemma:ParamExtension}
	Let $T \subset [0,\infty)$ and $S$ be a countable right-dense subset of $T$. 
	\begin{enumerate}
		\item[(i)] If $(\pi ^{\alpha})_{\alpha \in [0,1]}$ is a  martingale parametrization of the c\`adl\`ag martingale measure $\pi$, then $(\pi _{|S} ^{\alpha})_{\alpha \in [0,1]}$ is a c\`adl\`ag martingale parametrization of $\pi _{|S}$.
		\item[(ii)] Let $(\tilde{\pi} ^{\alpha})_{\alpha \in [0,1]}$ be a c\`adl\`ag martingale parametrization of $\tilde{\pi} \in \MMC_{S}$ and $\pi \in \MMC_{T}$  the unique extension of $\tilde\pi$ given by Lemma \ref{lemma:RCExtensionMartingale}, i.e.\ $\pi _{|S} = \tilde{\pi}$. There exists a unique c\`adl\`ag martingale parametrization  $(\pi ^{\alpha})_{\alpha \in [0,1]}$ of $\pi$ such that $\pi^{\alpha} _{|S} = \tilde{\pi}^{\alpha}$ for all $\alpha \in [0,1]$.
		\item [(iii)] Let $(\pi ^{\alpha})_{\alpha \in [0,1]}$ be a  martingale parametrization of $\pi \in \MMC_T$. If $\alpha \mapsto \pi ^{\alpha} _{|S}$ is right-differentiable at $a \in [0,1)$ (in the sense of Lemma \ref{lemma:RightDerivatives}) and there exists  $(\eta _t)_{t \in T} \in \PPC_T$ such that the law of $X_t$ under $\frac{1}{h}(\pi ^{a + h} - \pi ^a )$ converges under $\TO$ to $\eta _t$ for all $t \in T$, then the map $\alpha \mapsto \pi ^{\alpha}$ is right-differentiable at $a$, i.e.\ 
		\begin{equation*}
		\hat{\pi}^{a} = \lim _{h \downarrow 0} \frac{\pi ^{a + h} - \pi ^{a}}{h}
		\end{equation*}
		exists as a limit in $\PO(\mathbb{R}^T)$ under $\TO$.
		Moreover, the right-derivative of $\alpha \mapsto \pi ^{\alpha}$ at $a$ is an element of $\MMC_T((\eta _t)_{t \in T})$ and its restriction to $S$ is the right-derivative of $\alpha \mapsto \pi ^{\alpha} _{|S}$ at $a$. 
	\end{enumerate}
\end{lemma}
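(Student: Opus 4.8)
\emph{Proof plan.} All three statements rest on the extension/uniqueness Lemma \ref{lemma:RCExtensionMartingale}, and it is convenient to record first a general fact used repeatedly: \emph{if $\pi\in\MMC_T$ and $\mu\in\MM_T$ satisfies $\mu\leqp c\,\pi$ for some $c>0$, then $\mu\in\MMC_T$}. Indeed, if $\tilde X$ is a c\`adl\`ag modification of the canonical process under $\pi$, then $\mu(\tilde X_t\neq X_t)\leq c\,\pi(\tilde X_t\neq X_t)=0$ for every $t\in T$, so $\tilde X$ is also a c\`adl\`ag modification under $\mu$. In particular every martingale parametrization of a c\`adl\`ag martingale measure is automatically c\`adl\`ag. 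For (i) this already does most of the work: the map $\pi\mapsto\pi_{|S}=(\mathrm{proj}^S)_\#\pi$ is linear, $\TO$-continuous, mass-preserving, positivity-preserving and sends $\pi^1=\pi$ to $\pi_{|S}$, so $(\pi^\alpha_{|S})_{\alpha\in[0,1]}$ is a parametrization of $\pi_{|S}$; since the restriction to a sub-index set of a martingale w.r.t.\ its natural filtration is again such a martingale, $\tfrac1\alpha\pi^\alpha_{|S}\in\MM_S$; and $\tfrac1\alpha\pi^\alpha_{|S}\leqp\tfrac1\alpha\pi_{|S}$ with $\pi_{|S}\in\MMC_S$ (restrict a c\`adl\`ag modification of $\pi$ to $S$), whence $\tfrac1\alpha\pi^\alpha_{|S}\in\MMC_S$.

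For (ii), uniqueness is immediate from Lemma \ref{lemma:RCExtensionMartingale}(i): two c\`adl\`ag martingale parametrizations of $\pi$ with the same restriction to $S$ have $\tfrac1\alpha\pi^\alpha,\tfrac1\alpha\rho^\alpha\in\MMC_T$ agreeing on $S$, hence equal, and $\pi^0=\rho^0=0$. For existence I would exhibit the extension explicitly. Each $\tfrac1\alpha\tilde\pi^\alpha\in\MMC_S$ is concentrated on the (a.e.\ well-defined) set $D\subset\R^S$ of paths admitting right-limits at every point of $T\setminus S$; on $D$ define the right-continuous extension $Y(\omega)(t)=\omega(t)$ for $t\in S$ and $Y(\omega)(t)=\lim_{s\downarrow t,\,s\in S}\omega(s)$ for $t\notin S$. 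Since $S$ is countable, $Y_\#$ of a c\`adl\`ag martingale measure on $\R^S$ coincides with the extension produced in the proof of Lemma \ref{lemma:RCExtensionMartingale}(ii). Setting $\pi^0=0$ and $\pi^\alpha=Y_\#\tilde\pi^\alpha$ for $\alpha\in(0,1]$, one checks: $\pi^\alpha(\R^T)=\tilde\pi^\alpha(D)=\alpha$; for $\alpha\leq\alpha'$ the positive measure $\tilde\pi^{\alpha'}-\tilde\pi^\alpha$ lives on $D$, so $\pi^{\alpha'}-\pi^\alpha=Y_\#(\tilde\pi^{\alpha'}-\tilde\pi^\alpha)\geq0$, i.e.\ $\pi^\alpha\leqp\pi^{\alpha'}$; $\pi^1=Y_\#\pi_{|S}=\pi$ because $Y$ reconstructs the c\`adl\`ag paths of $\pi$; $\pi^\alpha_{|S}=\tilde\pi^\alpha$ since $\mathrm{proj}^S\circ Y=\mathrm{id}$ on $D$; and $\tfrac1\alpha\pi^\alpha\in\MMC_T$ by Lemma \ref{lemma:RCExtensionMartingale}(ii). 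The measurability of $D$ and, in the disintegration picture, of $a\mapsto\hat\pi^a$ are dealt with as in the proof of Lemma \ref{lemma:RCExtensionMartingale}.

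For (iii), by (i) the family $(\pi^\alpha_{|S})_{\alpha\in[0,1]}$ is a c\`adl\`ag martingale parametrization of $\pi_{|S}$, so the hypothesis provides the right-derivative $\hat{\tilde\pi}^a=\lim_{h\downarrow0}\tfrac1h(\pi^{a+h}_{|S}-\pi^a_{|S})=\lim_{h\downarrow0}\rho_h{}_{|S}$ in $\PO(\R^S)$, where $\rho_h:=\tfrac1h(\pi^{a+h}-\pi^a)\in\MMC_T$ (by the general fact above, since $\rho_h\leqp\tfrac1h\pi^{a+h}$ and $\rho_h\in\MM_T$). As a $\TO$-limit of martingale measures $\hat{\tilde\pi}^a$ is a martingale measure, and its marginals are $\lim_{h\downarrow0}\mathrm{Law}_{\rho_h}(X_s)=\eta_s$ for $s\in S$; since $(\eta_s)_{s\in S}$ is a right-continuous peacock, $\hat{\tilde\pi}^a\in\MMC_S$ by Lemma \ref{lemma:RCandcadlag}(ii). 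Let $\hat\pi^a\in\MMC_T$ be its unique extension from Lemma \ref{lemma:RCExtensionMartingale}(ii); approximating $t\in T\setminus S$ from the right by $s_k\in S$ and using that $\hat{\tilde\pi}^a$-a.e.\ path is c\`adl\`ag while $\eta_{s_k}\to\eta_t$ in $\TO$, one gets $\mathrm{Law}_{\hat\pi^a}(X_t)=\eta_t$ for every $t\in T$, i.e.\ $\hat\pi^a\in\MMC_T((\eta_t)_{t\in T})$. It then remains to upgrade the convergence to $\PO(\R^T)$: by the definition of $\TO$ it suffices to prove $\rho_h{}_{|S\cup R}\to\hat\pi^a{}_{|S\cup R}$ in $\PO(\R^{S\cup R})$ for every finite $R\subset T$, and this follows from $\rho_h{}_{|S}\to\hat{\tilde\pi}^a$ together with $\mathrm{Law}_{\rho_h}(X_t)\to\eta_t$ for $t\in R$ by a compactness argument in the spirit of Lemma \ref{lemma:CompactnessTD}(i) — tightness of the laws on the countable set $S\cup R$ is implied by tightness of the one-dimensional marginals, the martingale property and right-continuity of the marginals pass to every subsequential limit, and Lemma \ref{lemma:RCExtensionMartingale}(i) (with $S$ right-dense in $S\cup R$) identifies that limit as $\hat\pi^a{}_{|S\cup R}$. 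Hence $\hat\pi^a=\lim_{h\downarrow0}\rho_h$ is the asserted right-derivative, it lies in $\MMC_T((\eta_t)_{t\in T})$, and its restriction to $S$ is $\hat{\tilde\pi}^a$.

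The routine parts are (i) and the verification of the parametrization axioms in (ii). The main obstacle is the last step of (iii): turning the hypothesized convergence of the $S$-restrictions and of the one-dimensional marginals into convergence of the full laws on $\R^T$, which is exactly the point where one must exploit that all measures involved are c\`adl\`ag (hence determined by a countable, right-dense index set, Lemma \ref{lemma:RCExtensionMartingale}(i)) and that the limiting marginals form a genuine right-continuous peacock; this and the measurability points of (ii) are where the technical care of this section is concentrated.
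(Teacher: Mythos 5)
Your proof is correct and follows the same route as the paper: restriction for (i), extension of each $\frac{1}{\alpha}\tilde\pi^\alpha$ via Lemma \ref{lemma:RCExtensionMartingale}(ii) for (ii), and an identification-of-limits argument in the spirit of Lemma \ref{lemma:CompactnessTD}(i) for (iii), which the paper simply cites rather than reproving. The only cosmetic difference is in (ii), where you verify $\pi^\alpha\leqp\pi^{\alpha'}$ by pushing forward the positive measure $\tilde\pi^{\alpha'}-\tilde\pi^\alpha$ under the right-limit map, while the paper tests against nonnegative cylinder functions; both are fine, and your explicit treatment of uniqueness and of the c\`adl\`ag property of submeasures fills in details the paper leaves implicit.
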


\begin{proof}
	Item (i): It is straightforward to check that $(\pi _{|S} ^{\alpha})_{\alpha \in [0,1]}$ is a (c\`adl\`ag)  martingale parametrization of $\pi _{|S} \in \MMC_S$. 
	
	Item (ii): 
	For all $\alpha \in [0,1]$ let $\frac{1}{\alpha}\pi^{\alpha} \in \MMC_{T}$ be the unique extension of $\frac{1}{\alpha}\tilde{\pi}^{\alpha}$ given by Lemma \ref{lemma:RCExtensionMartingale} (ii). Then $\pi ^{\alpha}(\mathbb{R}^{T}) = \alpha$ and $\pi ^1 = \pi$. 
	Finally, $\pi^\alpha\leq_+\pi^\beta$ follows by  
	considering nonnegative cylinder functions (because they generate the $\sigma$-algebra on $\R^T$).
	
	Item (iii): Recall that the martingale property of a sequence in $\PO(\mathbb{R}^T)$ is preserved under convergence in $\TO$. The claim follows from Lemma \ref{lemma:CompactnessTD} (i).
\end{proof}

\subsection{Shadows obstructed by peacocks and the NSI property} \label{sec:ShadObstbyPCOC}
In this section we consider shadows in the special -- and for us most important -- case  in which $(\mu_t)_{t\in T}$ is a peacock. A particular consequence of this assumption, which is not true without the peacock assumption, is that the shadow is uniquely determined by a countable subset of obstructions, i.e.\ by marginal constraints $(\mu_t)_{t\in S}$ for a countable set $S\subset T$. By combining with Lemma \ref{lemma:CharacNSI} we see that the NSI property is determined by a well chosen countable subset of obstructions.

\begin{lemma} \label{lemma:ShadowRightCont}
	Let $\nu \leqcp \mu _{T}$ and $s \in T$. In the topology $\TO$ we have the following:
	\begin{enumerate}
		\item [(i)] If $t \mapsto \mu _t$ is left-continuous at $s$, then $t \mapsto \shadow{\mu _{T_t}}{\nu}$ is left-continuous at $s$.
		\item [(ii)] If $t \mapsto \mu _t$ is right-continuous at $s$, then $t \mapsto \shadow{\mu _{T_t}}{\nu}$ is right-continuous at $s$.
	\end{enumerate}
\end{lemma}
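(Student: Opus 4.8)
The plan is to prove both items by passing to a well-chosen countable subset of obstructions, using the approximation result Proposition \ref{prop:GeneralSchadow} together with the monotonicity of obstructed shadows (Lemma \ref{lemma:ShadowInclusion}) and the characterization of convex order and convergence via potential functions (Lemma \ref{lemma:propertiesPotF}). Throughout I work with potential functions, since convergence in $\TO$ is equivalent to pointwise convergence of potential functions (all measures involved have the same mass $\nu(\R)$, because $\nu \leqc \shadow{\mu_{T_t}}{\nu}$ forces equal mass).

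\smallskip\noindent\textbf{Item (i), left-continuity.}
Fix $s \in T$ and suppose $\mu_t \to \mu_s$ as $t \uparrow s$ in $\TO$. Let $(t_n)$ be any sequence in $T$ with $t_n \uparrow s$. First I would observe that $t \mapsto \shadow{\mu_{T_t}}{\nu}$ is increasing in convex order on $T_s$ by Lemma \ref{lemma:ShadowInclusion} (i), so the pointwise limit $u_-(x) := \lim_{n} U(\shadow{\mu_{T_{t_n}}}{\nu})(x) = \sup_n U(\shadow{\mu_{T_{t_n}}}{\nu})(x)$ exists and is a convex function with $U(\shadow{\mu_{T_{t_n}}}{\nu}) \leq u_- \leq U(\shadow{\mu_{T_s}}{\nu})$ for all $n$. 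It remains to show $u_- = U(\shadow{\mu_{T_s}}{\nu})$, i.e.\ $\zeta := \mathrm{Csup}\{\shadow{\mu_{T_{t_n}}}{\nu} : n \in \mathbb N\}$ equals $\shadow{\mu_{T_s}}{\nu}$. The inequality $\zeta \leqc \shadow{\mu_{T_s}}{\nu}$ is clear. For the reverse, I want to show $\zeta$ is itself one of the competitors in the defining infimum of $\shadow{\mu_{T_s}}{\nu}$ from Proposition \ref{prop:MaximalElement}, applied to $T_s$ (which has maximal element $s$). Concretely, set $\eta_t := \shadow{\mu_{T_t}}{\nu}$ for $t < s$ and $\eta_s := \zeta$. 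One checks $\nu \leqc \eta_t$, $\eta_t \leqc \eta_{t'}$ for $t \leq t' \leq s$ (using that $\zeta$ dominates every $\shadow{\mu_{T_{t_n}}}{\nu}$ and, for general $t < s$, picking $t_n \geq t$ eventually), and finally $\eta_s = \zeta \leqp \mu_s$ — this last point is where left-continuity of $t \mapsto \mu_t$ enters: each $\shadow{\mu_{T_{t_n}}}{\nu} \leqp \mu_{t_n}$ by Lemma \ref{lemma:ShadowInclusion} (v), and since $\mu_{t_n} \to \mu_s$ in $\TO$ and $\shadow{\mu_{T_{t_n}}}{\nu} \to \zeta$ in $\TO$ (by Lemma \ref{lemma:CSupSum} (ii), as the sequence is increasing in convex order and bounded above in convex-positive order by $\theta$), Lemma \ref{lemma:preservingOrder} (i) gives $\zeta \leqp \mu_s$. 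Hence by Proposition \ref{prop:MaximalElement}, $\shadow{\mu_{T_s}}{\nu} \leqc \eta_s = \zeta$, so $\zeta = \shadow{\mu_{T_s}}{\nu}$, which proves left-continuity.

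\smallskip\noindent\textbf{Item (ii), right-continuity.}
Fix $s$ with $\mu_t \to \mu_s$ as $t \downarrow s$. Take $t_n \downarrow s$ in $T$; again $\shadow{\mu_{T_{t_n}}}{\nu}$ is \emph{decreasing} in convex order in $n$ (Lemma \ref{lemma:ShadowInclusion} (i)), all with common mass and barycenter, so by Lemma \ref{lemma:CSupSum} (i) it converges in $\TO$ to $\zeta' := \mathrm{Cinf}\{\shadow{\mu_{T_{t_n}}}{\nu} : n\}$, and clearly $\shadow{\mu_{T_s}}{\nu} \leqc \zeta'$. For the reverse inequality I would use the approximation from Proposition \ref{prop:GeneralSchadow}: there is a nested sequence of finite sets $R_k \subset T_s$ with $\shadow{\mu_{R_k}}{\nu} \to \shadow{\mu_{T_s}}{\nu}$ in $\TO$; enlarging to $R_k' = R_k \cup \{s\}$ keeps this (Lemma \ref{lemma:ShadowInclusion} (ii)). Now for each $k$, the shadow $\shadow{\mu_{R_k'}}{\nu}$ is a finite composition of simple shadows in the marginals $\mu_r$, $r \in R_k'$; replacing the marginal $\mu_s$ by $\mu_{t_n}$ and using continuity of the simple shadow operation under $\TO$ in the obstructing measure (which follows from Lemma \ref{lemma:shadowPF} — the second, Lipschitz-type estimate $U(\shadow{\mu'}{\nu}) - U(\shadow{\mu}{\nu}) \leq U(\mu') - U(\mu)$ together with its symmetric counterpart, combined with $\mu_{t_n}\to\mu_s$), one gets $\shadow{\mu_{R_k'}}{\nu} = \lim_{n} \shadow{(\mu_{R_k} , \mu_{t_n})}{\nu}$ in $\TO$. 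Since $R_k \cup \{t_n\} \subset T_{t_n}$, Lemma \ref{lemma:ShadowInclusion} (i) gives $\shadow{(\mu_{R_k},\mu_{t_n})}{\nu} \leqc \shadow{\mu_{T_{t_n}}}{\nu}$, hence letting $n \to \infty$ (order preserved under $\TO$, Lemma \ref{lemma:preservingOrder} (ii)) yields $\shadow{\mu_{R_k'}}{\nu} \leqc \zeta'$, and then $k \to \infty$ gives $\shadow{\mu_{T_s}}{\nu} \leqc \zeta'$. Therefore $\zeta' = \shadow{\mu_{T_s}}{\nu}$, proving right-continuity.

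\smallskip\noindent\textbf{Main obstacle.}
I expect the delicate point to be handling the obstructions strictly \emph{between} $s$ and $t_n$ in the right-continuous case: the jump from $T_s$ to $T_{t_n}$ adds not just the single marginal $\mu_{t_n}$ but all marginals $\mu_r$ with $s < r < t_n$. The argument above sidesteps this by only ever using the \emph{lower bound} $\shadow{(\mu_{R_k},\mu_{t_n})}{\nu} \leqc \shadow{\mu_{T_{t_n}}}{\nu}$, which is monotone in the set of obstructions and hence insensitive to the extra marginals; the extra marginals can only push the true shadow $\shadow{\mu_{T_{t_n}}}{\nu}$ further up in convex order, which is the harmless direction. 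Making sure that the finite-approximation-then-perturb-one-marginal step is genuinely continuous in $\TO$ (rather than merely in $\TZ$) is the other technical care point, and this is exactly what the quantitative estimate in Lemma \ref{lemma:shadowPF} is designed to supply; a brief induction on $|R_k'|$ propagates the estimate through the iterated shadow.
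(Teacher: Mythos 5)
Your proof of item (i) is correct and is essentially the paper's argument: extend the increasing family $(\shadow{\mu_{T_t}}{\nu})_{t<s}$ by its $\TO$-limit at $s$, use preservation of $\leqp$ under $\TO$-convergence (which is where left-continuity of $t\mapsto\mu_t$ enters) to check the limit is a competitor, and conclude by Proposition \ref{prop:MaximalElement}. Nothing to add there.

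Item (ii) takes a different route from the paper, and it contains a genuine gap. Your key step is $\shadow{\mu_{R_k'}}{\nu}=\lim_n\shadow{(\mu_{R_k},\mu_{t_n})}{\nu}$, which (taking $s\notin R_k$) reduces to continuity of $\mu\mapsto\shadow{\mu}{\rho}$ along $\mu_{t_n}\downarrow\mu_s$, and you justify the needed lower bound by a ``symmetric counterpart'' of the estimate $U(\shadow{\mu'}{\nu})-U(\shadow{\mu}{\nu})\leq U(\mu')-U(\mu)$ from Lemma \ref{lemma:shadowPF}. That symmetric inequality is false, and the shadow is not even monotone in the target with respect to $\leqc$. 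Concretely, take $\nu=\tfrac12\delta_0$, $\mu=\tfrac12\delta_{-1}+\tfrac12\delta_1$ and $\mu'=\tfrac12\delta_{-1}+\tfrac14\delta_0+\tfrac14\delta_2$; then $\mu\leqc\mu'$, $U(\mu)(0)=U(\mu')(0)=1$, but $U(\shadow{\mu}{\nu})(0)=\tfrac12$ while $U(\shadow{\mu'}{\nu})(0)=\tfrac13$ (compute via $U(\shadow{\mu'}{\nu})=U(\mu')-\mathrm{conv}(U(\mu')-U(\nu))$). So $U(\shadow{\mu}{\nu})-U(\shadow{\mu'}{\nu})=\tfrac16>0=U(\mu)-U(\mu')$ at $x=0$, and $\shadow{\mu}{\nu}\not\leqc\shadow{\mu'}{\nu}$. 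Your intermediate convergence claim happens to be true, but proving it requires a genuinely different ingredient — e.g.\ the sup-norm bound $\|U(\shadow{\mu'}{\rho})-U(\shadow{\mu}{\rho})\|_\infty\leq\|U(\mu')-U(\mu)\|_\infty$ together with uniform (not just pointwise) convergence $\|U(\mu_{t_n})-U(\mu_s)\|_\infty\to0$, or the $\mathcal{W}_1$-stability of the simple shadow from \cite[Theorem 2.30]{Ju14} quoted in \eqref{eq:StabilitySimpShad} — so as written the step is unjustified.

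The paper avoids this issue entirely by a one-sided argument. It writes $\shadow{\mu_{T_{t_n}}}{\nu}=\shadow{\mu_{T_{s,t_n}}}{\shadow{\mu_{T_s}}{\nu}}$ with $T_{s,t_n}=\{t:s<t\leq t_n\}$, approximates the outer shadow by finitely obstructed ones, and telescopes Lemma \ref{lemma:shadowPF} along the finite chain — each application being legitimate because $\shadow{\mu_r}{\rho}=\rho$ whenever $\rho\leqp\mu_r$, so one always compares at a point where the lower term is saturated. This yields $U(\shadow{\mu_{T_{t_n}}}{\nu})-U(\shadow{\mu_{T_s}}{\nu})\leq U(\mu_{t_n})-U(\mu_s)$, and since the left-hand side is nonnegative by Lemma \ref{lemma:ShadowInclusion} (i), right-continuity of the peacock squeezes it to $0$. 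You may want to adopt that route, or else supply the missing uniform estimate for yours.
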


Recall that since $\mu _T$ is a peacock, left- and right-continuity of $t \mapsto \mu _t$ is independent from the choice of $\TO$ or $\TZ$ (cf.\ Remark \ref{rem:ObviuousRCMart} (i)).

\begin{proof}
	Item (i):  Let $(t_n)_{n \in \mathbb{N}}$ be a sequence that converges to $s$ from below. We define the family $(\eta _t)_{t \in T_s}$ in $\MO(\mathbb{R})$ by
	\begin{equation*}
	\eta _t = \begin{cases}
	\shadow{\mu _{T_t}}{\nu} & t < s \\
	\lim _{t \uparrow s} \shadow{\mu _{T_t}}{\nu} & t = s
	\end{cases}.
	\end{equation*}
	The  limit $\lim _{t \uparrow s} \shadow{\mu _{T_t}}{\nu}$ exists by Lemma \ref{lemma:CSupSum} (ii) in conjunction with Lemma \ref{lemma:ShadowInclusion} (i). Clearly, $\nu \leqc \eta _t \leqc \eta _u$ for all $t \leq u$ in $T_s$ and $\eta _t \leqp \mu _t$ for all $t < s$ in $T$. Moreover, since $\eta _s = \lim _{n \rightarrow \infty} \eta _{t_n}$ and $\eta _{t_n} \leqp \mu _{t_n}$ by Lemma \ref{lemma:ShadowInclusion} (v), we obtain $\eta _s \leqp \mu_s$ from  Lemma \ref{lemma:preservingOrder}. The claim follows by Proposition \ref{prop:MaximalElement}.
	
	Item (ii): Let $(t_n)_{n \in \mathbb{N}}$ be a sequence that converges to $s$ from above. 
	It holds $\shadow{\mu _{T_{t_n}}}{\nu} = \shadow{\mu _{T_{s, t_n}}}{\shadow{\mu _{T_s}}{\nu}}$ for all $n \in \mathbb{N}$ where $T_{s,t_n} = \{ t \in T: s < t \leq t_n \}$.
	Proposition \ref{prop:GeneralSchadow} states that there exists a sequence of finite sets $(R_k)_{k \in \mathbb{N}}$ such that $(\shadow{\mu _{R_k}}{\shadow{\mu _{T_s}}{\nu}})_{k \in \mathbb{N}}$ converges to $\shadow{\mu _{T_{s, t_n}}}{\shadow{\mu _{T_s}}{\nu}}=\shadow{\mu _{T_{t_n}}}{\nu}$ and a well-chosen telescopic application of Lemma \ref{lemma:shadowPF} to $(\shadow{\mu _{R_k}}{\shadow{\mu _{T_s}}{\nu}})_{k \in \mathbb{N}}$  in conjunction with Lemma \ref{lemma:preservingOrder} implies
	\[U(\shadow{\mu _{R_k}}{\shadow{\mu _{T_s}}{\nu}})-U(\shadow{\mu _{T_s}}{\nu})\leq U(\mu_{\max R_k})-U(\mu_s)\leq U(\mu_{t_n})-U(\mu_s).\]
	Letting $k$ tend to infinity yields $U(\shadow{\mu _{T_{t_n}}}{\nu})-U(\shadow{\mu _{T_s}}{\nu})\leq U(\mu_{t_n})-U(\mu_s)$ and since $(\mu _t)_{t \in T}$ is right-continuous, by Lemma \ref{lemma:propertiesPotF} (ii)we obtain
	\begin{equation*}
	0 \leq \lim _{n \rightarrow \infty} U(\shadow{\mu _{T_{t_n}}}{\nu}) - U(\shadow{\mu _{T_t}}{\nu}) \leq \lim _{n \rightarrow \infty} U(\mu _{t_n}) - U(\mu _t) = 0.\qedhere
	\end{equation*}
\end{proof}

If $(\mu _t)_{t \in T}$ is a peacock, Corollary \ref{cor:MonotonicityPCOC} yields that the map $t \mapsto \mu _t$ is continuous outside of a set $S \subset T$ that is (at most) countable and thus also $t \mapsto \shadow{\mu _{T_t}}{\nu}$ is continuous on $T \setminus S$.

Proposition \ref{prop:GeneralSchadow} states that any generalized obstructed shadow $\shadow{\mu _T}{\nu}$ can be approximated by a sequence of finitely obstructed shadows $(\shadow{\mu _{R_n}}{\nu})_{n \in \mathbb{N}}$. The continuity of $t \mapsto \shadow{\mu _{T_t}}{\nu}$ on $T \setminus S$ leads to a huge degree of freedom when choosing this family of nested finite sets $(R_n)_{n \in \mathbb{N}}$:

\begin{lemma} \label{lemma:RCShadowApprox}
	Let $(R_n)_{n \in \mathbb{N}}$ be a nested sequence of finite subsets of $T$.
	If the union $S = \bigcup_{n \in \mathbb{N}} R_n$ is both right- and left-dense in $T$ and contains all time points where $t \mapsto \mu _t$ is not continuous, then $(\shadow{\mu_{(R_n)_t}}{\nu})_{n \in \mathbb{N}}$  converges to $\shadow{\mu _{T_t}}{\nu}$ under $\TO$ for all $t \in T$ and $\nu \leqcp \mu _T$.
\end{lemma}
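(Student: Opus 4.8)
The claim is a statement about approximating the general obstructed shadow $\shadow{\mu_{T_t}}{\nu}$ from below by finitely obstructed shadows $\shadow{\mu_{(R_n)_t}}{\nu}$, under the hypothesis that $S = \bigcup_n R_n$ is right- and left-dense and contains all discontinuity times of $t\mapsto\mu_t$. The natural strategy is a two-step comparison: first pass from the finite sets $(R_n)_t$ to the countable set $S_t := S\cap T_t$, and then from $S_t$ to the full time set $T_t$. For the first step, since $((R_n)_t)_{n\in\mathbb N}$ is a nested sequence of finite subsets whose union is $S_t$, Lemma~\ref{lemma:ShadowInclusion}~(iii) applies directly and gives that $(\shadow{\mu_{(R_n)_t}}{\nu})_{n\in\mathbb N}$ converges to $\shadow{\mu_{S_t}}{\nu}$ under $\TO$. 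So the whole content of the lemma reduces to the identity $\shadow{\mu_{S_t}}{\nu} = \shadow{\mu_{T_t}}{\nu}$ for every $t\in T$, i.e.\ the countable right/left-dense family of obstructions already determines the shadow.

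**Reducing to the key identity.** By Lemma~\ref{lemma:ShadowInclusion}~(i) we always have $\shadow{\mu_{S_t}}{\nu}\leqc\shadow{\mu_{T_t}}{\nu}$, so only the reverse inequality $\shadow{\mu_{T_t}}{\nu}\leqc\shadow{\mu_{S_t}}{\nu}$ needs proof. The plan is to show that the family $(\eta_s)_{s\in T_t}$ defined by
\begin{equation*}
\eta_s = \shadow{\mu_{S_s}}{\nu}, \qquad S_s := S\cap T_s,
\end{equation*}
extended by right-limits through the continuity points, is an admissible competitor in the variational characterization of $\shadow{\mu_{T_t}}{\nu}$ from Proposition~\ref{prop:MaximalElement}; that is, $\nu\leqc\eta_r\leqc\eta_u\leqp\mu_u$ for all $r\leq u$ in $T_t$. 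Monotonicity in convex order and the bound $\nu\leqc\eta_s$ are immediate from Lemma~\ref{lemma:ShadowInclusion}~(i),(iv). The delicate point is the constraint $\eta_s\leqp\mu_s$ for \emph{every} $s\in T_t$, including points $s$ that lie outside $S$: at such points $\shadow{\mu_{S_s}}{\nu}$ is a shadow obstructed only by the $\mu_r$ with $r\in S$, $r\leq s$, none of which need equal $\mu_s$, so this submeasure bound is not automatic.

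**The main obstacle and how to handle it.** The heart of the argument is therefore to upgrade $\eta_s\leqp\mu_s$ at non-obstruction times. Here is where the density and continuity hypotheses enter. Fix $s\in T_t\setminus S$. Since $S$ is left-dense, pick $s_n\uparrow s$ with $s_n\in S$; then $\eta_{s_n} = \shadow{\mu_{S_{s_n}}}{\nu}\leqp\mu_{s_n}$, and by Lemma~\ref{lemma:ShadowRightCont}~(i) (using that $t\mapsto\mu_t$ is continuous at $s$, hence left-continuous) the left-limit $\lim_n\shadow{\mu_{S_{s_n}}}{\nu}$ equals $\shadow{\mu_{S_s}}{\nu}$ --- one must check that obstructing by $S_{s_n}$ versus by all of $T_{s_n}$ gives the same shadow, which follows from the same left-continuity argument applied recursively, or more cleanly by noting that $\shadow{\mu_{T_{s_n}}}{\nu}\leqc\shadow{\mu_{T_s}}{\nu}$ and passing to the $\TO$-limit. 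Since $\eta_{s_n}\leqp\mu_{s_n}$ and $\mu_{s_n}\to\mu_s$ under $\TO$, Lemma~\ref{lemma:preservingOrder}~(i) yields $\eta_s\leqp\mu_s$. This closes the admissibility check, so Proposition~\ref{prop:MaximalElement} gives $\shadow{\mu_{T_t}}{\nu}\leqc\eta_t = \shadow{\mu_{S_t}}{\nu}$, completing the identity and hence the lemma; the case $t\in S$ is already covered by the $S$-version. A subtlety to watch is when $T$ has no maximal element below $t$ — but $T_t$ does have the maximal element $t$, so Proposition~\ref{prop:MaximalElement} applies to the index set $T_t$ without issue, and right-density of $S$ in $T$ ensures $S_t$ is right-dense in $T_t$ so that Lemma~\ref{lemma:ShadowRightCont}~(ii) can be invoked if one needs continuity from the right near interior points as well.
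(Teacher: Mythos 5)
Your proof is correct and is essentially the paper's argument in a lightly reorganized form: the limit $\eta_t$ of the finitely obstructed shadows that the paper works with is exactly your $\shadow{\mu_{S_t}}{\nu}$ (by Lemma \ref{lemma:ShadowInclusion} (iii)), and the paper establishes the crucial constraint $\eta_s\leqp\mu_s$ at times $s\notin S$ by the same device you use, namely approximating $s$ from below by points of $S$ (there via $[s]_n=\max (R_n)_s$) and passing $\leqp$ to the limit using continuity of $r\mapsto\mu_r$ at $s$, before invoking Proposition \ref{prop:MaximalElement}; the reverse inequality is then equally immediate in both versions. One cosmetic remark: the convergence $\shadow{\mu_{S_{s_n}}}{\nu}\to\shadow{\mu_{S_s}}{\nu}$ is most directly Lemma \ref{lemma:ShadowInclusion} (iii) applied to the nested union $\bigcup_n S_{s_n}=S_s$ (no continuity needed), rather than Lemma \ref{lemma:ShadowRightCont} (i), whose literal application would reintroduce the full obstruction set you are trying to avoid.
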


\begin{proof}
	Fix $t \in T$. The sequence $(\shadow{\mu _{(R_n)_t}}{\nu})_{n \in \mathbb{N}}$ is increasing in convex order as $n$ tends to infinity and is bounded in convex-positive order by $\mu _t$. Hence, Lemma \ref{lemma:CSupSum} (ii) yields that this sequence is converging in $\MO(\mathbb{R})$ under $\TO$. We denote the limit by $\eta _t$ and 
	set $[t]_n := \max (R_n)_t$. Lemma \ref{lemma:ShadowInclusion} (v) implies that $\shadow{\mu _{(R_n)_t}}{\nu} \leq _+ \mu _{[t]_n}$ and since $S$ is both right- and left-dense in $T$, $([t]_n)_{n \in \mathbb{N}}$ converges to $t$ from below. 
	If $t \in S$, then $[t]_n = t$ for $n$ large enough, and, if $t \not \in S$, then $(\mu _{[t]_n})_{n \in \mathbb{N}}$ converges to $\mu _t$ under $\TO$. Hence, in both cases Lemma \ref{lemma:preservingOrder} implies that  $\eta _t \leqp \mu _t$.
	By 	Lemma \ref{lemma:preservingOrder}, the convex-order is preserved under convergence in $\TO$ and thus it holds $\nu \leqc \eta _s  \leqc \eta _t \leqp \mu _{t}$ 
for all $s \leq t$ in $T$. Fix $u \in T$. Any other family $(\eta' _t)_{t\in T}$ which satisfies the previous ordering relations for all $s \leq t$ in $T_u$, satisfies them in particular for all $s \leq t$ in $(R_n)_u$. Thus, Proposition \ref{prop:MaximalElement} applied to $(R_n)_u$ yields $\shadow{\mu _{(R_n)_u}}{\nu} \leqc \eta' _{[u]_n}$ and therefore
	\begin{equation*}
	\eta _u =  \lim _{n \rightarrow \infty} \shadow{\mu _{(R_n)_u}}{\nu} \leqc \mathrm{Csup} \{ \eta ' _{[u]_n} : n \in \mathbb{N} \} \leqc \eta _u '.
	\end{equation*}	
	As a consequence of Proposition \ref{prop:MaximalElement} applied to the index set $T_u$ we get for all $u \in T$
	\begin{equation*}
	\shadow{\mu _{T_u}}{\nu}  = \eta _u  = \lim _{n \rightarrow \infty} \mathcal{S}^{\mu _{(R_n) _u}}(\nu). \qedhere 
	\end{equation*}
\end{proof}

\begin{corollary} \label{cor:CountableObstr}
	There exists a countable right- and left-dense set $S \subset T$ such that $\shadow{\mu _{S_t}}{\nu} = \shadow{\mu _{T_t}}{\nu} $
	for all $t \in T$ and for all $\nu \leqp \mu _0$. In fact, $S$ can be any countable right- and left-dense subset of $T$ which contains all discontinuity points of the map $t \mapsto \mu _t$.
\end{corollary}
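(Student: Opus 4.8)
The plan is to deduce the corollary from Lemma~\ref{lemma:RCShadowApprox} together with the monotonicity of obstructed shadows under enlargement of the set of obstructions (Lemma~\ref{lemma:ShadowInclusion}~(i)). First I would fix the set $S$: by Corollary~\ref{cor:MonotonicityPCOC} the set $S_{\mathrm{disc}}$ of times at which $t\mapsto\mu_t$ fails to be $\TO$-continuous is countable, and by Lemma~\ref{lemma:RDenseSubset} there is a countable subset of $T$ that is simultaneously right- and left-dense in $T$; I let $S$ be the union of these two countable sets, or, more generally, any countable right- and left-dense subset of $T$ containing $S_{\mathrm{disc}}$. Before doing anything else I would check that all the shadows in the statement are well defined: for a fixed $t\in T$ the index set $T_t$ has maximal element $t$, and since $(\mu_s)_{s\in T_t}$ is a peacock one has $\mu_s\leqc\mu_t$ for all $s\in T_t$, so the standing assumption of Section~\ref{sec:ShadowGeneralConstruction} holds for the families indexed by $T_t$ and by $S_t:=S\cap T_t\subset T_t$ with $\theta=\mu_t$; moreover any $\nu\leqp\mu_0$ satisfies $\nu\leqcp\mu_0\leqc\mu_t$, hence $\nu\leqcp\mu_t$ for every $t$, so $\nu\leqcp\mu_{T_t}$ and $\nu\leqcp\mu_{S_t}$ in the sense of Definition~\ref{def:LeqCP} (take $\eta_s=\mu_s$).

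Fixing $t\in T$ and $\nu\leqp\mu_0$, I would split the desired equality into two inequalities. The inequality $\shadow{\mu_{S_t}}{\nu}\leqc\shadow{\mu_{T_t}}{\nu}$ is immediate from Lemma~\ref{lemma:ShadowInclusion}~(i), since $S_t\subset T_t$. For the reverse inequality I would enumerate $S=\{s_1,s_2,\dots\}$ and set $R_n=\{s_1,\dots,s_n\}$; then $(R_n)_{n\in\mathbb{N}}$ is a nested sequence of finite subsets of $T$ whose union $S$ is right- and left-dense in $T$ and contains every discontinuity point of $t\mapsto\mu_t$, so Lemma~\ref{lemma:RCShadowApprox} gives $\shadow{\mu_{(R_n)_t}}{\nu}\to\shadow{\mu_{T_t}}{\nu}$ under $\TO$. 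On the other hand $(R_n)_t\subset S_t$ for every $n$, and the finitely obstructed shadow $\shadow{\mu_{r_1},\dots,\mu_{r_n}}{\nu}$ depends only on the tuple of obstructing measures, not on any ambient index set, so Lemma~\ref{lemma:ShadowInclusion}~(i) yields $\shadow{\mu_{(R_n)_t}}{\nu}\leqc\shadow{\mu_{S_t}}{\nu}$ for every $n$. Since the convex order is preserved under $\TO$-convergence (Lemma~\ref{lemma:preservingOrder}~(ii)), letting $n\to\infty$ gives $\shadow{\mu_{T_t}}{\nu}\leqc\shadow{\mu_{S_t}}{\nu}$. All the measures involved have the same mass $\nu(\mathbb{R})$, so the two inequalities force $\shadow{\mu_{S_t}}{\nu}=\shadow{\mu_{T_t}}{\nu}$ (equal convex upper and lower bounds mean equal potential functions, hence equal measures, Lemma~\ref{lemma:characPotF}). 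As $t$ and $\nu$ were arbitrary, this is the claim.

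I do not expect a real obstacle: the substantive content is Lemma~\ref{lemma:RCShadowApprox}, and the rest is bookkeeping. The two points that deserve a line of care are the verification of the $\leqcp$-hypotheses so that the obstructed shadows exist (handled by the peacock property and the choice $\theta=\mu_t$ above) and the remark that $\shadow{\mu_{r_1},\dots,\mu_{r_n}}{\nu}$ is intrinsic to the tuple of measures, which is exactly what lets one compare the finitely obstructed approximations built inside $S_t$ with those built inside $T_t$.
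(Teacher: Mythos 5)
Your proof is correct and follows essentially the same route as the paper: both arguments rest on Lemma~\ref{lemma:RCShadowApprox} applied to a nested exhaustion $(R_n)_{n\in\mathbb{N}}$ of $S$. The only (immaterial) difference is that the paper identifies the limit of $(\shadow{\mu_{(R_n)_t}}{\nu})_{n}$ inside $S_t$ directly via Lemma~\ref{lemma:ShadowInclusion}~(iii), whereas you reach the same conclusion by sandwiching with Lemma~\ref{lemma:ShadowInclusion}~(i) and the $\TO$-preservation of the convex order.
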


\begin{proof}
	Pick by Corollary \ref{cor:MonotonicityPCOC} a countable right- and left-dense subset $S \subset T$ such that $t \mapsto \mu _t$ is continuous on $T \setminus S$.	
	Since $S$ is countable, there exists a nested sequence $(R_n)_{n \in \mathbb{N}}$ such that $\bigcup _{n \in \mathbb{N}} R_n = S$. Let $t$ be in $T$. Lemma \ref{lemma:RCShadowApprox} yields $	\shadow{\mu _{T_t}}{\nu} = \lim _{n \rightarrow \infty}\shadow{\mu _{(R_n)_t}}{\nu} $  
	for all $\nu \leq _{+} \mu _0$ and Lemma \ref{lemma:ShadowInclusion} (iii) implies that $(\shadow{\mu _{(R_n)_t}}{\nu})_{n \in \mathbb{N}}$ converges to $\shadow{\mu _{S_t}}{\nu}$.
\end{proof}

\begin{corollary} \label{cor:NSICountable}
	Let $(\mu _t)_{t \in T}$ be a peacock and $S$ be a countable left- and right-dense subset $S\subset T\subset [0,\infty)$ including $0$ and all time points where $t \mapsto \mu _t$ is not continuous. Then $(\mu _t)_{t \in T}$ is NSI if and only if $(\mu _t)_{t \in S}$ is NSI.
\end{corollary}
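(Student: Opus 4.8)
The strategy is to translate the NSI property into the shadow identity of Lemma~\ref{lemma:CharacNSI}(ii) and then invoke Corollary~\ref{cor:CountableObstr}, which asserts precisely that obstructed shadows of a peacock do not distinguish the full index set from a suitable countable subset. Note first that $(2\mu_s)_{s\in T}$ is again a peacock, its discontinuity points under $\TO$ coincide with those of $(\mu_s)_{s\in T}$ (multiplication by $2$ is a homeomorphism of $\MO(\R)$ under $\TO$), and $0\in S$, so $S$ is an admissible choice in Corollary~\ref{cor:CountableObstr} for the peacock $(2\mu_s)_{s\in T}$. Since $\mu_0\leq_+2\mu_0$, that corollary then gives
\begin{equation*}
\shadow{(2\mu_s)_{s\in S_t}}{\mu_0}=\shadow{(2\mu_s)_{s\in T_t}}{\mu_0}\qquad\text{for every }t\in T.
\end{equation*}

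By Lemma~\ref{lemma:CharacNSI}(ii), $(\mu_t)_{t\in T}$ is NSI iff $\shadow{(2\mu_s)_{s\in T_t}}{\mu_0}=\mu_t$ for all $t\in T$, equivalently (by the displayed identity) $\shadow{(2\mu_s)_{s\in S_t}}{\mu_0}=\mu_t$ for all $t\in T$; and $(\mu_t)_{t\in S}$ is NSI iff $\shadow{(2\mu_s)_{s\in S_t}}{\mu_0}=\mu_t$ for all $t\in S$. Thus everything reduces to showing that the latter family of equalities, indexed by $t\in S$, already forces the same equalities for all $t\in T$; the reverse direction is trivial by restriction.

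For the non-trivial direction, assume $\shadow{(2\mu_s)_{s\in S_t}}{\mu_0}=\mu_t$ for all $t\in S$ and fix $t\in T\setminus S$. Then $t$ is a continuity point of $t'\mapsto\mu_{t'}$, hence of $t'\mapsto 2\mu_{t'}$, so Lemma~\ref{lemma:ShadowRightCont}(i) applied to the peacock $(2\mu_s)_{s\in T}$ shows that $t'\mapsto\shadow{(2\mu_s)_{s\in T_{t'}}}{\mu_0}$ is left-continuous at $t$ under $\TO$. By the displayed identity this map coincides on $T$ with $t'\mapsto\shadow{(2\mu_s)_{s\in S_{t'}}}{\mu_0}$. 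Now choose, by left-density of $S$, a sequence $t_n\in S$ with $t_n\uparrow t$; for every $n$ we have $\shadow{(2\mu_s)_{s\in T_{t_n}}}{\mu_0}=\shadow{(2\mu_s)_{s\in S_{t_n}}}{\mu_0}=\mu_{t_n}$, and letting $n\to\infty$ the left-hand side converges to $\shadow{(2\mu_s)_{s\in T_t}}{\mu_0}$ by left-continuity while $\mu_{t_n}\to\mu_t$ by continuity at $t$. Hence $\shadow{(2\mu_s)_{s\in T_t}}{\mu_0}=\mu_t$, which is what we wanted.

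The only step beyond bookkeeping is this last passage from $S$ to $T$ at continuity points; the anticipated (minor) difficulty is to apply Lemma~\ref{lemma:ShadowRightCont} to the scaled peacock and to take the left-limit along a sequence lying inside $S$. All remaining ingredients -- that $(2\mu_s)_{s\in T}$ is a peacock, that $\mu_0\leqcp(2\mu_s)_{s\in T}$, and that scaling by $2$ does not move discontinuities -- are immediate.
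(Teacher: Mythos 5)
Your proof is correct and follows exactly the route the paper indicates: it combines the characterization of NSI via obstructed shadows (Lemma \ref{lemma:CharacNSI}(ii)), the reduction of obstructions to the countable set $S$ (Corollary \ref{cor:CountableObstr}), and the left-continuity of $t\mapsto\shadow{(2\mu_s)_{s\in T_t}}{\mu_0}$ at continuity points (Lemma \ref{lemma:ShadowRightCont}(i)) to pass from $S$ to $T$. The details you supply (scaling by $2$, left-density of $S$, continuity of $\mu$ at $t\in T\setminus S$) are precisely the bookkeeping the paper leaves implicit.
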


\begin{proof}
	This is an easy consequence of Corollary \ref{cor:CountableObstr}, Lemma \ref{lemma:ShadowRightCont}  and Lemma \ref{lemma:CharacNSI} (characterization of NSI property via generalized obstructed shadows). 
\end{proof}

\subsection{Existence and uniqueness of right-continuous shadow martingales} \label{sec:GenExistRC}

In this subsection, we prove the following right-continuous version of Theorem \ref{thm:GenExist} (cf.\ Remark \ref{rem:OtherFormulation}):

\begin{theorem} \label{thm:GenExistRC}
	Let $T \subset [0,\infty)$ with $0 \in T$, $(\mu _t)_{t \in T}$ be a right-continuous peacock and $(\nu ^{\alpha})_{\alpha \in [0,1]}$ a $\leq_{c,s}$-convex parametrization of $\mu _0$. There exists a unique pair $(\pi,(\pi ^{\alpha})_{\alpha \in [0,1]})$ where $\pi \in \MMC_{T}((\mu _t)_{t \in T})$ solves the peacock problem w.r.t.\ $(\mu _t)_{t \in T}$, $(\pi ^{\alpha})_{\alpha \in [0,1]}$ is a c\`adl\`ag martingale parametrization of $\pi$ w.r.t.\ $(\nu ^{\alpha})_{\alpha \in [0,1]}$ and for all $(\alpha,t)\in[0,1]\times T$,
	\begin{equation} \label{eq:71ShadowCurve}
	\pi^{\alpha} (X_t \in \cdot) = \shadow {\mu _{[0,t]}} {\nu ^{\alpha}}.
	\end{equation}	
	Moreover, there exists a Borel set $A \subset [0,1]$ with $\lambda(A) = 1$ such that for all $a \in A$
	the map $\alpha \mapsto \pi ^{\alpha}$ is right-differentiable at $a$ and the marginals of the right-derivative $\hat{\pi}^a$ at $a$ form a NSI peacock. 
	In particular, $\hat{\pi} ^a$ is a Markov martingale measure uniquely defined by its marginal distributions.
\end{theorem}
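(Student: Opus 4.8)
The plan is to reduce the statement to the already-established countable-index case, Theorem~\ref{prop:GenExistCount}, by restricting the peacock to a suitable countable ``skeleton'' of $T$ and then transporting the resulting objects back to all of $T$ with the tools of Subsections~\ref{sec:ContTimeMartMeas} and~\ref{sec:ShadObstbyPCOC}. First I would fix, using Corollary~\ref{cor:MonotonicityPCOC} and Lemma~\ref{lemma:RDenseSubset}, a countable set $S\subset T$ that is both right- and left-dense in $T$, contains $0$ (and $\sup T$ when $\sup T\in T$), and contains every point at which $t\mapsto\mu_t$ is discontinuous. Two facts make $S$ the ``right'' skeleton: Corollary~\ref{cor:CountableObstr} gives $\shadow{\mu_{S_t}}{\nu}=\shadow{\mu_{T_t}}{\nu}$ for all $t\in T$ and all $\nu\leqp\mu_0$, and Corollary~\ref{cor:NSICountable} says a peacock on $T$ is NSI iff its restriction to $S$ is. (If $\sup T\notin T$ I would exhaust $T$ by the sets $T_u$, $u\in S$, run the argument on each, and pass to the evident projective limit; alternatively this case is subsumed in the passage to abstract index sets in Section~\ref{sec:AbstractIndexSet}. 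I suppress it below and assume $\sup S\in S$.)

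For existence I would apply Theorem~\ref{prop:GenExistCount} to the countable peacock $(\mu_t)_{t\in S}$ and the (still $\leqcs$-convex) parametrization $(\nu^\alpha)_{\alpha\in[0,1]}$, obtaining $\pi^S\in\MM_S((\mu_t)_{t\in S})$ and a martingale parametrization $(\pi^{S,\alpha})_{\alpha\in[0,1]}$ with $\pi^{S,\alpha}(X_t\in\cdot)=\shadow{\mu_{S_t}}{\nu^\alpha}$ for all $\alpha$ and $t\in S$. Since the restriction to $S$ of the right-continuous peacock $(\mu_t)_{t\in T}$ is again right-continuous, Lemma~\ref{lemma:RCandcadlag}(ii) (and its analogue for $t\mapsto\shadow{\mu_{S_t}}{\nu^\alpha}$, which is right-continuous by Lemma~\ref{lemma:ShadowRightCont}(ii)) shows $\pi^S\in\MMC_S$ and that $(\pi^{S,\alpha})$ is c\`adl\`ag. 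Then Lemma~\ref{lemma:RCExtensionMartingale}(ii) produces the unique $\pi\in\MMC_T$ with $\pi_{|S}=\pi^S$, and Lemma~\ref{lemma:ParamExtension}(ii) produces the unique c\`adl\`ag martingale parametrization $(\pi^\alpha)_{\alpha\in[0,1]}$ of $\pi$ with $\pi^\alpha_{|S}=\pi^{S,\alpha}$; since $0\in S$ it is a parametrization w.r.t.\ $(\nu^\alpha)$. For the shadow identity~\eqref{eq:71ShadowCurve}: at $t\in S$ it is immediate from the construction together with $\shadow{\mu_{S_t}}{\nu^\alpha}=\shadow{\mu_{T_t}}{\nu^\alpha}$; at $t\in T\setminus S$ I would pick $s_n\downarrow t$ in $S$ and use that the marginals of the c\`adl\`ag measure $\tfrac1\alpha\pi^\alpha$ are right-continuous (Lemma~\ref{lemma:RCandcadlag}(i), Lemma~\ref{lemma:convDom}) and that $\shadow{\mu_{T_{s_n}}}{\nu^\alpha}\to\shadow{\mu_{T_t}}{\nu^\alpha}$ by Lemma~\ref{lemma:ShadowRightCont}(ii). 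Taking $\alpha=1$ shows $\pi$ solves the peacock problem.

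Uniqueness is then a purely formal transfer along $\pi\mapsto\pi_{|S}$. Given any competitor $(\rho,(\rho^\alpha))$ with the stated properties, Lemma~\ref{lemma:ParamExtension}(i) makes $(\rho^\alpha_{|S})$ a c\`adl\`ag martingale parametrization of $\rho_{|S}\in\MMC_S$, and by Corollary~\ref{cor:CountableObstr} it satisfies $\rho^\alpha_{|S}(X_t\in\cdot)=\shadow{\mu_{S_t}}{\nu^\alpha}$ for $t\in S$; the uniqueness clause of Theorem~\ref{prop:GenExistCount} forces $\rho^\alpha_{|S}=\pi^{S,\alpha}=\pi^\alpha_{|S}$ for every $\alpha$, hence $\rho_{|S}=\pi_{|S}$. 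Lemma~\ref{lemma:RCExtensionMartingale}(i) gives $\rho=\pi$, and the uniqueness part of Lemma~\ref{lemma:ParamExtension}(ii) gives $\rho^\alpha=\pi^\alpha$ for all $\alpha$.

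It remains to treat the ``moreover'' clause. Since $(\pi^\alpha)_{\alpha\in[0,1]}$ is a martingale parametrization of $\pi$, Lemma~\ref{lemma:RightDerivatives} already furnishes a single $\lambda$-null set $L$ off which the right-derivative $\hat\pi^a=\lim_{h\downarrow0}\tfrac1h(\pi^{a+h}-\pi^a)$ exists in $\PO(\R^T)$ under $\TO$ and lies in $\MM_T$, the convergence holding simultaneously against all of $\mathcal G_0\cup\mathcal G_1$; pushing forward under $X_t$ (and using the $\TO$-continuity of $\pi\mapsto\pi_{|S}$, together with a further full-measure restriction to the set given by Theorem~\ref{prop:GenExistCount}) we get, for $a\notin L$ and every $t\in T$, that $\mathrm{Law}_{\hat\pi^a}(X_t)=\lim_{h\downarrow0}\tfrac1h\bigl(\shadow{\mu_{T_t}}{\nu^{a+h}}-\shadow{\mu_{T_t}}{\nu^a}\bigr)$ and that $\hat\pi^a_{|S}$ is the right-derivative of $\alpha\mapsto\pi^{S,\alpha}$ at $a$, whose marginals form a NSI peacock $(\hat\eta^a_t)_{t\in S}$. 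To finish it suffices to show $\hat\pi^a\in\MMC_T$ for $\lambda$-a.e.\ $a$: then $(\mathrm{Law}_{\hat\pi^a}(X_t))_{t\in T}$ is a right-continuous peacock whose $S$-restriction is NSI, hence NSI on $T$ by Corollary~\ref{cor:NSICountable}, and Proposition~\ref{prop:NSItoUniq} (via Lemma~\ref{lemma:NSItoMarkov}) upgrades this to ``Markov, uniquely determined by its marginals''. For $\hat\pi^a\in\MMC_T$ I would apply Lemma~\ref{lemma:ParamExtension}(iii): its first hypothesis is the right-differentiability of $\alpha\mapsto\pi^\alpha_{|S}$ at $a$ (Theorem~\ref{prop:GenExistCount}), and its second is the existence of a right-continuous peacock $(\eta_t)_{t\in T}$ with $\mathrm{Law}_{\frac1h(\pi^{a+h}-\pi^a)}(X_t)\to\eta_t$ for all $t$; one takes $\eta_t=\hat\eta^a_t$ for $t\in S$ and $\eta_t=\lim_{s\downarrow t,\,s\in S}\hat\eta^a_s$ for $t\in T\setminus S$ (the limit exists, $s\mapsto\hat\eta^a_s$ being decreasing in convex order along such sequences and bounded below by the Dirac at the common barycenter, by Proposition~\ref{prop:ExistenceCSup}(i)) and verifies that $(\eta_t)_{t\in T}$ is a right-continuous peacock realizing the stated limits. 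I expect this last verification --- essentially interchanging the limits $h\downarrow0$ and $s\downarrow t$ in the obstructed shadow at times outside the skeleton $S$ --- to be the main obstacle; it should be carried out exactly as in the proof of Lemma~\ref{lemma:RCShadowApprox}, combining the right-continuity of $t\mapsto\shadow{\mu_{T_t}}{\nu}$ (Lemma~\ref{lemma:ShadowRightCont}), the splitting rule Proposition~\ref{prop:ShadowAssoc}, and the potential-function estimate $U(\shadow{\mu'}{\nu})-U(\shadow{\mu}{\nu})\le U(\mu')-U(\mu)$ of Lemma~\ref{lemma:shadowPF}.
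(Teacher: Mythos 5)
Your proposal follows essentially the same route as the paper's proof: restrict to a countable right- and left-dense skeleton $S$ containing $0$ and all discontinuities of $t\mapsto\mu_t$, invoke Theorem \ref{prop:GenExistCount} there, extend existence and transfer uniqueness via Lemmas \ref{lemma:RCExtensionMartingale} and \ref{lemma:ParamExtension} together with Corollaries \ref{cor:CountableObstr} and \ref{cor:NSICountable}, and handle $\sup T\notin T$ by exhaustion. The only adjustments needed are cosmetic: Lemma \ref{lemma:RightDerivatives} is established only for countable $T$, so the existence of $\hat{\pi}^a$ as a measure on $\mathbb{R}^T$ must indeed come from Lemma \ref{lemma:ParamExtension}(iii) as you do later rather than from a direct appeal to that lemma, and the limit interchange you correctly flag as the main obstacle is settled in the paper not by the telescoping potential-function argument of Lemma \ref{lemma:RCShadowApprox} but by observing that $\zeta^a_{h,u}$ is decreasing in convex-stochastic order in both $u\downarrow t$ and $h\downarrow 0$ (via Lemmas \ref{lemma:ShadowInclusion} and \ref{lemma:GenShadowOrder}), which permits swapping the two limits directly.
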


\begin{proof}
	For \textsf{STEPS 1--4} we assume that $T$ admits a maximal element, i.e.\ $\tmax := \sup T \in T$, and we fix a countable left- and right-dense subset $S$ of $T$ that contains both $0$ and $\tmax$, and all time points where $t \mapsto \mu _t$ is not continuous (cf.\ Corollary \ref{cor:MonotonicityPCOC} and Lemma \ref{lemma:RDenseSubset})
	
	\textsf{STEP 1:} \emph{We show that there exists $\pi \in \MMC_{T}((\mu _t)_{t \in T})$ and a martingale parametrization  $(\pi ^{\alpha})_{\alpha \in [0,1]}$ of $\pi$ w.r.t.\ $(\nu ^{\alpha})_{\alpha \in [0,1]}$ that satisfies \eqref{eq:71ShadowCurve}.} Since $S$  is a countable set and $\tmax \in S$, Theorem \ref{prop:GenExistCount} implies that there exists $\tilde{\pi} \in \mathsf{M}_{S}((\mu _t)_{t \in S})$ and a martingale parametrization  $(\tilde{\pi}^{\alpha})_{\alpha \in [0,1]}$ of $\tilde{\pi}$ w.r.t.\  $(\nu ^{\alpha})_{\alpha \in [0,1]}$ that satisfies 
	\begin{equation} \label{eq:71ShadowCurveCount}
	\tilde{\pi}^{\alpha}(X_t \in \cdot) = \shadow{\mu _{S_t}}{\nu ^{\alpha}}
	\end{equation}
	for all $t \in S$ and $\alpha \in [0,1]$.
	The peacock $(\mu_t)_{t \in S}$ is right-continuous, thus Lemma \ref{lemma:ShadowRightCont} implies that the map $t \mapsto \shadow{\mu _{S_t}}{\nu ^{\alpha}}$ is right-continuous for every $\alpha \in [0,1]$.
	Therefore, by Lemma \ref{lemma:RCandcadlag} (ii) applied to $\tilde{\pi}$ and $\frac1\alpha \tilde{\pi}^\alpha$ for every $\alpha\in (0,1)$, we have both  $\tilde{\pi} \in \MMC_{S}$  and $(\tilde{\pi}^{\alpha})_{\alpha \in [0,1]}$ is a c\`adl\`ag martingale parametrization.
	We can uniquely extend $\tilde{\pi}$ to $\pi \in \MMC_{T}((\mu _t)_{t \in T})$ by Lemma \ref{lemma:RCExtensionMartingale} (ii) and Lemma \ref{lemma:ParamExtension} (ii) shows that we can extend the parametrization  $(\tilde{\pi}^{\alpha})_{\alpha \in [0,1]}$ to a martingale parametrization $(\pi^{\alpha})_{\alpha \in [0,1]}$ of $\pi$. 
	The set $S$ contains all discontinuities of the map $t \mapsto \mu _t$ on $T$ and hence Corollary \ref{cor:CountableObstr} yields for all $t \in S$ and $\alpha \in [0,1]$  the equality
	\begin{equation} 
	\pi ^{\alpha}(X_t \in \cdot ) = \tilde{\pi}^{\alpha}(X_t \in \cdot) \overset{\eqref{eq:71ShadowCurveCount}}{=} \shadow{\mu _{S_t}}{\nu ^{\alpha}} = \shadow{\mu _{[0,t]}}{\nu ^{\alpha}}.
	\end{equation}
	Since both $t \mapsto \pi ^{\alpha}(X_t \in \cdot )$ and $t \mapsto \shadow{\mu _{T_t}}{\nu ^{\alpha}}$ are right-continuous functions from $T$ to $\MO(\mathbb{R})$ w.r.t.\ $\TO$ (see Lemma \ref{lemma:RCandcadlag} (i) and Lemma \ref{lemma:ShadowRightCont}), we deduce that $\pi^{\alpha}$ satisfies \eqref{eq:71ShadowCurve} for all $t \in T$ and $\alpha \in [0,1]$. 
	
	\textsf{STEP 2:} \emph{We show that $\pi$ and $(\pi ^{\alpha})_{\alpha \in [0,1]}$ are uniquely determined.}	Let $\rho \in \MMC_{T}((\mu _t)_{t \in T})$ and $(\rho ^{\alpha})_{\alpha \in [0,1]}$ be a c\`adl\`ag martingale parametrization of $\rho$ w.r.t.\ $(\nu ^{\alpha})_{\alpha \in [0,1]}$ that satisfies
	\begin{equation} 
	\rho^{\alpha} (X_t \in \cdot) = \shadow {\mu _{[0,t]}} {\nu ^{\alpha}}
	\end{equation}
	for all $t \in T$ and $\alpha \in [0,1]$. Lemma \ref{lemma:ParamExtension} (i) yields that the restrictions $({\rho ^{\alpha}}_{|S})_{\alpha \in [0,1]}$ are a martingale parametrization of $\rho _{|S}$. Furthermore, we obtain $\rho^{\alpha} (X_t \in \cdot) = \shadow {\mu _{[0,t]}} {\nu ^{\alpha}} = \shadow {\mu _{S_t}} {\nu ^{\alpha}}$
	for all $t \in S$ and $\alpha \in [0,1]$ by Corollary \ref{cor:CountableObstr}. The uniqueness part of Theorem \ref{prop:GenExistCount} implies that $\rho_{|S}$ and $\pi _{|S}$ coincide and hence $\pi = \rho$ by Lemma \ref{lemma:RCExtensionMartingale} (i). 
	
	\textsf{STEP 3:} \emph{We show that  there exists a Borel set $A \subset [0,1]$ with $\lambda(A) = 1$ such that  $\alpha \mapsto {\pi^{\alpha}}$ is right-differentiable at $a$ for all $a \in A$.}	Theorem \ref{prop:GenExistCount} yields that there exists a Borel set $A \subset [0,1]$ with $\lambda(A) = 1$ such that for all $a \in A$ the curve $\alpha \mapsto {\pi ^{\alpha}}_{|S}$ is right-differentiable at $a$. Hence, by Lemma \ref{lemma:ParamExtension} (iii) it suffices to show that
	the family of marginal distributions of $\frac{1}{h} (\pi ^{a+h} - \pi ^a)$ converges in $\PPC_{T}$ to a right-continuous peacock for all $a \in A$ to show that $\alpha \mapsto {\pi ^{\alpha}}$ is right-differentiable for all $a \in A$.
	
	To this end, fix $a \in A$ and note that the distribution of $X_t$ under $\frac{1}{h} (\pi ^{a+h} - \pi ^a)$ is given by 
	\begin{equation} \label{eq:RDofShadowCurve}
	\zeta _{h,t}^a := \shadow{\frac{1}{h} \left(\mu _s - \shadow{\mu _{T_s}}{\nu ^a} \right)_{s \in T_t}}{\frac{\nu ^{a +h} - \nu ^a}{h}} = \shadow{\frac{1}{h} \left(\mu _s - \shadow{\mu _{S_s}}{\nu ^a} \right)_{s \in S_t}}{\frac{\nu ^{a +h} - \nu ^a}{h}}
	\end{equation}
	due to Corollary \ref{cor:CountableObstr}.
	Since $\alpha \mapsto {\pi ^{\alpha}}_{|S}$ is right-differentiable at $a$, the limit $\hat{\eta}_t ^a := \lim _{h \downarrow 0} \zeta _{h,t} ^a$ under $\TO$ exists for all $t \in S$. Moreover, Lemma \ref{lemma:ShadowInclusion} (i) implies that $\zeta ^a _{h,u}$ is  decreasing in convex-order for $u \downarrow t$ and thus $\hat{\eta}^a_t \leq _c \hat{\eta}^a_u$ for all $t \leq u$ in $S$ by Lemma \ref{lemma:preservingOrder}. In particular, Lemma \ref{lemma:CSupSum} (i) shows that for all $t \in T$ the limit $\lim _{u \downarrow t, u \in S} \zeta ^a _{h,u}$ exists. Since 	$\zeta ^a _{h,u} $ is decreasing in convex-stochastic order for both $u \downarrow t$ and $h \downarrow 0$ (by Lemma  \ref{lemma:ShadowInclusion} (i) and Lemma \ref{lemma:GenShadowOrder} (iii) in conjunction with Lemma  \ref{lemma:GenShadowOrder} (iv)), we may interchange the limits and obtain for $t\in T$ by Lemma \ref{lemma:ShadowRightCont}
	\begin{equation*}
	\lim _{u \downarrow t, u \in S} \hat{\eta} ^a _u  = \lim _{u \downarrow t, u \in S} \lim _{h \downarrow 0} \zeta _{h,u}^a 	= \lim _{h \downarrow 0} \lim _{u \downarrow t} \zeta _{h,u} ^a  = \lim _{h \downarrow 0}  \zeta _{h,t} ^a =: \hat{\eta} _t ^a.
	\end{equation*}
	Thus, the marginal distributions of $(X_t)_{t \in S}$ under $\frac{1}{h} (\pi ^{a+h} - \pi ^a)$ converge in $\PPC_{T}$ to a right-continuous peacock.	
	
	\textsf{STEP 4:} \emph{We show that for all $a \in A$  the marginal distributions of $(X_t)_{t \in T}$ under the right-derivatives of $\alpha \mapsto \pi ^{\alpha}$ at $a$ are a NSI peacock.} By Lemma \ref{lemma:ParamExtension} (iii), the restriction of the right-derivative of $\alpha \mapsto \pi^{\alpha}$ at $a \in A$ to $S$ is the right-derivative of $\alpha \mapsto \pi^{\alpha}_{|S}$ at  $a \in A$. Since the marginal distributions of the latter are NSI by Theorem \ref{prop:GenExistCount}, Corollary \ref{cor:NSICountable} implies that the marginals of the right-derivative of $\alpha \mapsto \pi^{\alpha}$ at $a$ are NSI as well.
	
	\textsf{STEP 5:} \emph{We remove the assumption that $\sup T \in T$.} If $T$ does not admit a maximal element, there exists a sequence $(t_n ^*)_{n \in \mathbb{N}}$ in $T$ approaching $\sup T \in [0, \infty]$. For every $n \in \mathbb{N}$, we have shown that there exists a unique shadow martingale and a unique c\`adl\`ag martingale parametrization w.r.t.\ $(\nu ^{\alpha})_{\alpha \in [0,1]}$ corresponding to the right-continuous peacock $(\mu _t)_{t \in T_{t_n ^*}}$. Since these measures are consistent for different $n$, they define a unique shadow martingale and a unique c\`adl\`ag martingale parametrization w.r.t.\ $(\nu ^{\alpha})_{\alpha \in [0,1]}$ corresponding to the right-continuous peacock $(\mu _t)_{t \in T}$.
\end{proof}

\section{Shadow martingales indexed by a totally ordered set} \label{sec:AbstractIndexSet}

In this section, we prove Theorem \ref{thm:MostGenExist} which includes Theorem \ref{thm:GenExist} as a special case. The key observation is that the martingale property implies that this abstract setting can be embedded into the right-continuous setup of Section \ref{sec:ContTimeRC} in such a way that the structure and main properties of the obstructed shadows can be lifted from the continuous time case to the abstract setup.

\subsection{Embedding into the continous time setup}

Let $(T, \leq)$ be a totally ordered set with minimal element $0 \in T$ and $(\mu _t)_{t \in T}$ a peacock. We define the map $E: T \rightarrow [0, \infty)$ as
\begin{equation*}
E(t) = \int _{\mathbb{R}} \sqrt{1+x^2} \de \mu _t(x) - \int _{\mathbb{R}} \sqrt{1+x^2} \de \mu _0 (x)
\end{equation*}
for all $t \in T$ and set $\tilde{T} = E(T)$. Note that  $E(0)=0$ and hence $0 \in \tilde{T}$.
Since  $(\mu _t)_{t \in T}$ is a peacock, $E$ is increasing, $E(t) \leq E(t')$ implies $\mu _t \leqc \mu _{t'}$, and $E(t) = E(t')$ is equivalent to  $\mu _t = \mu _{t'}$. Moreover,  $E(t_n) \rightarrow E(t)$ yields that $(\mu _{t_n})_{n \in \mathbb{N}}$ converges to $\mu _t$ under $\TO$ (cf. \cite[Section 2.2]{BeHuSt16}).
However,  $E$ is in general neither strictly increasing nor invertible.

\begin{lemma} \label{lemma:InterpolationPCOC}
	We define the family $(\tilde{\mu}_u)_{u \in \tilde{T}}$ by
	\begin{equation*}
	\tilde{\mu} _u = \mu _t \ , \,\text{where } t \in E^{-1}(\{u\}).
	\end{equation*}
	For all $u \in \tilde{T}$, the measure $\tilde{\mu}_u$ is well-defined. Moreover, we have:
	\begin{enumerate}
		\item [(i)] The family $(\tilde{\mu}_u)_{u \in \tilde{T}}$ is a peacock with $\tilde{\mu}_{E(t)} = \mu _t$ for all $t \in T$.
		\item [(ii)]   The map $u \mapsto \tilde{\mu}_u$ is continuous from $\tilde{T}$ to $\PO(\mathbb{R})$ w.r.t.\ $\TO$.
		\item [(iii)] For all $t \in T$ and $\nu \in \MO(\mathbb{R})$ with $\nu \leqp \mu _0 = \tilde{\mu}_0$, it holds $\shadow{\mu _{T_t}}{\nu} = \shadow{\tilde{\mu}_{\tilde{T}_{E(t)}}}{\nu}$. 
	\end{enumerate}
\end{lemma}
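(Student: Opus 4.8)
The plan is to verify the three claims in order, exploiting that $E$ is built from a strictly convex function of the barycenter-type and that convex order is exactly what $E$ records. First I would check well-definedness: if $E(t)=E(t')$ with, say, $t\leq t'$ in $T$, then $\mu_t\leqc\mu_{t'}$ since $(\mu_t)_{t\in T}$ is a peacock, so $\int\sqrt{1+x^2}\de\mu_t=\int\sqrt{1+x^2}\de\mu_{t'}$ together with $\int x\de\mu_t=\int x\de\mu_{t'}$ (Lemma \ref{lemma:relationOrders} (i)) forces, via strict convexity of $x\mapsto\sqrt{1+x^2}$ and the characterization of potential functions (Lemma \ref{lemma:characPotF}), that $\mu_t=\mu_{t'}$. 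Hence $\tilde\mu_u$ does not depend on the representative $t\in E^{-1}(\{u\})$, and $\tilde\mu_{E(t)}=\mu_t$ by construction. For (i), given $u\leq u'$ in $\tilde T$ pick representatives $t,t'\in T$; since $E$ is increasing and $T$ is totally ordered we may take $t\leq t'$, so $\mu_t\leqc\mu_{t'}$, i.e.\ $\tilde\mu_u\leqc\tilde\mu_{u'}$; thus $(\tilde\mu_u)_{u\in\tilde T}$ is a peacock.

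For (ii), continuity of $u\mapsto\tilde\mu_u$ on $\tilde T$: take $u_n\to u$ in $\tilde T$ with representatives $t_n,t$. The point is that $U(\tilde\mu_{u_n})(0)=\int|x|\de\mu_{t_n}$ and $\int x\de\mu_{t_n}$ are both controlled by $u_n=E(t_n)$ (write $\int|x|\de\mu=2\int(x)^+\de\mu-\int x\de\mu$ and note $\int(x)^+\de\mu\le\int\sqrt{1+x^2}\de\mu$); more directly, I would invoke the statement already recalled in the paragraph before the lemma, namely that $E(t_n)\to E(t)$ implies $\mu_{t_n}\to\mu_t$ under $\TO$ (cf.\ \cite[Section 2.2]{BeHuSt16}). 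This handles both left- and right-approach since $\tilde T$ carries the subspace topology of $[0,\infty)$, giving genuine continuity, not just one-sided continuity.

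The main work is (iii): $\shadow{\mu_{T_t}}{\nu}=\shadow{\tilde\mu_{\tilde T_{E(t)}}}{\nu}$ for $\nu\leqp\mu_0$. The idea is that both sides are convex suprema over finitely obstructed shadows (Proposition \ref{prop:GeneralSchadow}), and finitely obstructed shadows are defined recursively via simple shadows (Lemma \ref{lemma:FinteShadowMinimal}), so it suffices to match the families of obstructions up to relabeling. Concretely, for a finite $R=\{r_1<\dots<r_n\}\subset T_t$ the monotonicity $E(r_1)\le\dots\le E(r_n)\le E(t)$ holds and $\mu_{r_i}=\tilde\mu_{E(r_i)}$, so $\shadow{\mu_{r_1},\dots,\mu_{r_n}}{\nu}=\shadow{\tilde\mu_{E(r_1)},\dots,\tilde\mu_{E(r_n)}}{\nu}$ where the right-hand side drops any repeated values (a repeated obstruction $\tilde\mu_{u}=\tilde\mu_{u}$ adds nothing, since $\shadow{\mu}{\shadow{\mu}{\cdot}}=\shadow{\mu}{\cdot}$). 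Conversely, every finite subset of $\tilde T_{E(t)}$ is of the form $E(R)$ for some finite $R\subset T_t$ because $E$ surjects $T$ onto $\tilde T$ and $E^{-1}([0,E(t)])\supseteq T_t$; one has to be slightly careful that points $u<E(t)$ in $\tilde T$ have a preimage $\le t$, which follows since any preimage $r$ of $u$ satisfies $E(r)<E(t)$, hence $\mu_r\ne\mu_t$, hence $r<t$ (totality of $\le$ and monotonicity of $E$). Therefore the two index sets over which we take the convex supremum give the same collection of measures, so the convex suprema agree. I expect the only real obstacle to be this bookkeeping about preimages and repeated obstructions; the rest is a direct application of Proposition \ref{prop:GeneralSchadow}, Lemma \ref{lemma:FinteShadowMinimal}, and the fact (Lemma \ref{lemma:preservingOrder}) that convex order passes to $\TO$-limits.
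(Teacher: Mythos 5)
Your proposal is correct. The well-definedness argument, item (i), and item (ii) follow the same route as the paper: the paper simply invokes the facts recorded in the paragraph preceding the lemma (that $E(t)=E(t')$ iff $\mu_t=\mu_{t'}$, and that $E(t_n)\to E(t)$ forces $\TO$-convergence, citing \cite{BeHuSt16}), whereas you re-derive the first of these from strict convexity of $x\mapsto\sqrt{1+x^2}$ together with Lemma \ref{lemma:relationOrders}~(i); the key inequality there is that $\mu\leqc\mu'$ with $\mu\neq\mu'$ gives a \emph{strict} inequality for any $C^2$ function with everywhere positive second derivative, which is standard but worth a line.

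For item (iii) you take a genuinely different route. The paper works with the $\mathrm{Cinf}$ characterization of Proposition \ref{prop:MaximalElement}: it sets up a correspondence between admissible families $(\eta_t)_{t\in T_t}$ and $(\tilde\eta_u)_{u\in\tilde T_{E(t)}}$ satisfying $\nu\leqc\eta_s\leqc\eta_{s'}\leqp\mu_{s'}$, so that the two convex infima coincide. You instead go through the defining $\mathrm{Csup}$ of Proposition \ref{prop:GeneralSchadow} and the recursive Lemma \ref{lemma:FinteShadowMinimal}, matching finite obstruction sets $R\subset T_t$ with $E(R)\subset\tilde T_{E(t)}$ and using idempotence of the simple shadow, $\shadow{\mu}{\shadow{\mu}{\cdot}}=\shadow{\mu}{\cdot}$, to discard repeated obstructions; your bookkeeping (every $u\leq E(t)$ has a preimage in $T_t$, adjacent equal $E$-values collapse) is sound, so the two collections of finitely obstructed shadows coincide as sets and hence have the same convex supremum. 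The two arguments are dual to one another. Yours stays entirely at the level of finite obstructions and the order-theoretic definition of the general shadow, which makes the relabeling completely explicit; the paper's argument is shorter once Proposition \ref{prop:MaximalElement} is available, and has the side benefit that it transports \emph{arbitrary} admissible families across $E$, which is the form in which the correspondence is reused elsewhere. Both are valid proofs of the lemma.
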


\begin{proof}
	Item (i): Since $E$ is monotonously increasing, the claim follows immediately.
	
	Item (ii): For $u \in \tilde{T}$ and a sequence $(u_n)_{n \in \mathbb{N}}$  that converges to $u$ we obtain
	\begin{eqnarray*}
		\lim _{n \rightarrow \infty} \int _{\mathbb{R}} \sqrt{1 + x^2} \de \tilde{\mu}_{u_n} &=& \int _{\mathbb{R}} \sqrt{1+x^2} \de \mu _0 + \lim _{n \rightarrow \infty} u_n \\ &=& \int _{\mathbb{R}} \sqrt{1+x^2} \de \mu _0 + u =  \int _{\mathbb{R}} \sqrt{1 + x^2} \de \tilde{\mu}_{u}.
	\end{eqnarray*}
	Thus, $u \mapsto \tilde{\mu}_u$ is continuous.
	
	Item (iii): Let $(\tilde{\eta}_u)_{u \in \tilde{T}}$ be a family in $\MO(\mathbb{R})$ with $\nu \leqc \tilde{\eta}_u \leqc \tilde{\eta}_{u'} \leqp \mu _{u'}$ for all $u \leq u'$ in $\tilde{T}$. We set $\eta _t := \tilde{\eta}_{E(t)}$ for all $t \in T$. Since $E$ is increasing, it holds $\nu \leqc \eta _t \leqc \eta_{t'}$ for all $t \leq t'$ in $T$ and furthermore we have $\eta _{t'} = \tilde{\eta}_{E(t')} \leqp \tilde{\mu}_{E(t')} = \mu _{t'}$ by (ii).
	
	Conversely, let $(\eta _t)_{t \in T}$ be a family with $\nu \leqc \eta _t \leqc \eta _{t'} \leqp \mu _{t'}$ for all $t \leq t'$ in $T$. We set $\tilde{\eta}_u := \eta _t$ for any $t \in E^{-1}(\{u\})$ for all $u \in \tilde{T}$. Since $E$ is increasing, we get $\nu \leqc \tilde{\eta}_u \leqc \tilde{\eta}_{u'}$ for all $u \leq u'$ in $\tilde{T}$. Moreover, there exists $t' \in T$ with $E(t') = u'$, and similar as in (ii) it holds $\tilde{\eta}_{E(t')} = \eta _{u'}$. Thus, $\tilde{\eta}_{u'} = \tilde{\eta}_{E(t')} = \eta _{t'} \leqp \mu _{t'} = \tilde{\mu}_{E(t')} = \tilde{\mu}_{u'}$.
	Then, Proposition \ref{prop:MaximalElement} implies that $\shadow{\mu _{T_t}}{\nu} = \shadow{\tilde{\mu}_{\tilde{T}_{E(t)}}}{\nu}$ for all $t \in T$.
\end{proof}

\begin{lemma} \label{lemma:ExtensionInterpolationPCOC}
	Let $\mathsf{E}^*: \MO(\mathbb{R}^{\tilde{T}}) \rightarrow \MO(\mathbb{R}^T)$ be the map $\tilde{\pi} \mapsto \mathsf{E}^*(\tilde{\pi})$ where $\mathsf{E}^*(\tilde{\pi})$ is uniquely determined by 
	\begin{equation*}
	\mathsf{E}^*(\tilde{\pi})(Y_{t_1} \in B_1, \ldots, Y_{t_n} \in B_n) := \tilde{\pi}(X_{E(t_1)} \in B_1, \ldots, X_{E(t_n)} \in B_n)
	\end{equation*}
	for all $n \in \mathbb{N}$, $t_1, \ldots, t_n \in T$ and Borel sets $B_1, \ldots, B_n$ where $Y$ and $X$ denote the canonical process on $\mathbb{R}^{\tilde{T}}$ and $\mathbb{R}^T$.
	\begin{enumerate}
		\item [(i)] If $(\tilde{\pi}^{\alpha})_{\alpha \in [0,1]}$ is a martingale parametrization of $\tilde{\pi} \in \MM_{\tilde{T}}$, then $(\mathsf{E}^*(\tilde{\pi}^{\alpha}))_{\alpha \in [0,1]}$ is a martingale parametrization of $\mathsf{E}^*(\tilde{\pi}) \in \MM_T$.
		
		\item [(ii)] The map $\mathsf{E}^*$ is sequentially continuous, i.e.\ if $(\tilde{\pi}_n)_{n \in \mathbb{N}}$ converges to $\tilde{\pi}$ under $\TO$, then  $(\mathsf{E}^*(\tilde{\pi}_n))_{n \in \mathbb{N}}$ converges to $\mathsf{E}^*(\tilde{\pi})$ under $\TO$. 
		\item [(iii)] It holds $\{\pi'  :  \pi' \leqp \pi, \pi \in \MM_T((\mu _t)_{t \in T})\} \subset \mathsf{Im}(\mathsf{E}^*)$.
	\end{enumerate}
\end{lemma}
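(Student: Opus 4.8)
The plan is to verify the three items essentially by unwinding the definition of $\mathsf{E}^*$ and using that $E$ is monotone with $E(0)=0$. For item (i), I would first check that $\mathsf{E}^*(\tilde\pi)$ is a genuine measure: the finite-dimensional distributions prescribed in the statement are consistent under the Kolmogorov conditions because $\tilde\pi$ is a measure on $\R^{\tilde T}$ and $t\mapsto E(t)$ merely relabels coordinates (collapsing coordinates $t,t'$ with $E(t)=E(t')$, which is harmless since $\tilde\mu_{E(t)}=\tilde\mu_{E(t')}$ forces $X_{E(t)}=X_{E(t')}$ $\tilde\pi$-a.e.). Then: that $\mathsf{E}^*(\tilde\pi)\in\MM_T$ follows from the martingale property of $\tilde\pi$ together with the fact that $\mathcal F_t=\sigma(Y_s:s\le t)$ under $\mathsf{E}^*(\tilde\pi)$ is carried by $\sigma(X_u : u\le E(t))$ on $\R^{\tilde T}$ (again because $E$ is increasing), so the tower property transfers directly; that $(\mathsf{E}^*(\tilde\pi^\alpha))_{\alpha\in[0,1]}$ is a parametrization of $\mathsf{E}^*(\tilde\pi)$ is checked against Definition \ref{def:Parametrization}: $\mathsf{E}^*$ is positively linear and monotone for $\leqp$ on cylinder sets (hence on all Borel sets, since nonnegative cylinder functions generate the $\sigma$-algebra on $\R^T$, exactly as used in the proof of Lemma \ref{lemma:ParamExtension}(ii)), $\mathsf{E}^*(\tilde\pi^\alpha)(\R^T)=\tilde\pi^\alpha(\R^{\tilde T})=\alpha$, $\mathsf{E}^*(\tilde\pi^1)=\mathsf{E}^*(\tilde\pi)$, and $\mathsf{E}^*(\tilde\pi^\alpha)(Y_0\in\cdot)=\tilde\pi^\alpha(X_0\in\cdot)=\nu^\alpha$; finally $\frac1\alpha\mathsf{E}^*(\tilde\pi^\alpha)=\mathsf{E}^*(\frac1\alpha\tilde\pi^\alpha)\in\MM_T$ by the first point.

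For item (ii), I would use the characterization of $\TO$ from Subsection \ref{sec:Notation}: convergence under $\TO$ is tested against $\mathcal G_0\cup\mathcal G_1$, i.e.\ against $g\circ(Y_{t_1},\dots,Y_{t_n})$ with $g\in C_b(\R^n)$ and against $|Y_t|$. For such an $f$ on $\R^T$ one has $\int f\,d\mathsf{E}^*(\tilde\pi_n)=\int (g\circ(X_{E(t_1)},\dots,X_{E(t_n)}))\,d\tilde\pi_n$, which is the integral of a function in $\mathcal G_0$ (resp.\ $\mathcal G_1$) for $\R^{\tilde T}$; convergence of these integrals is precisely what $\tilde\pi_n\to\tilde\pi$ under $\TO$ on $\MO(\R^{\tilde T})$ gives. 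Hence $\int f\,d\mathsf{E}^*(\tilde\pi_n)\to\int f\,d\mathsf{E}^*(\tilde\pi)$ for every test function, which is sequential continuity.

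For item (iii), given $\pi\in\MM_T((\mu_t)_{t\in T})$ and $\pi'\leqp\pi$, the task is to exhibit $\tilde\pi'\in\MO(\R^{\tilde T})$ with $\mathsf{E}^*(\tilde\pi')=\pi'$. The natural candidate is the pushforward: since $E$ is surjective onto $\tilde T$, pick for each $u\in\tilde T$ a preimage $t(u)\in E^{-1}(\{u\})$, let $\iota:\R^T\to\R^{\tilde T}$ send $\omega\mapsto(\omega(t(u)))_{u\in\tilde T}$, and set $\tilde\pi'=\iota_\#\pi'$ (and $\tilde\pi=\iota_\#\pi$). One checks $\mathsf{E}^*(\tilde\pi')=\pi'$ on cylinder sets: $\mathsf{E}^*(\iota_\#\pi')(Y_{t_1}\in B_1,\dots)=\iota_\#\pi'(X_{E(t_1)}\in B_1,\dots)=\pi'(X_{t(E(t_1))}\in B_1,\dots)$, and since $\pi'\leqp\pi$ is a submeasure of a martingale measure with marginal $\mu_t$, the coordinates $X_{t_i}$ and $X_{t(E(t_i))}$ agree $\pi$-a.e.\ (both marginals are $\mu_{t_i}=\tilde\mu_{E(t_i)}$ and convex order forces equality of the conditional structure — more cheaply, $\E{\pi}{|X_{t_i}-X_{t(E(t_i))}|}=0$ by the martingale property since they have the same distribution and one is the conditional expectation of the other), hence also $\pi'$-a.e.\ by $\pi'\leqp\pi$. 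Thus $\tilde\pi'$ works, proving the inclusion.

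The main obstacle I anticipate is the bookkeeping around coordinates $t\ne t'$ with $E(t)=E(t')$ in items (i) and (iii): one must argue cleanly that $X_t=X_{t'}$ almost surely under any martingale measure with these marginals (this is where $\tilde\mu_{E(t)}=\tilde\mu_{E(t')}$ and Jensen/the martingale property combine to force a.s.\ equality), so that the definition of $\mathsf{E}^*$ is unambiguous, the choice of section $t(\cdot)$ in (iii) is irrelevant, and the finite-dimensional distributions are consistent. Everything else is a routine transfer of the martingale, positivity, and topology through a monotone reparametrization of the index set.
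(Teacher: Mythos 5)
Your proposal is correct and follows essentially the same route as the paper: items (i) and (ii) are the routine transfer of the martingale property, the parametrization axioms, and the $\TO$ test functions through the monotone relabelling $E$, and item (iii) hinges on exactly the observation the paper uses, namely that $E(t)=E(t')$ forces $\mu_t=\mu_{t'}$ and hence, by the martingale property (strict convexity of $x\mapsto\sqrt{1+x^2}$ in Jensen's inequality), $\pi(X_t\neq X_{t'})=0$ and therefore $\pi'(X_t\neq X_{t'})=0$ since $\pi'\leqp\pi$. Your pushforward via a section $t(\cdot)$ of $E$ and the paper's direct definition of $\tilde\pi'$ through its finite-dimensional distributions are the same construction in different clothing.
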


\begin{proof}
	Item (i) and item (ii) are direct consequences of the previous definitions. For item (iii): Let $\pi \in \MM_T((\mu _t)_{t \in T})$ and $\pi' \leqp \pi$. For any $t, t' \in T$ with $E(t) = E(t')$ it holds $\mu _t = \mu _{t'}$. Thus, since $\pi$ is a martingale measure, we have $\pi(X_t \neq X_{t'}) = 0$ and therefore also $\pi'(X_t \neq X_{t'}) = 0$. Hence, we can uniquely define the measure $\tilde{\pi}' \in \MO(\mathbb{R}^{\tilde{T}})$ by
	\begin{equation*}
	\tilde{\pi}'(X_{E(t_1)} \in B_1, \ldots, X_{E(t_n)} \in B_n) := \pi'(Y_{t_1} \in B_1, \ldots, Y_{t_n} \in B_n)
	\end{equation*}
	for all $n \in \mathbb{N}$, $t_1, \ldots, t_n \in T$ and Borel sets $B_1, \ldots, B_n$ where $X$ and $Y$ denote the canonical processes on $\mathbb{R}^{\tilde{T}}$ and $\mathbb{R}^T$, respectively. Clearly, $\mathsf{E}^*(\tilde{\pi}') = \pi$.
\end{proof}

\subsection{Existence and uniqueness of shadow couplings}

\begin{theorem} \label{thm:MostGenExist}
	Let $T$ be a totally ordered set with $\min T=:0\in T$ and $(\nu ^{\alpha})_{\alpha \in [0,1]}$ a $\leq_{c,s}$-convex parametrization of $\mu _0$. There exists a unique pair $(\pi,(\pi ^{\alpha})_{\alpha \in [0,1]})$ where the martingale measure $\pi \in \MM_{T}((\mu _t)_{t \in T})$ solves the peacock problem w.r.t.\ $(\mu _t)_{t \in T}$, $(\pi ^{\alpha})_{\alpha \in [0,1]}$ is a  martingale parametrization of $\pi$ w.r.t.\ $(\nu ^{\alpha})_{\alpha \in [0,1]}$ and for all $\alpha \in [0,1]$ and $t \in T$,
	\begin{equation} \label{eq:ShadowCurve}
	\pi^{\alpha} (X_t \in \cdot) = \shadow {\mu _{T_t}} {\nu ^{\alpha}}.
	\end{equation}
	
	Moreover, there exists a Borel set $A \subset [0,1]$ with $\lambda(A) = 1$ such that for all $a \in A$ the map $\alpha \mapsto \pi ^{\alpha}$ is right-differentiable at $a$ and the marginals of the right-derivative $\hat{\pi}^a$ at $a$ form a NSI peacock. In particular, $\hat{\pi} ^a$ is a Markov martingale measure uniquely defined by its marginal distributions.
\end{theorem}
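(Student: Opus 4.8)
The plan is to deduce the general statement from its right-continuous continuous-time incarnation, Theorem~\ref{thm:GenExistRC}, via the time change $E$ introduced just above. First I would set $\tilde T = E(T) \subset [0,\infty)$, observe $0 = E(0) \in \tilde T$, and pass to the peacock $(\tilde\mu_u)_{u \in \tilde T}$ of Lemma~\ref{lemma:InterpolationPCOC}, which satisfies $\tilde\mu_{E(t)} = \mu_t$ and, by part~(ii) of that lemma, depends continuously on $u$ for $\TO$; in particular $(\tilde\mu_u)_{u \in \tilde T}$ is a right-continuous peacock. Since $\leqcs$-convexity of $(\nu^\alpha)_{\alpha \in [0,1]}$ is an intrinsic property of the parametrization and $\mu_0 = \tilde\mu_0$, Theorem~\ref{thm:GenExistRC} applies directly (it is stated for arbitrary $\tilde T \subset [0,\infty)$ with $0 \in \tilde T$, with no maximality assumption) and produces a unique pair $(\tilde\pi, (\tilde\pi^\alpha)_{\alpha \in [0,1]})$ with $\tilde\pi \in \MMC_{\tilde T}((\tilde\mu_u)_{u \in \tilde T})$, $(\tilde\pi^\alpha)_{\alpha \in [0,1]}$ a c\`adl\`ag martingale parametrization of $\tilde\pi$ w.r.t.\ $(\nu^\alpha)_{\alpha \in [0,1]}$, $\tilde\pi^\alpha(X_u \in \cdot) = \shadow{\tilde\mu_{\tilde T_u}}{\nu^\alpha}$ for all $\alpha,u$, together with a full-measure Borel set $A \subset [0,1]$ on which $\alpha \mapsto \tilde\pi^\alpha$ is right-differentiable with NSI (hence Markov and marginal-determined) right-derivative $\hat{\tilde\pi}^a$.

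Next I would transfer these objects through the map $\mathsf E^*$ of Lemma~\ref{lemma:ExtensionInterpolationPCOC}, which is the push-forward under the continuous \emph{injection} $\iota : \mathbb{R}^{\tilde T} \to \mathbb{R}^T$, $\iota(\omega) = (\omega(E(t)))_{t \in T}$ (injectivity is immediate from surjectivity of $E$ onto $\tilde T$). Put $\pi = \mathsf E^*(\tilde\pi)$ and $\pi^\alpha = \mathsf E^*(\tilde\pi^\alpha)$. By Lemma~\ref{lemma:ExtensionInterpolationPCOC}(i), $(\pi^\alpha)_{\alpha \in [0,1]}$ is a martingale parametrization of $\pi \in \MM_T$; because $E(0) = 0$ we get $\pi^\alpha(X_0 \in \cdot) = \tilde\pi^\alpha(X_0 \in \cdot) = \nu^\alpha$, so it is a parametrization w.r.t.\ $(\nu^\alpha)_{\alpha \in [0,1]}$, and $\mathrm{Law}_\pi(X_t) = \mathrm{Law}_{\tilde\pi}(X_{E(t)}) = \tilde\mu_{E(t)} = \mu_t$, so $\pi$ solves the peacock problem w.r.t.\ $(\mu_t)_{t \in T}$. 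Finally, for every $t \in T$, $\pi^\alpha(X_t \in \cdot) = \tilde\pi^\alpha(X_{E(t)} \in \cdot) = \shadow{\tilde\mu_{\tilde T_{E(t)}}}{\nu^\alpha} = \shadow{\mu_{T_t}}{\nu^\alpha}$ by Lemma~\ref{lemma:InterpolationPCOC}(iii), which is \eqref{eq:ShadowCurve}; this settles existence. For uniqueness I would argue by pulling a competitor back along $\mathsf E^*$: given another admissible pair $(\rho, (\rho^\alpha)_{\alpha \in [0,1]})$, each $\rho^\alpha \leqp \rho \in \MM_T((\mu_t)_{t \in T})$, so Lemma~\ref{lemma:ExtensionInterpolationPCOC}(iii) furnishes $\tilde\rho^\alpha \in \MO(\mathbb{R}^{\tilde T})$ with $\mathsf E^*(\tilde\rho^\alpha) = \rho^\alpha$; write $\tilde\rho = \tilde\rho^1$. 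Injectivity of $\iota$ makes $\mathsf E^*$ injective and, since $\iota^{-1}$ carries the canonical filtration on $\mathbb{R}^T$ to the canonical filtration on $\mathbb{R}^{\tilde T}$ (because $\{E(r) : r \le s\} = \tilde T_{E(s)}$), the martingale property of $\mathsf E^*(\sigma)$ on $\mathbb{R}^T$ is equivalent to that of $\sigma$ on $\mathbb{R}^{\tilde T}$; hence $(\tilde\rho^\alpha)_{\alpha \in [0,1]}$ is a martingale parametrization of $\tilde\rho$ w.r.t.\ $(\nu^\alpha)_{\alpha \in [0,1]}$, with marginals $\tilde\rho^\alpha(X_u \in \cdot) = \rho^\alpha(X_t \in \cdot) = \shadow{\mu_{T_t}}{\nu^\alpha} = \shadow{\tilde\mu_{\tilde T_u}}{\nu^\alpha}$ whenever $u = E(t)$ (Lemma~\ref{lemma:InterpolationPCOC}(iii)), i.e.\ for every $u \in \tilde T$. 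These marginals form right-continuous peacocks, so $\tilde\rho \in \MMC_{\tilde T}$ and $(\tilde\rho^\alpha)_{\alpha \in [0,1]}$ is c\`adl\`ag (Lemma~\ref{lemma:RCandcadlag}(ii), Lemma~\ref{lemma:ShadowRightCont}); thus $(\tilde\rho, (\tilde\rho^\alpha)_{\alpha \in [0,1]})$ meets the hypotheses of Theorem~\ref{thm:GenExistRC} for $(\tilde\mu_u)_{u \in \tilde T}$, so it equals $(\tilde\pi, (\tilde\pi^\alpha)_{\alpha \in [0,1]})$, and applying $\mathsf E^*$ gives $(\rho, (\rho^\alpha)_{\alpha \in [0,1]}) = (\pi, (\pi^\alpha)_{\alpha \in [0,1]})$.

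It remains to transport the differentiability/NSI conclusion, and this is where I expect the one genuinely delicate point. Since $\mathsf E^* = \iota_\#$ with $\iota$ continuous, a $\TO$-test functional on $\mathbb{R}^T$ composed with $\iota$ is again a $\TO$-test functional on $\mathbb{R}^{\tilde T}$, so $\mathsf E^*$ is continuous for $\TO$ (this refines the sequential continuity in Lemma~\ref{lemma:ExtensionInterpolationPCOC}(ii)); thus for $a \in A$ the limit $\hat\pi^a := \lim_{h \downarrow 0} h^{-1}(\pi^{a+h} - \pi^a) = \mathsf E^*(\hat{\tilde\pi}^a) \in \MM_T$ exists in $\PO(\mathbb{R}^T)$ under $\TO$, so $\alpha \mapsto \pi^\alpha$ is right-differentiable on $A$. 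Its right-derivative $\hat\pi^a$ has marginals $(\hat{\tilde\eta}^a_{E(t)})_{t \in T}$, where $(\hat{\tilde\eta}^a_u)_{u \in \tilde T}$ denotes the (NSI) marginal family of $\hat{\tilde\pi}^a$; I would prove this family is again NSI via the criterion Lemma~\ref{lemma:CharacNSI}(ii), i.e.\ by checking $\shadow{(2\hat{\tilde\eta}^a_{E(s)})_{s \in T_t}}{\hat{\tilde\eta}^a_0} = \hat{\tilde\eta}^a_{E(t)}$ for all $t$. The values $2\hat{\tilde\eta}^a_{E(\cdot)}$ are constant along the fibres of $E$, and so are the minimal families produced by Proposition~\ref{prop:MaximalElement}, so the same reparametrization argument that proves Lemma~\ref{lemma:InterpolationPCOC}(iii) would give $\shadow{(2\hat{\tilde\eta}^a_{E(s)})_{s \in T_t}}{\hat{\tilde\eta}^a_0} = \shadow{(2\hat{\tilde\eta}^a_v)_{v \in \tilde T_{E(t)}}}{\hat{\tilde\eta}^a_0}$, whose right-hand side equals $\hat{\tilde\eta}^a_{E(t)}$ because $(\hat{\tilde\eta}^a_u)_{u \in \tilde T}$ is NSI. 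The hard part is exactly this last reparametrization identity applied to the NSI peacock rather than to $(\mu_t)_{t \in T}$ itself (for which it is Lemma~\ref{lemma:InterpolationPCOC}(iii)): it hinges on the fibrewise constancy together with enough regularity of $u \mapsto \hat{\tilde\eta}^a_u$ to transfer admissible families back and forth across the non-injective map $E$, mirroring the use of continuity in the proof of that lemma. Once it is established, Proposition~\ref{prop:NSItoUniq} yields that $\hat\pi^a$ is Markov and uniquely determined by its marginal distributions, which completes the proof.
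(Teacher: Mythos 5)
Your proposal is correct and follows essentially the same route as the paper: push the solution of Theorem \ref{thm:GenExistRC} for the time-changed right-continuous peacock $(\tilde\mu_u)_{u\in\tilde T}$ forward through $\mathsf{E}^*$ for existence, pull competitors back through $\mathsf{E}^*$ for uniqueness, and transfer right-differentiability and the NSI property via the continuity of $\mathsf{E}^*$, Lemma \ref{lemma:InterpolationPCOC} and Lemma \ref{lemma:CharacNSI}. The only remark worth making is that the "delicate point" you flag at the end is less delicate than you fear: the reparametrization identity for the pulled-back NSI peacock needs no regularity of $u\mapsto\hat{\tilde\eta}^a_u$, since the proof of Lemma \ref{lemma:InterpolationPCOC}(iii) (via Proposition \ref{prop:MaximalElement}) uses only that $E$ is increasing and surjective onto $\tilde T$ and that the $T$-indexed family is constant on the fibres of $E$, which holds by definition of the pullback.
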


Observe that Theorem \ref{thm:MostGenExist} encompasses Theorem \ref{thm:GenExist} since $[0,\infty)$ is a totally ordered set with minimal element. The stochastic formulation in the last paragraph of Theorem \ref{thm:GenExist} follows as in Remark \ref{rem:OtherFormulation}.

\begin{proof}
	We prove the claim by reducing to the right-continuous setting in Theorem \ref{thm:GenExistRC}.
	
	\textsf{STEP 1:} \emph{We show that there exists a $\pi \in \MM_{T}((\mu _t)_{t \in T})$ and a martingale parametrization $(\pi ^{\alpha})_{\alpha \in [0,1]}$ of $\pi$ w.r.t.\ $(\nu ^{\alpha})_{\alpha \in [0,1]}$ such that \eqref{eq:ShadowCurve} is satisfied.} Let $(\tilde{\mu}_u)_{u \in \tilde{T}}$ be the right-continuous peacock associated with $(\mu_t)_{t \in T}$ in Lemma \ref{lemma:InterpolationPCOC}.  By Theorem \ref{thm:GenExistRC}, there exists a unique $\tilde{\pi} \in \MMC_{\tilde{T}}((\tilde{\mu}_u)_{u \in \tilde{T}})$ and a unique c\`adl\`ag martingale parametrization $(\tilde{\pi} ^{\alpha})_{\alpha \in [0,1]}$ of $\tilde{\pi}$ that satisfies $	\tilde{\pi} ^{\alpha}(X_u \in \cdot) = \shadow{\tilde{\mu} _{\tilde{T}_u}}{\nu ^{\alpha}} $
	for all $u \in \tilde{T}$ and $\alpha \in [0,1]$.
	Set $\pi = \mathsf{E}^*(\tilde{\pi})$ and $\pi ^{\alpha} = \mathsf{E}^*(\tilde{\pi}^{\alpha})$ for all $\alpha \in [0,1]$  where the map $\mathsf{E}^*$ is defined as in Lemma \ref{lemma:ExtensionInterpolationPCOC}. 
	Then $\pi \in \MM_{T}((\mu _t)_{t \in T})$ and $(\pi ^{\alpha})_{\alpha \in [0,1]}$ is a martingale parametrization of $\pi$ with
	\begin{equation*}
	\pi ^{\alpha}(X_t \in \cdot) = 	\tilde{\pi} ^{\alpha}(X_{E(t)} \in \cdot) =  \shadow{\tilde{\mu} _{\tilde{T}_{E(t)}}}{\nu ^{\alpha}} =  \shadow{\mu_{T_t}}{\nu ^{\alpha}}
	\end{equation*}
	for all $t \in T$ and $\alpha \in [0,1]$ by Lemma \ref{lemma:InterpolationPCOC}.
	
	\textsf{STEP 2:} \emph{We show that $(\pi ^{\alpha})_{\alpha \in [0,1]}$ (and therefore also $\pi = \pi ^1$) is uniquely determined by \eqref{eq:ShadowCurve}.} Let $\rho \in \mathsf{M}_{T}((\mu _t)_{t \in T})$ and let $(\rho ^{\alpha})_{\alpha \in [0,1]}$ be a martingale parametrization of $\rho$ w.r.t.\ $(\nu ^{\alpha})_{\alpha \in [0,1]}$ with
	\begin{equation} \label{eq:Something}
	\rho ^{\alpha}(X_t \in \cdot) = \shadow{\mu _{T_t}}{\nu ^{\alpha}}
	\end{equation}
	for all $\alpha \in [0,1]$ and $t \geq 0$. Let $\tilde{\rho} ^{\alpha} \in (\mathsf{E}^*)^{-1}(\rho ^{\alpha})$ and $t \in T$ with $E(t) = u$. We have
	\begin{equation*}
	\tilde{\rho} ^{\alpha}(X_u \in \cdot)  = \rho  ^{\alpha}(X_{E(t)} \in \cdot) = \shadow{\mu _{\tilde{T}_{E(t)}}}{\nu ^{\alpha}} =  \shadow{\tilde{\mu}_{\tilde{T}_u}}{\nu ^{\alpha}}
	\end{equation*}
	for all $\alpha \in [0,1]$ and $u \in \tilde{T}$ using \eqref{eq:Something} and Lemma \ref{lemma:InterpolationPCOC}.
	The uniqueness part of Theorem \ref{thm:GenExistRC} yields that $\tilde{\pi} ^{\alpha} = \tilde{\rho}^{\alpha}$ and hence $\pi ^{\alpha} = \rho ^{\alpha}$ for all $\alpha \in [0,1]$.
	
	\textsf{STEP 3:} \emph{We show that  there exists a Borel set $A \subset [0,1]$ with $\lambda(A) = 1$ such that for all $a \in A$
	the map $\alpha \mapsto \pi ^{\alpha}$ is right-differentiable at $a$ and the marginals of the right-derivative $\hat{\pi}^a$ at $a$ are a NSI peacock.} By Theorem \ref{thm:GenExistRC}, there exists a Borel set $A \subset [0,1]$ with $\lambda(A) = 1$ such that for all $a \in A$
	the map $\alpha \mapsto \tilde\pi ^{\alpha}$ is right-differentiable at $a$ with right-derivative $ \hat{\tilde{\pi}}^a$  and the marginals of the right-derivative $\hat{\tilde\pi}^a$ at $a$ are a NSI peacock.
	Note that
	\begin{equation*}
	\frac{\pi^{a + h} - \pi ^a}{h} = \mathsf{E}^* \left( \frac{\tilde{\pi}^{a+h} - \tilde{\pi}^a}{h} \right)
	\end{equation*}
	and thus the sequential continuity  proven in Lemma \ref{lemma:ExtensionInterpolationPCOC} implies that at all $a \in A$ the map  $\alpha \mapsto \pi ^{\alpha}$ is right-differentiable with right-derivative $\mathsf{E}^*\left( \hat{\tilde{\pi}}^a \right)$. Since the marginal distributions of the right-derivative  $\hat{\tilde{\pi}}^a$ are a NSI peacock, the same is true for  $\mathsf{E}^*\left( \hat{\tilde{\pi}}^a \right)$  by Lemma \ref{lemma:InterpolationPCOC} in conjunction with Lemma \ref{lemma:CharacNSI}.
\end{proof}

\subsection{Proof of Corollary \ref{thm:intro1}} \label{sec:ProofThmIntro}

Let $(T, \leq)$ be a totally ordered set with minimal element $0 \in T$ and $(\mu _t)_{t \in T}$ a peacock. 

\begin{lemma} \label{lemma:PseudoQuantileFct}
	Let $(\nu ^{\alpha})_{\alpha \in [0,1]}$ a parametrization of $\mu _0$ and let $\pi$ be the shadow martingale w.r.t.\ $(\mu _t)_{t \in T}$ and  $(\nu ^{\alpha})_{\alpha \in [0,1]}$. Moreover, we denote by $(\pi ^{\alpha})_{\alpha \in [0,1]}$ the corresponding parametrization of $\pi$ and by $\hat{\pi}^a$ the right-derivative of $\alpha \mapsto \pi ^{\alpha}$ at $a \in [0,1]$ (if it exists). If there exists a measurable function $q : \mathbb{R} \rightarrow [0,1]$ such that
\[q_{\#} \mu _0 = \lambda\quad\text{ and }\quad\hat{\pi}^{q(x)}(q(X_0) = q(x)) = 1\text{ for }\mu _0\text{-a.e }x \in \mathbb{R},\]
	then for all $n \in \mathbb{N}$, $t_1 \leq \ldots \leq t_n \in T$, $A \in \mathcal{B}(\mathbb{R}^n)$ and $\sigma$-fields $\mathcal{G} \subset \bigvee_{t\in T} \mathcal F_t$ we have $\pi$-a.s.\
	\begin{equation*}
	\mathbb{E}_{\pi} \left[ \1_A(X_{t_1}, \ldots, X_{t_n}) | X_0, \mathcal{G} \right] = \mathbb{E}_{\hat{\pi}^{q(X_0)}} \left[ \1_A(X_{t_1}, \ldots, X_{t_n})  |X_0, \mathcal{G} \right].
	\end{equation*}
\end{lemma}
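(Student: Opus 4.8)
The plan is to recognise $(\hat\pi^a)_{a\in[0,1]}$ as a family of conditional distributions of $\pi$ given the random variable $q\circ X_0$, and then to propagate this through the finer conditioning $\sigma$-field $\mathcal H:=\sigma(X_0)\vee\mathcal G$ by the tower property. I would first recall, from the construction of the shadow martingale (Theorem~\ref{thm:MostGenExist}, via the reductions of Sections~\ref{sec:GenExistCountable}--\ref{sec:AbstractIndexSet}), that $a\mapsto\hat\pi^a$ is measurable, that $\hat\pi^a\in\PO(\mathbb R^T)$ for $\lambda$-a.e.\ $a$, and that $\pi^\alpha=\int_0^\alpha\hat\pi^a\de a$ for every $\alpha\in[0,1]$; in particular $\pi=\int_0^1\hat\pi^a\de a$, and with $\hat\nu^a:=\hat\pi^a(X_0\sowh)$ also $\nu^\alpha=\int_0^\alpha\hat\nu^a\de a$, hence $\mu_0=\int_0^1\hat\nu^a\de a$ and $(q\circ X_0)_\#\pi=q_\#\mu_0=\lambda$.

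The conceptual core is the observation that $\hat\pi^a$ is concentrated on $\{q\circ X_0=a\}$ for $\lambda$-a.e.\ $a$. To see this, set $M:=\{a\in[0,1]:\hat\pi^a(q(X_0)=a)<1\}$, which is measurable since $a\mapsto\hat\pi^a$ is measurable and $\{(a,\omega):q(X_0(\omega))=a\}$ is Borel. If $x\in q^{-1}(M)$, then $q(x)\in M$ means $\hat\pi^{q(x)}(q(X_0)=q(x))<1$, so by the first hypothesis $x$ lies in a fixed $\mu_0$-null set; hence $q^{-1}(M)$ is $\mu_0$-null, and therefore $\lambda(M)=q_\#\mu_0(M)=\mu_0(q^{-1}(M))=0$, using $q_\#\mu_0=\lambda$. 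Combined with $\pi=\int_0^1\hat\pi^a\de a$ and $(q\circ X_0)_\#\pi=\lambda$, this is precisely the statement that $(\hat\pi^a)_a$ is a version of the disintegration of $\pi$ with respect to $q\circ X_0$.

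For the conclusion, fix $n$, $t_1\le\dots\le t_n$ in $T$, $A\in\mathcal B(\mathbb R^n)$ and $\mathcal G$, and write $F:=\1_A(X_{t_1},\dots,X_{t_n})$ and $\psi:=\mathbb E_\pi[F\mid\mathcal H]$. I would choose the conditional expectations $\phi_a:=\mathbb E_{\hat\pi^a}[F\mid\mathcal H]$ jointly measurably in $(a,\omega)$ by taking them to be the $a$-sections of a single conditional expectation on $[0,1]\times\mathbb R^T$ with respect to the coupling $\xi$ of $\lambda$ and $\pi$ attached to $(\pi^\alpha)_{\alpha}$ (cf.\ Remark~\ref{rem:ParamToCpl} and Lemma~\ref{lemma:RightDerivatives}); since $a=q(X_0)$ holds $\xi$-a.s.\ by the previous step, the same picture shows that $\omega\mapsto\phi_{q(X_0(\omega))}(\omega)$ is well defined $\pi$-a.s.\ and is $\mathcal H$-measurable (here one uses $\sigma(X_0)\subseteq\mathcal H$). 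As $\psi$ and $\phi_{q(X_0)}$ are then both bounded and $\mathcal H$-measurable, it suffices to test against an arbitrary bounded $\mathcal H$-measurable $G$: using $\pi=\int_0^1\hat\pi^a\de a$, then $\phi_{q(X_0)}=\phi_a$ $\hat\pi^a$-a.e.\ (since $\hat\pi^a$ lives on $\{q(X_0)=a\}$), then the defining property of $\phi_a$ against the $\mathcal H$-measurable $G$,
\[
\int\phi_{q(X_0)}\,G\de\pi=\int_0^1\!\!\int\phi_a\,G\de\hat\pi^a\de a=\int_0^1\!\!\int F\,G\de\hat\pi^a\de a=\int F\,G\de\pi=\int\psi\,G\de\pi,
\]
and choosing $G=\operatorname{sgn}(\psi-\phi_{q(X_0)})$ gives $\psi=\phi_{q(X_0)}$ $\pi$-a.s., which is the assertion. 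I expect the only genuine obstacle to be the measure-theoretic bookkeeping: selecting the $\phi_a$ measurably in $a$, making sense of $\phi_{q(X_0)}$ off $\pi$-null sets, and matching the ``$\lambda$-a.e.\ in $a$'' statements with ``$\pi$-a.e.\ in $\omega$'' statements; by contrast the structural input and the tower-property computation are short, and neither the martingale property of $\pi$ nor the $\leqcs$-convexity of $(\nu^\alpha)_\alpha$ enters --- only $\pi=\int_0^1\hat\pi^a\de a$ and the two hypotheses on $q$.
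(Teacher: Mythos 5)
Your proof is correct and follows essentially the same route as the paper: both rest on the identity $\pi=\int_0^1\hat\pi^a\,\mathrm da$ (equivalently $\pi=\int_{\mathbb R}\hat\pi^{q(x)}\,\mathrm d\mu_0$ via $q_\#\mu_0=\lambda$), the concentration of $\hat\pi^a$ on $\{q(X_0)=a\}$, and the tower property tested against $\sigma(X_0)\vee\mathcal G$-measurable functions. The only difference is presentational — you make explicit the measurable selection of the conditional expectations and the ``$\lambda$-a.e.\ $a$ versus $\mu_0$-a.e.\ $x$'' translation, which the paper's computation uses implicitly.
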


\begin{proof}
	Let  $n \in \mathbb{N}$, $t_1 \leq \ldots \leq t_n \in T$, $A \in \mathcal{B}(\mathbb{R}^n)$ and fix a $\sigma$-algebra $\mathcal{G} \subset \sigma \left(\bigcup _{t \in T} \mathcal{F}_t\right)$. Moreover, we set $Y := \1_A(X_{t_1}, \ldots, X_{t_n})$.
	For all $B \in \sigma(\sigma(X_0) \cup \mathcal{G})$ we obtain
	\begin{eqnarray*}
		&& \mathbb{E}_{\pi} \left[ \mathbb{E}_{\pi} \left[ Y \vert X_0, \mathcal{G} \right] \cdot \1_B \right] = \mathbb{E}_{\pi} \left[ Y  \cdot \1_{B} \right]  = \int _{\mathbb{R}} \mathbb{E}_{\hat{\pi}^{q(x)}}\left[  Y \cdot \1_B \right] \de \mu _0 \\ 
		&=&  \int _{\mathbb{R}} \mathbb{E}_{\hat{\pi}^{q(x)}}\left[   \mathbb{E}_{\hat{\pi}^{q(x)}}\left[ Y \vert X_0, \mathcal{G} \right]  \cdot \1_{B} \right] \de \mu _0 =  \int _{\mathbb{R}} \mathbb{E}_{\hat{\pi}^{q(x)}}\left[   \mathbb{E}_{\hat{\pi}^{q(X_0)}}\left[  Y \vert X_0, \mathcal{G} \right]  \cdot \1_{B} \right] \de \mu _0 \\ 
		&=& \mathbb{E}_{\pi} \left[ \mathbb{E}_{\hat{\pi}^{q(X_0)}}\left[  Y \vert X_0, \mathcal{G} \right] \cdot \1_{B} \right]. 
	\end{eqnarray*}
	Hence, we have $\mathbb{E}_{\pi} \left[ Y | X_0, \mathcal{G} \right] = \mathbb{E}_{\hat{\pi}^{q(X_0)}} \left[ Y  |X_0, \mathcal{G} \right]$ $\pi$-a.e.\
\end{proof}

\begin{corollary} \label{cor:ShadowMartingalePQF}
	Let $(\mu _t)_{t \in T}$ be a peacock and let $(I_{\alpha})_{\alpha \in [0,1]}$ be a 
	nested family of intervals that satisfies
	\begin{enumerate}
		\item [(i)] $\mu _0(I_{\alpha}) = \alpha$ for any $\alpha \in [0,1)$,
		\item [(ii)] $\sup I_{\alpha} < + \infty$ and $\partial I_{\alpha} \cap \partial I_{\beta} = \emptyset$ for all $\alpha \neq \beta$ in $[0,1]$ and for which
		\item [(iii)] $\alpha \mapsto  \int _{I_{\alpha}} y \de \mu _0(y)$ is a convex function.
	\end{enumerate}
	there exists unique $\pi \in \MM_T((\mu _t)_{t \geq 0})$ such that for all $\rho \in \MM_T((\mu _t)_{t \geq 0})$ we have
	\begin{equation*}
	\quad\mathsf{Law}_{\pi} (X_t | X_0 \in I_{\alpha}) \leqc \mathsf{Law}_{\rho} (X_t | X_0 \in I_{\alpha})
	\end{equation*}
	for all $\alpha \in [0,1]$ and $t \geq 0$.
	Moreover, $(X_0,X_t)_{t \geq 0}$ is a Markov process under $\pi$.
\end{corollary}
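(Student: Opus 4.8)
\textit{Strategy.} The plan is to realise Corollary~\ref{cor:ShadowMartingalePQF} as a special case of Theorem~\ref{thm:MostGenExist} by exhibiting the right parametrization, and then to obtain the Markov statement from Lemma~\ref{lemma:PseudoQuantileFct}. So first I would set $\nu^{\alpha}=(\mu_0)_{|I_{\alpha}}$ for $\alpha\in[0,1)$ and $\nu^1=\mu_0$. Condition (i) gives $\nu^{\alpha}(\R)=\alpha$, the intervals being nested gives $\nu^{\alpha}\leqp\nu^{\beta}$ for $\alpha\leq\beta$, and since $\mu_0(\bigcup_{\alpha<1}I_{\alpha})=1$ we get $\nu^{\alpha}\uparrow\nu^1=\mu_0$; thus $(\nu^{\alpha})_{\alpha\in[0,1]}$ is a parametrization of $\mu_0$. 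To check $\leqcs$-convexity I would apply Lemma~\ref{lemma:ExConvParam} with the nested intervals $J_{\alpha}:=I_{\alpha}$: hypothesis (i) there holds since $\supp(\nu^{\alpha})\subset\overline{I_{\alpha}}$ and $\sup I_{\alpha}<+\infty$ by (ii); hypothesis (ii) there holds since $\nu^{\alpha_2}-\nu^{\alpha_1}=(\mu_0)_{|I_{\alpha_2}\setminus I_{\alpha_1}}$ is supported in $\overline{I_{\alpha_1}^{c}}$; hypothesis (iii) there is precisely our (iii), because $\int_{\R}y\de\nu^{\alpha}(y)=\int_{I_{\alpha}}y\de\mu_0(y)$.

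\textit{Existence and minimality.} Theorem~\ref{thm:MostGenExist} then produces a unique pair $(\pi,(\pi^{\alpha})_{\alpha\in[0,1]})$ with $\pi\in\MM_T((\mu_t)_{t\in T})$, $(\pi^{\alpha})_{\alpha\in[0,1]}$ a martingale parametrization of $\pi$ w.r.t.\ $(\nu^{\alpha})_{\alpha\in[0,1]}$, and $\pi^{\alpha}(X_t\sowh)=\shadow{\mu_{T_t}}{\nu^{\alpha}}$. Since $\nu^{\alpha}$ is a restriction of $\mu_0$, Remark~\ref{rem:uniqueParametrization} identifies $\pi^{\alpha}=\alpha\,\mathsf{Law}_{\pi}(X\mid X_0\in I_{\alpha})$, hence $\alpha\,\mathsf{Law}_{\pi}(X_t\mid X_0\in I_{\alpha})=\shadow{\mu_{T_t}}{\nu^{\alpha}}$. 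For minimality, given $\rho\in\MM_T((\mu_t)_{t\in T})$ and $\alpha\in(0,1)$, the measure $\tfrac1\alpha\,\rho_{|\{X_0\in I_{\alpha}\}}$ is a martingale measure whose time-$0$ marginal is $\nu^{\alpha}$, whose time-$s$ marginal is $\leqp\mu_s$, and whose marginals increase in convex order by Jensen; so $(\alpha\,\mathsf{Law}_{\rho}(X_s\mid X_0\in I_{\alpha}))_{s\in T_t}$ is admissible in the infimum of Proposition~\ref{prop:MaximalElement}, giving $\shadow{\mu_{T_t}}{\nu^{\alpha}}\leqc\alpha\,\mathsf{Law}_{\rho}(X_t\mid X_0\in I_{\alpha})$. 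Dividing by $\alpha$ yields the asserted inequality, while for $\alpha=1$ both sides equal $\mu_t$ (since $\shadow{\mu_{T_t}}{\mu_0}=\mu_t$).

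\textit{Uniqueness of $\pi$.} If $\pi'\in\MM_T((\mu_t)_{t\in T})$ also satisfies the minimality property, then comparing it against $\rho=\pi$ and $\pi$ against $\rho=\pi'$ forces $\mathsf{Law}_{\pi'}(X_t\mid X_0\in I_{\alpha})=\mathsf{Law}_{\pi}(X_t\mid X_0\in I_{\alpha})=\tfrac1\alpha\shadow{\mu_{T_t}}{\nu^{\alpha}}$ for all $\alpha,t$. Consequently $(\alpha\,\mathsf{Law}_{\pi'}(X\mid X_0\in I_{\alpha}))_{\alpha\in[0,1]}$ is a martingale parametrization of $\pi'$ w.r.t.\ $(\nu^{\alpha})_{\alpha\in[0,1]}$ satisfying $\pi'^{\alpha}(X_t\sowh)=\shadow{\mu_{T_t}}{\nu^{\alpha}}$, and the uniqueness clause of Theorem~\ref{thm:MostGenExist} gives $\pi'=\pi$.

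\textit{Markov property and the main obstacle.} For the last assertion I would introduce $q(x)=\inf\{\alpha\in[0,1]:x\in I_{\alpha}\}$ (with $\inf\emptyset=1$) and verify the hypotheses of Lemma~\ref{lemma:PseudoQuantileFct}: first $q_{\#}\mu_0=\lambda$, which follows from (i) together with the boundary-disjointness in (ii) (the only ambiguity in the level of $x$ can occur on the at most countably many points lying in two distinct $\partial I_{\alpha}$, and these carry no $\mu_0$-mass by continuity of $\alpha\mapsto\mu_0(I_{\alpha})=\alpha$); second, for $\lambda$-a.e.\ $a$ the initial marginal $\hat\nu^a=\mathsf{Law}_{\hat\pi^a}(X_0)$ of the right-derivative $\hat\pi^a$ is concentrated on $\{q=a\}$, since $\hat\nu^a=\lim_{h\downarrow0}\tfrac1h(\mu_0)_{|I_{a+h}\setminus I_a}$ and $I_{a+h}\setminus I_a\subset\{a\le q\le a+h\}$; passing from ``$\lambda$-a.e.\ $a$'' to ``$\mu_0$-a.e.\ $x$'' through $q_{\#}\mu_0=\lambda$ then gives $\hat\pi^{q(x)}(q(X_0)=q(x))=1$ for $\mu_0$-a.e.\ $x$. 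With Lemma~\ref{lemma:PseudoQuantileFct} in force, for $s\le t$ and Borel $A$, applying it with $\mathcal{G}=\mathcal{F}_s$ (note $\sigma(X_0)\subset\mathcal{F}_s$) and using that each $\hat\pi^a$ is a Markov martingale measure (Theorem~\ref{thm:MostGenExist}) gives $\E{\pi}{\1_A(X_t)\mid\mathcal{F}_s}=\E{\hat\pi^{q(X_0)}}{\1_A(X_t)\mid X_s}$; applying it once more with $\mathcal{G}=\sigma(X_s)$ and invoking the ordinary Markov property of $\hat\pi^a$ identifies the right-hand side with $\E{\pi}{\1_A(X_t)\mid X_0,X_s}$, which is exactly the Markov property of $(X_0,X_t)_{t\ge 0}$ under $\pi$. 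I expect the construction of $q$ and the careful bookkeeping between $\lambda$-almost every $a$ and $\mu_0$-almost every $x$ — i.e.\ checking the precise hypotheses of Lemma~\ref{lemma:PseudoQuantileFct} — to be the only genuinely delicate point; everything else is either a black-box application of Theorem~\ref{thm:MostGenExist} or a direct consequence of the defining minimality of the obstructed shadow.
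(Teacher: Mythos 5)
Your proof is correct and follows essentially the same route as the paper: realise the interval family as a $\leqcs$-convex parametrization via Lemma \ref{lemma:ExConvParam}, invoke Theorem \ref{thm:MostGenExist} together with Remark \ref{rem:uniqueParametrization} for existence, minimality and uniqueness, and obtain the Markov property from the pseudo-quantile map, Lemma \ref{lemma:PseudoQuantileFct} and the Markov property of the right-derivatives $\hat{\pi}^a$. You are in fact somewhat more explicit than the paper in verifying the hypotheses of Lemma \ref{lemma:PseudoQuantileFct} and in spelling out the minimality and uniqueness steps via Proposition \ref{prop:MaximalElement}.
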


Clearly, this covers the case $T = [0, \infty)$ as stated in Corollary \ref{thm:intro1}.

\begin{proof}
	Lemma \ref{lemma:ExConvParam} shows that $(\nu ^{\alpha})_{\alpha \in [0,1]}$ with $\nu ^{\alpha} = {\mu _0}_{|I_{\alpha}}$ is a $\leqcs$-convex parametrization of $\mu _0$.
	Theorem \ref{thm:MostGenExist} states that there exists a unique shadow martingale $\pi$ w.r.t.\ $(\mu _t)_{t \in T}$ and $(\nu ^{\alpha})_{\alpha \in [0,1]}$. 
	The martingale measure $\pi$ is a solution to the peacock problem w.r.t.\ $(\mu _t)_{t \in T}$ and Remark \ref{rem:uniqueParametrization} yields
	\begin{equation*}
	\alpha \mathrm{Law}_{\pi}(X _t | X_0 \in I_{\alpha}) = \pi ^{\alpha} (X_t \in \cdot) = \shadow{\mu _{T_t}}{\mu _{0|I_{\alpha}}}
	\end{equation*}
	for all $\alpha \in [0,1]$. Thus, $\pi$ is the unique solution to the peacock problem with
	\begin{equation*}
	\mathrm{Law}_{\pi}(X _t | X_0 \in I_{\alpha}) \leq _c \mathrm{Law}_{\rho}(X _t | X_0 \in I_{\alpha})
	\end{equation*}
	for any other $\rho \in \mathsf{M}_{T}((\mu _t)_{t \in T})$.
	
	It remains to show that $(X_0,X_t)_{t \in T}$ is a Markov process under $\pi$. 
	Observe that properties (i)-(iii) of the family $(I_{\alpha})_{\alpha \in [0,1]}$ imply that the pseudo-quantile map $q: \mathbb{R} \rightarrow [0,1]$ defined as
	\begin{equation*}
	x \mapsto q(x) := \sup \{ \alpha \in [0,1]: x \not \in I_{\alpha} \}
	\end{equation*}
	meets the requirements of Lemma \ref{lemma:PseudoQuantileFct}  (W.l.o.g.\ we assume $I_1 = \mathbb{R}$).  Note that the map $q$ is Borel measurable because there exists $x_0 \in \mathbb{R}$ such that $x_0 \in I_{\alpha}$ for all $\alpha > 0$ and 
	$q$ is monotone on $(- \infty, x_0]$ and $[x_0,+ \infty)$. Thus,  for all Borel sets $B \subset \mathbb{R}^2$, we have $\pi$-a.e.
	\begin{align} \label{eq:111}
	\left\{
	\begin{aligned}
	\mathbb{E}_{\pi} \left[\1_{B}(X_0,X_t) \vert X_0, \mathcal{F}_s \right]  &=& \mathbb{E}_{\hat{\pi}^{q(X_0)}} \left[ \1_{B}(X_0,X_t) \vert X_0, \mathcal{F}_s \right]\\
	\mathbb{E}_{\pi} \left[ \1_{B}(X_0,X_t) \vert X_0, X_s \right] &=&  \mathbb{E}_{\hat{\pi}^{q(X_0)}} \left[ \1_{B}(X_0,X_t) \vert X_0, X_s \right] 
	\end{aligned}
	\right. .
	\end{align}
	
	By Theorem \ref{thm:MostGenExist}, there exists $A\subset [0,1]$ with $\lambda(A)=1$ such that for all $a\in A$ the right derivatives $\hat{\pi}^a$ of $\alpha \mapsto \pi ^{\alpha}$ exist and $(X_t)_{t \in T}$ is a Markov process under $\hat{\pi}^a$. Hence, $(X_0,X_t)$ is a Markov process under $\hat{\pi}^a$ and the claim follows with \eqref{eq:111}.
\end{proof}

\section{Examples and counterexamples} \label{sec:Examples}

The purpose of this section is twofold. On the one hand we show in Section \ref{sec:contshadow} that certain continuity properties of the obstructed shadow valid for finite index sets $T=\{0,\ldots,n\}$ do not hold in general. In fact, this is one of the reasons why we had to develop new tools.
On the other hand, we  show  explicit examples of shadow martingales. One particular class of examples allowing for very explicit representations is the class of non-obstructed peacocks introduced in Section \ref{sec:NonObstrPCOC}. In Section \ref{sec:ExplSM}, we give a couple of concrete examples (non-obstructed and obstructed).

\subsection{Continuity of the shadow}\label{sec:contshadow}

In Section \ref{sec:ShadObstbyPCOC} we showed that the map $t \mapsto \shadow{\mu _{T_t}}{\nu}$ is left- resp.\ right-continuous whenever the peacock $\mu _T$ is left- or right-continuous. However, we didn't discuss any continuity properties of the shadow as a function of $\nu$ and $\mu _T$ defined on $\MO(\mathbb{R})$ and $\PP_T$ respectively. For the simple shadow (see Proposition \ref{prop:SimpleShadow}), Juillet showed in  \cite[Theorem 2.30]{Ju14} that
\begin{equation} \label{eq:StabilitySimpShad}
\mathcal{W}_1 \left( \shadow{\mu}{\nu}, \shadow{\mu'}{\nu'} \right) \leq \mathcal{W}_1(\nu, \nu') + 2 \mathcal{W}_1(\mu, \mu')
\end{equation}
for all $\nu, \nu ' \in \MO(\mathbb{R})$, $\nu \leqcp \mu$ and $\nu' \leqcp \mu'$ where the Wasserstein distance extends to finite non-probability measures through the dual Kantorovich equivalent definition of $\mathcal{W}_1$ (cf. \cite[Section 1]{Ju14}).

Similarly, an inductive application of \eqref{eq:StabilitySimpShad} in Lemma \ref{lemma:FinteShadowMinimal} implies that for finite $T$ the map  $\mu _T \mapsto \shadow{\mu _T}{\nu}$ is continuous w.r.t.\ pointwise convergence of $\mu _T$ (If $\mu _T$ is a peacock, then this is exactly the topology on $\PP_T$). 
We want to highlight a few implications of this continuity for the class of NSI peacocks in the case of a finite index set $T$  (where $T_t = \{s \leq t : s \in T \}$ as previously):
\begin{proposition}\label{pro:finiteT}
	Let $T$ be finite with minimal element $0$.
	\begin{itemize}
		\item [(i)] The set of NSI peacocks is a closed subset of $\PO(\mathbb{R})^T$. 
		\item [(ii)] Given an arbitrary peacock $(\eta _t)_{t \in T}$, the peacock $(\mu _t)_{t \in T}$ with $$\mu _t:= \lim _{n \rightarrow \infty}(\shadow{(n \eta _s)_{s \in T_t}}{\eta _0})_{n \in \mathbb{N}}$$ for all $t \in T$ is NSI.\label{ii_and_iii}
		\item [(iii)] Let $(\mu _t)_{t \in T}$ be a peacock, $(\nu ^{\alpha})_{\alpha \in [0,1]}$ some parametrization of $\mu_0$  and $(\pi ^{\alpha})_{\alpha \in [0,1]}$  the martingale parametrization of a shadow martingale $\pi$ w.r.t.\ $(\mu _t)_{t \in T}$ and $(\nu ^{\alpha})_{\alpha \in [0,1]}$. If the right-derivative $\hat{\pi}^a$ exists, the family of marginal distributions is NSI.
	\end{itemize}
\end{proposition}

\begin{proof}
		\begin{itemize}

	\item [(i)] Let $(\mu_T^n)_{n \in \mathbb{N}}$ be a sequence of NSI peacocks that converge in $\mathcal{P}_1(\mathbb{R})^T$ to a family of probability measures $\mu_T$. Lemma \ref{lemma:preservingOrder}(ii) yields that $\mu_T$ is again a peacock. Moreover,  for all $t \in T$  we know that  $(\mu_t^n)_{n \in \mathbb{N}}$ converges to $\mu_t$ and $(2\mu_t^n)_{n \in \mathbb{N}}$ converges to $2\mu_t$ under $\TO$. Hence, we obtain by using the stability inequality \eqref{eq:StabilitySimpShad} at most $|T|$ times that
	\begin{equation*}
		\shadow{(2\mu_s)_{s \in T_t}}{\mu_0} = \lim_{n \rightarrow \mathbb{N}} \shadow{(2\mu^n_s)_{s \in T_t}}{\mu^n_0} = \lim_{n \rightarrow \mathbb{N}}  \mu^n_t = \mu_t
	\end{equation*}
	for all $t \in T$ where the limits are w.r.t.\ $\TO$. The second equality is a consequence of  Lemma \ref{lemma:CharacNSI} and we also deduce from Lemma \ref{lemma:CharacNSI} that $\mu_T$ is NSI.
	
	\item [(ii)] For all $t \in T$ the sequence $(\shadow{(n \eta_s)_{s \in T_t}}{\eta_0})_{n \in \mathbb{N}}$ is monotonously decreasing w.r.t.\ $\leq_c$ and bounded from below by $\eta_0$. Hence, there exists a limit $\mu_t$ under $\TO$. We deduce from Lemma \ref{lemma:preservingOrder}(ii) that $\mu_T$ is a peacock. Directly from the definition of the shadow we see that $\mu_0 = \eta_0$.  Moreover, for all $t \in T$ we obtain by using the stability inequality \eqref{eq:StabilitySimpShad} at most   $|T|$ times that
	\begin{align*}
		\shadow{(2 \mu_s)_{s \in T_t}}{\mu_0} &= \lim _{n \rightarrow \infty} \shadow{(2 \shadow{(n \eta_r)_{r \in T_s}}{\eta_0})_{s \in T_t}}{\mu_0} \\
		&= \lim _{n \rightarrow \infty} \shadow{(2 \shadow{(n \eta_r)_{r \in T_s}}{\eta_0})_{s \in T_t}}{\eta_0} \\
		&= \lim _{n \rightarrow \infty} \shadow{(2 n \eta_s)_{s \in T_t}}{\eta_0} = \mu_t
	\end{align*}
	where the limits are w.r.t.\ $\TO$. Here the third equality is true for all $n \in \mathbb{N}$. Indeed, first observe that for every $s$
	\begin{align*}
		 2 \shadow{(n \eta_r)_{r \in T_s}}{\eta_0} \leqp 2n\eta_s .
	\end{align*}
This implies that we have
	\begin{align}\label{eq:eisessen}
	 \shadow{(2 \shadow{(n \eta_r)_{r \in T_s}}{\eta_0})_{s \in T_t}}{\eta_0} &\geqc \shadow{(2 n \eta_s)_{s \in T_t}}{\eta_0}.
	\end{align}
Moreover, we have  
	\begin{align*}
	\eta_0\leqc \shadow{(2 n \eta_s)_{s \in T_t}}{\eta_0} \leqp \shadow{(2 n \eta_s)_{s \in T_t}}{2\eta_0} = 2 \shadow{(n \eta_s)_{s \in T_t}}{\eta_0}
	\end{align*}
so that $\shadow{(2 n \eta_s)_{s \in T_t}}{\eta_0}$ is a candidate for $\shadow{(2 \shadow{(n \eta_r)_{r \in T_s}}{\eta_0})_{s \in T_t}}{\eta_0}$ in the sense of Proposition \ref{prop:MaximalElement} which therefore yields the opposite inequality to \eqref{eq:eisessen} and hence third equality above.
	 Lemma \ref{lemma:CharacNSI} yields that $\mu_T$ is NSI. 
	
	\item [(iii)] We first assume that $(\nu ^{\alpha})_{\alpha \in [0,1]}$ is the sunset parametrization of $\mu _0$ and the right-derivative of $\alpha \mapsto \pi^\alpha$ exists at $a \in [0,1)$. For all $t \in T$, we have using Lemma \ref{lemma:ShadowFactor} and Proposition \ref{prop:ShadowAssoc}
	\begin{align*}
		\hat{\pi}^a(X_t \sowh) &= \lim _{h \downarrow 0} \frac{1}{h} \left(\pi^{a + h}(X_t \sowh) - \pi^{a }(X_t \sowh)  \right)\\
		&= \lim_{h \downarrow 0} \frac1h \left[\shadow{(\mu_s)_{s\in T_t}}{(a+h)\mu_0}-\shadow{(\mu_s)_{s\in T_t}}{a\mu_0}\right]\\
		 &= \lim _{h \downarrow 0}\shadow{(h^{-1}[\mu_s-\shadow{\mu_{T_s}}{a\mu_0}])_{s \in T}}{ \mu_0}.		
	\end{align*}
	Hence, since $(\mu_t-\shadow{(\mu_s)_{s\in T_t}}{a\mu_0})_{t\in T}$ is a peacock (see Remark \ref{rem:RestIsPCOC}), (ii) implies that the marginal distributions under $\hat{\pi}^a$ are a NSI peacock.
	For any other parametrization we obtain with the same argument
	\[\hat{\pi}^a(X_t \sowh)=\lim _{h \downarrow 0}\mathcal{S}^{(h^{-1}[\mu_s-\shadow{\mu_{T_s}}{\nu^a}])_{s \in T}}\left(\frac{\nu^{a+h}-\nu^a}{h}\right).\]
By inequatilty \eqref{eq:StabilitySimpShad} we can replace $\frac{\nu^{a+h}-\nu^a}{h}$ by its limit $\hat \nu^a$ and conclude by (ii).
\end{itemize}\qedhere

\end{proof}

Armed with Proposition \ref{pro:finiteT} and independently from Theorem \ref{thm:MostGenExist} we can give a short proof of the uniqueness of shadow martingales for a finite set $T$. Moreover, appealing to the connection of one step NSI peacocks and Kellerer dilations explained in Remark \ref{rem:NSIandKellerer} it follows that shadow martingales can be disintegrated into binomial martingales whose law is by definition a concatenation of Kellerer dilations (cf.\ \cite[Proposition 8.5]{NuStTa17}). In the special case of the left-curtain parametrization, we recover \cite[Theorem 8.3]{NuStTa17} by Nutz, Stebegg, and Tan.

\begin{corollary}\label{cor:thm83}
	Let $T=\{0,1,\ldots,n\}$ be finite, $(\mu_t)_{t \in T}$ be a peacock, and $(\nu^\alpha)_{\alpha\in[0,1]}$ be any parametrization of $\mu_0$. 
	There exists a unique shadow martingale $\pi$ w.r.t.\ $(\mu_t)_{t \in T}$
	and a unique martingale parametrization $(\pi ^{\alpha})_{\alpha \in [0,1]}$ of $\pi$ w.r.t.\ $(\nu ^{\alpha})_{\alpha \in [0,1]}$ such that $\alpha \mapsto \pi^{\alpha}$ is right-differentiable at $\lambda$-a.e.\ $a\in [0,1]$ and the right-derivative $\hat{\pi}^a$, whenever it exists, is a binomial martingale measure. If moreover $\mu_0(\{x\})=0$ for all $x \in \mathbb{R}$, the left-curtain shadow martingale is binomial.
\end{corollary}

\begin{proof}
	By Proposition \ref{prop:CountableExistSM} there exists a shadow martingale $\pi$ w.r.t.\ $(\mu _t)_{t \in T}$ and $(\nu ^{\alpha})_{\alpha \in [0,1]}$. Denote by $(\pi ^{\alpha})_{\alpha \in [0,1]}$ the corresponding martingale parametrization (recall that Proposition \ref{prop:CountableExistSM} does not require that $(\nu ^{\alpha})_{\alpha \in [0,1]}$ is $\leqcs$-convex). 
	
	By Lemma \ref{lemma:RightDerivatives} there exists a Borel set $A \subset [0,1]$ with $\lambda(A) = 1$ such that $\alpha \mapsto \pi ^{\alpha}$ is right-differentiable for all $a \in A$. For each $a \in A$, we denote by $(\eta ^a)_{t \in T}$ the marginal distributions of the right-derivative $\hat{\pi}^a$. Proposition \ref{pro:finiteT} (iii) yields that $(\eta ^a _t)_{t \in T}$ is NSI for all $a \in A$ and the associated martingale is unique.
	
	For each $a \in A$, by Definition \ref{def:pcoc} we have that the one-step peacocks $(\eta ^a _{i}, \eta ^a _{i+1})$ are NSI for $0 \leq i \leq n-1$. Thus, Remark \ref{rem:NSIandKellerer} yields that $\hat{\pi}^a$ is a binomial martingale. In particular, $\hat{\pi}^a$ is uniquely determined by its marginals and therefore $\pi$ and $(\pi ^{\alpha})_{\alpha \in [0,1]}$ are uniquely determined by their marginals.	
	If $\mu_0(\{x\})=0$ for all $x \in \mathbb{R}$ and $(\nu ^{\alpha})_{\alpha \in [0,1]}$ is the left-curtain parametrization, we obtain by Lemma \ref{lemma:PseudoQuantileFct} that $\mathrm{Law}_{\pi}(X | X_0) = \hat{\pi}^q(X_0)$, $\pi$-a.s.\ where $q = F_{\mu_0} ^{-1}$ is the quantile function of $\mu_0$. The claim follows from the first part.
\end{proof}

Let us go back to the  case of an infinite index set $T$. As Example \ref{expl:ShadowNotCont} below shows, the continuity of $\mu _{S} \mapsto \shadow{\mu _S}{\nu}$ fails in general. Moreover, neither of the items (i)-(iii) of Proposition \ref{pro:finiteT} are  true any more as Examples \ref{expl:ShadowNotCont} and \ref{expl:RDNotNSI} show.

We would like to stress that the lack of item (ii) (and hence (iii)) of Proposition \ref{pro:finiteT} is the main point separating the case of finite index sets from the one of a countably infinite index set $T$. As a particular consequence, we could not rely on argumentations from \cite{BeJu21} and \cite{NuStTa17} to show uniqueness of shadow martingales but had to develop a new approach.

In Examples \ref{expl:ShadowNotCont} and \ref{expl:RDNotNSI} we consider $T=[0,1]$, or the countable index set $T=[0,1]\cap\mathbb{Q}$.

\begin{example}[Discontinuity of the shadow, NSI is not closed] \label{expl:ShadowNotCont}Define $(\mu^n _t)_{t \in [0,1]}$ by
	\begin{equation*}
	\mu _t ^n = \begin{cases}
	\delta_0   &t \in \left[0, \frac{n-1}{n} \right) \\
	\frac{ \delta _{-1} + \delta _1}{2} &t \in \left[\frac{n-1}{n},1 \right) \\
	\frac{1}{2} \delta _0 + \frac{1}{4} \left( \delta _{-2} + \delta _{2} \right) &t = 1
	\end{cases}
	\end{equation*}
	for $t \in [0,1]$ and $n \in \mathbb{N}$. For all $t \in [0,1]$ the sequence $(\mu _t ^n)_{n \in \mathbb{N}}$  converges in $\MO(\mathbb{R})$ to
	\begin{equation*}
	\mu _t = \begin{cases}
	\delta _0 &t < 1 \\
	\frac{1}{2} \delta _0 + \frac{1}{4} \left( \delta _{-2} + \delta _{2} \right) &t = 1
	\end{cases}.
	\end{equation*}
	But for $\nu = \frac{1}{2}\delta _0$, we have $\lim _{n \rightarrow \infty} \shadow{\mu^n_{[0,1]}}{\nu} = \frac{1}{4} \delta _0 + \frac{1}{8} \left( \delta _{-2} + \delta _{2} \right) \neq \frac{1}{2} \delta _0 = \shadow{\mu _{[0,1]}}{\nu}.$
	
	Thus, the map  $\tilde\mu _{[0,1]} \mapsto \shadow{\tilde\mu _{[0,1]}}{\nu}$  is not (sequentially) continuous.
	Moreover, any element of the sequence $(\mu^n _t)_{t \in [0,1]}$ is NSI but the limit $(\mu_t)_{t\in[0,1]}$ is not NSI by Lemma \ref{lemma:CharacNSI}. Hence the subset of NSI peacocks in $\PP_{[0,1]}$ is not closed. 
\end{example}

\begin{example}[Right-derivatives are not NSI] \label{expl:RDNotNSI}
	Let $(\mu _t)_{t \in [0,1]}$ be defined by
	\begin{equation*}
	\mu _t  = \begin{cases}
	(1-\frac{t}2) \delta_0 + \frac{t}{4} \left( \delta _{-1} + \delta _1\right) &t < 1 \\
	\frac{3}{4} \delta _0 + \frac{1}{8} \left( \delta _{-2} + \delta _{2} \right) &t = 1
	\end{cases}
	\end{equation*}
	for $t \in [0,1]$ and let $\nu ^{\alpha} = \alpha \mu _0 = \alpha \delta _0 $ 
	be the sunset parametrization of $\mu _0$. Moreover, let $\pi$ be the shadow martingale w.r.t.\ $(\mu _t)_{t \in [0,1]}$ and $(\nu ^{\alpha})_{\alpha \in [0,1]}$ and $(\pi ^{\alpha})_{\alpha \in [0,1]}$ the corresponding martingale parametrization. 
	For all $h > 0$ (small enough) and  $a \in [0,1)$, it holds $ 	\frac{\nu ^{a + h} - \nu ^a }{h} = \mu _0 = \delta _0.$
	Thus, the right-derivative of $\alpha \mapsto \nu ^{\alpha}$ exists for all $a \in [0,1)$ and equals $\mu _0$. Actually, it is not difficult to show that the right-derivatives $\hat{\pi}^a$ of $\alpha \mapsto \pi ^{\alpha}$ exist for all $a \in [0,1)$.
	However,  the marginal distributions of $\hat{\pi}^{\frac{1}{2}}$ are
	\begin{equation}\label{eq:expl9.2}
	\hat{\pi}^{\frac{1}{2}} (X_t \sowh) = \lim _{n \rightarrow \infty} \shadow{(n (\mu _s - \shadow{\mu_{T_s}}{\frac12\mu _0}))_{s \in T_t}}{\mu _0} = \begin{cases}
	\delta _0 & t < 1 \\
	\frac{1}{2} \delta _0 + \frac{1}{4} \left( \delta _{-2} + \delta _{2} \right) &t = 1.
	\end{cases}
	\end{equation}
	Observe that $(\mu _s - \shadow{\mu_{T_s}}{\frac12\mu _0}))_{s \geq0}$  is a peacock by Remark \ref{rem:RestIsPCOC}. Moreover, Lemma \ref{lemma:CharacNSI} implies that the family of measures on the right-hand side of \eqref{eq:expl9.2} is not an NSI peacock. Hence, items (ii) and (iii) of Proposition \ref{pro:finiteT} are not satisfied.

NSI implies uniqueness of the martingale associated to the marginals. Note that even without NSI in the present example the martingale for parameter $a=1/2$ is also unique. However, by splitting the Dirac mass $\delta_0$ in the definition of $\mu_t$ and replacing it for instance by $\frac12 (\delta_{-1/10}+\delta_{1/10})$ we  see that the resulting peacock can be represented by several different martingales. This phenomena does not occur for $T$ finite, recall (iii) in Proposition \ref{pro:finiteT}.
\end{example}

\subsection{Non-obstructed shadows} \label{sec:NonObstrPCOC}

We fix a totally ordered set $(T,\leq)$ with minimal element $0\in T$.
In this section, we consider the special case that the additional obstructions  in the shadow between $\mu _0$ and $\mu _t$ imposed by the marginals $(\mu _s)_{s \in T_t}$ are not binding, i.e. $\shadow{\mu _{T_t}}{\nu } = \shadow{\mu _t}{\nu }$ for all $t \in T$.  The associated shadow martingales allow for rather explicit representations as will be shown in Proposition \ref{prop:NonObstructedPCOC} and illustrated in Examples \ref{expl:MCSM}--\ref{expl:SunSM}.

\begin{definition}
	Let $(\mu _t)_{t \in T}$ be a family in $\PO(\mathbb{R})$ and $\nu \leqcp \mu _{T}$. 
	We say that the shadow of $\nu$ in $(\mu _t)_{t \in T}$ is non-obstructed if for all $t \in T$ we have $ 	\shadow{\mu _{T_t}}{\nu} = \shadow{\mu _t}{\nu}$.
\end{definition}

\begin{lemma} \label{lemma:CharNonObs}
	Let $(\mu _t)_{t \in T}$ be a family in $\PO(\mathbb{R})$ and $\nu \leqcp \mu _{T}$. The shadow of $\nu$ in $(\mu _t)_{t \in T}$ is non-obstructed if for all $s \leq t$ in $T$ one of the following equivalent conditions is satisfied:
	\[\mathrm{(i})\;\shadow{\mu _s}{\nu} \leqc \shadow{\mu _t}{\nu}\quad\text{or}\quad\mathrm{(ii})\; \shadow{\mu _t}{\nu}=\shadow{\mu_t}{\shadow{\mu_s}{\nu}} \ .\]
\end{lemma}

\begin{proof}
	For fixed $s \leq t$ in $T$, the equivalence of (i) and (ii) is straightforward. If (i) is satisfied for all $s \leq t$, applying Proposition \ref{prop:MaximalElement} to $(\shadow{\mu _s}{\nu})_{s \in T_t}$ finishes the proof.
\end{proof}

We are interested in pairs of peacocks $(\mu _t)_{t\in T}$ and $\leqcs$-convex parametrization $(\nu ^{\alpha})_{\alpha \in [0,1]}$ of $\mu _0$ for which the shadow of $\nu^\alpha$ in $(\mu _t)_{t \in T}$ is non-obstructed, for all $\alpha \in [0,1]$. For an example available in the literature, one can consider  the middle-curtain parametrization in combination with peacocks increasing in \textit{diatomic convex order}, see \cite{Ju16} and \cite[Section 3.1.3]{BeJu21}.  Setting $x$ the barycenter of $\mu _0$, this condition can be formulated by $\shadow{\mu_s}{\alpha \delta _x} \leqc \shadow{\mu_t}{\alpha \delta _x}$ for all $0 \leq s \leq t$ and $\alpha \in [0,1]$. One can easily check that it precisely corresponds to peacocks for which the middle-curtain parametrization has non-obstructed shadows. Note that in \cite{Ju16} a non-Markov generalization of Kellerer's Theorem \ref{prop:KellerersThm} is given for peacocks in $\mathcal P(\R)$ increasing in diatomic convex order that are indexed by a \textit{partially} ordered set.

The following lemma describes a class of peacocks non-obstructed by parametrizations of ``interval type'':

\begin{lemma}
	\label{lemma:dispersivePCOC}
	Let $(\mu _t)_{t \in T}$ be a peacock such that there exists a nested family of closed intervals $(I_t)_{t \in T}$ with
	\begin{enumerate}
		\item [(i)]  $\mathrm{supp}(\mu_t) \subset I_t$ for all $t \in T$ and
		\item [(ii)] ${\mu _t}_{|I_s} \leq _+ {\mu _s}_{|I_s}$ for all $s \leq t$ in $T$.	 
	\end{enumerate}
	If additionally $\mu _t(\{x\}) = 0$ for all $x \in \mathbb{R}$ and $t \in T$, then, for any closed interval $I \subset \mathbb{R}$, the shadow of $\nu = {\mu _0}_{|I}$ in $(\mu _t)_{t \in T}$ is non-obstructed.
\end{lemma}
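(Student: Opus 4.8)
The plan is to verify the criterion in Lemma \ref{lemma:CharNonObs}, namely that $\shadow{\mu_s}{\nu}\leqc\shadow{\mu_t}{\nu}$ for all $s\leq t$ in $T$, where $\nu={\mu_0}_{|I}$ for a closed interval $I$. First I would reduce the problem to understanding the simple shadow $\shadow{\mu_s}{\nu}$ explicitly. Since $\nu={\mu_0}_{|I}$ is the restriction of $\mu_0$ to a closed interval and $\mathrm{supp}(\mu_0)\subset I_0$ with $\mu_0$ atomless, one expects (as in Example \ref{expl:ShadowOfAtom} and its interval variants) that $\shadow{\mu_s}{\nu}={\mu_s}_{|J_s}$ for a suitable closed interval $J_s$ — more precisely, that the shadow of a measure concentrated on an interval, inside an atomless measure, is again the restriction to an interval (or to an interval plus boundary atoms, but atomlessness of $\mu_s$ rules out the atoms). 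The key structural input is hypothesis (ii): ${\mu_t}_{|I_s}\leq_+{\mu_s}_{|I_s}$, which says the family is "dispersive" — mass only leaves the interval $I_s$ as time increases, it never enters. I would use this to pin down the intervals $J_s$ and show they are nested and compatible across times.

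The main step is then to show $\shadow{\mu_s}{\nu}\leqc\shadow{\mu_t}{\nu}$, or equivalently (by Lemma \ref{lemma:CharNonObs}(ii)) that $\shadow{\mu_t}{\nu}=\shadow{\mu_t}{\shadow{\mu_s}{\nu}}$. Writing $\eta_s=\shadow{\mu_s}{\nu}$, it suffices by the minimality defining the shadow (Proposition \ref{prop:SimpleShadow}) to check that $\eta_s\leqcp\mu_t$ and that $\eta_s$ is the convex-minimal submeasure of $\mu_t$ dominating $\nu$ in convex order. The first containment $\eta_s\leqcp\mu_t$: since $\eta_s={\mu_s}_{|J_s}$ with $J_s\subset I_s$, hypothesis (ii) gives ${\mu_t}_{|J_s}\leq_+{\mu_s}_{|J_s}={\mu_s}_{|I_s}{}_{|J_s}\geq\eta_s$ — wait, that inequality goes the wrong way, so instead I would argue as follows: $\eta_s$ and $\mu_t$ have comparable masses on $I_s$, and outside $I_s$ the measure $\mu_t$ has extra mass at the tails which can only help in the convex order; using potential functions (Lemma \ref{lemma:shadowPF}, Lemma \ref{lemma:propertiesPotF}) one shows $U(\mu_t)-U(\eta_s)$ is nonnegative and that its convex hull has the right behaviour at $\pm\infty$, which is exactly $\eta_s\leqcp\mu_t$. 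Then, to get that $\shadow{\mu_t}{\eta_s}=\shadow{\mu_t}{\nu}$, I would invoke Proposition \ref{prop:propertiesShadow}(ii) (associativity of shadows) together with the observation that the "leftover" mass $\mu_t-\eta_s$ is precisely what the obstructed-shadow machinery in Remark \ref{rem:RestIsPCOC} and Proposition \ref{prop:ShadowAssoc} keeps track of; concretely, $\shadow{\mu_t}{\nu}=\shadow{\mu_t}{\eta_s}$ because $\nu\leqc\eta_s\leqp\mu_s$ and the dispersivity condition guarantees that moving through $\mu_s$ first does not spread $\nu$ out more than going directly to $\mu_t$.

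An alternative and perhaps cleaner route, which I would pursue in parallel, is to work entirely with potential functions. Set $f_s(x)=U(\mu_s)(x)-U(\nu)(x)$; by Lemma \ref{lemma:shadowPF}, $U(\shadow{\mu_s}{\nu})=U(\mu_s)-\mathrm{conv}(f_s)$. The claim $\shadow{\mu_s}{\nu}\leqc\shadow{\mu_t}{\nu}$ becomes $U(\mu_s)-\mathrm{conv}(f_s)\leq U(\mu_t)-\mathrm{conv}(f_t)$, i.e. $\mathrm{conv}(f_t)-\mathrm{conv}(f_s)\leq U(\mu_t)-U(\mu_s)$. Now $f_t-f_s=U(\mu_t)-U(\mu_s)\geq 0$ (peacock), so $f_t\geq f_s$, hence $\mathrm{conv}(f_t)\geq\mathrm{conv}(f_s)$; the work is to bound the increase of the convex hulls by the increase $U(\mu_t)-U(\mu_s)$ of the potential functions themselves. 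This is where hypotheses (i) and (ii) enter: they control \emph{where} $f_s$ and $f_t$ fail to be convex (namely on the complement of the interval where $\nu$ and $\mu_s$ agree up to the interval structure), and the dispersivity ensures the non-convexity region does not grow in a way that outpaces $U(\mu_t)-U(\mu_s)$. I expect this last estimate on the convex hulls — showing the "pulling up to the convex hull" is dominated by the pointwise increase of the potentials, using that $\mu_t$ only loses interior mass of $I_s$ and gains it in the tails — to be the main obstacle; the reduction steps and the identification of shadows with interval restrictions should be routine given Example \ref{expl:ShadowOfAtom} and the calculus in Section \ref{sec:Shadow}.
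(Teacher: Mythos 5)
Your framework is the right one---reduce to Lemma \ref{lemma:CharNonObs} and exploit the interval structure of the relevant shadows---but the proposal has a genuine gap at its core. The structural claim you lean on, that the shadow of an interval-supported measure in an atomless measure is again an interval restriction, is false in general: take $\nu=\frac12\mathrm{Unif}_{[-1,1]}$ and $\mu=\mathrm{Unif}_{[-1,1]}$; then $\nu\leqp\mu$, so $\shadow{\mu}{\nu}=\nu$, which is not of the form $\mu_{|J}$. What makes the interval structure true here is precisely hypothesis (ii), and although you flag (ii) as ``the key structural input'' you never actually deploy it. The missing step is the decomposition $\mu_{s|I}=\mu_{t|I}+(\mu_{s|I}-\mu_{t|I})$ for $s\leq t$ and $I\subset I_s$, where the second summand is a nonnegative measure \emph{exactly because of} (ii); combining this with the associativity rule of Proposition \ref{prop:propertiesShadow}(ii) and Example \ref{expl:ShadowOfAtom}(ii) (the first summand is already a submeasure of $\mu_t$; the second is supported in $I$ while the leftover $\mu_{t|I^c}$ is supported in $\overline{I^c}$) yields
\begin{equation*}
\shadow{\mu_t}{\mu_{s|I}}=\mu_{t|I}+\mu_{t|I^c\cap J}=\mu_{t|J}
\end{equation*}
for a closed interval $J\supset I$.

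Your ``main step'' is then circular: asserting that dispersivity ``guarantees that moving through $\mu_s$ first does not spread $\nu$ out more than going directly to $\mu_t$'' restates the conclusion rather than proving it, and the potential-function route is abandoned exactly at the estimate $\mathrm{conv}(f_t)-\mathrm{conv}(f_s)\leq U(\mu_t)-U(\mu_s)$ that you correctly identify as the crux (note that Lemma \ref{lemma:shadowPF} gives the \emph{reverse} inequality for free, so this direction genuinely needs the hypotheses). The clean finish is to apply the displayed identity three times: it shows that $\shadow{\mu_s}{\nu}=\mu_{s|J''}$ for some closed interval $J''\subset I_s$, and hence that both $\shadow{\mu_t}{\nu}$ and $\shadow{\mu_t}{\shadow{\mu_s}{\nu}}=\shadow{\mu_t}{\mu_{s|J''}}$ are restrictions of $\mu_t$ to closed intervals. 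Both dominate $\nu$ in convex order, hence share its mass and barycenter by Lemma \ref{lemma:relationOrders}(i), and two interval restrictions of the same atomless measure with equal mass and barycenter coincide. Therefore $\shadow{\mu_s}{\nu}\leqc\shadow{\mu_t}{\shadow{\mu_s}{\nu}}=\shadow{\mu_t}{\nu}$, which is condition (i) of Lemma \ref{lemma:CharNonObs}. This also disposes of the domination issue you got stuck on, since $\mu_{s|J''}\leqp\mu_s\leqc\mu_t$ already gives $\shadow{\mu_s}{\nu}\leqcp\mu_t$ directly.
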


\begin{proof}
	Let $s \leq t$ in $T$ and $I \subset \mathbb{R}$ be an interval. W.l.o.g.\ we may assume $I \subset I_s$. Property (ii) yields that $({\mu _s}_{|I} - {\mu _t}_{|I}) \in \MO(\mathbb{R})$ and thus by Proposition \ref{prop:propertiesShadow} (ii)
	\begin{equation} \label{eq:intervalStructure}
	\shadow{\mu _t}{\mu _{s |I}} = \mu _{t |I} +  \shadow{\mu _{t |I^c}}{\mu _{s |I} - {\mu _{t}}_{|I}} = \mu _{t |I} + \mu _{t |I^c \cap J} = \mu _{t |J}
	\end{equation} 
	for some closed interval $J$ with $I \subset J \subset I_t$. Such an interval $J$ exists because $\mathrm{supp}({\mu _s} _{|I} - {\mu _{t}}_{|I})$ is contained in the interval $I$ and $\mathrm{supp}(\mu _{t |I^c})$ belongs to the closure of the complement of $I$ (see Lemma \ref{expl:ShadowOfAtom} (ii)). Applying \eqref{eq:intervalStructure} twice yields both $\shadow{\mu _t}{\nu} = \mu _{t |J}$ and $\shadow{\mu _t}{\shadow{\mu _s}{\nu}} = \mu _{t |J'}$ for two intervals $J$ and $J'$. However, since 
	both measures are in convex order greater than $\nu$, they have the same mass and barycenter by Lemma \ref{lemma:relationOrders} (i) and hence $\mu _{t |J}  =\mu _{t | J'}$. Thus, we have proven that $\shadow{\mu _s}{\nu} \leqc  \shadow{\mu _t}{\nu}$ because $\shadow{\mu _s}{\nu} \leqc \shadow{\mu _t}{\shadow{\mu _s}{\nu}}$ by default.
	The claim follows by Lemma  \ref{lemma:CharNonObs}.
\end{proof}

Note that the condition in Lemma \ref{lemma:dispersivePCOC} is conceptually very similar to the \textit{Dispersion Assumption} introduced by Hobson and Norgilas in \cite{HoNo17}. Recall that $T$ is a totally ordered set with minimal element $0$.

\begin{proposition}
	\label{prop:NonObstructedPCOC}
	Let  $(\mu _t)_{t \in T}$ be a peacock and $(\nu ^{\alpha})_{\alpha \in [0,1]}$ a $\leqcs$-convex parametrization of $\mu _0$. Let $\pi \in \MM_{T}((\mu _t)_{t \in T})$ be the corresponding shadow martingale and $(\hat{\pi}^a)_{a \in [0,1]}$ the family of right-derivatives. 
	For all $x \in \mathbb{R}$ and $a \in [0,1]$, we define 
	the maps $C_+^{x,a}, C_-^{x,a}: [0,1] \rightarrow \mathbb{R}$ as
	\begin{eqnarray*}
		C_+^{x,a} (t) &=& \inf \left\{ [x, + \infty ) \cap \mathrm{supp} ( \mu _t- \shadow{\mu _t}{\nu ^a} ) \right\} \hspace{0,2cm} \text{and} \\
		C_-^{x,a} (t) &=& \sup \left\{ (- \infty, x] \cap \mathrm{supp} ( \mu _t- \shadow{\mu _t}{\nu ^a} ) \right\}.
	\end{eqnarray*}
	If  the shadow of $\nu^{\alpha}$ in $(\mu _t)_{t \in T}$ is non-obstructed for all $\alpha \in [0,1]$, then under $\hat{\pi}^a$ the process $(X_t)_{t \in T}$ is a Markov process with 
	\begin{equation} \label{eq:NOPos}
	X_t \in \{C_+^{X_0,a}(t),C_-^{X_0,a} (t)\} \hspace*{1cm} \hat{\pi}^a\text{-a.e.}
	\end{equation}
	Moreover, if there exists a measurable function $q : \mathbb{R} \rightarrow [0,1]$ as in Lemma \ref{lemma:PseudoQuantileFct}, then $(X_0,X_t)_{t \in T}$ is a Markov process under $\pi$ that jumps between the two curves
	\begin{eqnarray*}
		\tilde{C}_+^{x} (t) &=& \inf \left\{ [x, + \infty) \cap \mathrm{supp}(\mu _t- \shadow{\mu _t}{\nu ^{q(x)}}) \right\} \hspace{0,2cm} \text{and} \\
		\tilde{C}_-^{x} (t) &=& \sup \left\{ (- \infty, x] \cap \mathrm{supp}(\mu _t- \shadow{\mu _t}{\nu ^{q(x)}}) \right\}
	\end{eqnarray*}
	depending on the initial value $X_0 = x$.
\end{proposition}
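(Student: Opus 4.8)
\emph{Proof plan.} By Theorem \ref{thm:MostGenExist} applied to the $\leqcs$-convex parametrization $(\nu^\alpha)_{\alpha\in[0,1]}$, there is a Borel set $A\subset[0,1]$ with $\lambda(A)=1$ such that for $a\in A$ the right-derivative $\hat\pi^a$ exists, is a Markov martingale measure, and is the \emph{unique} martingale measure whose marginals are the NSI peacock $(\hat\eta^a_t)_{t\in T}$; write $\hat\nu^a=\hat\eta^a_0$ for the right-derivative of $\alpha\mapsto\nu^\alpha$ at $a$. The Markov property in the proposition is thus already granted, so the task is to determine the transition behaviour. The plan is: (i) compute $\hat\eta^a_t$ explicitly using non-obstructedness; (ii) read off the joint law of $(X_0,X_t)$ under $\hat\pi^a$ from the Kellerer-dilation description of NSI pairs in Remark \ref{rem:NSIandKellerer}; (iii) pass to $\pi$ via the pseudo-quantile map and Lemma \ref{lemma:PseudoQuantileFct}.

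Step (i). Put $\theta^a_s:=\mu_s-\shadow{\mu_s}{\nu^a}$; by Remark \ref{rem:RestIsPCOC} (with $\nu_1=\nu^a$, $\nu_2=\mu_0-\nu^a$) this is a family increasing in convex order, and by non-obstructedness $\mu_s-\shadow{\mu_{T_s}}{\nu^a}=\theta^a_s$ for all $s$. For $a\le b$, combining the convex-order monotonicity proved inside Proposition \ref{prop:ShadowAssoc} (for $\nu=\nu^b$, $\nu_1=\nu^a$, $\nu_2=\nu^b-\nu^a$) with Proposition \ref{prop:propertiesShadow}\,(ii) applied to $\mu_s$ shows that $(\shadow{\theta^a_s}{\nu^b-\nu^a})_{s\in T}$ is increasing in convex order, so by Lemma \ref{lemma:CharNonObs} the shadow of $\nu^b-\nu^a$ in $(\theta^a_s)_{s\in T}$ is non-obstructed as well. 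Plugging this back into Proposition \ref{prop:ShadowAssoc} and using Lemma \ref{lemma:ShadowFactor} gives
\begin{equation*}
\frac{\shadow{\mu_{T_t}}{\nu^{a+h}}-\shadow{\mu_{T_t}}{\nu^a}}{h}=\shadow{\frac{1}{h}\theta^a_t}{\frac{1}{h}(\nu^{a+h}-\nu^a)}.
\end{equation*}
Letting $h\downarrow0$, I would use the stability estimate \eqref{eq:StabilitySimpShad} to replace $\frac{1}{h}(\nu^{a+h}-\nu^a)$ by its limit $\hat\nu^a$, while the obstruction $\frac{1}{h}\theta^a_t$ has mass tending to $+\infty$ and therefore becomes non-binding; the limit should then be the $\leqc$-smallest probability measure that dominates $\hat\nu^a$ in convex order and is supported in $F_t:=\supp(\mu_t-\shadow{\mu_t}{\nu^a})$, which by Kellerer's characterisation (Remark \ref{rem:NSIandKellerer}) equals $\hat\nu^a\,P_{F_t}$. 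One also checks $\supp(\hat\nu^a)\subset[\min F_t,\max F_t]$, since $\hat\nu^a\leqc\hat\eta^a_t$ and $\supp(\hat\eta^a_t)\subset F_t$. Hence $\hat\eta^a_t=\hat\nu^a\,P_{F_t}$.

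Steps (ii) and (iii). For $\hat\nu^a$-a.e.\ $x$, the measure $P_{F_t}(x,\cdot)$ sits on $\{x^-,x^+\}$ with $x^+=\min(F_t\cap[x,\infty))=C^{x,a}_+(t)$ and $x^-=\max(F_t\cap(-\infty,x])=C^{x,a}_-(t)$, and both of these points lie in $\supp(\hat\eta^a_t)$; a short argument then shows $P_{F_t}(x,\cdot)=P_{\supp(\hat\eta^a_t)}(x,\cdot)$ for $\hat\nu^a$-a.e.\ $x$, i.e.\ the one-step peacock $(\hat\eta^a_0,\hat\eta^a_t)=(\hat\nu^a,\hat\nu^a P_{\supp(\hat\eta^a_t)})$ is NSI. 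By Proposition \ref{prop:NSItoUniq} it has a unique martingale coupling, namely $\hat\nu^a(dx)\,P_{F_t}(x,dy)$; since the push-forward of $\hat\pi^a$ under $(X_0,X_t)$ is such a coupling, the two agree, so $X_t\in\{C^{X_0,a}_+(t),C^{X_0,a}_-(t)\}$ $\hat\pi^a$-a.e.\ (with the convention that $C^{x,a}_\pm(t)=x$ when $x\in F_t$). Together with the Markov property from Theorem \ref{thm:MostGenExist} this is the first assertion. Finally, if a pseudo-quantile $q$ as in Lemma \ref{lemma:PseudoQuantileFct} exists, that lemma identifies the conditional law of $(X_t)_{t\in T}$ given $X_0$ under $\pi$ with the corresponding conditional law under $\hat\pi^{q(X_0)}$, which by hypothesis is concentrated on $\{X_0=x\}$; applying the first part with $a=q(X_0)$ gives $X_t\in\{\tilde C^{X_0}_+(t),\tilde C^{X_0}_-(t)\}$ $\pi$-a.e., and the Markov property of $(X_0,X_t)_{t\in T}$ under $\pi$ follows exactly as in the proof of Corollary \ref{cor:ShadowMartingalePQF}, by combining Lemma \ref{lemma:PseudoQuantileFct} with the Markov property of each $\hat\pi^a$.

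The main obstacle is the limit in Step (i): proving that $\shadow{\frac{1}{h}\theta^a_t}{\frac{1}{h}(\nu^{a+h}-\nu^a)}$ converges, as $h\downarrow0$, to the Kellerer dilation $\hat\nu^a P_{F_t}$. The subtlety is that the approximating shadows are in general non-atomic whereas the limit is typically purely atomic, so the identification has to be carried out at the level of potential functions and of supports rather than by the existing stability estimates alone; moreover one must control this limit with enough uniformity in $t$, which is where the right-continuity of the obstructed shadow (Lemma \ref{lemma:ShadowRightCont}, together with Corollary \ref{cor:MonotonicityPCOC}) comes in. Everything else amounts to bookkeeping with the calculus of obstructed shadows already developed in Section \ref{sec:ImportantObjects}.
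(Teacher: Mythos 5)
Your proof follows the paper's argument essentially step for step: use non-obstructedness to reduce the right-derivative of the shadow curve to the derivative of a \emph{simple} shadow, identify that limit as a Kellerer dilation, conclude via the uniqueness of the martingale coupling between a measure and its Kellerer projection, and transfer to $\pi$ with Lemma \ref{lemma:PseudoQuantileFct}; your second part is verbatim the paper's computation. The one step you flag as the ``main obstacle'' --- that $\shadow{\frac1h\theta^a_t}{\frac1h(\nu^{a+h}-\nu^a)}\to\hat\nu^a P_{F_t}$ as $h\downarrow 0$ --- is exactly where the paper does not argue from scratch but invokes Lemma~2.8 of Beiglb\"ock--Juillet \cite{BeJu16b} (in conjunction with Lemma \ref{lemma:convDom}), which states precisely that the right-derivative of $\alpha\mapsto\shadow{\mu}{\nu^\alpha}$ is the Kellerer dilation of $\hat\nu^a$ onto $\mathrm{supp}(\mu-\shadow{\mu}{\nu^a})$; so your proof is complete modulo that external citation, and your sketch (convex-minimality among measures dominating $\hat\nu^a$ and supported in $F_t$) is the right heuristic but would indeed need the potential-function argument of that lemma to be rigorous. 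One point where you are more careful than the paper: the paper simply asserts that the difference quotient ``is the simple shadow,'' whereas you verify via Proposition \ref{prop:ShadowAssoc}, Proposition \ref{prop:propertiesShadow}(ii) and Lemma \ref{lemma:CharNonObs} that the shadow of $\nu^{a+h}-\nu^a$ in the residual family $(\theta^a_s)_{s\in T}$ is itself non-obstructed --- a detail that is genuinely needed to pass from the obstructed to the simple shadow and that the paper leaves implicit. Your extra step replacing $P_{F_t}$ by $P_{\supp(\hat\eta^a_t)}$ is harmless but unnecessary, since Kellerer's uniqueness statement in Remark \ref{rem:NSIandKellerer} applies to any closed set $F$ containing the support of the target in $[\min F,\max F]$.
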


\begin{proof}
	Theorem \ref{thm:GenExistRC} yields that there exists a Borel set $A \subset [0,1]$ with $\lambda(A) = 1$ such that for all $a \in A$ the right-derivative $\hat{\pi}^a$ of $\alpha \mapsto \pi ^{\alpha}$ at $a$ exists and is  Markov. 
	
	Since the shadow is non-obstructed, the marginal distributions of $\hat{\pi}$ satisfy
	\begin{equation*}
	\hat{\pi}^a(X_t \in \cdot) = \lim _{h \downarrow 0} \shadow{\frac{1}{h} \left(\mu _t - \shadow{\mu _t}{\nu ^a} \right)}{\frac{\nu ^{a +h} - \nu ^a}{h}}.
	\end{equation*}
	Note that this is the simple shadow. Since $\alpha \mapsto \nu ^{\alpha}$ is right-differentiable everywhere with derivative $\hat{\nu} ^a$, we can apply \cite[Lemma 2.8]{BeJu21} (in conjunction with Lemma \ref{lemma:convDom}) for simple shadows, to obtain $\hat{\pi}^a(X_t \in \cdot) = \hat{\nu}^a P_{t,a}$ where $P_{t,a}$ denotes the Kellerer dilation onto the set $\mathrm{supp}(\mu _t - \shadow{\mu _t}{\nu ^a})$ (see Remark \ref{rem:NSIandKellerer}). There is only one martingale coupling between a measure and its Kellerer projection onto a set $F$ and it is given by the Kellerer dilation kernel.
	Hence, \eqref{eq:NOPos} holds a.e. 
	
	In the second case, we know that $(X_0,X_t)_{t \in T}$ is a Markov process under the unique shadow martingale measure $\pi$ (cf. Corollary \ref{cor:ShadowMartingalePQF}) and moreover by Lemma \ref{lemma:PseudoQuantileFct}
	\begin{align*}
	\pi \left( X_t \in \left\{ \tilde{C}^{X_0} _+, \tilde{C}^{X_0} _- \right\}  \right) &= \mathbb{E}_{\pi} \left[ \pi \left( X_t \in \left\{ \tilde{C}^{X_0} _+, \tilde{C}^{X_0} _- \right\} \vert \ X_0  \right) \right] \\
	&= \mathbb{E}_{\pi} \left[ \hat{\pi} ^{q(X_0)} \left( X_t \in \left\{ \tilde{C}^{X_0} _+, \tilde{C}^{X_0} _- \right\} \vert \ X_0  \right) \right] \\
	&= \mathbb{E}_{\pi} \left[ \hat{\pi} ^{q(X_0)} \left( X_t \in \left\{ C^{X_0,q(X_0)} _+, C^{X_0,q(X_0)} _- \right\} \vert \ X_0  \right) \right] = 1.\qedhere
	\end{align*}
\end{proof}

\subsection{Examples of shadow martingales} \label{sec:ExplSM}

In this section we present four examples of shadow martingales. 
For the first three examples we can apply Proposition \ref{prop:NonObstructedPCOC} from the previous subsection.
Indeed,  using Lemma \ref{lemma:CharNonObs} it is easy to check that in these cases the shadow w.r.t.\ the given peacock and parametrization is non-obstructed. 
Alternatively, in Example \ref{expl:MCSM} and Example \ref{expl:LCSM} one could argue via Lemma \ref{lemma:dispersivePCOC} to see that the shadow is non-obstructed. Moreover, in these two examples the second part of Proposition \ref{prop:NonObstructedPCOC} is applicable (cf.\ the proof of Corollary \ref{cor:ShadowMartingalePQF}).

\begin{example} \label{expl:MCSM}
	Let $(\mu _t)_{t \geq 0}$ be the marginal distributions of a standard Brownian motion started at $t = 1$, i.e.\ $\mu _t$ is the Gaussian distribution on $\mathbb{R}$ with mean $0$ and variance $1+t$, and let $(\nu ^{\alpha} _{\text{mc}})_{\alpha \in [0,1]}$ be the middle-curtain parametrization of $\mu _0$.
	
	Let $\pi _{\mc}$ be the shadow martingale w.r.t.\ $(\mu _t)_{t \geq 0}$ and $(\nu ^{\alpha} _{\mc})_{\alpha \in [0,1]}$. The end of Proposition \ref{prop:NonObstructedPCOC} shows that the canonical process $(X_t)_{t \geq 0}$ is a martingale jumping between the two curves
	\begin{equation} \label{eq:CurvesMCSM}
	C_-^{X_0}: \ t \ \mapsto \ -|X_0| \cdot \sqrt{1 + t}  \hspace{0.5cm} \text{and} \hspace{0.5cm}
	C_+^{X_0}: \ t \ \mapsto \ |X_0| \cdot \sqrt{1 + t}.
	\end{equation}
	Since $\mathrm{Law}_{\pi _{\mc}} (X_0) = \mu _0$ is fixed, \eqref{eq:CurvesMCSM} characterizes $\pi  _{\mc}$ uniquely. 
	
	We can describe $\pi _{\mc}$ in purely stochastic terms: If $N$ is a standard Gaussian distributed random variable, the distribution of the unique c\`adl\`ag martingale $(Y_t)_{t \geq 0}$ that satisfies
	\begin{equation*}
	Y_t \in \begin{cases}
	\{N\} & t = 0 \\
	\{- |N| \cdot \sqrt{1 + t}, |N| \cdot \sqrt{1 + t}\} & t > 0
	\end{cases}
	\end{equation*}
	for all $t \geq 0$ is precisely the shadow martingale measure $\pi _{\text{\mc}}$.
\end{example}

Note that in the previous example $(X_t)_{t \geq 0}$ is a Markov process under $\pi _{\lc}$. 
In general, only the process $(X_0,X_t)_{t \geq 0}$ is a Markov process under the shadow martingale measure by Corollary \ref{thm:intro1} and not the canonical process $(X_t)_{t \geq 0}$ itself.   
But since in Example \ref{expl:MCSM} we have $\{C_-^x(t), C_+^x(t) : t \geq 0 \} \cap \{C_-^y(t), C_+^y(t) : t \geq 0 \} = \emptyset$ for all $x \neq y$ (cf.\ Figure \ref{fig:MCSM}),
one can reconstruct $|X_0|$ from $X_t$ and thus $(X_t)_{t \in [0,1]}$ is Markov.

\begin{center}
	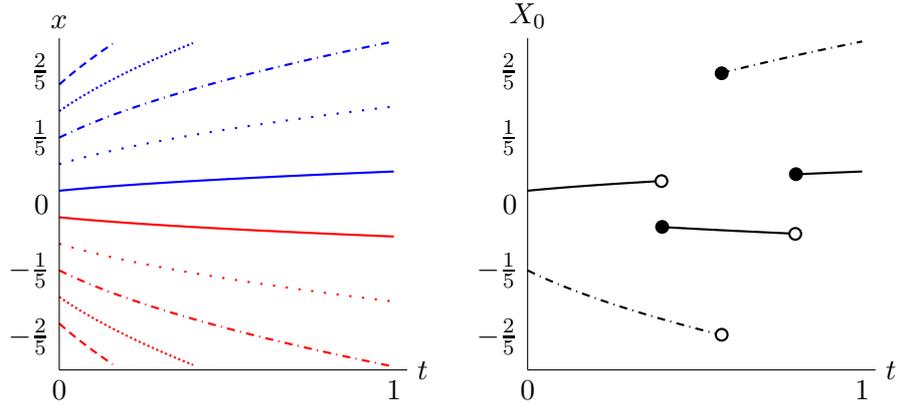
\begin{figure} 
		\begin{tikzpicture}[scale=0.88]
		\draw[densely dashed, red,thick,domain = 0:0.8,smooth,variable=\t] 
		plot({\t},{-1.8*sqrt(1+\t)});
		\draw[densely dotted, red,thick,domain = 0:2,smooth,variable=\t] 
		plot({\t},{-1.4*sqrt(1+\t)});
		\draw[dashdotted,red,thick,domain = 0:5,smooth,variable=\t] 
		plot({\t},{-1*sqrt(1+\t)});
		\draw[loosely dotted,red,thick,domain = 0:5,smooth,variable=\t] 
		plot({\t},{-0.6*sqrt(1+\t)});
		\draw[-,red,thick,domain = 0:5,smooth,variable=\t] 
		plot({\t},{-0.2*sqrt(1+\t)});
		\draw[-, blue,thick,domain = 0:5,smooth,variable=\t] 
		plot({\t},{0.2*sqrt(1+\t)});
		\draw[loosely dotted,blue,thick,domain = 0:5,smooth,variable=\t] 
		plot({\t},{0.6*sqrt(1+\t)});
		\draw[dashdotted,blue,thick,domain = 0:5,smooth,variable=\t] 
		plot({\t},{1*sqrt(1+\t)});
		\draw[densely dotted, blue,thick,domain = 0:2,smooth,variable=\t] 
		plot({\t},{1.4*sqrt(1+\t)});
		\draw[densely dashed, blue,thick,domain = 0:0.8,smooth,variable=\t] 
		plot({\t},{1.8*sqrt(1+\t)});
		\node[left] at (0,1) {$\frac{1}{5}$};
		\node[left] at (0,-1) {$-\frac{1}{5}$};
		\node[left] at (0,0) {$0$};
		\node[left] at (0,2) {$\frac{2}{5}$};
		\node[left] at (0,-2){$-\frac{2}{5}$};
		\draw[-] (0,-2.5) node[below]{$0$} -- (5.2,-2.5) node[right]{$t$};
		\node[below] at (5,-2.5) {$1$};
		\draw[-] (0,-2.5)  -- (0,2.5) node[above] {$x$};

		\draw[thick,domain = 7:8.9,smooth,variable=\t] 
		plot({\t},{0.2*sqrt(1+(\t-7))});
		\draw[thick] (9,0.345) circle (2.5pt);
		\draw[thick,domain = 9.1:10.9,smooth,variable=\t] 
		plot({\t},{-0.2*sqrt(1+(\t-7)});
		\filldraw[thick] (9.01,-0.345) circle (2.5pt);
		\draw[thick] (11,-0.45) circle (2.5pt);
		\draw[thick,domain = 11.1:12,smooth,variable=\t] 
		plot({\t},{0.2*sqrt(1+(\t-7)});
		\filldraw[thick] (11.01,0.45) circle (2.5pt);
		\draw[dashdotted,thick,domain = 7:9.8,smooth,variable=\t] 
		plot({\t},{-sqrt(1+(\t-7))});
		\draw[thick] (9.9,-1.97) circle (2.5pt);
		\draw[dashdotted,thick,domain = 9.9:12,smooth,variable=\t] 
		plot({\t},{sqrt(1+(\t-7)});
		\filldraw[thick] (9.9,1.97) circle (2.5pt);
		\draw[-,dotted] (7,1) node[left]{$\frac{1}{5}$};
		\draw[-,dotted] (7,-1) node[left]{$-\frac{1}{5}$};
		\draw[-,dotted] (7,0) node[left]{$0$};
		\draw[-,dotted] (7,2) node[left]{$\frac{2}{5}$};
		\draw[-,dotted] (7,-2) node[left]{$-\frac{2}{5}$};
		\draw[-] (7,-2.5) node[below]{$0$} -- (12.2,-2.5) node[right] 
		{$t$};
		\node[below] at (12,-2.5) {$1$};
		\draw[-] (7,-2.5) -- (7,2.5) node[above] {$X_0$};
		\end{tikzpicture}
		
		\caption{On the left is a sketch of $C^{x}_+$ (blue) and $C^{x}_-$ (red) in Example \ref{expl:MCSM}. On the right is a sketch of two typical trajectories under the corresponding shadow martingale measure $\pi _{\text{mc}}$.}
		\label{fig:MCSM}
	\end{figure}
\end{center}

\begin{example} \label{expl:LCSM}
	Let $(\mu _t)_{t \geq 0}$ be defined as	$\mu _t = \mathrm{Unif}_{\left[ -1-t, 1+t \right]}$ for all $t \geq 0$ and let $(\nu ^{\alpha} _{\lc})_{\alpha \in [0,1]}$ be the left-curtain parametrization of $\mu _0$.
	
	Let $\pi _{\lc}$ be the shadow martingale w.r.t.\ $(\mu _t)_{t \geq 0}$ and $(\nu ^{\alpha} _{\lc})_{\alpha \in [0,1]}$. The second part of Proposition \ref{prop:NonObstructedPCOC} shows that the canonical process $(X_t)_{t \geq 0}$ is a martingale that jumps between the two curves
\begin{equation} \label{eq:CurvesLCSM} 
	C_-^{X_0}: \ t \mapsto (-1) - \frac{X_0+1}{2} \cdot t  \hspace{0.5cm} \text{and}   \hspace{0.5cm}
	C_+^{X_0}: \ t \mapsto X_0 + \frac{X_0+1}{2} \cdot t.
	\end{equation}
	Since $\mathrm{Law}_{\pi _{\lc}} (X_0) = \mu _0$ is fixed, \eqref{eq:CurvesLCSM} characterizes $\pi  _{\lc}$ uniquely. 
	
	We can describe $\pi _{\lc}$ in purely stochastic terms: If $U$ is a uniformly random variable on $[-1,1]$, the distribution of the unique c\`adl\`ag martingale $(Y_t)_{t \geq 0}$ that satisfies
	\begin{equation*}
	Y_t \in \begin{cases}
	\{U\} & t = 0 \\
	\left\{-1 - \frac{U+1}{2} \cdot t, U + \frac{U+1}{2} \cdot t \right\} & t > 0
	\end{cases}
	\end{equation*}
	for all $t \geq 0$ is precisely the shadow martingale measure $\pi _{\text{\lc}}$.
\end{example}

As mentioned in Section \ref{sec:RelatedLit}, Henry-Labord\`{e}re, Tan and Touzi \cite{HeTaTo16} and Juillet \cite{Ju18} constructed solutions to the peacock problem close but different from $\pi_{\lc}$. Their principle of construction is also to consider the left-curtain parametrization of $\mu_0$, use partitions and left-curtain couplings. However, the simple Markov concatanation is used in place of the generalized obstructed shadows. Concretely, in the setting of Example \ref{expl:LCSM}, their solution exists and has the following behaviour: Let $(\mu _t)_{t \geq 0}$ be as in Example \ref{expl:LCSM}. There exists a unique limit-curtain measure $\pi \in \MMC_{[0, \infty)}((\mu _{t})_{t \geq 0})$. Under $\pi$ the canonical process consists of trajectories piecewise non-decreasing with jumps down at random times to the bottom $-f(t)$ of the interval.
(cf.\ \cite[Theorem B]{Ju18})
This solution to the peacock problem behaves notably differently from the shadow martingale (cf.\ Figure \ref{fig:LCComparison}). 

\begin{center}
	\begin{figure} 
		\begin{tikzpicture}[scale=0.88]
		
		\draw[purple,thick,domain = 0:5,smooth,variable=\t] 
		plot({\t},{-1 + (-1+1)/10 * \t});
		\draw[loosely dashed, blue,thick,domain = 0:5,smooth,variable=\t] 
		plot({\t},{-0.6 + (-0.6+1)/10 * \t});
		\draw[loosely dotted, blue,thick,domain = 0:5,smooth,variable=\t] 
		plot({\t},{-0.2 + (-0.2+1)/10 * \t});
		\draw[dashdotted, blue,thick,domain = 0:5,smooth,variable=\t] 
		plot({\t},{0.2 + (0.2+1)/10 * \t});
		\draw[densely dotted, blue,thick,domain = 0:5,smooth,variable=\t] 
		plot({\t},{0.6 + (0.6+1)/10 * \t});
		\draw[densely dashed, blue,thick,domain = 0:5,smooth,variable=\t] plot({\t},{1 
			+ (1+1)/10 * \t});
		\draw[loosely dashed, red,thick,domain = 0:5,smooth,variable=\t] plot({\t},{-1 
			- (-0.6+1)/10 * \t});
		\draw[loosely dotted, red,thick,domain = 0:5,smooth,variable=\t] plot({\t},{-1 
			- (-0.2+1)/10 * \t});
		\draw[dashdotted, red,thick,domain = 0:5,smooth,variable=\t] plot({\t},{-1 
			- (0.2+1)/10 * \t});
		\draw[densely dotted, densely dotted, red,thick,domain = 0:5,smooth,variable=\t] plot({\t},{-1 
			- (0.6+1)/10 * \t});
		\draw[densely dashed, red,thick,domain = 0:5,smooth,variable=\t] plot({\t},{-1 
			- (1+1)/10 * \t});
		
		\node[left] at (0,1) {$\frac{1}{5}$};
		\node[left] at (0,-1) {$-\frac{1}{5}$};
		\node[left] at (0,0) {$0$};
		\node[left] at (0,2) {$\frac{2}{5}$};
		\node[left] at (0,-2){$-\frac{2}{5}$};
		
		\draw[-] (0,-2.5) node[below]{$0$} -- (5.2,-2.5) node[right]{$t$};
		\node[below] at (5,-2.5) {$1$};
		\draw[-] (0,-2.5)  -- (0,2.5) node[above] {$x$};

		\draw[loosely dashed,thick,domain = 7:8.9,smooth,variable=\t] 
		plot({\t},{-0.6	+ (-0.6+1)/10 * (\t-7)});
		\draw[thick] (9,-0.52) circle (2.5pt);
		\filldraw[thick] (9.0,-1.08) circle (2.5pt);
		
		\draw[loosely dashed,thick,domain = 9:10.9,smooth,variable=\t] 
		plot({\t},{-1 -	(-0.6+1)/10 * (\t-7)});
		\draw[thick] (11,-1.16) circle (2.5pt);
		\filldraw[thick] (11,-0.44) circle (2.5pt);
		
		\draw[loosely dashed,thick,domain = 11:12,smooth,variable=\t] 
		plot({\t},{-0.6 + (-0.6+1)/10 * (\t-7)});

		\draw[dashdotted,thick,domain = 7:7.9,smooth,variable=\t] 
		plot({\t},{0.6 + (0.6+1)/10 * (\t-7)});
		\draw[thick] (8,0.76) circle (2.5pt);
		\filldraw[thick] (8.0,-1.16) circle (2.5pt);
		
		\draw[dashdotted,thick,domain = 8:9.9,smooth,variable=\t] 
		plot({\t},{-1 - (0.6+1)/10 * (\t-7)});
		\draw[thick] (10,-1.48) circle (2.5pt);
		
		\draw[dashdotted,thick,domain = 10:12,smooth,variable=\t] 
		plot({\t},{0.6 + (0.6+1)/10 * (\t-7)});
		\filldraw[thick] (10,1.075) circle (2.5pt);
		
		\draw[-,dotted] (7,1) node[left]{$1$};
		\draw[-,dotted] (7,-1) node[left]{$-1$};
		\draw[-,dotted] (7,0) node[left]{$0$};
		\draw[-,dotted] (7,2) node[left]{$2$};
		\draw[-,dotted] (7,-2) node[left]{$-2$};
		\draw[-] (7,-2.5) node[below]{$0$} -- (12.2,-2.5) node[right] 
		{$t$};
		\node[below] at (12,-2.5) {$1$};
		\draw[-] (7,-2.5) -- (7,2.5) node[above] {$X_0$};
		
		\end{tikzpicture}
		
		\caption{On the left is a sketch of $C^{x}_+$ (blue) and $C^{x} _-$ (red)  in Example \ref{expl:LCSM}. On the right is a sketch of two typical trajectories under the corresponding shadow martingale measure $\pi _{\lc}$.}
		\label{fig:LCComparison}
	\end{figure}
\end{center}

For Example \ref{expl:MCSM} and Example \ref{expl:LCSM} we could apply the second part of Proposition \ref{prop:NonObstructedPCOC} that corresponds to Corollary \ref{thm:intro1} because the parametrization of the initial marginal was given by a restriction of $\mu _0$ to intervals of $\mathbb{R}$. Any shadow martingale w.r.t.\ a sunrise parametrization does not belong to this class. Therefore, in the following example we need to rely on the notion of martingale parametrization to describe the shadow martingale.

\begin{example} \label{expl:SunSM}
	Let $(\mu _t)_{t \geq 0}$ be defined as $\mu _t = \mathrm{Unif}_{\left[ -e^t, e^t \right]}$ for all $t \geq 0$
	and let $(\nu ^{\alpha} _{\text{sun}})_{\alpha \in [0,1]}$ be the sunset parametrization of $\mu _0$.
	
	Let $\pi _{\sun}$ be the shadow martingale w.r.t.\ $(\mu _t)_{t \geq 0}$ and $(\nu ^{\alpha} _{\sun})_{\alpha \in [0,1]}$ and $(\hat{\pi}^a_{\sun})_{a \in [0,1]}$ the right-derivatives of the corresponding martingale parametrization. Proposition \ref{prop:NonObstructedPCOC} shows that under $\hat{\pi}^a _{\sun}$ the canonical process $(X_t)_{t \geq 0}$ is a martingale that jumps between the two curves
	\begin{equation} \label{eq:CurvesSunSM}
	C_-^{X_0,a}: \ t \mapsto \begin{cases}
	X_0 &t \leq - \ln(a) \\
	-ae^t  &t > - \ln(a)
	\end{cases} \hspace{0.5cm} \text{and}  \hspace{0.5cm}
	C_+^{X_0,a}: \ t \mapsto \begin{cases}
	X_0 &t \leq - \ln(a) \\
	ae^t  &t > - \ln(a)
	\end{cases} 
	\end{equation}
	Since $\mathrm{Law}_{\hat{\pi}^a _{\sun}} (X_0) = \hat{\nu}^a _{\sun} = \mu _0$ is fixed, \eqref{eq:CurvesSunSM} characterizes $\hat{\pi}^a_{\sun}$ and thereby the shadow martingale $\pi _{\sun}$ uniquely. 
	
	We can describe $\pi _{\sun}$ in purely stochastic terms: If $T$ is an exponentially distributed random variable with parameter $1$ and $U$ an independent uniformly distributed random variable on $[-1,1]$, the distribution of the unique c\`adl\`ag martingale $(Y_t)_{t \geq 0}$ that satisfies
	\begin{equation*}
	Y_t \in \begin{cases}
	\{U\} & t < T \\
	\{- e^{t-T}, e^{t -T}\} & t \geq T
	\end{cases}
	\end{equation*}
	for all $t \geq 0$ is precisely the shadow martingale measure $\pi _{\text{\sun}}$.
\end{example}

This example shows that martingale parametrizations and their right-derivatives are essential to describe our solution to the peacock problem. The behaviour of the canonical process under every right-derivative $\hat{\pi}^a$ is easy to understand and the shadow martingale is a simple mixture of these measures (see Figure \ref{fig:SunSM}).
\begin{center}
	\begin{figure} 
		\begin{tikzpicture}[scale=0.88]
		
		\draw[purple,thick,-] (0,-1) -- (3.47,-1);
		\filldraw[red,thick] (3.37,-1) circle (2.5pt);
		\draw[purple,thick,-o] (0,-1+0.4) -- (3.47,-1+0.4);
		\draw[purple,thick,-o] (0,-1+0.8) -- (3.47,-1+0.8);
		\draw[purple,thick,-o] (0,-1+1.2) -- (3.47,-1+1.2);
		\draw[purple,thick,-o] (0,-1+1.6) -- (3.47,-1+1.6);
		\draw[purple,thick,-] (0,-1+2) -- (3.47,1);
		\filldraw[blue,thick] (3.37,1) circle (2.5pt);
		\draw[blue,thick,domain = 3.45:5,smooth,variable=\t] 
		plot({\t},{0.5 * exp(\t/5)});
		\draw[red,thick,domain = 3.45:5,smooth,variable=\t] 
		plot({\t},{-0.5 * exp(\t/5)});
		\draw[-,dotted] (0,1) node[left]{$1$} -- (5,1);
		\draw[-,dotted] (0,-1) node[left]{$-1$} -- (5,-1);
		\draw[-,dotted] (0,0) node[left]{$0$} -- (5,0);
		\draw[-,dotted] (0,2) node[left]{$2$} -- (5,2);
		\draw[-,dotted] (0,-2) node[left]{$-2$} -- (5,-2);
		\draw[-] (0,-2.5) node[below]{$0$} -- (5.2,-2.5) node[right]{$t$};
		\node[below] at (5,-2.5) {$1$};
		\node[below] at (3.45,-2.5) {$\ln(2)$};
		\draw[-] (0,-2.5)  -- (0,2.5) node[above] {$x$};

		\draw[-,thick] (7,-1+0.4) -- (8.9,-1+0.4);
		\draw[thick] (9,-0.6) circle (2.5pt);
		\filldraw[thick] (9.01,1) circle (2.5pt);
		\draw[thick,domain = 9:10.4,smooth,variable=\t] 
		plot({\t},{exp(-0.4)* exp((\t-7)/5)});
		\draw[thick] (10.5,1.36) circle (2.5pt);
		\filldraw[thick] (10.5,-1.36) circle (2.5pt);
		\draw[thick,domain = 10.5:11.4,smooth,variable=\t] 
		plot({\t},{-exp(-0.4)* exp((\t-7)/5)});
		\draw[thick] (11.5,-1.65) circle (2.5pt);
		\draw[thick,domain = 11.5:12,smooth,variable=\t] 
		plot({\t},{exp(-0.4)* exp((\t-7)/5)});
		\filldraw[thick] (11.48,1.61) circle (2.5pt);
		\draw[dashed,-,thick] (7,-1+1.4) -- (11,-1+1.4) node[right]{};
		\draw[thick] (11,0.4) circle (2.5pt);
		\filldraw[thick] (11.01,-1) circle (2.5pt);
		\draw[dashed,thick,domain = 11:12,smooth,variable=\t] 
		plot({\t},{-exp(-0.8)* exp((\t-7)/5)});
		\draw[-,dotted] (7,1) node[left]{$1$} -- (12,1);
		\draw[-,dotted] (7,-1) node[left]{$-1$} -- (12,-1);
		\draw[-,dotted] (7,0) node[left]{$0$} -- (12,0);
		\draw[-,dotted] (7,2) node[left]{$2$}-- (12,2);
		\draw[-,dotted] (7,-2) node[left]{$-2$} -- (12,-2);
		\draw[-] (7,-2.5) node[below]{$0$} -- (12.2,-2.5) node[right] 
		{$t$};
		\node[below] at (12,-2.5) {$1$};
		\draw[-] (7,-2.5) -- (7,2.5) node[above] {$X_0$};
		\end{tikzpicture}		
		\caption{Left: Sketch of $C^{x,a}_+$ (blue) and $C^{x,a} _-(t)$ (red) for $a = \frac{1}{2}$ and $x \in \{-1 + \frac{2i}{5} : 0 \leq i \leq 5 \}$; Right: two typical trajectories under the shadow martingale measure $\pi_{\sun}$ in Example \ref{expl:SunSM} for a random $a$.}
		\label{fig:SunSM}
	\end{figure}
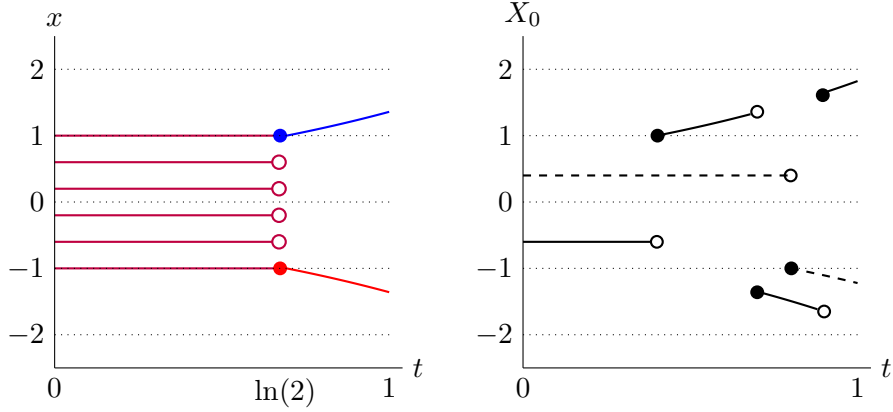
\end{center}

Example \ref{expl:SunSM} was more involved than Example \ref{expl:MCSM} and Example \ref{expl:LCSM} because the parametrization was not given by restrictions to intervals. However, this example still belongs to the special class of non-obstructed peacocks. The last example does not belong to this class. 

\begin{example} \label{expl:SunObstPCOC}
	Let $(S_n)_{n \in \mathbb{N}}$ be the symmetric simple random walk on $\mathbb{Z}$, i.e.\ $S_0=0$ and $S_n := \sum_{i=1} ^n D_i$ for $n\geq 1$ where $(D_i)_{i\geq 1}$ is a sequence of i.i.d.\ random variables uniformly distributed on $\{-1,1\}$. Let the family $(\mu _n)_{n \in \mathbb{N}}$ be defined as 
	\begin{equation*}
	\mu _n := \frac{1}{2} \mathrm{Law}\left( S_n \right) + \frac{1}{2} \mathrm{Law}\left( 3 S_n \right)
	\end{equation*}
	for all $n \in \mathbb{N}$ and let $(\nu ^{\alpha} _{\text{sun}})_{\alpha \in [0,1]}$ be the sunset parametrization of $\mu _0 = \delta _0$, i.e.\ $\nu ^{\alpha} _{\text{sun}} = \alpha \mu _0$. 
	
	By Theorem \ref{thm:MostGenExist} there exists a unique shadow martingale $\pi _{\text{sun}}$ w.r.t.\ $(\mu _n)_{n \in \mathbb{N}}$ and $(\nu ^{\alpha} _{\text{sun}})_{\alpha \in [0,1]}$. 
	Let $(\hat{\pi}^a)_{a \in [0,1]}$ be the family of right-derivatives of  the corresponding martingale parametrization. 
	For $a < \frac{1}{2}$, $\hat{\pi}^a$ is the distribution of $(S_n)_{n \in \mathbb{N}}$ and for $a \geq \frac{1}{2}$, $\hat{\pi}^a$ is the distribution of $(3 S_n)_{n \in \mathbb{N}}$.
	
	We can describe $\pi _{\sun}$ in purely stochastic terms: If $U$ is a uniformly distributed random variable on $[0,1]$ independent from the simple symmetric random walk $(S_n)_{n \in \mathbb{N}}$, the distribution of the  martingale $(Y_n)_{n \in \mathbb{N}}$ defined by
	\begin{equation*}
	Y_n = \begin{cases}
	S_n & \text{if }U < \frac{1}{2} \\
	3S_n & \text{if }U \geq \frac{1}{2}
	\end{cases}
	\end{equation*}
	for all $n \in \mathbb{N}$ is precisely the shadow martingale measure $\pi _{\text{\sun}}$.
\end{example}

It is easy to see that the shadow of the sunset parametrization in the peacock $(\mu _n)_{n \in \mathbb{N}}$ defined as in Example \ref{expl:SunObstPCOC} is not non-obstructed, compare to Figure \ref{fig:ObstrShadow}. Hence, Proposition \ref{prop:NonObstructedPCOC} does not apply and in fact under $\hat{\pi}^a$ the canonical process $(X_t)_{t \in \mathbb{N}}$ does not jump between only two curves.

\section{The left-curtain shadow martingale} \label{sec:LeftMonotone}

Let $(T,\leq)$ be a totally ordered set with minimal element $0\in T$ and $(\mu_t)_{t\in T}$ a peacock  with $\mu_0(\{x\}) = 0$ for all $x \in \mathbb{R}$. Moreover, let  $(\nu ^{\alpha} _{\lc})_{\alpha \in [0,1]}$ be the left-curtain parametrization of $\mu_0$, i.e.\ $\nu _{\lc} ^{\alpha} = {\mu _0}_{|(- \infty, \alpha]}$ for all $\alpha \in [0,1]$ because $\mu _0$ is atomless (see Lemma \ref{lemma:ExConvParam}).
In this section, we give an alternative characterization of the shadow martingale measure w.r.t.\ $(\mu _t)_{t \in T}$  and the left-curtain parametrization $(\nu ^{\alpha} _{\lc})_{\alpha \in [0,1]}$ of $\mu _0$  in terms of  continuous time martingale optimal transport.

More precisely, we prove Theorem \ref{thm:GeneralOptimality} which is the rigorous version of  Theorem \ref{thm:LMAtlOptimal}, 
i.e.\ we show that the shadow martingale is the unique solution to the continuous time martingale optimal transport problem
\begin{equation} \label{eq:ContTimeMOT}
\inf \left\{ \mathbb{E}_{\rho}[c(X_0,X_t)] : \rho \in \MM_{T}(\mu _{T})  \right\}
\end{equation}
simultaneously for all $t \in T$ and for a specific class of cost functions $c$.

\subsection{Optimality}

In the following we denote partial derivatives with the indices of the coordinates, e.g.\ we write $\partial_{122}c$ for $\partial_x\partial^2_y c$.

\begin{definition}
	A function $c: \R^2\to \R$ is said to be a martingale Spence-Mirrlees (MSM) cost function if for every $x < x'$ in $\mathbb{R}$ the increment function
	\begin{equation*}
	\Delta_{x,x'}c:y\mapsto c(x',y)-c(x,y)
	\end{equation*}
	is strictly concave.
\end{definition}

We will principally work with MSM cost functions $c$ that are in $C^{1,2}(\mathbb{R}^2)$ and satisfy the sufficient condition $\partial_{122}c< 0$.
Typical examples are cost functions of the form $c(x,y) = h(y-x)$ where $h'$ is strictly convex, e.g. $c(x,y) = (y-x)^3$, and cost functions of the form $c(x,y) = \varphi (x) \psi (y)$ where $\varphi$ is strictly decreasing and $\psi$ is strictly convex, e.g. $c(x,y)=- x/y$ for $(x,y)\in\R\times (0, \infty)$.  The cost function $c(x,y) = \exp(y-x)$ is in the intersection of the subclasses and $c(x,y)=\sqrt{a(x)+b(x)y^2}$ is a MSM function outside these two subclasses where $a, b$ are non-negative functions for which both $a/\sqrt{b}$ and $b/\sqrt{a}$ are decreasing. The class of MSM cost functions was introduced in \cite{HeTo16} by Henry-Labordère and Touzi and is similar to (but should not be mixed up with) the `non-twisted' condition $\partial_{12}c < 0$ in  classical optimal transport. 

We will show that the left-curtain shadow martingale is the unique simultaneous optimizer of \eqref{eq:ContTimeMOT}. Note that the case of finite $T$  has been worked out with different methods by Beiglb\"ock and Juillet \cite{BeJu21}, Nutz, Stebegg and Tan \cite{NuStTa17} and Beiglb\"ock, Cox, Huesmann \cite{BeCoHu17b}. In fact, the following lemma is very similar to \cite[Lemma 7.14]{NuStTa17}.

\begin{lemma} \label{lemma:MSMRepr}
	Let $c \in C^{1,2}(\mathbb{R}^2)$. For all $M, N \in \mathbb{N}$ and $(x,y) \in (- \infty, M] \times (- \infty, N]$ we have
	\begin{eqnarray*} \label{eq:MSMRepr}
		c(x,y) &=& c(M,y) - (\Delta_{x,M}c)(N) - (y-N)(\Delta_{x,M}c)'(N) \\
		&&+ \int_{-\infty} ^{M} \int_{-\infty} ^{N} \1_{(-\infty,u]}(x)(v-y)^+ \cdot  (-\partial_{122}c(u,v)) \de u \de v.
	\end{eqnarray*}
\end{lemma}

\begin{proof}
	Let $M,N \in \mathbb{N}$ and $f \in C^2(\mathbb{R})$. By partial integration,  for  $y \in (- \infty,N]$ it holds
	\begin{eqnarray*}
		- f(y) &=& - f(N) + \int _{y} ^{N} f'(v) \de v = -f(N) + [(v-y)f'(v)]^{N}_y - \int _y ^{N} (v-y) f''(v) \de v \\
		&=& -f(N) + (N-y) f'(N) - \int _{-\infty} ^{N} (v-y)^+ f''(v) \de v.
	\end{eqnarray*}
The claim follows by applying this to $f = \Delta_{x,M}c$ with $x \leq M$ and rewriting
	\begin{equation*}
	(\Delta_{x,M}c)''(v) = \int _{-\infty} ^{M} \1 _{(- \infty,u]}(x) \partial_{122}c(u,v) \de u. \qedhere
	\end{equation*}
\end{proof}

Lemma \ref{lemma:MSMRepr} shows that - up to some boundary terms - the  interaction of $x$ and $y$ given by   $c$ is basically described by the functions of the form $(x,y) \mapsto \1 _{(- \infty,u]}(x) (v-y)^+$. These, however, are closely connected with the left-curtain shadow martingale.

\begin{lemma} \label{lemma:TildeMSMFct}
	For all $(u,v) \in \mathbb{R}^2$ we define the function $c^{u,v}$ as 
	\begin{equation} \label{eq:DefCTilde}
	c^{u,v}:(x,y)\in \R^2\mapsto \1 _{(- \infty,u]}(x) (v-y)^+\in [0,\infty).
	\end{equation}
	Moreover, let $(\mu _t)_{t \in T}$ be a peacock and $\pi _{\lc}$ the shadow coupling w.r.t.\ $(\mu _t)_{t \in T}$ and the left-curtain parametrization $(\nu ^{\alpha} _{\lc})_{\alpha \in [0,1]}$ of $\mu _0$. Let $\pi\in\MM_T((\mu _t)_{t \in T})$.
	\begin{enumerate}
		\item [(i)]  For every $t \in T$ and $(u,v) \in \mathbb{R}^2$ we have
		\begin{equation*}
		\mathbb{E}_{\pi} \left[c^{u,v}(X_0,X_t) \right] \geq \mathbb{E}_{\pi_{\lc}} \left[ c^{u,v}(X_0,X_t) \right].
		\end{equation*}
		\item  [(ii)] Fix $t\in T$. If for all $(u,v)$ in a dense set of $\mathbb{R}^2$ we have,
		\begin{equation*} \label{eq:SOEqual2}
		\mathbb{E}_{\pi} \left[ c^{u,v}(X_0,X_t) \right] = \mathbb{E}_{\pi_{\lc}} \left[c^{u,v}(X_0,X_t) \right],
		\end{equation*}
		then $(X_0,X_t)$ have the same law under $\pi$ and $\pi _{\lc}$.
		\item [(iii)] Assume $\mu_0$ is atomless. If for every $t\in T$ and all $(u,v)$ in a dense set of $\mathbb{R}^2$ we have
		\begin{equation*} \label{eq:SOEqual3}
		\mathbb{E}_{\pi} \left[c^{u,v}(X_0,X_t) \right] = \mathbb{E}_{\pi_{\lc}} \left[c^{u,v}(X_0,X_t) \right],
		\end{equation*}
		then $\pi$ is the shadow martingale $\pi _{\lc}$.
	\end{enumerate}
\end{lemma}

\begin{proof}
	Item (i): By definition of $\pi_{\lc}$ and Proposition \ref{prop:MaximalElement}, for all $(u,t) \in \mathbb{R}\times T$ we have
	\begin{equation*}
	\pi_{\lc} (X_0 \leq u, X_t \sowh)  = \shadow{\mu _{T_t}}{(\mu _0)_{(- \infty,u]}} \leqc 	\pi( X_0 \leq u, X_t  \sowh).
	\end{equation*}
	Since $y \mapsto (v-y)^+$ is a convex function for all $v \in \mathbb{R}$ we get the desired inequality.
	
	Item (ii): Let $f_1,f_2 : \mathbb{R}^2 \rightarrow \mathbb{R}$ be defined as 
	\begin{eqnarray*}
		f_1(u,v) := \mathbb{E}_{\pi} \left[  c^{u,v}(X_0,X_t) \right] \hspace{1cm} \text{and} \hspace{1cm}
		f_2(u,v) :=\mathbb{E}_{\pi_{\lc}} \left[  c^{u,v}(X_0,X_t) \right].
	\end{eqnarray*}
	Since $f_1(u,\cdot)$ and $f_2(u,\cdot)$ are the potential functions of $\pi_{\lc} (X_0 \leq u, X_t \sowh)$ and $\pi (X_0 \leq u, X_t \sowh)$  and potential functions are convex, $u_1$ and $u_2$ are continuous in $v$.
	Moreover, for all $(u,v) \in \mathbb{R}^2$ and $i \in \{1,2\}$, dominated convergence yields $\lim _{u_n \downarrow u} f_i (u_n,v) = f_i (u,v)$
	because $\mu _t$ has a finite first moment.
	Hence, since $f_1$ and $f_2$ coincide on a dense set, they are equal everywhere. In particular, by Lemma \ref{lemma:characPotF} we obtain for all $u \in \mathbb{R}$
	\begin{equation*}
	\pi_{\lc} (X_0 \leq u, X_t \sowh) = \pi (X_0 \leq u, X_t \sowh).
	\end{equation*}
	
	Item (iii): Since $\mu _0(\{x\}) = 0$ for all $x \in \mathbb{R}$, for all $\alpha \in [0,1]$ there exists $q _{\alpha} \in \mathbb{R}$ with $\nu ^{\alpha} _{\lc} = (\mu_0)_{|(- \infty,q_{\alpha}]}$. Hence, $(\pi ^{\alpha})_{\alpha \in [0,1]}$ with $\pi^{\alpha} := \alpha \mathrm{Law}_{\pi} (X | X_0 \leq q_{\alpha})$
	is a parametrization of $\pi$ w.r.t.\ $(\nu ^{\alpha} _{\lc})_{\alpha \in [0,1]}$ and by (ii) (and Lemma \ref{rem:uniqueParametrization}) we have
	\begin{equation*}
	\pi^{\alpha}(X_t  \sowh) = \alpha \mathrm{Law}_{\pi_{\lc}} (X | X_0 \leq q_{\alpha}) = \shadow{\mu _{T_t}}{\nu ^{\alpha}_{\lc}}
	\end{equation*} 
	for all $\alpha \in [0,1]$ and $t \in T$. By uniqueness of the shadow martingale, $\pi = \pi_{\lc}$.
\end{proof}

\begin{theorem} \label{thm:GeneralOptimality}
	Let $(\mu _t)_{t \in T}$ be a peacock and  $c: \mathbb{R}^2 \rightarrow \mathbb{R}$ a MSM cost function. Suppose that 
	\begin{enumerate}
		\item [(i)] $c \in C^{1,2}(\mathbb{R}^2)$ with $\partial_{122}c<0$ and
		\item [(ii)] for all $t \in T$ there exists $\varphi_1 \in L^1(\mu _0)$ and $\psi_1 \in L^1(\mu _t)$ such that for all $(x,y) \in \mathbb{R}^2$ we have $|c(x,y)| \leq \varphi_1(x) + \psi_1(y)$.
	\end{enumerate}
	Moreover, we assume that at least one of the two following assumptions is satisfied:
	\begin{enumerate}
		\item [(iii-a)] For all $t \in T$ there exists $\varphi_2 \in L^1(\mu _0)$ and $\psi_2 \in L^1(\mu _t)$ such that for all $(x,y) \in \mathbb{R}^2$ we have $|\partial_2 c(x,0)y| \leq \varphi_2(x) + \psi_2(y)$ or
		\item [(iii-b)] there exists $\overline{M} \in \mathbb{N}$ such that $\mathrm{supp}(\mu _0) \subset (-\infty, \bar{M}]$. 
	\end{enumerate}
	Under these assumptions the shadow martingale $\pi_\lc$ w.r.t.\ $(\mu _t)_{t \in T}$ and the left-curtain parametrization $(\nu ^{\alpha} _{\lc})_{\alpha \in [0,1]}$ of $\mu _0$  satisfies 
	\begin{equation*}
	\mathbb{E}_{\pi _{\lc}}[c(X_0,X_t)] = \inf \left\{ \mathbb{E}_{\rho}[c(X_0,X_t)] : \rho \in \MM_T((\mu _t)_{t \in T}) \right\}
	\end{equation*}
	simultaneously for all $t \in T$. 
	
	If $\mu _0$ is atomless, $\pi_{\lc}$ is the unique element of $\MM_T((\mu _t)_{t \in T})$ with this property.
\end{theorem}

We first concentrate on the central argument of the proof and postpone the proofs of two technical results Lemma \ref{lemma:OptimalTechn} and Lemma \ref{lemma:OptimalTechnSecond} to the next subsection.

\begin{proof}
	Fix $t \in T$ and for $\pi_\lc$ a competitor $\pi \in \MM_T((\mu _t)_{t \in T})$. 
	For all $m,M,N \in \mathbb{N}$ and $(x,y) \in \mathbb{R}^2$ we define
	\begin{eqnarray*}
		A_{m,M,N} &:=& [-m, M] \times (- \infty, N] \hspace{1cm} \text{ and} \\
		R_{M, N}:(x,y)\in \R^2 &\mapsto& c(M,y)-[\Delta_{x,M}c(N)+(\Delta_{x,M} c)'(N)\cdot (y-N)]
	\end{eqnarray*}
	where we recall that $\Delta_{x,M}c$ denotes the increment function $y\mapsto c(M,y)-c(x,y)$. In the following we use the notation $\mathbb{E}_{\pi - \pi _{\lc}} [g(X)]$ for $\mathbb{E}_{\pi}[g(X)] - \mathbb{E}_{\pi _{\lc}}[g(X)]$.
	By (i) and Lemma \ref{lemma:MSMRepr}, we know that for all $m,M,N \in \mathbb{R}$ and $(x,y) \in A_{m,M,N}$ we have
	\begin{align}\label{eq:basis}
	c(x,y)&=R_{M,N}(x,y)+\int_{-\infty} ^M \int_{-\infty}^{N} c^{u,v}(x,y)  (-\partial_{122}c(u,v)) \de u \de v
	\end{align}
	where ${c}^{u,v}(x,y) = \1_{(-\infty,u]}(x)(v-y)_+$ as in \eqref{eq:DefCTilde}.  
	On the one hand, by assumption (ii) $c(X_0,X_t)$ is integrable w.r.t.\ $\pi$ and $\pi _{\lc}$, and dominated convergence yields
	\begin{equation*}
	\mathbb{E}_{\pi - \pi _{\lc}}[c(X_0,X_t)] = \lim _{M \rightarrow \infty} \lim _{m \rightarrow \infty} \lim _{N \rightarrow \infty} \mathbb{E}_{\pi - \pi_{\lc}} \left[ \1_{A_{m,M,N}}(X_0,X_t) c(X_0,X_t)\right].
	\end{equation*} 
	On the other hand, for all $m,M,N \in \mathbb{N}$ the random variable $ \1_{m,M,N}(X_0,X_t) R_{M,N}(X_0,X_t)$
	is integrable w.r.t.\ $\pi$ and $\pi _{\lc}$ because the function $x\mapsto\partial_2 c(x,N)$ is continuous on $[- m, M]$ and $\mu _t$ has a finite first moment. We show in Lemma \ref{lemma:OptimalTechn} that under assumptions (i) and (ii) the following successive limits exist and satisfy
	\begin{eqnarray*}
		 \lim _{m \rightarrow \infty} \lim _{N \rightarrow \infty} \mathbb{E_{\pi - \pi_{\lc}}} \left[ \1_{A_{m,M,N}}(X_0,X_t) R_{m,M,N}(X_0,X_t) \right]= \mathbb{E_{\pi - \pi_{\lc}}} \left[ \1_{(- \infty,M]}(X_0) c(M,X_t) \right].
	\end{eqnarray*}
	Since assumption (iii-a) or assumption (iii-b) is satisfied, by Lemma \ref{lemma:OptimalTechnSecond} we have
	\begin{equation*}
	\lim _{M \rightarrow \infty}  \mathbb{E_{\pi - \pi_{\lc}}} \left[ \1_{(- \infty,M]}(X_0) c(M,X_t) \right] = 0.
	\end{equation*}
	Hence, taking the expectation in \eqref{eq:basis}, by Fubini's theorem we have
	\begin{eqnarray*}
		\mathbb{E}_{\pi - \pi _{\lc}} \left[ c(X_0,X_t)\right]
		=\lim _{M \rightarrow \infty} \lim _{m \rightarrow \infty} \lim _{N \rightarrow \infty} \int _m ^M \int_{-\infty} ^N \mathbb{E}_{\pi - \pi _{\lc}} \left[c^{u,v}(X_0,X_t) \right] (-\partial_{122}c(u,v)) \de u \de v.
	\end{eqnarray*}
	Since $\partial_{122}c < 0$ by assumption (i) and $\mathbb{E}_{\pi - \pi _\lc} \left[c^{u,v}(X_0,X_t) \right] \geq 0$ by Lemma \ref{lemma:TildeMSMFct} (i), by monotone convergence we obtain
	\begin{equation*}
	\mathbb{E}_{\pi - \pi _{\lc}} \left[ c(X_0,X_t)\right] =  \int _{-\infty} ^{\infty}  \int _{-\infty} ^{\infty} \mathbb{E}_{\pi - \pi _{\lc}} \left[c^{u,v}(X_0,X_t) \right] (-\partial_{122}c(u,v)) \de u \de v.
	\end{equation*}
	The claim follows with Lemma \ref{lemma:TildeMSMFct} (i) and Lemma \ref{lemma:TildeMSMFct} (iii).
\end{proof}

\begin{remark} \label{rem:OpimalityCostFuctions}
	In the following cases the assumptions of Theorem \ref{thm:GeneralOptimality} are satisfied:
	\begin{itemize}
		\item[(i)] The cost function $c:(x,y)\mapsto \mathrm{tanh}(-x)\sqrt{1+y^2}$ satisfies (i), (ii) and (iii-a) for every peacock and every $t\in T$. 
		\item [(ii)] Let $c(x,y) := (y-x)^3$ and suppose $\mu _t$ has a finite third moment for all $t \in T$. Then we can satisfy assumptions (ii) and (iii-a) with the same functions $x\mapsto 4|x|^3$ and $y\mapsto 4|y^3|$ 
		\item [(iii)] Let $\varphi \in C^1(\mathbb{R})$ with $\varphi' > 0$, $\psi \in C^2(\mathbb{R})$ with $\psi '' < 0$,   $c(x,y) := \varphi(x) \psi(y)$  and suppose there exist  $\frac{1}{p} + \frac{1}{q} = 1$ such that for all $t \in T$  $\varphi \in L^p(\mu _0)$ and $\psi \in L^q(\mu _t)$ and either $\mu_t$ has finite $q$-th moment for all $t\in T$ or there exists $\overline{M} \in \mathbb{N}$ with $\mathrm{supp}(\mu _0) \subset (- \infty,\overline{M}]$. 
	\end{itemize}
\end{remark}

\begin{remark} \label{rem:LMApprox}
	As mentioned before, Theorem \ref{thm:GeneralOptimality} for a finite index set $T$ is proven in \cite{NuStTa17} and \cite{BeCoHu17}.
	For a general index set $T$, picking a suitable sequence of nested finite subsets $(R_n)_n$ of $T$, it is possible to show that the sequence of shadow martingales w.r.t.\ $(\mu_t)_{t\in R_n}$ and $(\nu ^{\alpha} _{\lc})_{\alpha \in [0,1]}$ converges to the unique shadow martingale w.r.t.\ $(\mu _t)_{t \in T}$ and $(\nu ^{\alpha} _{\lc})_{\alpha \in [0,1]}$.  In particular, the unique optimizer of the finite time martingale optimal transport problem provided by \cite{NuStTa17} converge to the unique optimizer of the corresponding continuous-time martingale optimal transport problem. 
\end{remark}

\subsection{Pending proofs}

Recall the notation $A_{m,M,N}$, $R_{M, N}$  and $\mathbb{E}_{\pi - \pi _{\lc}}$ from the proof of Theorem \ref{thm:GeneralOptimality}.
Given a MSM cost function $c$, for all $x < x'$ in $\mathbb{R}$ and $N \in \mathbb{N}$ we denote by $L_{x,x'} ^{N}$ the tangent of the concave increment function $\Delta _{x,x'}c$ at $N$, i.e.\ $L_{x,x'} ^N(y) := (\Delta _{x,x'}c)(N) + (\Delta _{x,x'}c)'(N)(y-N)$ for all $y \in \mathbb{R}$.

\begin{lemma} \label{lemma:OptimalTechn}
	Let $(\mu _t)_{t \in T}$ be a peacock and  $c: \mathbb{R}^2 \rightarrow \mathbb{R}$ a MSM cost function.
	Under the assumptions (i) and (ii) of Theorem \ref{thm:GeneralOptimality}, for all $t \in T$ and $M \in \mathbb{N}$ the following successive limit exists and satisfies 
	\begin{eqnarray*}
        \lim _{m \rightarrow \infty} \lim _{N \rightarrow \infty} \mathbb{E_{\pi - \pi_{\lc}}} \left[ \1_{A_{m,M,N}}(X_0,X_t) R_{M,N}(X_0,X_t) \right]  =\mathbb{E_{\pi - \pi_{\lc}}} \left[ \1_{(- \infty,M]}(X_0) c(M,X_t) \right].
	\end{eqnarray*}
\end{lemma}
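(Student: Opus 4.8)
The plan is to prove the identity by a careful iterated limiting argument, peeling off the limits $N\to\infty$, then $m\to\infty$ in the stated order, using the explicit form $R_{M,N}(x,y)=c(M,y)-L^N_{x,M}(y)$ where $L^N_{x,M}$ is the tangent line to the concave increment $\Delta_{x,M}c$ at $N$. First I would fix $t\in T$, $M\in\mathbb N$, and $m\in\mathbb N$, and analyze the inner limit $\lim_{N\to\infty}$. On the set $A_{m,M,N}=[-m,M]\times(-\infty,N]$ we can split
\begin{equation*}
\1_{A_{m,M,N}}(X_0,X_t)R_{M,N}(X_0,X_t)=\1_{[-m,M]}(X_0)\1_{(-\infty,N]}(X_t)\bigl(c(M,X_t)-L^N_{X_0,M}(X_t)\bigr).
\end{equation*}
The term $\1_{[-m,M]}(X_0)\1_{(-\infty,N]}(X_t)c(M,X_t)$ converges, as $N\to\infty$, to $\1_{[-m,M]}(X_0)c(M,X_t)$ by dominated convergence under both $\pi$ and $\pi_{\lc}$ (assumption (ii) gives the domination, since $c(M,\cdot)\in L^1(\mu_t)$). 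The delicate piece is the tangent term $\1_{[-m,M]}(X_0)\1_{(-\infty,N]}(X_t)L^N_{X_0,M}(X_t)$: here I would use that $\Delta_{x,M}c$ is concave and $C^2$ in $y$ (by assumption (i)), so that $L^N_{x,M}(y)\ge\Delta_{x,M}c(y)$ for all $y$, and that the slope $(\Delta_{x,M}c)'(N)$ is monotone in $N$ and bounded on the compact $x$-range $[-m,M]$; integrability is controlled because $\mu_t$ has a finite first moment and $x\mapsto\partial_2c(x,N)$ is continuous on $[-m,M]$. One shows both $\pi$- and $\pi_{\lc}$-expectations of this term converge as $N\to\infty$, and crucially that their \emph{difference} $\mathbb E_{\pi-\pi_{\lc}}[\cdot]$ converges to $0$: indeed, since $\pi$ and $\pi_{\lc}$ are both martingale measures associated with $(\mu_t)_{t\in T}$, the law of $X_t$ is $\mu_t$ under both, and the affine-in-$y$ function $L^N_{X_0,M}(X_t)$ contributes, after conditioning on $X_0$, a term of the form $a(X_0)+b(X_0)\mathbb E[X_t|X_0]=a(X_0)+b(X_0)X_0$ (martingale property), which therefore depends only on the joint law of $(X_0)$ — but one must be careful: $a,b$ still depend on $M$ and $N$ but not on $t$, and the $X_0$-marginal is $\mu_0$ under both $\pi$ and $\pi_{\lc}$, so this contribution \emph{already cancels in $\mathbb E_{\pi-\pi_{\lc}}$} except for the truncation $\1_{(-\infty,N]}(X_t)$. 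So the real work is showing $\mathbb E_{\pi-\pi_{\lc}}\bigl[\1_{[-m,M]}(X_0)\1_{(N,\infty)}(X_t)L^N_{X_0,M}(X_t)\bigr]\to0$ and the analogous statement for the $c(M,X_t)$ term, which follows from uniform integrability of $(X_t)$ under both measures.

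Next I would take $\lim_{m\to\infty}$ of the resulting expression $\mathbb E_{\pi-\pi_{\lc}}\bigl[\1_{[-m,M]}(X_0)c(M,X_t)\bigr]$ (whatever extra tangent-term remnant survives the $N$-limit, I expect it to be zero as argued, or at worst to also vanish in the $m$-limit). As $m\to\infty$, $\1_{[-m,M]}(X_0)\uparrow\1_{(-\infty,M]}(X_0)$, and since $c(M,X_t)\in L^1(\mu_t)$ and $\1_{(-\infty,M]}(X_0)$ is $\mu_0$-integrable-bounded, dominated convergence under both $\pi$ and $\pi_{\lc}$ gives
\begin{equation*}
\lim_{m\to\infty}\mathbb E_{\pi-\pi_{\lc}}\bigl[\1_{[-m,M]}(X_0)c(M,X_t)\bigr]=\mathbb E_{\pi-\pi_{\lc}}\bigl[\1_{(-\infty,M]}(X_0)c(M,X_t)\bigr],
\end{equation*}
which is exactly the claimed right-hand side. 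Throughout, I would phrase the domination carefully: the bound $|c(x,y)|\le\varphi_1(x)+\psi_1(y)$ with $\varphi_1\in L^1(\mu_0)$, $\psi_1\in L^1(\mu_t)$ from assumption (ii) handles the $c$-terms; the tangent-line terms need the separate observation that the martingale property collapses the $y$-dependence modulo affine functions, together with uniform integrability of the family $\{\mathrm{Law}_\rho(X_t):\rho\in\MM_T((\mu_t)_{t\in T})\}$ — but here there is only one $\mu_t$, so this is just uniform integrability of a single measure, trivially true.

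The main obstacle I anticipate is controlling the tangent term $L^N_{X_0,M}(X_t)$ uniformly enough to justify interchanging limits and to show that its $\mathbb E_{\pi-\pi_{\lc}}$-contribution vanishes. Concretely, one must verify that $\1_{[-m,M]}(X_0)\1_{(-\infty,N]}(X_t)L^N_{X_0,M}(X_t)$ is $\pi$- and $\pi_{\lc}$-integrable for each fixed $m,M,N$ (this uses continuity of $x\mapsto c(M,x)$ and $x\mapsto\partial_2c(x,N)$ on the compact set $[-m,M]$ plus finite first moment of $\mu_t$), and then that the martingale property yields $\mathbb E_\pi[\1_{(-\infty,N]}(X_t)L^N_{X_0,M}(X_t)\mid X_0]$ equals, on $\{X_0\in[-m,M]\}$, a quantity whose $\pi$- and $\pi_{\lc}$-expectations differ only through the truncation at $N$ — and that this difference is $O$ of the tail $\mathbb E_{\mu_t}[|X_t|\1_{|X_t|>N}]$, hence vanishes as $N\to\infty$. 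A clean way to organize this is to write $L^N_{x,M}(y)=\alpha_{N,M}(x)+\beta_{N,M}(x)\,y$ and split the truncated expectation accordingly; the $\beta_{N,M}(x)\,y$ piece is handled by the martingale property ($\mathbb E[X_t\1_{(-\infty,N]}(X_t)\mid X_0]=X_0-\mathbb E[X_t\1_{(N,\infty)}(X_t)\mid X_0]$) and the $\alpha_{N,M}(x)$ piece by the fact that $\mathbb E[\1_{(-\infty,N]}(X_t)\mid X_0]\to1$ dominatedly. Once these estimates are in place the iterated limits collapse exactly as stated, completing the proof.
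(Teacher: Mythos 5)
Your overall architecture coincides with the paper's: use the martingale property to show that the affine tangent term $L^N_{X_0,M}(X_t)$ contributes identically under $\pi$ and $\pi_{\lc}$ (so it cancels in $\mathbb{E}_{\pi-\pi_{\lc}}$ once the truncation in $X_t$ is removed), reduce the $N$-limit to showing that the contribution of the tail region $\{X_t>N\}$ vanishes, and finish the $m$-limit by dominated convergence. The first and last of these steps are fine as you present them.

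The gap is in your treatment of the tail region. You propose to write $L^N_{x,M}(y)=\alpha_{N,M}(x)+\beta_{N,M}(x)y$ and to conclude that the tail contribution is $O\bigl(\mathbb{E}_{\mu_t}[|X_t|\1_{\{|X_t|>N\}}]\bigr)$. This requires $\alpha_{N,M}$ and $\beta_{N,M}$ to be bounded \emph{uniformly in $N$} on $[-m,M]$, which is false for generic MSM costs: already for $c(x,y)=(y-x)^3$ one has $\beta_{N,M}(x)=(\Delta_{x,M}c)'(N)\sim -6(M-x)N$ and $\alpha_{N,M}(x)\sim 3(M-x)N^2$, so $\alpha_{N,M}(x)\,\mathbb{P}(X_t>N)$ need not vanish under a mere first-moment assumption. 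The boundedness you invoke ("bounded on the compact $x$-range") holds only for fixed $N$. The correct mechanism — which the paper uses and which you almost have, since you record both the tangent inequality and the monotonicity of the slope — is the two-sided sandwich: for $y>N\geq 0$ concavity gives
\begin{equation*}
\Delta_{x,M}c(y)\;\leq\; L^N_{x,M}(y)\;\leq\; L^0_{x,M}(y),
\end{equation*}
so that $\1_{\{X_t>N\}}\bigl|R_{M,N}(X_0,X_t)\bigr|$ is dominated by the $N$-independent integrable function $|c(M,X_t)|+|\Delta_{X_0,M}c(X_t)|+\1_{[-m,M]}(X_0)|L^0_{X_0,M}(X_t)|$ (integrable by assumption (ii) and the finite first moment of $\mu_t$), and dominated convergence then kills the tail. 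You should replace the $\alpha+\beta y$ bookkeeping by this majorant; as written, that step does not go through.
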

\begin{proof}
	Fix $\pi \in \MM_T((\mu _t)_{t \in T})$ and $t \in T$. 
	
	\textsf{STEP 1:} For all $m,M, N \in \mathbb{N}$ and $x \in [-m,M]$, the tangent function $L_{m,M}^N$ of $c$ is an affine function. Hence, the martingale property yields that 
	\begin{eqnarray*}
		&& \mathbb E_\pi \left[ \1 _{[- m, M]}(X_0) \left( c(M, X_t) - R_{M,N}(X_0,X_t) \right)\right] \\
		&=&\mathbb E_\pi  \left[ \1 _{[- m,M]}(X_0) L_{X_0,M}^N(X_t) \right] =\mathbb E_\pi  \left[ \1 _{[- m,M]}(X_0) L_{X_0,M}^N(X_0) \right]
	\end{eqnarray*}
	is independent of $\pi\in\MM _T((\mu _t)_{t \in T})$.  Thus,
	\begin{equation*}
	\mathbb{E}_{\pi - \pi _{\lc}} \left[ \1_{[- m, M]}(X_0) R_{M,N}(X_0,X_t) \right] = \mathbb{E}_{\pi - \pi _{\lc}} \left[  \1 _{[- m, M]}(X_0) c(M,X_t)  \right].
	\end{equation*}
	
	\textsf{STEP 2:} Fix $M \in \mathbb{N}$ and $x \in [- m, M]$. For all $N \in \mathbb{N}$ we define $ B_{m,M,N} = [-m,M] \times (N, \infty)$
	so that $\1_{[-m,M]}(x)=\1_{A_{m,M,N}}(x,y)+\1_{B_{m,M,N}}(x,y)$ for all $y \in \mathbb{R}$.
	The function
	\begin{equation*}
	y \mapsto c(M,y) - R_{N,M} (x,y)= L_{x,M} ^N(y)
	\end{equation*}
	is  the tangent of the concave function $\Delta _{x,M}c$ at the position $N$. By concavity, for every $y\in (N,+\infty)$ (where we recall $N\geq0$) we have $\Delta_{x,M}c (y) \leq L_{x,M}^{N} (y) \leq L_{x,M} ^0(y).$ 
	Hence,
	\begin{eqnarray*} 
		&&\1 _{B_{m,M,N}}(X_0,X_t)| R_{M,N}(X_0,X_t)| \\ 
		&\leq& |c(M,X_t)| + |\Delta_{X_0,M}c(X_t)| + \1 _{[-m,M]}(X_0) |L_{X_0,M}^{0}(X_t)|.
	\end{eqnarray*}
	Note that the right-hand side is independent of $N$  and integrable with respect to $\pi$ and $\pi _{\lc}$.
	
	\textsf{STEP 3:} With the majorant from Step 2, the dominated convergence theorem yields
	\begin{equation*}
	\lim _{N \rightarrow \infty} \mathbb{E}_{\pi - \pi _{\lc}} \left[ \1 _{B_{m,M,N}}(X_0,X_t) R_{m,M,N}(X_0,X_t)\right] = 0.
	\end{equation*}
	 Therefore, we have
	\begin{eqnarray*}
		&&\lim _{N \rightarrow \infty} \mathbb{E}_{\pi - \pi _{\lc}} \left[ \1 _{A_{m,M,N}}(X_0,X_t) R_{M,N}(X_0,X_t)\right] \\
		&=& \lim _{N \rightarrow \infty}\left( \mathbb{E}_{\pi - \pi _{\lc}} \left[ \1 _{[- m, M]}(X_0) R_{M,N}(X_0,X_t)\right] - \mathbb{E}_{\pi - \pi _{\lc}} \left[ \1 _{B_{m,M,N}}(X_0,X_t) R_{M,N}(X_0,X_t)\right]\right) \\
		&=& \mathbb{E}_{\pi - \pi _{\lc}} \left[  \1 _{[- m, M]}(X_0) c(M,X_t)  \right].
	\end{eqnarray*}
	
	\textsf{STEP 4:} For all $M \in \mathbb{N}$ assumption (ii) ensures that $\1 _{(- \infty,M]}(X_0)c(M,X_t)$ is integrable w.r.t.\ $\pi$ and $\pi _{\lc}$. Hence, the claim follows with dominated convergence.
\end{proof}

\begin{lemma} \label{lemma:OptimalTechnSecond}
	Let $(\mu _t)_{t \in T}$ be a peacock and  $c: \mathbb{R}^2 \rightarrow \mathbb{R}$ a MSM cost function that satisfies assumptions (i) and (ii) of Theorem \ref{thm:GeneralOptimality}. If additionally assumption (iii-a) or assumption (iii-b) of Theorem \ref{thm:GeneralOptimality} are satisfied,  for all $t \in T$ we have 
	\begin{equation*}
	\lim _{M \rightarrow \infty} \mathbb{E_{\pi - \pi_{\lc}}} \left[ \1_{(- \infty,M]}(X_0) c(M,X_t) \right] =0.
	\end{equation*}
\end{lemma}

\begin{proof}
	Fix $\pi \in \MM_T((\mu _t)_{t \in T})$ and $t \in T$.
	
	If assumption (iii-b) is satisfied, $\1 _{(- \infty,\overline{M}]} = 1$ $\pi$-a.e. Hence, for all $M \geq \overline{M}$ we have 
	\begin{equation*}
	\mathbb{E_{\pi - \pi_{\lc}}} \left[ \1_{(- \infty,M]}(X_0) c(M,X_t) \right] = \mathbb{E_{\pi - \pi_{\lc}}} \left[ c(M,X_t) \right] =  0
	\end{equation*} 
	and the claim follows.
	
	Now suppose assumption (iii-a) is satisfied. In this case, w.l.o.g.\ we can assume that 
	\begin{equation} \label{eq:Normalization}
	c(0,y) = c(x,0) =\partial_2 c(x,0) = 0, \hspace*{1cm} \text{for all } (x,y) \in \mathbb{R}^2.
	\end{equation}
	Otherwise, we replace $c$ with $\tilde{c}$ defined as
	\begin{eqnarray*}
		\tilde{c} (x,y) &:=& \Delta_{0,x}c(y) - (\Delta_{0,x}c)(0) - (\Delta_{0,x}c)'(0) y \\
		&=& c(x,y) - c(0,y) - (\Delta_{0,x}c)(0) - (\Delta_{0,x}c)'(0) y.
	\end{eqnarray*}
	Indeed, $\tilde{c}$ satisfies assumptions (i), (ii), and (iii-a) or (iii-b) (depending on $c$)  and $\tilde c(0,y) = \tilde c(x,0) =  \partial_2 \tilde c(x,0) = 0$ for all $(x,y) \in \mathbb{R}^2$. Moreover, by the martingale property  we have
	\begin{eqnarray*}
		&& \lim _{M \rightarrow \infty} \mathbb{E}_{\pi - \pi _{\lc}} \left[ \1_{(- \infty,M]}(X_0) \left( \tilde{c}(M,X_t) - c(M,X_t) \right) \right] \\
		&=& \lim _{M \rightarrow \infty} \mathbb{E}_{\pi - \pi _{\lc}} \left[ \1_{(- \infty,M]}(X_0) \left( c(0,X_t) + L_{0,M}^0(X_0) \right) \right] = \lim _{M \rightarrow \infty} \mathbb{E}_{\pi - \pi _{\lc}} \left[  c(0,X_t) \right] =  0.
	\end{eqnarray*}
	Hence the limit of $\mathbb{E_{\pi - \pi_{\lc}}} \left[ \1_{(- \infty,M]}(X_0) c(M,X_t) \right]$ exists and vanishes as $M$ tends to infinity if and only if the limit of  $\mathbb{E_{\pi - \pi_{\lc}}} \left[ \1_{(- \infty,M]}(X_0) \tilde c(M,X_t) \right]$ exists and vanishes as $M\to \infty$.
	
	For all $0 \leq  x \leq x'$, the MSM property and \eqref{eq:Normalization} yield that $\Delta_{x,x'}c$ is a concave function with $(\Delta_{x,x'}c)(0) = (\Delta_{x, x'}c)'(0) = 0$ and thus $\Delta_{x,x'}c \leq 0$. Hence,  for all $0 \leq M \leq M'$ and $(x,y) \in \mathbb{R}^2$ we obtain
	\begin{align*}
		&\1_{(M, \infty)}(x) \Delta _{M,x}c(y) = \1_{(M, M']}(x) \Delta _{M,x}c(y) + \1_{(M', \infty)}(x) \Delta _{M,x}c(y) \\
		\leq& \1_{(M', \infty)}(x) \Delta _{M,x}c(y) =  \1_{(M', \infty)}(x) \left( \Delta_{M,M'} c(y) + \Delta_{M',x}c(y) \right) 
		\leq  \1_{(M', \infty)}(x) \Delta _{M',x}c(y).
	\end{align*}
	Since for all $(x,y) \in \mathbb{R}^2$ the sequence $\1_{(M, \infty)}(x) \Delta _{M,x}c(y)$ converges to $0$ as $M$ tends to infinity,
	monotone convergence applied separately for $\pi$ and $\pi_{\lc}$ yields 
	\begin{equation*}
	\lim_{M \rightarrow \infty} \mathbb{E}_{\pi - \pi _{\lc}} \left[  \1_{(M, \infty)}(X_0) \Delta _{M,X_0}c(X_t)  \right] = 0.
	\end{equation*}
	Finally, we can conclude 
	\begin{eqnarray*}
		&&	\lim _{M \rightarrow \infty} \mathbb{E}_{\pi - \pi _{\lc}} \left[  \1 _{(- \infty, M]}(X_0) c(M,X_t)  \right] \\
		&=& 	\lim_{M \rightarrow \infty} \left(  \mathbb{E}_{\pi - \pi _{\lc}} \left[ c(M,X_t)  \right]  -  \mathbb{E}_{\pi - \pi _{\lc}} \left[  \1 _{(M,\infty)}(X_0) c(M,X_t)  \right]  \right) \\
		&=&  - \lim _{M \rightarrow \infty} \mathbb{E}_{\pi - \pi _{\lc}} \left[  \1 _{(M,\infty)}(X_0) c(M,X_t)  \right] \\
		&=& \lim _{M \rightarrow \infty} \mathbb{E}_{\pi - \pi _{\lc}} \left[  \1 _{(M,\infty)}(X_0) \Delta _{M,X_0}c(X_t)  \right] -  \lim _{M \rightarrow \infty} \mathbb{E}_{\pi - \pi _{\lc}} \left[  \1 _{(M,\infty)}(X_0) c(X_0,X_t)  \right] =0
	\end{eqnarray*}
	using dominated convergence in the last equality employing assumption (ii).
\end{proof}

\section*{Acknowledgements}

We thank the referees for a careful reading of the manuscript and several hints to improve the presentation of our results. We thank Mathias Beiglb\"ock for stimulating discussions at the outset of this project and Gudmund Pammer for pointing out a mistake in an earlier version of this paper.

\normalem

\end{document}